\documentclass[11pt]{amsart}
\usepackage{amsfonts,amsmath,latexsym,amssymb,verbatim,amsbsy,amsthm}
\usepackage{graphicx}
\usepackage{caption}
\usepackage{float}
\usepackage{wrapfig}
\usepackage[shortlabels]{enumitem}
\usepackage[english]{babel}
\usepackage{mathtools}
\usepackage{array}


\usepackage[top=1in, bottom=1in, left=1in, right=1in]{geometry}
\usepackage[dvipsnames]{xcolor}
\usepackage[colorlinks=true, pdfstartview=FitV, linkcolor=RoyalBlue,citecolor=ForestGreen, urlcolor=blue]{hyperref}
\usepackage{xfrac}


\newcommand{\norm}[1]{\left\lVert#1\right\rVert}


\theoremstyle{plain}
\newtheorem{THEOREM}{Theorem}[section]

\newtheorem{theorem}[THEOREM]{Theorem}

\newtheorem{lemma}[THEOREM]{Lemma}

\theoremstyle{definition}

\theoremstyle{remark}

\newtheorem{remark}[THEOREM]{Remark}



 %
\DeclareMathOperator{\supp}{supp} %
 %
 %
 %
 %
 %
 %
 %
 %
 %
 %
 %
 %
 %


\def \a {\alpha}

\def \g {\gamma}
\def \d {\delta}
\def \k {\kappa}
\def \e {\varepsilon}

\def \l {\lambda}
\def \n {\nabla}
\def \s {\sigma}

\def \D {\Delta}


\def \bb {{\bf b}}

\def \bn {{\bf n}}

\def \bu {{\bf u}}

\def \bx {{\bf x}}

\def \bz {{\bf z}}

\def \bx {{\bf x}}


\def \cL {\mathcal{L}}

\def \cN {\mathcal{N}}

\def \cQ {\mathcal{Q}}
\def \cR {\mathcal{R}}
\def \cS {\mathcal{S}}
\def \cT {\mathcal{T}}

\def \aa {\mathsf{a}}

\def \uu {\mathsf{u}}
\def \vv {\mathsf{v}}
\def \bb {\mathsf{b}}
\def \ff {\mathsf{f}}
\def \hh {\mathsf{h}}
\def \HH {\mathsf{H}}
\def \gg {\mathsf{g}}
\def \pp {\mathsf{p}}

\usepackage{amsmath}


\def \cn {\mathcal{C}_n}
\def \sn {\mathcal{S}_n}

\def \cm {\mathcal{C}_m}
\def \sm {\mathcal{S}_m}


\newcommand{\N}{\ensuremath{\mathbb{N}}}   
\newcommand{\R}{\ensuremath{\mathbb{R}}}   
\newcommand{\T}{\ensuremath{\mathbb{T}}}   


\newcommand{\pa}{\partial}


\def \sign {\mathrm{sgn}}


\def \p {\partial}


\def \dt  {\, \mbox{d}t}

\def \dz  {\, \mbox{d}z}
\def \dr  {\, \mbox{d}r}
\def \ds  {\, \mbox{d}s}

\newcommand{\ep}{\varepsilon}

\usepackage{amsfonts}

\begin{document}

\title[2D EULER: TIME PERIODIC SOLUTIONS NEAR TAYLOR-COUETTE]{Time periodic solutions for the  2D Euler equation\\
  near Taylor-Couette flow}

\author{\'Angel Castro and Daniel Lear} 

\address{Instituto de Ciencias Matematicas, Madrid.}
\email{angel\_castro@icmat.es}

\address{Universidad de Cantabria, Santander.}
\email{daniel.lear@unican.es}

\date{\today}

\subjclass{76E07,76B03,35Q31,35Q35}

\keywords{2D Euler, hydrodynamic stability, Taylor-Couette flow, rotating solutions, bifurcation.}


\maketitle

\begin{abstract}
In this paper we consider the incompressible 2D Euler equation in an annular domain with non-penetration boundary condition. In this setting, we prove the existence of a family of non-trivially smooth time-periodic solutions at an arbitrarily small distance from the stationary Taylor-Couette flow in $H^s$, with $s < \sfrac{3}{2}$, at the vorticity level. 
\end{abstract}

\addtocontents{toc}{\protect\setcounter{tocdepth}{1}}

\tableofcontents

\section{Introduction and main result}
\subsection{Motivation and background}
The 2D Euler equation is an old but still very active field of research. Unlike its 3D version, the global existence of smooth solutions is a classic result dating back to the 1930's, see \cite{H,W}. The qualitative behavior of these global smooth solutions is a very important aspect of the study of Euler flows. However, their long time dynamics is difficult to understand in general due to the lack of a global relaxation mechanism.

An important class of flows, in terms of long term behavior, are those that are independent of time.
Then, a more realistic task is to study the dynamics of solutions near steady-states. 
Coherent structures, such as shear flows, radial flows and vortices, are especially important in the study of the 2D Euler equation, since physical experiments, numerical simulations and formal asymptotic  show that they tend to form dynamically and become the dominant part of the solution at very long times.

The stability of these steady-states is a classical subject and a fundamental problem in the context of \textit{hydrodynamic stability} and has been considered by many authors. We refer to \cite{BGMsurvey,Gallay,IJ_icm} for a general overview of this topic and a more detailed list of references. 

\subsection{The Taylor-Couette flow} In this article we are interested in the 2D Euler dynamics near Taylor-Couette flow in an annular domain.
This radial flow is given by
\begin{equation}\label{def:u_TC} 
u_{\textsf{TC}}^{\theta}(r)=Ar +\frac{B}{r}, \qquad r_1\leq r\leq r_2.
\end{equation}
Here $A,B\in\R$ and $r_1,r_2\in(0,\infty)$ are the inner and outer radius of the annular domain.

Such type of two-dimensional flow can be obtained experimentally by rotating two concentric cylinders, where the rotation is slow enough not to cause a 3D Taylor-Couette instability.

Despite being a simple type of steady solution, perturbation of Taylor-Couette has proven to be a challenging topic and has been extensively studied experimentally, theoretically and numerically for a long time. Many questions remain unanswered, making it an active field of research in fluid mechanics, see \cite{CI,GLS,HLS,T}.

Now, for the sake of completeness, we recall that velocity, vorticity and stream function associated to the stationary Taylor-Couette 
flow are given in Cartesian coordinates by
\[
\bu_{\textsf{TC}}(\bx)=\left(A|\bx|+\frac{B}{|\bx|}\right)\frac{\bx^{\perp}}{|\bx|}, \qquad \omega_{\textsf{TC}}(\bx)=2A, \qquad \psi_{\textsf{TC}}(\bx)=\frac{A}{2}(r_1^2-|\bx|^2)+B \log\left(\frac{r_1}{|\bx|}\right).
\]

In order to present our result, we start considering the vorticity formulation for the Euler equation in the two-dimensional annular domain $\overline{B_{r_2}(0)}\setminus B_{r_1}(0)$ with no-penetration boundary conditions, i.e. $\bu\cdot\bn=0$ on $\p B_{r_2}(0) \cup \p B_{r_1}(0)$. Then, we have
\begin{equation}\label{eq:2dvorticity}
\p_t \omega + \bu \cdot \nabla \omega =0, \qquad \bu = \nabla^{\perp}\psi,
\end{equation}
with the stream function $\psi$ satisfying $\Delta\psi=\omega$ and the boundary conditions
\begin{equation}\label{eq:BC}
\psi|_{\p B_{r_1}(0)}=0, \qquad \psi|_{\p B_{r_2}(0)}=\gamma.
\end{equation}
Clearly, $\psi$ is defined uniquely by $\bu$ up to a constant. Therefore, without loss of generality we set $\psi|_{\p B_{r_1}(0)}=0$. The constant $\gamma$ may in general depend on time but by Kelvin's theorem on conservation of circulation $\bu$ is also independent of time along the flow. In fact, see \eqref{eq:gammacirculation} we have
\[
\gamma=\frac{-1}{2\pi}\int_{[r_1,r_2]\times\T} u_0^{\theta}(r,\theta)\dr\mbox{d}\theta.
\]

Since we are  interested in the 2D Euler dynamics near Taylor-Couette, we will set the circulation to be exactly the same as the Taylor-Couette flow, i.e.
\begin{equation}\label{def:gamma}
\gamma:=-\int_{r_1}^{r_2} u_{\textsf{TC}}^{\theta}(r) \dr=-\left(\frac{A}{2}(r_2^2-r_1^2) + B \log\left(\frac{r_2}{r_1}\right)\right).
\end{equation}

\subsection{Main result}
The 2D Euler equation can be seen as a Hamiltonian system, and thus it is quite natural from a dynamical system point of view to explore whether time periodic solutions near specific equilibrium states may exist.

For the sake of clarity we shall now give an elementary statement of our main result. Let $0<r_1<r_2<\infty$ and $A,B\in\R$ with $B\neq 0$, then the following result holds:
\begin{theorem}\label{thmbasic}
For any $0 \leq s < \sfrac{3}{2}$ and $\epsilon>0$, the  2D Euler system \eqref{eq:2dvorticity}-\eqref{def:gamma} admits a non-trivial smooth time periodic solution $\omega$ with frequency $\lambda$ such that:
\begin{itemize}
    \item The distance to Taylor-Couette flow satisfies that $$\|\omega-2A\|_{H^{s}(\overline{B_{r_2}(0)}\setminus B_{r_1}(0))}\leq \epsilon.$$

    \item The frequency $\lambda$ of the temporal periodicity is of order $O(1)$ with respect to $\epsilon$.
\end{itemize}
\end{theorem}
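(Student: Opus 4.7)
I would first look for rotating profiles $\omega(t,\bx)=\Omega(R_{-\lambda t}\bx)$, which reduces \eqref{eq:2dvorticity}--\eqref{def:gamma} to the stationary equation
\[
\nabla^\perp\!\bigl(\Psi-\tfrac{\lambda}{2}r^2\bigr)\cdot\nabla\Omega=0,\qquad \Delta\Psi=\Omega,
\]
with the boundary conditions \eqref{eq:BC}. Writing $\Omega=2A+f$, $\Psi=\psi_{\textsf{TC}}+\phi$ with $\phi$ vanishing on $\partial B_{r_1}\cup\partial B_{r_2}$, and $\Psi_\lambda(r):=\psi_{\textsf{TC}}(r)-\tfrac{\lambda}{2}r^2$, this becomes
\[
-\frac{\Psi_\lambda'(r)}{r}\partial_\theta f+\nabla^\perp\phi\cdot\nabla f=0.
\]
The fundamental obstruction to a direct Crandall--Rabinowitz bifurcation from Taylor--Couette is that the Rayleigh-type coupling $-\omega_{\textsf{TC}}'\,r^{-1}\partial_\theta\phi$ vanishes identically, since $\omega_{\textsf{TC}}=2A$ is constant; no non-trivial spectral structure is available around the pure Taylor--Couette state.

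\textbf{Bifurcation from a perturbed radial base.}
I would circumvent this degeneracy by fixing a small scale $\delta>0$ and a target critical radius $r_*\in(r_1,r_2)$, and searching for rotating solutions of the form $\Omega=\omega_0+\eta h$, $\Psi=\Psi_0+\eta\psi$, where
\[
\omega_0(r)=2A+\delta\,g\bigl(\delta^{-1}(r-r_*)\bigr)
\]
and $g$ is a fixed smooth, compactly supported profile whose derivative is non-degenerate near the origin. Linearising at $\eta=0$ in the Fourier mode $e^{im\theta}$, $m\geq 1$, one obtains the Rayleigh equation
\[
\psi_m''+\frac{1}{r}\psi_m'-\frac{m^2}{r^2}\psi_m+\frac{\omega_0'(r)/r}{U_0(r)-\lambda}\psi_m=0,\qquad \psi_m(r_1)=\psi_m(r_2)=0,
\]
where $U_0(r)=-\Psi_0'(r)/r$ is the angular velocity of the perturbed base flow. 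For a judicious choice of $g$, a mode-by-mode analysis of this singular Sturm--Liouville problem should yield, for each $m\geq 1$, a simple eigenvalue $\lambda_m(\delta)$ converging to $\lambda_0:=A+B/r_*^2=O(1)$ as $\delta\to 0$, with one-dimensional kernel spanned by $\psi_m(r)\cos(m\theta)$. Applying the Crandall--Rabinowitz theorem at $\lambda_m(\delta)$ then produces a local branch $\eta\mapsto(\omega_\eta,\lambda_\eta)$ of genuinely non-radial (hence time-dependent) rotating solutions.

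\textbf{Sobolev threshold $s<\sfrac{3}{2}$.}
Along this branch $\omega_\eta-2A=\delta g(\delta^{-1}(r-r_*))+\eta h+O(\eta^2)$. For $\eta$ chosen much smaller than $\delta$, the dominant part is the radial bump, and a direct rescaling yields
\[
\bigl\|\delta g(\delta^{-1}(r-r_*))\bigr\|_{H^s(\overline{B_{r_2}(0)}\setminus B_{r_1}(0))}^2\sim \delta^{3-2s}.
\]
This quantity vanishes as $\delta\to 0$ precisely when $s<\sfrac{3}{2}$, matching the sharp threshold in \thm{thmbasic}. Choosing $\delta$ small enough so that $\delta^{(3-2s)/2}\ll\epsilon$, and then $\eta$ sufficiently small along the bifurcating branch, one obtains a smooth, non-trivially time-periodic solution with $\|\omega-2A\|_{H^s}\leq\epsilon$ and frequency $\lambda_\eta=\lambda_0+o(1)=O(1)$, as claimed.

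\textbf{Main obstacle.}
The most delicate part of the argument is the spectral analysis of the Rayleigh equation uniformly in $\delta$: as $\delta\to 0$ the multiplier $\omega_0'/(U_0-\lambda)$ concentrates in an $O(\delta)$ critical layer around $r_*$, so both the existence of $\lambda_m(\delta)$ as a simple eigenvalue and the verification of the Crandall--Rabinowitz transversality condition must be carried out by matched asymptotics respecting the logarithmic (or power-law) behaviour of Rayleigh solutions at the critical radius $r_c(\lambda)$ where $U_0(r_c)=\lambda$. One must also control the non-local Poisson step under these rescalings and ensure that the bifurcating mode is genuinely non-radial -- so that the solution obtained is truly time-dependent rather than a new radial steady state -- a non-degeneracy that should follow from the condition $m\geq 1$ together with the unimodality of $g$.
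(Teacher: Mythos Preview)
Your overall strategy is correct and matches the paper's: recognize that Taylor--Couette itself is spectrally degenerate (constant vorticity), bifurcate instead from a nearby radial profile with $\omega_0'\not\equiv 0$, apply Crandall--Rabinowitz at a Rayleigh-type eigenvalue $\lambda\approx A+B/r_*^2=O(1)$, and use the concentration of the radial bump to push the $H^s$ norm to zero exactly for $s<3/2$. The $H^s$ scaling you give is the right one and agrees with the paper's computation.

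Two substantive differences are worth noting. First, the paper does not work with the Rayleigh equation for $\psi_m$ but parameterizes the level sets of the vorticity, $\omega(\Phi(\rho,\theta))=\varpi(\rho)$, and writes a functional $F[\lambda,\ff]$ for the deformation $\ff$ defined only on $\supp(\varpi')\times\T$; this localizes the nonlinear problem to the thin layer from the outset and avoids handling the Green's function of the annulus directly. Second, and more importantly, the paper's base profile is not a single bump but a smoothed table of height $\varepsilon$ supported on $[R_1,R_2]$, so that $\varpi'$ lives on \emph{two} separated intervals $I_\varepsilon^{R_1}\cup I_\varepsilon^{R_2}$. The bifurcation parameter is tuned so that $\lambda_0=u_{\textsf{TC}}^\theta(R_2)/R_2$ makes $R_2$ critical while $R_1$ stays away from the critical layer (this is exactly the condition $u_{\textsf{TC}}^\theta(R_1)\neq u_{\textsf{TC}}^\theta(R_2)$). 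The resulting $2\times 2$ system is then triangular at leading order: the non-singular $R_1$ component is solved explicitly, and the genuinely singular analysis is isolated at $R_2$, where a single scalar equation determines $\lambda_1$ and the one-dimensionality of the kernel. Your single-bump profile collapses both sign-changes of $\omega_0'$ into the same critical layer, so you lose this triangular structure; the matched-asymptotics you anticipate would have to handle a potential that changes sign inside the layer, which is doable in principle but is precisely the step the paper's two-leg geometry is designed to avoid. You also do not address the $C^\infty$ regularity of the bifurcating solution, which in the paper requires a separate bootstrap argument (their Theorem~\ref{thm:fullregularity}) on the level-set equation.
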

\begin{remark}
Non-trivial means solutions that depend on the angular variable in a non-trivial way.
\end{remark}
This result will follow from Theorems \ref{thmGOAL}, \ref{thm:distance} and \ref{thm:fullregularity}, in which the properties of these solutions can be further elaborated.

\subsection{Steady states and their (asymptotic) stability}
The problem of finding steady states solutions for 2D Euler has been addressed by Nadirashvili in \cite{N2013}, where he studies the geometry and the stability of solutions, following the works of Arnold 
\cite{Arnold1,Arnold2,Arnold3}.
In recent years, a large number of results taking into account the geometry of the steady states and their flexibility/rigidity properties have emerged, see \cite{CG,CDG,ZEW,CS,HN_arma,HN_jems,HN_cpam,IK,LS,N} and references therein.

There are a multitude of steady solutions of the 2D Euler equation. For example, every shear flow in a strip-type domain or every radial flow in a circular-type domain. Being a steady state imposes the condition that vorticity gradients are locally parallel to gradients of
the stream function $(\n^\perp \psi \cdot \n\omega=0)$.
So, as a  consequence, a large class of steady solutions can be generated by specifying a Lipschitz function $F : \R\rightarrow \R$ and solving the elliptic problem $\D \psi=F(\psi).$
There is a privileged class of such steady state solutions given by solving the above elliptic problem, for some specific $F$, called \textit{Arnold stable}. These flows are nonlinearly Lyapunov stable in the $L^2$ topology of vorticity. It is also well known that there exist particular steady state solutions which possess even stronger stability properties like the Couette flow on the periodic channel which is nonlinearly Lyapunov stable in $L^\infty$.
For the most up-to-date overview, see \cite{DE} and references therein.\medskip

Asymptotic stability can also occur, although the theorems and scenarios are much more subtle and challenging.
This convergence back to equilibrium, despite time reversibility and the lack of dissipative mechanisms, is now known as \textit{inviscid damping} and shares analogies with \textit{Landau damping} in plasma physics, see the celebrated paper of Mohout-Villani \cite{MV} and references therein.

This work on \textit{inviscid damping}, initiated with the  breakthrough of Bedrossian-Masmoudi \cite{BM}, established that sufficiently small perturbations of the Couette flow in the Gevrey spaces $\mathcal{G}^s$, with $s >\sfrac{1}{2}$, converge  to a possibly new shear flow near Couette in $\T\times\R$. The exact same result does not hold for $s < \sfrac{1}{2}$, as it was shown in \cite{DM} by Deng-Masmoudi. See also the very recent results \cite{CWZZ,Zhao} for the inhomogeneous case.

In recent works Ionescu-Jia \cite{IJ_cmp,IJ_acta} and Masmoudi-Zhao \cite{MZ} proved that nonlinear asymptotic stability holds true also for perturbations around the Couette flow and more general monotone shear flows in the finite periodic channel $\T\times [0,1]$.

For the steady state of interest in this manuscript, i.e., the Taylor-Couette flow it is important to mention the very recent results \cite{An1} and \cite{An2}, where the authors investigate the  nonlinear stability of the  Taylor-Couette  for the incompressible Navier-Stokes equations.
See \cite{BZV,CZ,GS,Zillinger0}  for other related results on the mathematical study of the Taylor-Couette flow or more general circular flows.\\

Motivated by the results mentioned above, the linearized equations around more general shear flows were investigated intensely in the last few years, see for example \cite{BMlinear,DZ1,DZ2,GNRS,Jia1,Jia2,Wei-Zhang-Zhao,Wei-Zhang-Zhao_1,Wei-Zhang-Zhao_2,Zillinger1,Zillinger2} and their references for a complete, but not exhaustive, list of references.

An interesting related work is that of Lin-Zeng \cite{LZ},  where they  proved that nonlinear inviscid damping is not true for perturbations of the Couette flow in $H^s$ with $s < \sfrac{3}{2}$. More specifically, they discovered Kelvin's cat's eyes steady states
nearby to the Couette shear flow. In fact, the dynamics at this low regularity is even richer and there are also travelling waves with velocity of order $O(1)$ as showed by the authors in \cite{CL}.

Recently, there has been a growing interest in the study of existence or not of invariant structures, and their stability for 2D Euler near other shear flows and for related equations, see \cite{ZEW,FMM,LLZ,LYZ,LWZZ,N,SZ,WZZ}.

\subsection{Tools and ideas behind the proof}
An important tool for the proof will be the use of bifurcation theory, which has been very useful in showing the existence of solutions to various equations that arise in the field of fluid mechanics.

The fact that the vorticity of the solutions we are interested in is not constant within its support induces a more complex spectral problem  due to the larger dimension of the space on which the linear part of the equation acts. This topic turns out to be less explored in the literature and only a few relevant results are known, see \cite{CCG_memoirs,GHM,GHS}.

To be more specific, we point out that the vorticity profile we are interested in is constant outside a pair of very thin regions $I_\e^{R_1}, I_\e^{R_2}$ located at $R_1,R_2\in(r_1,r_2)$ where the dynamics occurs, and the thickness $\e$ of this regions is related with the distance of the steady Taylor-Couette flow and the time periodic solution that we obtain in the procedure.

\subsubsection{Comparison and differences with Couette flow} Compared to shear-type Couette flow, the circular-type Taylor-Couette flow involves additional coefficients $A$ and $B$ that take into account rotational effects. Unlike the monotone and linear Couette flow, the appropriate choice of the free parameters $A\in\R$ and $B\in\R\setminus\{0\}$ makes the Taylor-Couette flow a much more interesting and challenging radial flow, which can have regions of concavity and convexity.

We observe that for the case $B=0$, i.e., constant angular velocity, the perturbations rotate while maintaining their shape at the linear level. However, in the case when $B\neq 0$, the perturbation is ``sheared'' in a way reminiscent of plane Couette flow.

An important difference with respect to the Couette flow is that here we have to be much more careful when choosing the parameters $R_1, R_2\in(r_1,r_2).$ The choice of points $R_1, R_2\in(r_1,r_2)$ around where $\supp(\nabla\omega)$ is concentrated depends strongly on the geometry of the Taylor-Couette flow profile. As we will see in Section \ref{s:dim1kernel}, in order to get a one dimensional kernel of the linear operator we need to impose that
\[
R_1,R_2\in(r_1,r_2)\quad \text{such that}\quad u_{\textsf{TC}}^{\theta}(R_1) \neq u_{\textsf{TC}}^{\theta}(R_2).
\]

Another important difficulty to be faced is related to the form of the Green's function associated to the Laplacian in an annular domain. The situation in bounded domains turns out to be more delicate due to the presence of boundaries. This has an impact on the study of the regularity of the functional that will describe the periodic solutions over time. 

For the Couette flow we used the explicit representation of the Green's function of the Laplacian in $\T\times\R$, which facilitates the study of the regularity of the functional. However, for the radial Taylor-Couette flow, as we are working in an annular domain, we do not have a manageable explicit expression for it. This type of difficulty has appeared previously in the literature. For example, in \cite{HXX}, for the generalized surface quasi-geostrophic equation in the disc. In that paper, the authors circumvent this issue by a suitable splitting into a singular explicit part  and a smooth implicit one induced by the boundary of the domain. That is,
\[
K^\alpha(x,y)=\frac{c_\alpha}{|x-y|^\alpha}+ K_1^\alpha(x,y), \qquad x,y\in \overline{B_1(0)},
\]
where $K_1^\alpha$ is a smooth function in $\overline{B_1(0)}\times \overline{B_1(0)}$. 

In the present paper we use a different way to study the regularity of the $F$-functional introduced in \eqref{def:Functional}, which can be easily adapted to other settings. Roughly speaking, we conduct a detailed study through the
elliptic problem associated to the stream function.

\subsection{Time periodic solutions in the literature} 
The topic of the existence of time periodic solutions is a classical issue in fluid dynamics, with a long history and many contributions.

In particular, following the approach of Burbea \cite{B}, there has been many works concerning the existence of single or multiple patches  and their degenerate case of point vortices not only for 2D Euler equation but also for other two-dimensional active scalar equations such as the generalized surface quasi-geostrophic equation through  contour dynamics equations paired with bifurcation
theory, desingularization techniques and variational tools.
We refer to the interested reader, for more details, to the non-exhaustive list \cite{Cao1,Cao2,Cao3,Cao4,DPMW,HHHM,HHMV,G,GH,GD,GPSY1,GPSY2,HH,HMW,HR,HM} and the references therein.

Very recently, some important progress has been done on the existence of time quasi-periodic solutions using KAM techniques and Nash-Moser scheme, see \cite{BHM,GIP,HR_quasi,HHM} and references therein.


\subsection{Notation and organization of the paper}
Here, we will define the annular domain of interest using Cartesian and polar coordinates respectively. That is,
\begin{align}
\mathsf{C}_{r_1,r_2}&:=\{\bx=(x_1,x_2)\in\R^2: r_1 \leq |\bx|\leq r_2\}\equiv \overline{B_{r_2}(0)}\setminus B_{r_1}(0),\label{dom:cartesian}\\
\mathsf{P}_{r_1,r_2}&:=\{(r,\theta)\in \R^{+}\times\T: r_1\leq r \leq r_2\}\equiv [r_1,r_2]\times\T.\label{dom:polar}
\end{align}

In the rest, we are going to work mainly with polar coordinates. Therefore, setting the parameters $\e>0$ small enough and $R_1,R_2\in(r_1,r_2)$, we define an auxiliary domain $D_\e$ within $\mathsf{P}_{r_1,r_2}$ where all the dynamics occurs. This domain $D_\e$ will be given by
\begin{equation}\label{dom:dynamics}
D_\e:=I_\e^{R_1,R_2}\times\T,
\end{equation}
with
\[
I_\e^{R_1,R_2}=I_\e^{R_1}\cup I_\e^{R_2}, \qquad \text{and} \qquad I_\e^{R_i}=(R_i-\e,R_i+\e).
\]

\subsubsection{Organization}
The rest of the paper is organized as follows. In Section \ref{s:formulation}, we will write the equation for the vorticity contour lines via the Biot-Savart law. In Section \ref{s:bifurcation}, we introduce the spaces we will work with to apply the Crandall-Rabinowitz  theorem and study the regularity of the nonlinear functional. To obtain a more manageable linear operator, we decompose and rescale it appropriately in the Sections \ref{s:linear} and \ref{s:rescaling}. In Sections \ref{s:dim1kernel}, \ref{s:codimension}, \ref{s:adjoint} and \ref{s:transversality} we conduct the spectral study of the linearized operator around zero and under suitable assumptions we obtain a Fredholm operator of zero index. Finally, in Section \ref{s:mainthm} we quantify the distance between our solution and the Taylor-Couette flow and address the complete regularity of the constructed time periodic solution.
In the Appendix \ref{s:appendix} we recall and give the proof of some auxiliary lemmas used in the paper.

\section{Formulation of the problem}\label{s:formulation}
Since we are considering an annular domain, that is, a domain of circular type, it will be natural to work in the more appropriate setting of polar coordinates.

In polar coordinates $(r,\theta)\in \R^{+}\times\T$, the vorticity formulation for the Euler equation in the two-dimensional bounded smooth annular domain $\overline{B_{r_2}(0)}\setminus B_{r_1}(0)$ with no-penetration boundary conditions, i.e. $\bu\cdot\bn=0$ on $\p B_{r_2}(0)\cup \p B_{r_1}(0)$  reads as
\begin{equation}\label{eq:omegaradial}
\p_t \omega + u^r \p_r \omega + \frac{u^\theta}{r}\p_\theta \omega=0, \qquad (r,\theta)\in \mathsf{P}_{r_1,r_2},
\end{equation}
where the velocity vector field $\bu=(u^r,u^\theta)$ is recovered from the vorticity $\omega$ by means of the stream function $\psi$, via the relations
\begin{equation}\label{eq:BSlaw}
    (u^r,u^\theta)=\left(\frac{1}{r}\p_{\theta}\psi,-\p_r\psi\right), \qquad -\left(\p_{rr}+\frac{1}{r}\p_r +\frac{1}{r^2}\p_{\theta\theta} \right)\psi=\omega.
\end{equation}

The main task of this section will be to obtain a functional equation whose solutions give rise to time-periodic rotating solutions of
\begin{equation}\label{eq:2dEvorticity}
\p_t \omega + \frac{1}{r}\left(\p_\theta \psi\p_r \omega - \p_r \psi \p_\theta\omega\right)=0,   \qquad (r,\theta)\in  \mathsf{P}_{r_1,r_2},
\end{equation}
with stream function $\psi$ solving 
\begin{equation}\label{eq:BVPpsi}
\left\{
\begin{aligned}
\Delta_B \psi(r,\theta)  &= \omega(r,\theta),\qquad (r,\theta)\in \mathsf{P}_{r_1,r_2},\\
\psi|_{r=r_1}&=0,\\
\psi|_{r=r_2}&=\gamma.
\end{aligned}
\right.
\end{equation}
Here and in the rest of the papaer, $\Delta_B\equiv -\left(\p_{r}^2+\frac{1}{r}\p_r +\frac{1}{r^2}\p_\theta^2\right)$ is the Laplace-Beltrami operator and $\gamma$ is the circulation given by \eqref{def:gamma}.

\subsection{The 2D Euler as an equation for the level sets of the vorticity.}

In order to find solutions of \eqref{eq:2dEvorticity} by looking to the level sets of $\omega$. We assume that level sets of vorticity $\omega$ can be parameterized by $\Phi_t(\rho,\theta)=(\rho+f(\rho,\theta,t))(\cos\theta,\sin\theta)$ inside the support of $\nabla\omega$, in such a way that
\begin{equation}\label{levels}
\omega(\Phi_t(\rho,\theta),t)=\varpi(\rho),\qquad (\rho,\theta)\in D_\e\subsetneq \mathsf{P}_{r_1,r_2},
\end{equation}
for some smooth profile function $\varpi$.

We remark that, since the 2D Euler equation is a transport type equation, we can assume that profile function $\varpi$ does not depend on $t$ without any loss of generality.

Here, we assume that $f(\cdot,t)\in C^{2,\alpha}(D_\e)$ for all $t\geq 0$ with norm small enough, $\|f(t)\|_{C^{2,\alpha}(D_\e)}\ll 1$.  We immediately have that $\Phi_t(D_\e)\subsetneq \mathsf{C}_{r_1,r_2}$. This fact allows us to write (abusing the notation a little) the rest of the annular domain in Cartesian coordinates as follows
\[
\mathsf{C}_{r_1,r_2}\setminus \Phi_t(D_\e)=\Omega_{\textsc{Inner}}(t)\cup\Omega_{\textsc{Middle}}(t)\cup \Omega_{\textsc{Outer}}(t),
\]
with
\begin{align*}
    \Omega_{\textsc{Inner}}(t)&:=\{(\rho,\theta)\in \R^{+}\times \T : r_1\leq \rho \leq R_1-\e+f(R_1-\e,\theta,t)\},\\
    \Omega_{\textsc{Middle}}(t)&:=\{(\rho,\theta)\in \R^{+}\times \T :  R_1+\e+f(R_1+\e,\theta,t)\}\leq \rho \leq R_2-\e + f(R_2-\e,\theta,t)\},\\
    \Omega_{\textsc{Outer}}(t)&:=\{(\rho,\theta)\in \R^{+}\times \T :  R_2+\e+f(R_2+\e,\theta,t)\}\leq \rho \leq r_2\}.
\end{align*}
\begin{remark}
Here, we are considering $\e$ small enough such that
\begin{align*}
r_1&<R_1-\e+f(R_1-\e,\theta,t),\\
R_1+\e+f(R_1+\e,\theta,t) &< R_2-\e + f(R_2-\e,\theta,t),\\
R_2+\e+f(R_2+\e,\theta,t)&< r_2,
\end{align*}
for all $t\geq 0$ and $\theta\in\T.$
\end{remark}

Thus, in the remaining domain $\mathsf{C}_{r_1,r_2}\setminus \Phi_t(D_\e)$ the scalar vorticity is  just piecewise constant. Since we will define $\varpi$ such that
\[
\varpi(\rho)= 2A +
\begin{cases}
0, \qquad (\rho,\theta)\in \Omega_{\textsc{Inner}}(t),\\
\e, \qquad (\rho,\theta)\in \Omega_{\textsc{Middle}}(t),\\
0, \qquad (\rho,\theta)\in \Omega_{\textsc{Outer}}(t),
\end{cases}
\]
we have that
\begin{equation}\label{omegacte}
\omega(\rho(\cos\theta,\sin\theta),t)= 2A +
\begin{cases}
0, \qquad (\rho,\theta)\in \Omega_{\textsc{Inner}}(t),\\
\e, \qquad (\rho,\theta)\in \Omega_{\textsc{Middle}}(t),\\
0, \qquad (\rho,\theta)\in \Omega_{\textsc{Outer}}(t).
\end{cases}
\end{equation}

\begin{remark}
The precise definition of $\varpi$ has been postponed to Section \ref{s:profile}. For this point, the only information we need to know is that 
\begin{align*}
\text{supp}(\varpi)&=(R_1-\e,R_2+\e),\\
\text{supp}(\varpi')&=(R_1-\e,R_1+\e)\cup (R_2-\e,R_2+\e)\equiv I_\e^{R_1,R_2}.
\end{align*}
\end{remark}

Using contour dynamics together with the equations  \eqref{levels} and \eqref{omegacte} we obtain an equivalent formulation for the scalar vorticity equation \eqref{eq:2dEvorticity}-\eqref{eq:BVPpsi} in terms of a time-evolution equation for $f(\cdot,t)$ that depends only on the profile of $\varpi$, see \eqref{eq:flevel}.\\

Differentiating \eqref{levels} with respect to $\theta$ and with respect to $\rho$ we have that
\begin{align*}
\nabla \omega(\Phi_t(\rho,\theta),t)\p_\theta \Phi_t(\rho,\theta)&=0,\\
\nabla \omega(\Phi_t(\rho,\theta),t)\p_\rho \Phi_t(\rho,\theta)&=\varpi'(\rho).
\end{align*}
By straightforward computation, we get
\begin{equation}\label{eq:nablaomega}
\nabla \omega(\Phi_t(\rho,\theta),t)=\frac{\varpi'(\rho)}{\p_\theta\Phi_t^{\perp}(\rho,\theta)\cdot \p_\rho \Phi_t(\rho,\theta)}\p_\theta \Phi_t^{\perp}(\rho,\theta).
\end{equation}
Taking a time derivative in \eqref{levels}  and using \eqref{eq:2dEvorticity} and \eqref{eq:nablaomega} yields
\begin{align*}
0=\frac{d}{dt} \omega(\Phi_t(\rho,\theta),t)&=\p_t\omega(\Phi_t(\rho,\theta),t) + \nabla \omega(\Phi_t(\rho,\theta),t)\p_t\Phi_t(\rho,\theta)\\
&=(-\nabla^{\perp}\psi(\Phi_t(\rho,\theta),t)+\p_t\Phi_t(\rho,\theta))\cdot\nabla\omega(\Phi_t(\rho,\theta),t)\\
&=(-\nabla^{\perp}\psi(\Phi_t(\rho,\theta),t)+\p_t\Phi_t(\rho,\theta))\cdot \frac{\varpi'(\rho)}{\p_\theta\Phi_t^{\perp}(\rho,\theta)\cdot \p_\rho \Phi_t(\rho,\theta)}\p_\theta \Phi_t^{\perp}(\rho,\theta).
\end{align*}

Since by definition of the profile function $\varpi$ we have that $\text{supp}(\varpi')=I_\e^{R_1,R_2}$,  the above expression reduces to 
\[
\p_t\Phi_t(\rho,\theta)\cdot\p_\theta \Phi_t^{\perp}(\rho,\theta)=\nabla^{\perp}\psi(\Phi_t(\rho,\theta),t)\cdot\p_\theta \Phi_t^{\perp}(\rho,\theta).
\]
Recalling that $\Phi_t(\rho,\theta)=(\rho+f(\rho,\theta,t))(\cos\theta,\sin\theta)$ and defining the auxiliary function $\bar{\psi}$ adapted to the level sets
\[
\bar{\psi}(\rho,\theta,t):=\psi(\Phi_t(\rho,\theta),t),\qquad (\rho,\theta)\in D_\e,
\]
we immediately get
\begin{align*}
\p_t\Phi_t(\rho,\theta)\cdot\p_\theta \Phi_t^{\perp}(\rho,\theta)&=-(\rho+f(\rho,\theta,t))\pa_t f(\rho,\theta,t),\\
\nabla^{\perp}\psi(\Phi_t(\rho,\theta),t)\cdot\p_\theta \Phi_t^{\perp}(\rho,\theta)&=\p_\theta\bar{\psi}(\rho,\theta,t),
\end{align*}
and consequently the problem reduces to solve the following equation
\begin{equation}\label{eq:flevel}
(\rho+f(\rho,\theta,t)) \p_t f(\rho,\theta,t)+\p_\theta\bar{\psi}(\rho,\theta,t)=0, \qquad (\rho,\theta)\in D_\e, \qquad t\geq 0,
\end{equation}
with $\bar{\psi}=\psi\circ \Phi_t$ and $\psi$ solving \eqref{eq:BVPpsi}.

\subsubsection{Recovering the solution of 2D Euler from the level sets}
The above expression \eqref{eq:flevel} gives us the equation satisfying the level sets $\Phi_t(\rho,\theta)=(\rho +f(\rho,\theta,t))(\cos\theta,\sin\theta)$ of the scalar vorticity of the 2D Euler equation \eqref{eq:2dEvorticity} that we are interested in.\\

Conversely, if $f(\cdot,t)\in C^{\infty}(D_\e)$  with $\|f(t)\|_{C^{2,\alpha}(D_\e)}\ll 1$ for all $t\geq 0$ satisfies the previous equation \eqref{eq:flevel} on $D_\e$ for some $\varpi\in C^{\infty}([r_1,r_2])$, we can prove that the function $\omega(\cdot,t)$ defined implicitly by
\eqref{levels} on $D_\e$, i.e. $\omega=\varpi\circ\Phi_t^{-1}$
and extended by suitable constants to the complementary of
\[
\left\lbrace \bx\in \mathsf{C}_{r_1,r_2} : \bx=(\rho+f(\rho,\theta,t))(\cos\theta,\sin\theta) \text{ with } (\rho,\theta)\in D_\e \right\rbrace,
\]
is a smooth solution of the original problem \eqref{eq:2dEvorticity} over $\mathsf{P}_{r_1,r_2}$.

\subsection{The equation for the time periodic  solution}

In this manuscript we are interested in the existence of time periodic rotating solutions of \eqref{eq:flevel}. That is, we will look for solutions of the form
\begin{equation}\label{ansatzrho}
f(\rho,\theta,t):=\ff(\rho,\theta+\lambda t), \qquad (\rho,\theta)\in D_\e, \quad t\geq 0,
\end{equation}
for some value $\lambda\in\R\setminus\{0\}$ and some profile $\ff\in C^{2,\alpha}(D_\e).$

Putting the previous ansatz \eqref{ansatzrho} into \eqref{eq:flevel}, our original time-dependent problem reduces to solve the time-independent equation
\begin{equation}
\l(\rho+\ff(\rho,\theta))  \p_\theta\ff(\rho,\theta)+\p_\theta\bar{\psi}[\ff](\rho,\theta)=0, \qquad (\rho,\theta)\in D_\e,
\end{equation}
with $\bar{\psi}[\ff](\rho,\theta)=\psi(\rho+\ff(\rho,\theta),\theta)$ and $\psi$ solving \eqref{eq:BVPpsi}.

\begin{remark}
It is important to emphasize that $\bar{\psi}=\psi\circ\Phi_t$ depends functionally on the function $\ff$. Then, we will  write in the rest of the manuscript $\bar{\psi}[\ff]$ to make that dependence explicit.
\end{remark}

\subsubsection{The functional equation}

Let us define our functional candidate taking into account \eqref{eq:flevel}. That is,
\[
\tilde{F}[\l,\ff](\rho,\theta):=\l(\rho+\ff(\rho,\theta))  \p_\theta\ff(\rho,\theta)+\p_\theta\bar{\psi}[\ff](\rho,\theta), \qquad (\rho,\theta)\in D_\e,
\]
which can be written also as
\[
\tilde{F}[\l,\ff](\rho,\theta)=\pa_\theta[\lambda (\rho + \ff(\rho,\theta))^2/2 + \bar{\psi}[\ff](\rho,\theta)].
\]
We also introduce the auxiliary functional
\[
A[\lambda,\ff](\rho):=\frac{1}{2\pi}\int_{\mathbb{T}}\lambda (\rho + \ff(\rho,s))^2/2 + \bar{\psi}[\ff](\rho,s) \mbox{d}s, \qquad \rho\in I_\e^{R_1,R_2},
\]
and define our final functional as
\begin{equation}\label{def:Functional}
F[\lambda,\ff](\rho,\theta):=\lambda (\rho + \ff(\rho,\theta))^2/2 + \bar{\psi}[\ff](\rho,\theta)-A[\lambda,\ff](\rho), \qquad (\rho,\theta)\in D_\e.
\end{equation}
Therefore, just by construction of $F[\cdot,\cdot]$, we have that $(\l,\ff)\in \R\times C^{2,\alpha}(D_\e)$ satisfies 
\[
F[\l,\ff]=0 \qquad \Longrightarrow \qquad \tilde{F}[\l,\ff]=0.
\]

Then, the problem of finding time periodic rotating solutions reduces to find the roots of the
nonlinear and nonlocal functional $F[\cdot,\cdot]$, that is, to study the equation
\begin{equation}\label{functionaleq}
F[\l,\ff](\rho,\theta)=0, \qquad (\rho,\theta)\in D_\e.
\end{equation}

\begin{remark}
Note that a  trivial curve of solutions is known
\[
F[\lambda,0](\rho,\theta)=0, \quad (\rho,\theta)\in D_\e, \quad \forall \l\in\R.
\]
\end{remark}

The main goal of the paper is to show the existence of non-trivial solutions of the equation \eqref{functionaleq}. We remind the reader that by non-trivial we mean solutions that depend on the angular variable in a non-trivial way. In particular, we prove the existence of time periodic rotating solutions of 2D Euler equation as close as we want to the steady Taylor-Couette bifurcating from the trivial one.

\remark{Notice that the solution given by 
\[
\omega((\rho+\ff(\rho,\theta+\l t))(\cos\theta,\sin\theta),t)=\varpi(\rho), \qquad (\rho,\theta)\in D_\e,
\]
and by expression $\eqref{omegacte}$ on the complementary set 
give rise, by an appropriate change of variables, to solutions of 2D Euler equation \eqref{eq:2dvorticity}-\eqref{def:gamma} of the form
\[
\bu(r,\theta,t)=(u^r(r,\theta,t),u^\theta(r,\theta,t))=(\bar{u}^r(r,\theta+\lambda t),\bar{u}^\theta(r,\theta+\lambda t)).
\]
}

\subsection{The profile function $\varpi_{\e,\k}$}\label{s:profile} To continue, we define the profile function $\varpi\in C^{\infty}([r_1,r_2])$ that will be used in Section \ref{s:dim1kernel} to solve the functional equation \eqref{functionaleq}. 

To begin with, let us first consider a profile with compact support in a much less regular space, that is
\[
\varpi\in  W^{1,\infty}([r_1,r_2])\cap H^{\sfrac{3}{2}^-}([r_1,r_2]).
\]
Let us consider the auxiliary function
\[
\varphi(z)=\frac{1-z}{2}, \quad |z|\leq 1.
\]
Fixed $R_1,R_2\in(r_1,r_2)$. Let $\e>0$, we define $\varpi_\e(r)$ in the following way
\begin{align*}
\varpi_\e(r)=\left\{\begin{array}{cr}
0 										& r_1<r<R_1-\e, \\
\e\varphi\left(\frac{R_1-r}{\e}\right) 	& R_1-\e \leq r \leq R_1+\e,\\
\e 										& R_1+\e<r<R_2-\e,\\
\e\varphi\left(\frac{r-R_2}{\e}\right) 	& R_2-\e \leq r \leq R_2 +\e,\\
0 										& R_2+\e<r<r_2.
\end{array}\right.
\end{align*}

Since we are interested in smooth solutions, we have to consider a smooth profile function of $\varpi_\e$.
In order to do that, we will use a $\k$-regularization of $\varphi$. The function $\varphi_\k:[-1,1]\mapsto[0,1]$ will be defined by 
\begin{equation}\label{varphikappa}
\varphi_\k(z)=1-\frac{\int_{-1}^{z}\left(\int_{-1+\k}^{+1-\k}\Theta\left(\frac{\bar{z}-\bar{\bar{z}}}{\k}\right)\mbox{d}\bar{\bar{z}}\right)\mbox{d}\bar{z}}{\int_{-1}^{+1}\left(\int_{-1+\k}^{+1-\k}\Theta\left(\frac{\bar{z}-\bar{\bar{z}}}{\k}\right)\mbox{d}\bar{\bar{z}}\right)\mbox{d}\bar{z}}, 
\end{equation}
where $\Theta$  is a mollifier so that is smooth and positive with $\supp(\Theta)\subset(-1,1)$ and $\int_{-1}^{+1}\Theta(\bar{z})\mbox{d}\bar{z}=1.$

We summarize the properties of this $\k$-regularization function in the following lemma without proof. For the interested reader, we will refer to  \cite[Lemma 2.2]{CL} for more details. 
\begin{lemma}\label{propiedadesvarphi}The function $\varphi_{\kappa}$ given by \eqref{varphikappa} satisfies
\begin{enumerate}
\item $\varphi_{\kappa}(-1)=1$, $\varphi_{\kappa}(1)=0$, $(\pa_z^{n}\varphi_{\kappa})(\pm 1)=0$, for $n=1,2,\ldots$
\item $\varphi_{\kappa}'(z)<0$, for $|z|<1$.
\item $\|\varphi'_{\kappa}+\frac{1}{2}\|_{L^1([-1,1])}\leq C\kappa$.
\end{enumerate}
\end{lemma}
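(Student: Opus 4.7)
The plan is to rewrite \eqref{varphikappa} in a way that makes both the normalization and the derivative transparent. Introduce the inner integral
\[
G_{\k}(\bar z):=\int_{-1+\k}^{1-\k}\Theta\!\left(\tfrac{\bar z-\bar{\bar z}}{\k}\right)\mbox{d}\bar{\bar z},
\qquad
C_{\k}:=\int_{-1}^{1}G_{\k}(\bar z)\,\mbox{d}\bar z,
\]
so that $\varphi_{\k}(z)=1-C_{\k}^{-1}\int_{-1}^{z}G_{\k}(\bar z)\,\mbox{d}\bar z$, and in particular $\varphi_{\k}'(z)=-G_{\k}(z)/C_{\k}$. Note that $G_{\k}$ is a convolution of the $C^{\infty}_{c}$ function $\Theta(\cdot/\k)$ (supported in $[-\k,\k]$) with the indicator of $[-1+\k,1-\k]$, hence $G_{\k}\in C^{\infty}(\R)$ with $\supp(G_{\k})\subset[-1,1]$. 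A Fubini computation, noting that for each fixed $\bar{\bar z}\in[-1+\k,1-\k]$ the range of $(\bar z-\bar{\bar z})/\k$ over $\bar z\in[-1,1]$ covers the whole support of $\Theta$, yields $C_{\k}=2\k(1-\k)$.

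For item (1), the boundary values follow immediately: $\varphi_{\k}(-1)=1$ and $\varphi_{\k}(1)=1-C_{\k}^{-1}C_{\k}=0$. The identity $\varphi_{\k}^{(n)}(z)=-C_{\k}^{-1}G_{\k}^{(n-1)}(z)$ for $n\ge1$ reduces the statement about higher derivatives to showing $G_{\k}^{(m)}(\pm1)=0$ for every $m\ge0$; but this is automatic since $G_{\k}$ is smooth and compactly supported in $[-1,1]$. Item (2) is equivalent to $G_{\k}(z)>0$ for $z\in(-1,1)$, which holds because $\Theta>0$ on $(-1,1)$ and a simple interval-intersection check shows that for any such $z$ the set of $\bar{\bar z}\in(-1+\k,1-\k)$ with $(z-\bar{\bar z})/\k\in(-1,1)$ has positive measure.

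For item (3), I would split $[-1,1]$ into the middle plateau $[-1+2\k,1-2\k]$ and the two boundary strips. On the plateau, the defining integration interval of $G_{\k}(z)$ covers the full support of $\bar{\bar z}\mapsto\Theta((z-\bar{\bar z})/\k)$, so $G_{\k}(z)\equiv\k$ and therefore
\[
\varphi_{\k}'(z)+\tfrac12
=-\tfrac{1}{2(1-\k)}+\tfrac12
=-\tfrac{\k}{2(1-\k)},
\]
whose $L^{1}$ norm on a set of measure $\le2$ is $O(\k)$. On the boundary strips, each of length $2\k$, the pointwise bound $|\varphi_{\k}'(z)+\tfrac12|\le \|G_{\k}\|_{L^{\infty}}/C_{\k}+\tfrac12\le C$ gives an $L^{1}$ contribution of $O(\k)$ as well. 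Summing yields (3).

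The only genuinely delicate point is the claim in (1) that all derivatives of $\varphi_{\k}$ vanish at $\pm1$; this is where the assumption that $\Theta$ is a \emph{smooth} mollifier with support strictly inside $(-1,1)$ (so that all derivatives of $\Theta$ vanish at $\pm1$, making the convolution $G_{\k}$ glue to $0$ outside $[-1,1]$ in a $C^{\infty}$ fashion) is essential. Every other step is an elementary rearrangement of the explicit formula for $\varphi_{\k}'$ together with the explicit value $C_{\k}=2\k(1-\k)$.
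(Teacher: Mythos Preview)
Your argument is correct. The paper does not actually prove this lemma; it states the result and refers to \cite[Lemma~2.2]{CL} for details. Your convolution rewriting $G_\kappa=\Theta(\cdot/\kappa)\ast\mathbb{1}_{[-1+\kappa,1-\kappa]}$, the explicit evaluation $C_\kappa=2\kappa(1-\kappa)$, and the plateau/boundary-strip decomposition for item (3) constitute exactly the natural proof one would expect in that reference, so there is no methodological difference to discuss.

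One small remark on consistency: in item (2) you invoke $\Theta>0$ on $(-1,1)$, while in the final paragraph you speak of $\supp(\Theta)$ lying \emph{strictly} inside $(-1,1)$. These are mutually exclusive. The statement of item (2) in the lemma (strict negativity of $\varphi_\kappa'$ on the whole open interval) forces the former reading, and the paper's phrase ``smooth and positive with $\supp(\Theta)\subset(-1,1)$'' should be understood as $\Theta>0$ on $(-1,1)$ with $\Theta\equiv0$ outside. For item (1) you do not need strict containment anyway: $G_\kappa\in C^\infty(\R)$ and $G_\kappa(z)=0$ for $|z|>1$ already force $G_\kappa^{(m)}(\pm1)=0$ by continuity of the derivatives. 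With that clarification your proof stands as written.
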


The final profile  that will be used in this manuscript is given by
\begin{align}\label{varpiepkappa}
\varpi_{\e,\k}(r)=\left\{\begin{array}{cr}
0 										& r_1<r<R_1-\e, \\
\e\varphi_\k\left(\frac{R_1-r}{\e}\right) 	& R_1-\e \leq r \leq R_1+\e,\\
\e 										& R_1+\e<r<R_2-\e,\\
\e\varphi_\k\left(\frac{r-R_2}{\e}\right) 	& R_2-\e \leq r \leq R_2 +\e,\\
0 										& R_2+\e<r<r_2.
\end{array}\right.
\end{align}
Let us point out that $\supp(\varpi'_{\ep,\kappa})=I_\e^{R_1,R_2}$. That is, the domain
\begin{align*}
I_\ep^{R_1,R_2}=I_\e^{R_1}\cup I_\e^{R_2}=(R_1-\ep,R_1+\ep)\cup (R_2-\ep, R_2+\ep)\subset [r_1,r_2].
\end{align*}

In principle the values of $R_1, R_2$ could be any real numbers just satisfying $r_1<R_1<R_2<r_2$. But as we will see in Section \ref{s:dim1kernel} to prove that $\text{dim}(\cN(\cL[\l_\star]))=1$ we need to choose $R_1,R_2$ such that 
\[
u_{\textsf{TC}}^{\theta}(R_1) \neq u_{\textsf{TC}}^{\theta}(R_2).
\]
Moreover, the properties of the time periodic solution we construct  depend strongly on these values. For example, the frequency of the rotation we obtain will be $u_{\textsf{TC}}^{\theta}(R_1)$ or $u_{\textsf{TC}}^{\theta}(R_2).$\medskip

The rest of the manuscript consists of finding a non-trivial solution of \eqref{functionaleq} with $\varpi \equiv 2A+\varpi_{\e,\k}$ in \eqref{def:Functional} for parameters $\e$ and $\k$ small enough. It will be done using the bifurcation theory through Crandall--Rabinowitz theorem \cite{CR}. For the completeness of the paper we recall this basic theorem and it will
referred to as sometimes by C-R theorem.

\section{Bifurcation theory and Crandall-Rabinowitz}\label{s:bifurcation}

The resolution of a nonlinear equation of the type \eqref{functionaleq} together with the existence of a trivial
line of solutions $F[\l,0]=0$ can be studied through bifurcation theory. In particular, here we will apply
the well-known Crandall-Rabinowitz theorem whose proof can be found in \cite{CR}.

\begin{theorem}\label{th:CR} Let $X, Y$ be two Banach spaces, $V$ a neighborhood of $0$ in $X$ and let
$
F : \R \times V \to Y
$
with the following  properties:
\begin{enumerate}
\item $F [\lambda, 0] = 0$ for any $\lambda\in \R$.
\item The partial derivatives $D_\l F$, $D_{\ff}F$ and $D^2_{\l,\ff}F$ exist and are continuous.
\item There exists $\lambda_\star$ such that if $\mathcal{L}_\star= D_{\ff} F[\l_\star,0]$ then  $\cN(\mathcal{L}_\star)$ and $Y/\cR(\mathcal{L}_\star)$ are one-dimensional.
\item {\it Transversality assumption}: $D^2_{\l,\ff}F[\l_\star, 0]\hh_\star \not\in \cR(\mathcal{L}_\star)$, where
$$
\cN(\mathcal{L}_\star) = \text{span}\{\hh_\star\}.
$$
\end{enumerate}
If $Z$ is any complement of $\cN(\mathcal{L}_\star)$ in $X$, then there is a neighborhood $U$ of $(\l_\star,0)$ in $\R \times X$, an interval $(-\sigma_0,\sigma_0)$, and continuous functions $\varphi: (-\sigma_0,\sigma_0) \to \R$, $\psi: (-\sigma_0,\sigma_0) \to Z$ such that $\varphi(0) = 0$, $\psi(0) = 0$ and
\begin{align*}
F^{-1}(0)\cap U=&\Big\{\big(\lambda_\star+\varphi(\sigma), \sigma h_\star+\sigma \psi(\sigma)\big)\,;\,\vert \sigma\vert<\sigma_0\Big\} \cup \Big\{(\lambda,0)\,;\, (\lambda,0)\in U\Big\}.
\end{align*}

\end{theorem}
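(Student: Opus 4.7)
The plan is to reduce the theorem to an application of the implicit function theorem through a Lyapunov--Schmidt decomposition combined with a rescaling trick that absorbs the trivial branch. The one-dimensional defects in both the kernel and the cokernel of $\cL_\star$ should match up with the one-dimensional freedom coming from the parameter $\lambda$, and the transversality assumption guarantees that this matching is non-degenerate.

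Concretely, I would fix the decomposition $X = \text{span}\{\hh_\star\} \oplus Z$ given by hypothesis, and pick any one-dimensional complement $W$ of $\cR(\cL_\star)$ in $Y$, so that $Y = \cR(\cL_\star) \oplus W$. Every small $\ff \in X$ is then written uniquely as $\ff = \sigma\,\hh_\star + \sigma\,\eta$ with $\sigma \in \R$ and $\eta \in Z$; the rescaling $z = \sigma\eta$ is the essential trick. Using $F[\lambda, 0] = 0$ and the fundamental theorem of calculus one has
\[
F[\lambda, \sigma(\hh_\star + \eta)] = \sigma\, G[\lambda, \sigma, \eta], \qquad G[\lambda, \sigma, \eta] := \int_0^1 D_\ff F[\lambda, t\sigma(\hh_\star + \eta)](\hh_\star + \eta)\, dt,
\]
and hypothesis (2) ensures that $G$ is of class $C^1$ jointly in $(\lambda, \sigma, \eta)$, with $G[\lambda_\star, 0, 0] = \cL_\star \hh_\star = 0$.

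The payoff is that the partial differential of $G$ at $(\lambda_\star, 0, 0)$ in the $(\lambda, \eta)$ directions equals the linear map $(\mu, \zeta) \mapsto \mu\, D^2_{\lambda,\ff} F[\lambda_\star, 0]\hh_\star + \cL_\star \zeta$ from $\R \times Z$ into $Y$. Hypothesis (3) makes the restriction $\cL_\star|_Z : Z \to \cR(\cL_\star)$ a topological isomorphism (bijective, hence open by the open mapping theorem), and the transversality hypothesis (4) says that $D^2_{\lambda,\ff} F[\lambda_\star, 0]\hh_\star$ has a non-trivial projection onto $W$. Together, the partial differential above is a bijection from $\R \times Z$ onto $\cR(\cL_\star) \oplus W = Y$. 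The implicit function theorem then supplies $\sigma_0 > 0$ and continuous functions $\varphi : (-\sigma_0, \sigma_0) \to \R$, $\psi : (-\sigma_0, \sigma_0) \to Z$ with $\varphi(0) = 0$, $\psi(0) = 0$, solving $G[\lambda_\star + \varphi(\sigma), \sigma, \psi(\sigma)] = 0$ locally; multiplying by $\sigma$ yields $F[\lambda_\star + \varphi(\sigma), \sigma(\hh_\star + \psi(\sigma))] = 0$, the announced non-trivial branch. The trivial branch $\{(\lambda, 0)\}$ accounts for $\sigma = 0$, and local uniqueness in the IFT closes the stated description of $F^{-1}(0) \cap U$.

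The step I expect to be most delicate is the $C^1$ regularity of $G$ jointly in $(\lambda, \sigma, \eta)$: the ``division by $\sigma$'' is informally a difference quotient of $F$ around the trivial branch, and turning it into a genuinely differentiable object requires the joint continuity of $D_\ff F$ and of the mixed derivative $D^2_{\lambda,\ff} F$ granted by hypothesis (2). The integral representation above is the standard device that makes this precise; once it is in place, the remaining steps --- the isomorphism check, the IFT application and the extraction of local uniqueness --- are algebraic.
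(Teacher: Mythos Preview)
Your sketch is correct and follows the classical Crandall--Rabinowitz argument via Lyapunov--Schmidt reduction and the implicit function theorem. The paper, however, does not prove this theorem at all: it is stated as a black box with the remark that ``its proof can be found in \cite{CR}'', and the authors then devote the remainder of the paper to verifying the four hypotheses for their specific functional. So there is no paper-proof to compare against; your outline is essentially the original proof from \cite{CR}, and the one delicate point you flag --- the $C^1$ regularity of $G$ coming from the integral representation and hypothesis (2) --- is indeed the place where care is needed.
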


The bulk of the work consists of proving all assumptions of Theorem \ref{th:CR}. This will be done in detail in the
the following sections.

\subsection{Functional setting and regularity}
In order to apply the Crandall-Rabinowitz  theorem we need first to fix the functional spaces. We should look for Banach spaces $X$ and $Y$ such that $F : \R \times V\subset X \rightarrow Y$ is well-defined and satisfies the required assumptions.

Our first step is to recall that we have to solve the functional equation $\eqref{functionaleq}$ over $\overset{\circ}{\supp}(\varpi'_{\e,\k})\times \T$, where the profile function $\varpi_{\e,\k}$ is given by \eqref{varpiepkappa}. By definition, the support of $\varpi'_{\e,\k}$ is just 
$$I_\ep^{R_1,R_2}=(R_1-\ep,R_1+\ep)\cup (R_2-\ep, R_2+\ep).$$

Then, recalling that $D_\e=I_\ep^{R_1,R_2} \times \T$, the spaces $X$ and $Y$ are given by
\begin{equation}\label{def:spaceX}
X(D_\e):=\left\lbrace \gg\in C^{2,\alpha}(D_\e) :  \text{$\gg$ is even in $\theta$ with} \int_{\T}\gg(r,\theta)\mbox{d}\theta=0 \right\rbrace,
\end{equation}
and 
\begin{equation}\label{def:spaceY}
Y(D_\e):=X(D_\e).
\end{equation}
Here $C^{2,\alpha}(D_{\ep})$ is the H\"older space of $2\pi$-periocic functions in the $\theta$-variable with norm
\begin{align*}
\|\gg\|_{C^{2,\alpha}(D_{\ep})}:= \| \gg\|_{C^2(D_{\ep})}+|\nabla^2 \gg|_{C^{0,\alpha}(D_\e)},
\end{align*}
where $\|\cdot\|_{C^{2}}$ and $|\cdot|_{C^{0,\alpha}}$ are the standard norm and semi-norm respectively.\\

Given these definitions, the main objective of this section is to show the following lemmas:

\begin{lemma} For all $0<\e<\e_0(r_1,R_1,R_2,r_2),$ there exist $\d(\ep_0)$ small enough such that
\begin{align*}
F\,:\, \R\times \mathbb{B}_{\d}(X(D_{\ep})) & \to Y(D_{\ep}),\\
\quad (\l,\ff) & \to F[\l,\ff]
\end{align*}
where
\begin{align*}
\mathbb{B}_{\d}(X(D_{\ep})) :=\{ \gg\in X(D_{\ep})\,:\, \|\gg\|_{C^{2,\alpha}(D_{\ep})}< \d\}.
\end{align*}
\end{lemma}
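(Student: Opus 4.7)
The plan is to decompose $F[\lambda,\ff]$ into its three building blocks---the polynomial piece $\lambda(\rho+\ff)^2/2$, the stream-function piece $\bar{\psi}[\ff]$, and the $\theta$-average correction $A[\lambda,\ff](\rho)$---and to verify the three defining properties of $Y(D_\ep)$ for each: $C^{2,\alpha}$ regularity, evenness in $\theta$, and zero $\theta$-mean. The mean-zero condition is built into \eqref{def:Functional} by construction, and evenness will follow from a symmetry argument once $\psi$ is understood. The substantive work is the $C^{2,\alpha}$ regularity of $\bar{\psi}[\ff]$.

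For $\delta=\delta(\ep_0)$ small enough that $\partial_\rho[\rho+\ff(\rho,\theta)]$ stays uniformly bounded away from zero on $D_\ep$, the map
\[
\Phi(\rho,\theta)=(\rho+\ff(\rho,\theta))(\cos\theta,\sin\theta)
\]
is a $C^{2,\alpha}$ diffeomorphism from $D_\ep$ onto its image inside $\mathsf{C}_{r_1,r_2}$, with a $C^{2,\alpha}$ inverse (inverse function theorem). I then reconstruct $\omega$ globally on the annulus via \eqref{omegacte}, and use Lemma \ref{propiedadesvarphi}(1): the profile $\varpi_{\ep,\kappa}$ vanishes together with all its derivatives at $\rho=R_1-\ep$ and $\rho=R_2+\ep$, and attains the value $\ep$ with all derivatives vanishing at $R_1+\ep$ and $R_2-\ep$. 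Consequently the globally defined $\omega$ matches the piecewise-constant extension to infinite order across the interface $\partial \Phi(D_\ep)$, yielding $\omega\in C^{2,\alpha}(\mathsf{C}_{r_1,r_2})$. Classical Schauder theory for the Dirichlet problem \eqref{eq:BVPpsi} on the smooth annular domain, with $\omega\in C^{0,\alpha}$ and constant boundary data $0$ and $\gamma$, then produces a unique $\psi\in C^{2,\alpha}(\mathsf{C}_{r_1,r_2})$. Since $C^{2,\alpha}$ regularity is stable under composition with $C^{2,\alpha}$ diffeomorphisms, $\bar{\psi}[\ff]=\psi\circ\Phi\in C^{2,\alpha}(D_\ep)$; combined with the obviously $C^{2,\alpha}$ polynomial piece and the mean $A[\lambda,\ff](\rho)$ (which is $C^{2,\alpha}$ in $\rho$ by differentiation under the integral), this gives $F[\lambda,\ff]\in C^{2,\alpha}(D_\ep)$.

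For evenness, if $\ff$ is even in $\theta$ then $\Phi(\rho,-\theta)$ is the reflection of $\Phi(\rho,\theta)$ across $\{x_2=0\}$, so the reconstructed $\omega$ is symmetric under $(x_1,x_2)\mapsto(x_1,-x_2)$. Uniqueness of the Dirichlet problem with rotationally symmetric boundary data forces $\psi$ to inherit this symmetry, so $\bar{\psi}[\ff](\rho,\theta)=\psi(\rho+\ff(\rho,\theta),\theta)$ is even in $\theta$; the polynomial piece and $A[\lambda,\ff](\rho)$ are evidently even as well, so $F[\lambda,\ff]\in Y(D_\ep)$. The main obstacle I anticipate is precisely the interface matching used above: the curve $\partial \Phi(D_\ep)$ is naturally described in the $(\rho,\theta)$ coordinates of $D_\ep$, whereas the elliptic theory demands $\omega$ to be globally Hölder on the annulus as a function of $(x_1,x_2)$. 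The reconciliation rests entirely on the infinite-order matching supplied by Lemma \ref{propiedadesvarphi}(1), which makes the extension seamless regardless of the shape or smoothness of the interface itself.
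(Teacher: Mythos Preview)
Your proof is correct and follows essentially the same route as the paper: establish $\omega\in C^{2,\alpha}$ on the full annulus via the infinite-order matching of $\varpi_{\ep,\kappa}$ at the endpoints, apply Schauder theory to get regularity of $\psi$, compose with the $C^{2,\alpha}$ diffeomorphism $\Phi$, and then check evenness by the reflection symmetry of the Dirichlet problem and mean-zero by the construction of $A[\lambda,\ff]$. The only noteworthy difference is that the paper, having $\omega\in C^{2,\alpha}$, invokes elliptic regularity to obtain $\psi\in C^{4,\alpha}(\mathsf{P}_{r_1,r_2})$ rather than stopping at $C^{2,\alpha}$; this extra regularity is not needed for the present lemma but is used in the subsequent computation of $D_\ff F$, where one needs $\partial_r\psi[\ff]\in C^{3,\alpha}$ so that its composition with $\Phi[\ff]\in C^{2,\alpha}$ stays in $C^{2,\alpha}$.
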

\begin{proof}
Let $\ff\in X(D_\e)$, we have that $F[\l,\ff]\in Y(D_\e)$ by direct application of the next three facts:
\begin{itemize}
    \item[i)] $\ff\in C^{2,\a}(D_\e)\quad \Longrightarrow \quad \bar{\psi}[\ff]\in C^{2,\a}(D_\e).$ 
    
    \item[ii)] $\ff \text{ is even in } \theta \quad \Longrightarrow \quad \bar{\psi}[\ff] \text{ is even in }\theta.$

    \item[iii)] $F[\l,\ff]$ is mean zero in the angular variable.
\end{itemize}

\noindent
\textit{Proof of i).} Recall that $\bar{\psi}[\ff]=\psi[\ff]\circ \Phi[\ff]$ with $\Phi[\ff]=(\Phi_1[\ff],\Phi_2[\ff])=(r+\ff(r,\theta),\theta)\in C^{2,\a}(D_\e).$ Since $r\in I_\e^{R_1,R_2}\subset [r_1,r_2]$ with $0<r_1<r_2<\infty$ and the norm $\norm{\ff}_{C^{2,\a}(D_\e)}$ is small, we can define their inverse $\Phi^{-1}[\ff]=(\Phi_1^{-1}[\ff],\Phi_2^{-1}[\ff])\in C^{2,\a}(\Phi[\ff](D_\e))$, with $\Phi[\ff](D_\e)\subsetneq \mathsf{P}_{r_1,r_2}$ satisfying 
\[
r_1< \Phi_1[\ff](R_1-\e), \quad \Phi_1[\ff](R_1+\e)< \Phi_1[\ff](R_2-\e), \quad \Phi_1[\ff](R_2+\e)<r_2.
\]
As $\varpi\in  C^{\infty}([r_1,r_2])$ and $\Phi_1^{-1}[\ff]\in C^{2,\a}(\Phi[\ff](D_\e))$, we have that $\omega[\ff]$ given by \eqref{aux:omega[f]} belongs to  $C^{2,\a}(\mathsf{P}_{r_1,r_2})$. Moreover, applying elliptic regularity theory into the system \eqref{aux:DBpsi} we obtain that 
$\psi[\ff]\in C^{4,\a}(\mathsf{P}_{r_1,r_2}).$
Therefore, we can conclude that $\bar{\psi}[\ff]=\psi[\ff]\circ \Phi[\ff]\in C^{2,\a}(D_\e).$ \medskip

\noindent
\textit{Proof of ii).} Since $\ff$ is even in the angular variable, the result follows immediately by the definition of $\bar{\psi}[\ff](r,\theta)=\psi[\ff](r+\ff(r,\theta),\theta)$ and the evenness of $\psi[\ff]$ in the angular variable. To obtain the evenness of $\psi[\ff]$ we just remember that it is solution of the elliptic problem $\Delta_B \psi[\ff]=\omega[\ff]$ with an even ``source'' term. The latter is deduced by the facts that $\omega[\ff](r+\ff(r,\theta),\theta)=\varpi(r)$ for all $\theta\in\T$, and $\ff$ is even in the angular variable by hypothesis. \medskip

\noindent
\textit{Proof of iii).} This is direct consequence of the definition of the auxiliary functional $A[\l,\ff].$
\end{proof}

Moreover, the functional satisfies the regularity conditions given by the next lemma.

\begin{lemma}
For all $0<\e<\e_0(r_1,R_1,R_2,r_2),$ there exist $\d(\ep_0)$ small enough such that
\begin{enumerate}
	 \item The functional $F:\R \times \mathbb{B}_{\d}(X(D_{\ep}))\rightarrow Y(D_{\ep})$ is of class $C^1.$
	 \item The partial derivative $D^2_{\l,\ff} F:\R \times \mathbb{B}_{\d}(X(D_{\ep}))\rightarrow Y(D_{\ep})$ is continuous.
\end{enumerate}
\end{lemma}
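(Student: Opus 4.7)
The plan is to reduce both assertions to the single question of $C^1$-regularity of the stream-function map $\ff \mapsto \bar{\psi}[\ff]$. Observe that $F$ is affine in $\lambda$: writing
\[
F[\lambda,\ff] = \lambda\, G[\ff] + H[\ff], \qquad G[\ff] := \tfrac{(\rho+\ff)^2}{2} - \tfrac{1}{2\pi}\int_\T \tfrac{(\rho+\ff(\rho,s))^2}{2}\,\mbox{d}s,
\]
with $H[\ff] := \bar{\psi}[\ff] - \tfrac{1}{2\pi}\int_\T \bar{\psi}[\ff](\rho,s)\,\mbox{d}s$, one sees that $D_\lambda F = G[\ff]$ is independent of $\lambda$ and that $D^2_{\lambda,\ff}F[\lambda,\ff]\hh = D_\ff G[\ff]\hh$ depends only on the polynomial/averaging piece, which is manifestly $C^\infty$ in $\ff \in X(D_\e)$. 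Hence both items (1) and (2) follow once $\ff \mapsto \bar{\psi}[\ff]$ is shown to be $C^1$ from $\mathbb{B}_\d(X(D_\e))$ into $C^{2,\alpha}(D_\e)$.

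To establish this, decompose $\bar{\psi}[\ff] = \psi[\ff] \circ \Phi[\ff]$, where $\Phi[\ff](\rho,\theta) = (\rho+\ff(\rho,\theta),\theta)$ and $\psi[\ff]$ is the unique solution of the Dirichlet problem for $\Delta_B$ with source $\omega[\ff]$ and boundary values $0,\gamma$. After extending $\ff$ from $D_\e$ to the full annulus $\mathsf{P}_{r_1,r_2}$ by a fixed bounded linear extension operator producing functions supported away from $\p \mathsf{P}_{r_1,r_2}$, the radial equation $r = \rho + \ff(\rho,\theta)$ admits, for $\|\ff\|_{C^{2,\alpha}}$ sufficiently small, a unique solution $\rho[\ff](r,\theta)\in C^{2,\alpha}(\mathsf{P}_{r_1,r_2})$ produced by the Banach-space implicit function theorem, and the map $\ff \mapsto \rho[\ff]$ is $C^\infty$. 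Because $\varpi = 2A + \varpi_{\e,\k} \in C^\infty([r_1,r_2])$ with $\varpi'$ compactly supported in $I_\e^{R_1,R_2}$, the identity $\omega[\ff](r,\theta) = \varpi(\rho[\ff](r,\theta))$ automatically matches the constant extensions on the complement of $\Phi[\ff](D_\e)$ because all derivatives of $\varpi_{\e,\k}$ vanish at the endpoints of its support (see Lemma \ref{propiedadesvarphi}). Hence $\ff \mapsto \omega[\ff]$ is a $C^1$ map into $C^{2,\alpha}(\mathsf{P}_{r_1,r_2})$, and Schauder theory for $\Delta_B$ on the smooth fixed annular domain identifies the solution operator $\omega \mapsto \psi$ as a bounded linear, hence $C^\infty$, map from $C^{2,\alpha}$ into $C^{4,\alpha}$; therefore $\ff \mapsto \psi[\ff]$ is a $C^1$ map into $C^{4,\alpha}$.

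The final step is the composition $(\psi,\Phi) \mapsto \psi \circ \Phi$, which is $C^1$ from $C^{4,\alpha}(\mathsf{P}_{r_1,r_2}) \times \{C^{2,\alpha}\text{-diffeomorphisms close to the identity}\}$ into $C^{2,\alpha}(D_\e)$, with differential in direction $(\dot\psi,\dot\Phi)$ given by $\dot\psi \circ \Phi + (\nabla\psi \circ \Phi)\cdot \dot\Phi$. Since $\ff \mapsto \Phi[\ff]$ is affine, this concludes the proof of (1), and the same mechanism supplies the continuity of $D^2_{\lambda,\ff}F$ required in (2). The main obstacle is precisely this composition step: a direct comparison of $\bar{\psi}[\ff_1]$ and $\bar{\psi}[\ff_2]$ in $C^{2,\alpha}$ costs one derivative of $\psi$ through the chain rule, so the two-derivative gain provided by elliptic regularity on the \emph{fixed} annulus $\mathsf{P}_{r_1,r_2}$, rather than on the $\ff$-dependent domain $\Phi[\ff](D_\e)$, is essential. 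This is exactly the feature that makes the indirect stream-function PDE approach preferable to a direct Green's-function representation of $\psi$ in the annulus.
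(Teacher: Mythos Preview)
Your overall reduction matches the paper's: everything hinges on showing that $\ff \mapsto \bar{\psi}[\ff]$ is $C^1$ from $\mathbb{B}_\d(X(D_\e))$ into $C^{2,\alpha}(D_\e)$, and you correctly identify that the two-derivative elliptic gain on the \emph{fixed} annulus is what makes the outer composition $\psi \circ \Phi$ controllable despite $\Phi[\ff]$ being only $C^{2,\alpha}$. Your treatment of item~(2) via the affine-in-$\lambda$ decomposition is clean and correct.

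The gap is the step where you invoke the Banach-space implicit function theorem to conclude that $\ff \mapsto \rho[\ff]$ is $C^\infty$. The defining map $\Psi(\tilde\ff,\rho)(r,\theta) = \rho(r,\theta) + \tilde\ff(\rho(r,\theta),\theta) - r$ involves composition of $\tilde\ff$ with $\rho$, and composition is \emph{not} a $C^1$ operation on H\"older spaces when both factors lie in the same class $C^{2,\alpha}$. Concretely, the formal partial derivative $D_\rho\Psi \cdot \dot\rho = \bigl(1 + \p_1\tilde\ff(\rho,\cdot)\bigr)\dot\rho$ has coefficient $\p_1\tilde\ff(\rho,\cdot)$ which is only $C^{1,\alpha}$, so the product does not land in $C^{2,\alpha}$ and the IFT hypothesis fails. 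The same loss of one derivative blocks the later claim that $\ff \mapsto \omega[\ff]=\varpi(\rho[\ff])$ is $C^1$ into $C^{2,\alpha}$: its formal derivative $\varpi'(\rho[\ff])\,D_\ff\rho[\ff]\hh$ is only $C^{1,\alpha}$.

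The paper circumvents this by computing the G\^ateaux derivative $D_\ff\omega[\ff]\hh$ explicitly from the piecewise formula, placing it only in $C^{0,\alpha}$ (enough for Schauder to put $D_\ff\psi[\ff]\hh$ in $C^{2,\alpha}$, and one checks separately that in fact $D_\ff\psi[\ff]\hh\in C^{3,\alpha}$ since $D_\ff\omega[\ff]\hh\in C^{1,\alpha}$), and then verifying by hand the limit
\[
\Bigl\|D_\ff\omega[\ff]\hh - \tau^{-1}\bigl(\omega[\ff+\tau\hh]-\omega[\ff]\bigr)\Bigr\|_{C^{0,\alpha}(\mathsf{P}_{r_1,r_2})} \to 0
\]
through a region-by-region decomposition of the overlapping supports (the sets $U_\tau^k$ and $V_\tau(R_i\pm\e)$). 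The continuity estimate for $\ff\mapsto D_\ff F[\lambda,\ff]$ is handled the same way. Your global formula $\omega[\ff]=\varpi(\rho[\ff])$ via an extension of $\ff$ is a nice way to \emph{organize} these computations, but it does not let you replace them by an off-the-shelf IFT in the H\"older category.
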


\begin{proof}
The derivatives with respect $\l$ are trivial. Consequently, the result reduces to proving that there exists the Fr\'{e}chet differential of $F$:
\begin{align*}
D_\ff F[\l,\ff]\,:\, X(D_{\ep}) & \to Y(D_{\ep}),\\
\quad \hh & \to D_\ff F[\l,\ff]\hh
\end{align*}
and that it is continuous for all $\ff\in \mathbb{B}_{\d}(X(D_{\ep}))$ and $\lambda\in\R$.

This will be done by firstly showing the existence of the linear G\^ateaux derivative and secondly its continuity in the strong topology.

Actually, the G\^ateaux differential of $F[\l,\cdot]$ at $\ff\in \mathbb{B}_{\d}(X(D_{\ep}))$ in the direction $\hh\in X(D_{\ep})$
is defined as
\[
D_\ff F[\l,\ff]\hh:=\frac{\mbox{d}}{{\mbox{d}\tau}}F[\l,\ff+\tau \hh]|_{\tau=0}=\lim_{\tau\to0}\frac{F[\l,\ff+\tau\hh]-F[\l,\ff]}{\tau}.
\]
A formal straightforward computation show that this derivative is given by
\begin{multline}\label{def:dF}
D_\ff F[\l,\ff]\hh(r,\theta)=\l(r+\ff(r,\theta))\hh(r,\theta) + D_\ff\bar{\psi}[\ff]\hh(r,\theta)\\
-\frac{1}{2\pi}\int_\T \l(r+\ff(r,s))\hh(r,s)+ D_\ff\bar{\psi}[\ff]\hh(r,s)\ds.
\end{multline}
To prove this rigorously, we need to get
\[
\lim_{\tau\to 0} \left\Vert \frac{F[\l,\ff+\tau\hh]-F[\l,\ff]}{\tau} -D_\ff F[\l,\ff]\hh\right\Vert_{C^{2,\a}(D_{\ep})}=0.
\]
By virtue of \eqref{def:dF}, and  straightforward computations, it is enough to prove that there exists $D_\ff\bar{\psi}[\ff]\hh$ such that
\begin{equation}\label{aux:dbarpsi}
\lim_{\tau\to 0} \left\Vert \frac{\bar{\psi}[\ff+\tau\hh]-\bar{\psi}[\ff]}{\tau} -D_\ff\bar{\psi}[\ff]\hh \right\Vert_{C^{2,\a}(D_{\ep})}=0.
\end{equation}

In order to prove that, we start recalling  that for a generic function $f\in C^{2,\a}(D_\e)$ we have that $\bar{\psi}[f](\rho,\theta)=\psi[f](\Phi[f])(r,\theta)=\psi[f](\rho + f(\rho,\theta),\theta)$ with $\psi[f]$ solving the problem
\begin{equation}\label{aux:DBpsi}
\left\{
\begin{aligned}
\Delta_B \psi[f](r,\theta)&=\omega[f](r,\theta),\qquad (r,\theta)\in \mathsf{P}_{r_1,r_2},\\
\psi[f](r_1,\theta)&=0, \\
\psi[f](r_2,\theta)&=\gamma,
\end{aligned}
\right.
\end{equation}
and 
\begin{equation}\label{aux:omega[f]}
\omega[f](r,\theta)=2A +\begin{cases}
0 \qquad & r_1\leq r < \Phi_1[f](R_1-\e,\theta),\\
\varpi_{\e,\k}(\Phi_1^{-1}[f](r,\theta))\qquad &\Phi_1[f](R_1-\e,\theta) \leq r \leq \Phi_1[f](R_1+\e,\theta),\\
\e & \Phi_1[f](R_1+\e,\theta) < r < \Phi_1[f](R_2-\e,\theta),\\
\varpi_{\e,\k}(\Phi_1^{-1}[f](r,\theta))\qquad &\Phi_1[f](R_2-\e,\theta) \leq r \leq \Phi_1[f](R_2+\e,\theta),\\
0 & \Phi_1[f](R_2+\e,\theta) < r \leq r_2,
\end{cases}
\end{equation}
where
\[
\Phi[f](r,\theta)\equiv (\Phi_1[f](r,\theta),\Phi_2[f](r,\theta)):=(r+f(r,\theta),\theta), \qquad (r,\theta)\in D_\e.
\]
\begin{remark}
In this section, we remove the subscripts $\e$ and $\k$ from $\varpi_{\e,\k}$ to alleviate
the notation.
\end{remark}

Proceeding similarly as before, we  start computing formally $D_\ff \bar{\psi}[\ff]\hh$, which can be compute as
\begin{align*}
D_\ff \bar{\psi}[\ff]\hh&=\frac{d}{d\tau}\bar{\psi}[\ff + \tau \hh]\bigg|_{\tau=0}\\
&=\frac{d}{d\tau} \left(\psi[\ff + \tau \hh]\circ \Phi[\ff + \tau \hh]\right)\bigg|_{\tau=0}\\
&=\left(\frac{d}{d\tau}\psi[\ff + \tau \hh]\right)\circ \Phi[\ff + \tau \hh] + \left(\nabla \psi[\ff + \tau \hh]\circ \Phi[\ff + \tau \hh]\right)\cdot\frac{d}{d\tau}\Phi[\ff + \tau \hh]\bigg|_{\tau=0}\\
&=D_\ff \psi[\ff]\hh \circ \Phi[\ff] + \left(\p_r \psi[\ff]\circ\Phi[\ff] \right)\hh.
\end{align*}
That is,
\begin{equation}\label{def:dbarpsi}
D_\ff \bar{\psi}[\ff]\hh(r,\theta):=D_\ff \psi[\ff]\hh(r+\ff(r,\theta),\theta) + \p_r \psi[\ff](r+\ff(r,\theta),\theta) \hh(r,\theta).
\end{equation}
Since the second term of the above expression \eqref{def:dbarpsi} is just a derivative in the classical sense and $\p_r \psi[\ff](r+\ff(r,\theta),\theta) \hh(r,\theta)\in C^{2,\a}(D_\e)$ (remember that $\psi\in C^{4,\a}(\mathsf{P}_{r_1,r_2})$), we have that \eqref{aux:dbarpsi} reduces to prove that there exists $D_\ff \psi[\ff]\hh$ such that
\begin{equation}\label{aux:dpsi}
\lim_{\tau\to 0} \left\Vert \frac{\psi[\ff+\tau\hh]-\psi[\ff]}{\tau} -D_\ff \psi[\ff]\hh \right\Vert_{C^{2,\a}(D_{\ep})}=0.
\end{equation}

Taking into account \eqref{aux:DBpsi} we have that $D_\ff \psi[\ff]\hh$ should solve the problem
\begin{equation}\label{aux:DeltaDF}
\left\{
\begin{aligned}
\Delta_B \left(D_\ff \psi[\ff]\hh\right)(r,\theta)&=D_\ff \omega[\ff]\hh(r,\theta),\qquad (r,\theta)\in \mathsf{P}_{r_1,r_2},\\
D_\ff \psi[\ff]\hh(r_1,\theta)&=0, \\
D_\ff \psi[\ff]\hh(r_2,\theta)&=0.
\end{aligned}
\right.
\end{equation}
Now, we are going to obtain an explicit expression for the ``source'' term of \eqref{aux:DeltaDF}. That is, we are going to compute the differential of $\omega[f]$ with respect $f$ at the point $\ff$ in the direction $\hh$.

Recalling \eqref{aux:omega[f]}, that is $\omega[f](\Phi[f](r,\theta))=\varpi(r)$ for $(r,\theta)\in D_\e$, we obtain 
\[
D_\ff \omega[\ff]\hh(r+\ff(r,\theta),\theta)+\p_r\omega[\ff] (r+\ff(r,\theta),\theta)\hh(r,\theta)=0, 
\]
and
\[
\p_r\omega[\ff](r+\ff(r,\theta),\theta)(1+\p_r\ff(r,\theta))=\varpi'(r) \quad \Longrightarrow \quad \p_r\omega[\ff](r,\theta)=\frac{\varpi'(\Phi_1^{-1}[\ff](r,\theta))}{1+\p_r\ff(\Phi^{-1}[\ff](r,\theta))}.
\]
Hence, we immediately get the candidate
\begin{equation}\label{aux:Dfw[f]}
D_\ff \omega[\ff]\hh (r,\theta)=\begin{cases}
0 \qquad & r_1\leq r < \Phi_1[\ff](R_1-\e,\theta),\\
-\frac{\varpi'(\Phi_1^{-1}[\ff](r,\theta)) \hh(\Phi^{-1}[\ff](r,\theta))}{1+\p_r\ff(\Phi^{-1}[\ff](r,\theta))} \qquad & \Phi_1[\ff](R_1-\e,\theta) \leq r \leq \Phi_1[\ff](R_1+\e,\theta),\\
0 & \Phi_1[\ff](R_1+\e,\theta) < r < \Phi_1[\ff](R_2-\e,\theta),\\
-\frac{\varpi'(\Phi_1^{-1}[\ff](r,\theta)) \hh(\Phi^{-1}[\ff](r,\theta))}{1+\p_r\ff(\Phi^{-1}[\ff](r,\theta))} \qquad & \Phi_1[\ff](R_2-\e,\theta) \leq r \leq \Phi_1[\ff](R_2+\e,\theta),\\
0 & \Phi_1[\ff](R_2+\e,\theta) < r\leq r_2.
\end{cases}
\end{equation}

At this point, combining \eqref{aux:DBpsi} and \eqref{aux:DeltaDF} we obtain that auxiliary function 
\[
\Psi_\tau[\ff]\hh:=D_\ff \psi[\ff]\hh -\frac{\psi[\ff+\tau\hh]-\psi[\ff]}{\tau},
\]
solves (by definition) the elliptic-type problem
\begin{equation}\label{aux:dF_elliptic}
\left\{
\begin{aligned}
\Delta_B \left(\Psi_\tau[\ff]\hh\right)(r,\theta)&=\left(D_\ff \omega[\ff]\hh-\frac{\omega[\ff+\tau\hh]-\omega[\ff]}{\tau}\right)(r,\theta),\qquad (r,\theta)\in \mathsf{P}_{r_1,r_2},\\
\left(\Psi_\tau[\ff]\hh\right)(r_1,\theta)&=0, \\
\left(\Psi_\tau[\ff]\hh\right)(r_2,\theta)&=0.
\end{aligned}
\right.
\end{equation}
Hence, by classical elliptic regularity theory we can conclude from \eqref{aux:dF_elliptic} that
\[
\norm{D_\ff \psi[\ff]\hh -\frac{\psi[\ff+\tau\hh]-\psi[\ff]}{\tau}}_{C^{2,\alpha}(\mathsf{P}_{r_1,r_2})}\lesssim \norm{D_\ff \omega[\ff]\hh-\frac{\omega[\ff+\tau\hh]-\omega[\ff]}{\tau}}_{C^{0,\alpha}(\mathsf{P}_{r_1,r_2})}.
\]
with $D_\ff \omega[\ff]\hh$ given by \eqref{aux:Dfw[f]}. Now, we will check that actually 
\[
\lim_{\tau\to 0} \norm{D_\ff \omega[\ff]\hh-\frac{\omega[\ff+\tau\hh]-\omega[\ff]}{\tau}}_{C^{0,\alpha}(\mathsf{P}_{r_1,r_2})}=0,
\]
what it implies \eqref{aux:dpsi}. In order to do that, we start analyzing in detail the difference $\omega[\ff+\tau\hh]-\omega[\ff]$.
Without lost of generality, the direction $\hh\in X(D_\e)$ can be taken with norm $\norm{\hh}_{C^{2,\a}(D_\e)}=1$. Then, taking $\tau$ small enough, we have that 
\begin{equation}\label{aux:difference}
\omega[\ff + \tau \hh]-\omega[\ff]=\begin{cases}
0                   &r_1\leq r < \mathsf{m}_\tau(R_1-\e,\theta),\\
\omega[\ff + \tau \hh]-\omega[\ff]               & \mathsf{m}_\tau(R_1-\e,\theta) \leq r < \mathsf{M}_\tau(R_1-\e,\theta),\\
\varpi\circ \Phi_1^{-1}[\ff+\tau\hh]-\varpi\circ\Phi_1^{-1}[\ff]\quad &\mathsf{M}_\tau(R_1-\e,\theta)\leq r < \mathsf{m}_\tau(R_1+\e,\theta),\\
\omega[\ff + \tau \hh]-\omega[\ff] &\mathsf{m}_\tau(R_1+\e,\theta)\leq r < \mathsf{M}_\tau(R_1+\e,\theta),\\
0 & \mathsf{M}_\tau(R_1+\e,\theta)\leq r < \mathsf{m}_\tau(R_2-\e,\theta),\\
\omega[\ff + \tau \hh]-\omega[\ff] & \mathsf{m}_\tau(R_2-\e,\theta)\leq r < \mathsf{M}_\tau(R_2-\e,\theta),\\
\varpi\circ \Phi_1^{-1}[\ff+\tau\hh]-\varpi\circ\Phi_1^{-1}[\ff]\quad &\mathsf{M}_\tau(R_2-\e,\theta)\leq r < \mathsf{m}_\tau(R_2+\e,\theta),\\
\omega[\ff + \tau \hh]-\omega[\ff] & \mathsf{m}_\tau(R_2+\e,\theta)\leq r < \mathsf{M}_\tau(R_2+\e,\theta),\\
0 & \mathsf{M}_\tau(R_2+\e,\theta)\leq r \leq r_2,
\end{cases}
\end{equation}
where
\begin{align*}
\mathsf{m}_\tau(r,\theta):&= \min\{\Phi_1[\ff+\tau \hh](r,\theta),\Phi_1[\ff](r,\theta)\},\\
\mathsf{M}_\tau(r,\theta):&= \max\{\Phi_1[\ff+\tau \hh](r,\theta),\Phi_1[\ff](r,\theta)\}.   
\end{align*}
\begin{remark}
Over each region $\{(r,\theta)\in \mathsf{P}_{r_1,r_2}: \mathsf{m}_\tau(R_i+(-1)^j\e,\theta) \leq r < \mathsf{M}_\tau(R_i+(-1)^j\e,\theta)\}$ with $i,j=1,2$ we have that the difference $\omega[\ff + \tau \hh]-\omega[\ff]$ will be 
 \[
 \varpi(\Phi_1^{-1}[\ff+\tau\hh]) \qquad\text{or} \qquad \varpi(\Phi_1^{-1}[\ff]).
 \]
\end{remark}
In addition, we define the auxiliary sets 
\begin{equation}\label{def:setV}
V_\tau(R_i+(-1)^j \e):=\{ (r,\theta)\in \mathsf{P}_{r_1,r_2}: \mathsf{m}_{\tau}(R_i+(-1)^j \e,\theta)< r < \mathsf{M}_{\tau}(R_i+(-1)^j \e,\theta)\}, 
\end{equation}
and
\begin{equation}\label{def:setU}
U_\tau^k:=\{(r,\theta)\in \mathsf{P}_{r_1,r_2}: \mathsf{M}_\tau (R_k-\e,\theta)\leq r \leq \mathsf{m}_\tau(R_k+\e,\theta)\}.
\end{equation}
Then, as an immediate consequence of \eqref{aux:Dfw[f]}-\eqref{aux:difference} and \eqref{def:setV}-\eqref{def:setU} we have that
\[
D_\ff \omega[\ff]\hh-\frac{\omega[\ff+\tau\hh]-\omega[\ff]}{\tau}=0, \qquad (r,\theta)\in \mathsf{P}_{r_1,r_2}\setminus\left(\bigcup_{i,j=1,2}V_\tau(R_i+(-1)^j\e) \cup \bigcup_{k=1,2}U_\tau^k\right).
\]
Hence, we can directly write 
\begin{multline*}
\norm{D_\ff \omega[\ff]\hh-\frac{\omega[\ff+\tau\hh]-\omega[\ff]}{\tau}}_{C^{0,\alpha}(\mathsf{P}_{r_1,r_2})}= \sum_{i,j=1,2} \norm{D_\ff \omega[\ff]\hh-\frac{\omega[\ff+\tau\hh]-\omega[\ff]}{\tau}}_{C^{0,\a}(V_\tau(R_i+(-1)^j\e))}\\
+\sum_{k=1,2}\norm{D_\ff \omega[\ff]\hh-\frac{\omega[\ff+\tau\hh]-\omega[\ff]}{\tau}}_{C^{0,\a}(U_\tau^k)}.
\end{multline*}

Now, we are going to study each of the terms of the previous expression. On the one hand, applying \eqref{aux:Dfw[f]} and \eqref{aux:difference} we obtain
\begin{multline}\label{aux:trozogrande}
\norm{D_\ff \omega[\ff]\hh-\frac{\omega[\ff+\tau\hh]-\omega[\ff]}{\tau}}_{C^{0,\a}(U_\tau^k)}\\
=\norm{\frac{\left(\varpi'\circ\Phi_1^{-1}[\ff]\right)  \left(\hh\circ\Phi^{-1}[\ff]\right)}{1+\left(\p_r\ff\circ\Phi^{-1}[\ff]\right)}-\frac{\varpi\circ \Phi_1^{-1}[\ff]-\varpi\circ \Phi_1^{-1}[\ff+\tau\hh]}{\tau}}_{C^{0,\a}(U_\tau^k)}.
\end{multline}
Here we will give all the details on how to manage the above type of term, which will appear repeatedly in this section.  First of all, from Taylor's expansion, we can ensure that there exists $\tau^\ast\in(0,\tau)$ such that
\begin{multline*}
\varpi\circ \Phi_1^{-1}[\ff+\tau\hh]=\left(\varpi\circ \Phi_1^{-1}[\ff+\tau \hh]\right)\big|_{\tau=0}+\frac{d}{d\tau}\left(\varpi\circ \Phi_1^{-1}[\ff+\tau\hh]\right)\bigg|_{\tau=0} \tau \\
+ \frac{d^2}{d^2\tau}\left(\varpi\circ \Phi_1^{-1}[\ff+\tau\hh]\right)\bigg|_{\tau=\tau^\ast}\frac{\tau^2}{2},
\end{multline*}
with
\begin{equation}\label{aux:dtau1comp}
\frac{d}{d\tau}\left(\varpi\circ \Phi_1^{-1}[\ff+\tau\hh]\right)\bigg|_{\tau=0}=\varpi'\circ  \Phi_1^{-1}[\ff+\tau\hh] \cdot\frac{d}{d\tau}\Phi_1^{-1}[\ff+\tau\hh]\bigg|_{\tau=0},
\end{equation}
and
\begin{multline*}
\frac{d^2}{d^2\tau}\left(\varpi\circ \Phi_1^{-1}[\ff+\tau\hh]\right)\bigg|_{\tau=\tau^\ast}=\varpi'' \circ  \Phi_1^{-1}[\ff+\tau\hh]\cdot \left(\frac{d}{d\tau}\Phi_1^{-1}[\ff+\tau\hh]\right)^2\bigg|_{\tau=\tau^\ast} \\
+ \varpi'\circ  \Phi_1^{-1}[\ff+\tau\hh] \cdot\frac{d^2}{d^2\tau}\Phi_1^{-1}[\ff+\tau\hh]  \bigg|_{\tau=\tau^\ast}.
\end{multline*}
Since $\Phi_1[\ff+\tau\hh](\Phi^{-1}[\ff+\tau\hh](r,\theta))=r$ and $\Phi_2[\ff+\tau\hh](\Phi^{-1}[\ff+\tau\hh](r,\theta))=\theta$, we have
\begin{align*}
\Phi_1^{-1}[\ff+\tau\hh](r,\theta)+\ff(\Phi^{-1}[\ff+\tau\hh](r,\theta))+\tau \hh(\Phi^{-1}[\ff+\tau\hh](r,\theta))=r,\\
\Phi_2^{-1}[\ff+\tau\hh](r,\theta)=\theta,
\end{align*}
and calculating their derivatives with respect to $\tau$-variable, we obtain
\begin{align}
\frac{d}{d\tau}\Phi_1^{-1}[\ff+\tau \hh]&=-\frac{\hh\circ\Phi^{-1}[\ff+\tau\hh]}{1+(\p_r\ff \circ \Phi^{-1}[\ff+\tau\hh])+\tau(\p_r\hh \circ\Phi^{-1}[\ff+\tau\hh])}, \label{aux:1dtauinversa}\\
\frac{d}{d\tau}\Phi_2^{-1}[\ff+\tau \hh]&=0,\label{aux:1dtauinvtheta}
\end{align}
and proceeding similarly
\begin{multline}\label{aux:2dtauinversa}
\left[1+(\p_r\ff \circ \Phi^{-1}[\ff+\tau\hh])+\tau(\p_r\hh \circ\Phi^{-1}[\ff+\tau\hh])\right]\frac{d^2}{d^2\tau}\Phi_1^{-1}[\ff+\tau \hh]\\
+\left[\left(\p_r^2\ff \circ \Phi^{-1}[\ff+\tau\hh]\right)+\tau\left(\p_r^2\hh \circ \Phi^{-1}[\ff+\tau\hh]\right)\right]\left(\frac{d}{d\tau}\Phi_1^{-1}[\ff+\tau \hh]\right)^2\\
+2 \p_r\hh\circ\Phi^{-1}[\ff+\tau\hh]\, \frac{d}{d\tau}\Phi_1^{-1}[\ff+\tau \hh]=0.
\end{multline}
Then, combining the above, we have that \eqref{aux:trozogrande} is just
\[
\norm{D_\ff \omega[\ff]\hh-\frac{\omega[\ff+\tau\hh]-\omega[\ff]}{\tau}}_{C^{0,\a}(U_\tau^k)}=\frac{\tau}{2}\norm{\frac{d^2}{d^2\tau}\left(\varpi\circ \Phi_1^{-1}[\ff+\tau\hh]\right)\bigg|_{\tau=\tau^\ast}}_{C^{0,\a}(U_\tau^k)},
\]
which can be upper bounded, taking $\tau$ small enough and using \eqref{aux:1dtauinversa}-\eqref{aux:2dtauinversa}, as follows
\begin{equation*}
\norm{D_\ff \omega[\ff]\hh-\frac{\omega[\ff+\tau\hh]-\omega[\ff]}{\tau}}_{C^{0,\a}(U_\tau^k)}\leq C(\|\ff\|_{C^{2,\a}(D_\e)},\|\hh\|_{C^{2,\a}(D_\e)},\|\varpi\|_{C^{2,\a}([r_1,r_2])})\tau.
\end{equation*}
In particular, we have proved that
\begin{equation}\label{final:trozogrande}
\lim_{\tau\to 0}\norm{D_\ff \omega[\ff]\hh-\frac{\omega[\ff+\tau\hh]-\omega[\ff]}{\tau}}_{C^{0,\a}(U_\tau^k)}=0.
\end{equation}

On the other hand, since $|V_\tau(R_i+(-1)^j\e)|=O(\tau)$ and we have computed explicitly \eqref{aux:Dfw[f]} and \eqref{aux:difference} we obtain that
\begin{multline}\label{aux:trozomedio}
\norm{D_\ff \omega[\ff]\hh-\frac{\omega[\ff+\tau\hh]-\omega[\ff]}{\tau}}_{C^{0,\a}(V_\tau(R_i+(-1)^j\e))}=  \norm{\frac{\varpi\circ\Phi_1^{-1}[\ff+\tau\hh]}{\tau}}_{C^{0,\a}(V_\tau^{+}(R_i+(-1)^j\e))}\\
+\norm{\frac{\varpi\circ\Phi_1^{-1}[\ff]}{\tau}-\frac{(\varpi'\circ\Phi_1^{-1}[\ff]) (\hh\circ \Phi^{-1}[\ff])}{1+(\p_r\ff\circ\Phi^{-1}[\ff])}}_{C^{0,\a}(V_\tau^{-}(R_i+(-1)^j\e))},
\end{multline}
where we have used the disjoint union given by 
\[
V_\tau(R_i+(-1)^j\e))=V_\tau^{+}(R_i+(-1)^j\e))\cup V_\tau^{-}(R_i+(-1)^j\e)),
\]
with
\begin{align*}
V_\tau^{+}(z):=\{ (r,\theta)\in V_\tau(z): \Phi_1[\ff+\tau\hh](z,\theta)< r < \Phi_1[\ff](z,\theta)\},\\
V_\tau^{-}(z):=\{ (r,\theta)\in V_\tau(z): \Phi_1[\ff](z,\theta)< r < \Phi_1[\ff+\tau\hh](z,\theta)\},
\end{align*}
for $z\in\left\{ R_i+(-1)^j \e : i,j=1,2\right\}$.

Since $\varpi\circ\Phi_1^{-1}[\ff]\equiv 0$ over $V_\tau^{+}(R_i+(-1)^j \e)$ by definition, we directly have that
\[
\norm{\frac{\varpi\circ\Phi_1^{-1}[\ff+\tau\hh]}{\tau}}_{C^{0,\a}(V_\tau^{+}(R_i+(-1)^j\e))}=\norm{\frac{\varpi\circ\Phi_1^{-1}[\ff+\tau\hh]-\varpi\circ\Phi_1^{-1}[\ff]}{\tau}}_{C^{0,\a}(V_\tau^{+}(R_i+(-1)^j\e))}.
\]
Moreover, applying Taylor's expansion as we did before, we can ensure that there exists $\tau^\ast\in(0,\tau)$ such that
\[
\norm{\frac{\varpi\circ\Phi_1^{-1}[\ff+\tau\hh]}{\tau}}_{C^{0,\a}(V_\tau^{+}(R_i+(-1)^j\e))}=\norm{\frac{d}{d\tau}\left(\varpi\circ \Phi_1^{-1}[\ff+\tau\hh]\right)\bigg|_{\tau=\tau^\ast}}_{C^{0,\a}(V_\tau^{+}(R_i+(-1)^j\e))}.
\]
In fact, using \eqref{aux:dtau1comp} and the fact that $C^{0,\a}(V_\tau^{+}(R_i+(-1)^j\e))$  is a Banach algebra we get
\begin{multline*}
\norm{\frac{\varpi\circ\Phi_1^{-1}[\ff+\tau\hh]}{\tau}}_{C^{0,\a}(V_\tau^{+}(R_i+(-1)^j\e))}\leq \norm{\varpi'\circ\Phi_1^{-1}[\ff+\tau^\ast\hh]}_{C^{0,\a}(V_\tau^{+}(R_i+(-1)^j\e))}\\
\norm{\frac{d}{d\tau}\Phi_1^{-1}[\ff+\tau\hh]\bigg|_{\tau=\tau^\ast}}_{C^{0,\a}(V_\tau^{+}(R_i+(-1)^j\e))},
\end{multline*}
with (see \eqref{aux:1dtauinversa})
\[
\norm{\frac{d}{d\tau}\Phi_1^{-1}[\ff+\tau\hh]\bigg|_{\tau=\tau^\ast}}_{C^{0,\a}(V_\tau^{+}(R_i+(-1)^j\e))}\lesssim \frac{\norm{\hh}_{C^{0,\a}(D_\e)}}{1-\left(\norm{\ff}_{C^{1,\a}(D_\e)}+\tau^\ast \norm{\hh}_{C^{1,\a}(D_\e)}\right)},
\]
and
\[
\lim_{\tau\to 0}\norm{\varpi'\circ\Phi_1^{-1}[\ff+\tau^\ast\hh]}_{C^{0,\a}(V_\tau^{+}(R_i+(-1)^j\e))}=0,
\]
where, in the limit above, we have used the fact that $\varpi$ is smooth with $\varpi\circ\Phi_1^{-1}[\ff]\equiv 0$ over $V_\tau^{+}(R_i+(-1)^j \e)$ and $\tau^\ast\in(0,\tau).$

Hence, we have concluded that
\begin{equation}\label{aux:trozomedio1}
\lim_{\tau\to 0}\norm{\frac{\varpi\circ\Phi_1^{-1}[\ff+\tau\hh]}{\tau}}_{C^{0,\a}(V_\tau^{+}(R_i+(-1)^j\e))}=0,
\end{equation}
and proceeding in a similar way, using the same type of arguments, we can also conclude that
\begin{equation}\label{aux:trozomedio2}
\lim_{\tau\to 0}\norm{\frac{\varpi\circ\Phi_1^{-1}[\ff]}{\tau}-\frac{(\varpi'\circ\Phi_1^{-1}[\ff]) (\hh\circ \Phi^{-1}[\ff])}{1+(\p_r\ff\circ\Phi^{-1}[\ff])}}_{C^{0,\a}(V_\tau^{-}(R_i+(-1)^j\e))}=0.
\end{equation}
Applying \eqref{aux:trozomedio1}-\eqref{aux:trozomedio2} into \eqref{aux:trozomedio} we finally get our desired goal. That is,
\begin{equation}\label{final:trozomedio}
\lim_{\tau\to 0} \norm{D_\ff \omega[\ff]\hh-\frac{\omega[\ff+\tau\hh]-\omega[\ff]}{\tau}}_{C^{0,\a}(V_\tau(R_i+(-1)^j\e))}=0.
\end{equation}

To sum up, combining \eqref{final:trozomedio} and \eqref{final:trozogrande} we have proved that
\[
\lim_{\tau\to 0}\norm{D_\ff \omega[\ff]\hh-\frac{\omega[\ff+\tau\hh]-\omega[\ff]}{\tau}}_{C^{0,\a}(\mathsf{P}_{r_1,r_2})}=0,
\]
and consequently, going backwards in our argument, we have proven our initial objective \eqref{aux:dbarpsi}, i.e.,
\[
\lim_{\tau\to 0} \left\Vert \frac{\bar{\psi}[\ff+\tau\hh]-\bar{\psi}[\ff]}{\tau} -D_\ff\bar{\psi}[\ff]\hh \right\Vert_{C^{2,\a}(D_{\ep})}=0,
\]
or equivalently,
\[
\lim_{\tau\to 0} \left\Vert \frac{F[\l,\ff+\tau\hh]-F[\l,\ff]}{\tau} -D_\ff F[\l,\ff]\hh\right\Vert_{C^{2,\a}(D_{\ep})}=0.
\]

This shows the existence of G\^ateaux derivative and now we intend to prove the continuity of the map  $\ff\to D_\ff F[\l,\ff]$
from $B_\delta(X(D_{\ep}))$ to the space $\mathsf{L}(X(D_{\ep}),Y(D_{\ep}))$ of  bounded linear operators from $X(D_{\ep})$ to $Y(D_{\ep})$.  This is a consequence of the following estimate:
\begin{equation}\label{continuity}
\|D_\ff F[\l,\ff']\hh-D_\ff  F[\l,\ff'']\hh\|_{C^{2,\alpha}(D_{\ep})}\lesssim \|\ff'-\ff''\|_{C^{2,\a}(D_{\ep})},
\end{equation}
for any pair $\ff',\ff'' \in \mathbb{B}_{\d}(X(D_{\ep}))$ and $\hh\in X(D_{\ep}).$

By virtue of \eqref{def:dF} we get
\begin{multline*}
D_\ff F[\l,\ff']\hh-D_\ff  F[\l,\ff'']\hh=\l(\ff'-\ff'')\hh-\left(D_\ff\bar{\psi}[\ff']\hh-D_\ff\bar{\psi}[\ff'']\hh\right)\\
-\frac{1}{2\pi}\int_\T \l(\ff'-\ff'')\hh-\left(D_\ff\bar{\psi}[\ff']\hh-D_\ff\bar{\psi}[\ff'']\hh\right)\ds,
\end{multline*}
and as a direct consequence we obtain that to prove \eqref{continuity}  it is enough to verify that
\begin{equation}\label{aux:continuity}
\|D_\ff\bar{\psi}[\ff']\hh-D_\ff\bar{\psi}[\ff'']\hh\|_{C^{2,\alpha}(D_{\ep})}\lesssim \|\ff'-\ff''\|_{C^{2,\a}(D_{\ep})}.
\end{equation}
Furthermore, taking into account the explicit expression of $D_\ff\bar{\psi}[\ff]\hh$ given by \eqref{def:dbarpsi} and by adding and subtracting appropriate terms we can write
\begin{multline*}
D_\ff\bar{\psi}[\ff']\hh-D_\ff\bar{\psi}[\ff'']\hh=D_\ff \psi [\ff']\hh\circ \Phi[\ff']-D_\ff \psi[\ff'']\hh  \circ\Phi[\ff'']\pm D_\ff \psi[\ff'']\hh\circ \Phi[\ff']\\
+\p_r\psi[\ff']\circ\Phi[\ff']-\p_r\psi[\ff'']\circ\Phi[\ff''] \pm \p_r\psi[\ff'']\circ\Phi[\ff'].
\end{multline*}
Next, we have to study each of the following terms
\begin{equation}\label{aux:trozosstability_1}
\norm{D_\ff \psi [\ff'']\hh\circ\Phi[\ff']-D_\ff \psi [\ff'']\hh\circ\Phi[\ff'']}_{C^{2,\a}(D_\e)},\quad \norm{\p_r \psi [\ff'']\circ\Phi[\ff']-\p_r \psi [\ff'']\circ\Phi[\ff'']}_{C^{2,\a}(D_\e)},
\end{equation}
and
\begin{equation}\label{aux:trozosstability_2}
\norm{\left(\p_r\psi[\ff']-\p_r\psi[\ff'']\right)\circ\Phi[\ff']}_{C^{2,\a}(D_\e)}, \quad \|\left(D_\ff \psi[\ff']\hh -D_\ff \psi [\ff'']\hh\right)\circ\Phi[\ff']\|_{C^{2,\alpha}(D_{\ep})}.
\end{equation}

Let us start with \eqref{aux:trozosstability_1}, we recall that $D_\ff \psi [\ff'']\hh, \p_r\psi[\ff'']\in C^{3,\a}(\mathsf{P}_{r_1,r_2})$. Then, taking $\gg$ equal to $D_\ff \psi [\ff'']\hh$ or $\p_r\psi[\ff'']$ we get
\begin{multline*}
\gg\circ \Phi[\ff'](r,\theta)-\gg\circ\Phi[\ff''](r,\theta)=\gg(r+\ff'(r,\theta),\theta)-\gg(r+\ff''(r,\theta),\theta)\\
=\int_0^1\p_s \left[\gg(r+s\ff'(r,\theta)+(1-s)\ff''(r,\theta),\theta)\right]\ds\\
=\left(\ff'(r,\theta)-\ff''(r,\theta)\right)\int_0^1\p_r \gg(r+s\ff'(r,\theta)+(1-s)\ff''(r,\theta),\theta)\ds.
\end{multline*}
Since $\gg\in C^{3,\a}(\mathsf{P}_{r_1,r_2})$ and $\ff',\ff''\in C^{2,\a}(D_\e)$ we directly have that 
\[
\p_r \gg(r+s\ff'(r,\theta)+(1-s)\ff''(r,\theta),\theta)\in C^{2,\a}(D_\e), \qquad \forall s\in(0,1),
\]
and consequently, using the fact that $C^{2,\a}(D_\e)$ is Banach algebra, we can conclude that
\[
\|\gg\circ\Phi[\ff']-\gg\circ\Phi[\ff'']\|_{C^{2,\alpha}(D_{\ep})}\lesssim \|\ff'-\ff''\|_{C^{2,\a}(D_{\ep})}.
\]

The first factor of \eqref{aux:trozosstability_2} can be handled as follows
\begin{multline*}
\norm{\left(\p_r\psi[\ff']-\p_r\psi[\ff'']\right)\circ\Phi[\ff']}_{C^{2,\a}(D_\e)} \leq \norm{\p_r\psi[\ff']-\p_r\psi[\ff'']}_{C^{2,\a}(\mathsf{P}_{r_1,r_2})}\\
\leq \norm{\psi[\ff']-\psi[\ff'']}_{C^{3,\a}(\mathsf{P}_{r_1,r_2})}
\lesssim \norm{\omega[\ff']-\omega[\ff'']}_{C^{1,\a}(\mathsf{P}_{r_1,r_2})},
\end{multline*}
where, in the last step, we have used the fact that $\psi[\ff']-\psi[\ff'']$ solves the elliptic-type problem 
\begin{equation*}
\left\{
\begin{aligned}
\Delta_B \left(\psi[\ff']-\psi[\ff'']\right)(r,\theta)&=\left(\omega[\ff']-\omega[\ff'']\right)(r,\theta),\qquad (r,\theta)\in \mathsf{P}_{r_1,r_2},\\
\left(\psi[\ff']-\psi[\ff'']\right)(r_1,\theta)&=0, \\
\left(\psi[\ff']-\psi[\ff'']\right)(r_2,\theta)&=0.
\end{aligned}
\right.
\end{equation*}
Proceeding in the same way as we did before we get $\norm{\omega[\ff']-\omega[\ff'']}_{C^{1,\a}(\mathsf{P}_{r_1,r_2})}\leq \|\ff'-\ff''\|_{C^{2,\a}(D_{\ep})}.$

Then, combining all the above we have proved that
\[
\norm{D_\ff\bar{\psi}[\ff']\hh-D_\ff\bar{\psi}[\ff'']\hh}_{C^{2,\a}(D_\e)}\lesssim \|\ff'-\ff''\|_{C^{2,\a}(D_{\ep})} + \|D_\ff \psi[\ff']\hh-D_\ff \psi [\ff'']\hh\|_{C^{2,\alpha}(\mathsf{P}_{r_1,r_2})},
\]
and we conclude that \eqref{aux:continuity} reduces to check that
\begin{equation}\label{aux:continuity1}
\|D_\ff \psi[\ff']\hh-D_\ff \psi [\ff'']\hh\|_{C^{2,\alpha}(\mathsf{P}_{r_1,r_2})}\lesssim \|\ff'-\ff''\|_{C^{2,\a}(D_{\ep})}.
\end{equation}
Since $D_\ff \psi[\ff']\hh$ and $D_\ff \psi[\ff'']\hh$ solve an elliptic-type problem like \eqref{aux:DeltaDF}, we have that their difference satisfies
\begin{equation*}
\left\{
\begin{aligned}
\Delta_B \left(D_\ff \psi[\ff']\hh-D_\ff \psi[\ff'']\hh\right)(r,\theta)&=\left(D_\ff \omega[\ff']\hh-D_\ff \omega[\ff'']\hh\right)(r,\theta),\qquad (r,\theta)\in \mathsf{P}_{r_1,r_2},\\
\left(D_\ff \psi[\ff']\hh-D_\ff \psi[\ff'']\hh\right)(r_1,\theta)&=0, \\
\left(D_\ff \psi[\ff']\hh-D_\ff \psi[\ff'']\hh\right)(r_2,\theta)&=0.
\end{aligned}
\right.
\end{equation*}
Applying classical elliptic regularity theory we have 
\begin{equation}\label{aux:continuity2}
\|D_\ff \psi[\ff']\hh-D_\ff \psi [\ff'']\hh\|_{C^{2,\alpha}(\mathsf{P}_{r_1,r_2})}\lesssim \norm{D_\ff \omega[\ff']\hh-D_\ff \omega[\ff'']\hh}_{C^{0,\a}(\mathsf{P}_{r_1,r_2})}.
\end{equation}
Therefore, combining \eqref{aux:continuity1} and \eqref{aux:continuity2}, our original problem is reduced to prove
\begin{equation}\label{aux:continuityfinal}
\norm{D_\ff \omega[\ff']\hh-D_\ff \omega[\ff'']\hh}_{C^{0,\a}(\mathsf{P}_{r_1,r_2})} \lesssim \|\ff'-\ff''\|_{C^{2,\a}(D_{\ep})}.
\end{equation}

Since we have an explicit expression for $D_\ff \omega[\ff]\hh$ given by \eqref{aux:Dfw[f]}. We start analyzing in detail their difference. That is,
\begin{multline*}
D_\ff \omega[\ff']\hh-D_\ff \omega[\ff'']\hh=\\
\begin{cases}
0 \qquad & r_1 \leq r <\mathrm{m}(R_1-\e,\theta),\\
D_\ff \omega[\ff']\hh-D_\ff \omega[\ff'']\hh \quad & \mathrm{m}(R_1-\e,\theta)\leq r < \mathrm{M}(R_1-\e,\theta),\\
\frac{\left(\varpi'\circ\Phi_1^{-1}[\ff'']\right)  \left(\hh\circ\Phi^{-1}[\ff'']\right)}{1+\left(\p_r\ff''\circ\Phi^{-1}[\ff'']\right)}-\frac{\left(\varpi'\circ\Phi_1^{-1}[\ff']\right)  \left(\hh\circ\Phi^{-1}[\ff']\right)}{1+\left(\p_r\ff'\circ\Phi^{-1}[\ff']\right)} \quad &\mathrm{M}(R_1-\e,\theta) \leq r < \mathrm{m}(R_1+\e,\theta),\\
D_\ff \omega[\ff']\hh-D_\ff \omega[\ff'']\hh \quad & \mathrm{m}(R_1+\e,\theta)\leq r < \mathrm{M}(R_1+\e,\theta),\\
0 & \mathrm{M}(R_1+\e,\theta)\leq r < \mathrm{m}(R_2-\e,\theta),\\
D_\ff \omega[\ff']\hh-D_\ff \omega[\ff'']\hh \quad & \mathrm{m}(R_2-\e,\theta)\leq r < \mathrm{M}(R_2-\e,\theta),\\
\frac{\left(\varpi'\circ\Phi_1^{-1}[\ff'']\right)  \left(\hh\circ\Phi^{-1}[\ff'']\right)}{1+\left(\p_r\ff''\circ\Phi^{-1}[\ff'']\right)}-\frac{\left(\varpi'\circ\Phi_1^{-1}[\ff']\right)  \left(\hh\circ\Phi^{-1}[\ff']\right)}{1+\left(\p_r\ff'\circ\Phi^{-1}[\ff']\right)} \qquad &\mathrm{M}(R_2-\e,\theta) \leq r < \mathrm{m}(R_2+\e,\theta),\\
D_\ff \omega[\ff']\hh-D_\ff \omega[\ff'']\hh \quad & \mathrm{m}(R_2+\e,\theta)\leq r < \mathrm{M}(R_2+\e,\theta),\\
0 & \mathrm{M}(R_2+\e,\theta)\leq r \leq  r_2
\end{cases}   
\end{multline*}
where
\begin{align*}
    \mathrm{m}(r,\theta)&:=\min\{\Phi_1[\ff'](r,\theta),\Phi_1[\ff''](r,\theta)\},\\
    \mathrm{M}(r,\theta)&:=\max\{\Phi_1[\ff'](r,\theta),\Phi_1[\ff''](r,\theta)\}.
\end{align*}
\begin{remark}\label{auxremark:Df'-Df''}
Over each region $\{(r,\theta)\in \mathsf{P}_{r_1,r_2}: \mathrm{m}(R_i+(-1)^j\e,\theta) \leq r < \mathrm{M}(R_i+(-1)^j\e,\theta)\}$ with $i,j=1,2$ we have that the difference $D_\ff \omega[\ff']\hh-D_\ff \omega[\ff'']\hh$ will be 
 \[
-\frac{\left(\varpi'\circ\Phi_1^{-1}[\ff']\right)  \left(\hh\circ\Phi^{-1}[\ff']\right)}{1+\left(\p_r\ff'\circ\Phi^{-1}[\ff']\right)} \qquad\text{or} \qquad -\frac{\left(\varpi'\circ\Phi_1^{-1}[\ff'']\right)  \left(\hh\circ\Phi^{-1}[\ff'']\right)}{1+\left(\p_r\ff''\circ\Phi^{-1}[\ff'']\right)}.
 \]
\end{remark}
We can proceed as we did before and define the auxiliary sets
\begin{equation*}
V(R_i+(-1)^j \e):=\{ (r,\theta)\in \mathsf{P}_{r_1,r_2}: \mathrm{m}(R_i+(-1)^j \e,\theta)< r < \mathrm{M}(R_i+(-1)^j \e,\theta)\}, 
\end{equation*}
and
\begin{equation*}
U^k:=\{(r,\theta)\in \mathsf{P}_{r_1,r_2}: \mathrm{M} (R_k-\e,\theta)\leq r \leq \mathrm{m}(R_k+\e,\theta)\}.
\end{equation*}
Then, as an immediate consequence we get 
\begin{multline}\label{aux:finalcont}
\|D_\ff\bar{\psi}[\ff']\hh-D_\ff\bar{\psi}[\ff'']\hh\|_{C^{0,\alpha}(\mathsf{P}_{r_1,r_2})}=\sum_{i,j=1,2}\norm{D_\ff\bar{\psi}[\ff']\hh-D_\ff\bar{\psi}[\ff'']\hh}_{C^{0,\a}(V(R_i+(-1)^j \e))}\\
+\sum_{k=1,2}\norm{D_\ff\bar{\psi}[\ff']\hh-D_\ff\bar{\psi}[\ff'']\hh}_{C^{0,\a}(U^k)}.
\end{multline}

Now, we are going to study each of the terms of the previous expression. On the one hand, we have
\[
\norm{D_\ff\bar{\psi}[\ff']\hh-D_\ff\bar{\psi}[\ff'']\hh}_{C^{0,\a}(U^k)}=\norm{\frac{\left(\varpi'\circ\Phi_1^{-1}[\ff'']\right)  \left(\hh\circ\Phi^{-1}[\ff'']\right)}{1+\left(\p_r\ff''\circ\Phi^{-1}[\ff'']\right)}-\frac{\left(\varpi'\circ\Phi_1^{-1}[\ff']\right)  \left(\hh\circ\Phi^{-1}[\ff']\right)}{1+\left(\p_r\ff'\circ\Phi^{-1}[\ff']\right)}}_{C^{0,\a}(U^k)},
\]
which can be decomposed and upper bounded (adding and subtracting appropriate terms) as follows 
\begin{multline*}
\norm{D_\ff\bar{\psi}[\ff']\hh-D_\ff\bar{\psi}[\ff'']\hh}_{C^{0,\a}(U^k)}\\
\leq  \norm{\left(\varpi'\circ\Phi_1^{-1}[\ff'']\right)  \left(\hh\circ\Phi^{-1}[\ff'']\right)\left[\frac{1}{1+\left(\p_r\ff''\circ\Phi^{-1}[\ff'']\right)}-\frac{1}{1+\left(\p_r\ff'\circ\Phi^{-1}[\ff'']\right)}\right]}_{C^{0,\a}(U^k)}\\
+\norm{\left(\varpi'\circ\Phi_1^{-1}[\ff'']\right)  \left(\hh\circ\Phi^{-1}[\ff'']\right)\left[\frac{1}{1+\left(\p_r\ff'\circ\Phi^{-1}[\ff'']\right)}-\frac{1}{1+\left(\p_r\ff'\circ\Phi^{-1}[\ff']\right)}\right]}_{C^{0,\a}(U^k)}\\
+\norm{\frac{\left(\varpi'\circ\Phi_1^{-1}[\ff'']\right)  \left(\hh\circ\Phi^{-1}[\ff'']\right)-\left(\varpi'\circ\Phi_1^{-1}[\ff']\right)  \left(\hh\circ\Phi^{-1}[\ff']\right)}{1+\left(\p_r\ff'\circ\Phi^{-1}[\ff']\right)}}_{C^{0,\a}(U^k)}.
\end{multline*}
Since H\"{o}lder space $C^{0,\a}(U^k)$ is a Banach algebra we get
\begin{multline*}
\norm{D_\ff\bar{\psi}[\ff']\hh-D_\ff\bar{\psi}[\ff'']\hh}_{C^{0,\a}(U^k)}\leq 
\frac{1}{1-\norm{\ff'}_{C^{1,\a}(D_\e)}}\norm{(\varpi'\hh)\circ\Phi^{-1}[\ff'']- (\varpi'\hh)\circ\Phi^{-1}[\ff']}_{C^{0,\a}(U^k)}\\
+\frac{\norm{\varpi}_{C^{1,\a}([r_1,r_2])}\norm{\hh}_{C^{0,\a}(D_\e)}}{1-\max\{\norm{\ff'}_{C^{1,\a}(D_\e)}, \norm{\ff''}_{C^{1,\a}(D_\e)}\}}\norm{\left(\p_r\ff'-\p_r\ff''\right)\circ\Phi^{-1}[\ff'']}_{C^{0,\a}(U^k)}\\
+\frac{\norm{\varpi}_{C^{1,\a}([r_1,r_2])}\norm{\hh}_{C^{0,\a}(D_\e)}}{1- \norm{\ff''}_{C^{1,\a}(D_\e)}}\norm{\p_r\ff'\circ\Phi^{-1}[\ff']-\p_r\ff'\circ\Phi^{-1}[\ff'']}_{C^{0,\a}(U^k)}.
\end{multline*}
In addition, we have directly that 
\begin{align*}
\norm{(\varpi'\hh)\circ\Phi^{-1}[\ff'']- (\varpi'\hh)\circ\Phi^{-1}[\ff']}_{C^{0,\a}(U^k)}&\leq  \norm{\varpi'\hh}_{C^{1,\a}(D_\e)}\norm{\Phi^{-1}[\ff'']-\Phi^{-1}[\ff']}_{C^{0,\a}(U^k)}, \\
\norm{\p_r\ff'\circ\Phi^{-1}[\ff']-\p_r\ff'\circ\Phi^{-1}[\ff'']}_{C^{0,\a}(U^k)}&\leq \norm{\ff'}_{C^{2,\a}(D_\e)}\norm{\Phi^{-1}[\ff'']-\Phi^{-1}[\ff']}_{C^{0,\a}(U^k)},\\
\norm{\left(\p_r\ff'-\p_r\ff''\right)\circ\Phi^{-1}[\ff'']}_{C^{0,\a}(U^k)}&\leq \norm{\ff'-\ff''}_{C^{1,\a}(D_\e)}.
\end{align*}
To conclude, we only need to see that
\begin{equation}\label{aux:Phif-Phif}
\norm{\Phi^{-1}[\ff'']-\Phi^{-1}[\ff']}_{C^{0,\a}(U^k)}\leq \norm{\ff'-\ff''}_{C^{0,\a}(D_\e)}.
\end{equation}
In order to prove this we apply once again Taylor's expansion, which ensures that there exists $\tau^\ast\in(0,1)$ such that
\begin{align*}
\Phi^{-1}[\ff'']-\Phi^{-1}[\ff']&=\Phi^{-1}[\ff''+\tau(\ff'-\ff'')]\big|_{\tau=0}-\Phi^{-1}[\ff''+\tau(\ff'-\ff'')]\big|_{\tau=1}\\
&=\frac{d}{d\tau}\Phi^{-1}[\ff''+\tau(\ff'-\ff'')]\big|_{\tau=\tau^\ast} \,(\ff''-\ff').
\end{align*}
Moreover, using \eqref{aux:1dtauinversa}-\eqref{aux:1dtauinvtheta} we get
\[
\norm{\Phi^{-1}[\ff'']-\Phi^{-1}[\ff']}_{C^{0,\a}(U^k)}\leq \frac{\norm{\ff'-\ff''}_{C^{0,\a}(D_\e)}}{1-\max\{\norm{\ff''}_{C^{1,\a}(D_\e)},\norm{\ff'-\ff''}_{C^{1,\a}(D_\e)}\}}\norm{\ff'-\ff''}_{C^{0,\a}(D_\e)}.
\]
Therefore, combining all these calculations we have proved that
\begin{equation}\label{aux:diffDF_U}
\norm{D_\ff\bar{\psi}[\ff']\hh-D_\ff\bar{\psi}[\ff'']\hh}_{C^{0,\a}(U^k)} \lesssim \norm{\ff'-\ff''}_{C^{1,\a}(D_\e)}.
\end{equation}

On the other hand, taking into account Remark \ref{auxremark:Df'-Df''} and proceeding as before, we have that
\begin{multline*}
\norm{D_\ff\bar{\psi}[\ff']\hh-D_\ff\bar{\psi}[\ff'']\hh}_{C^{0,\a}(V(R_i+(-1)^j \e))}=\norm{\frac{\left(\varpi'\circ\Phi_1^{-1}[\ff']\right)  \left(\hh\circ\Phi^{-1}[\ff']\right)}{1+\left(\p_r\ff'\circ\Phi^{-1}[\ff']\right)}}_{C^{0,\a}(V^+(R_i+(-1)^j \e))}\\
+\norm{\frac{\left(\varpi'\circ\Phi_1^{-1}[\ff'']\right)  \left(\hh\circ\Phi^{-1}[\ff'']\right)}{1+\left(\p_r\ff''\circ\Phi^{-1}[\ff'']\right)}}_{C^{0,\a}(V^-(R_i+(-1)^j \e))},
\end{multline*}
where we have used the disjoint union given by
\[
V(R_i+(-1)^j \e))=V^+(R_i+(-1)^j \e))\cup V^-(R_i+(-1)^j \e)),
\]
with
\begin{align*}
V^{+}(z):=\{ (r,\theta)\in V(z): \mathrm{m}(z,\theta)= \Phi_1[\ff'](z,\theta)\},\\
V^{-}(z):=\{ (r,\theta)\in V(z): \mathrm{M}(z,\theta)= \Phi_1[\ff'](z,\theta)\},
\end{align*}
for $z\in\left\{ R_i+(-1)^j \e : i,j=1,2\right\}$.

Since $\varpi'\circ\Phi_1^{-1}[\ff'']\equiv 0$ over $V^{+}(R_i+(-1)^j \e)$ by definition, we directly have that
\begin{multline*}
\norm{\frac{\left(\varpi'\circ\Phi_1^{-1}[\ff']\right)  \left(\hh\circ\Phi^{-1}[\ff']\right)}{1+\left(\p_r\ff'\circ\Phi^{-1}[\ff']\right)}}_{C^{0,\a}(V^+(R_i+(-1)^j \e))}\\
=\norm{\frac{\left(\varpi'\circ\Phi_1^{-1}[\ff']-\varpi'\circ\Phi_1^{-1}[\ff'']\right)  \left(\hh\circ\Phi^{-1}[\ff']\right)}{1+\left(\p_r\ff'\circ\Phi^{-1}[\ff']\right)}}_{C^{0,\a}(V^+(R_i+(-1)^j \e))}.
\end{multline*}
Now, using the fact that H\"{o}lder space $C^{0,\a}(V^+(R_i+(-1)^j \e))$ is a Banach algebra together with the analogous of \eqref{aux:Phif-Phif} we get
\begin{multline*}
\norm{\frac{\left(\varpi'\circ\Phi_1^{-1}[\ff']\right)  \left(\hh\circ\Phi^{-1}[\ff']\right)}{1+\left(\p_r\ff'\circ\Phi^{-1}[\ff']\right)}}_{C^{0,\a}(V^+(R_i+(-1)^j \e))}\\
\leq \norm{\varpi'}_{C^{1,\a}(D_\e)} \norm{\Phi^{-1}[\ff']-\Phi^{-1}[\ff'']}_{C^{0,\a}(V^+(R_i+(-1)^j \e))}\frac{\norm{\hh}_{C^{0,\a}(D_\e)}}{1-\norm{\ff'}_{C^{1,\a}(D_\e)}}\\
\leq 
\norm{\varpi'}_{C^{1,\a}(D_\e)} \norm{\ff'-\ff''}_{C^{0,\a}(D_\e)}\frac{\norm{\hh}_{C^{0,\a}(D_\e)}}{1-\norm{\ff'}_{C^{1,\a}(D_\e)}}.
\end{multline*}
Proceeding similarly with straightforward modifications we can conclude the same result over the domain $V^-(R_i+(-1)^j \e)$.  As a immediate consequence, combining both, we have proved that
\begin{equation}\label{aux:diffDF_V}
\norm{D_\ff\bar{\psi}[\ff']\hh-D_\ff\bar{\psi}[\ff'']\hh}_{C^{0,\a}(V(R_i+(-1)^j \e))}\lesssim \norm{\ff'-\ff''}_{C^{0,\a}(D_\e)}.
\end{equation}

In summary, by putting together \eqref{aux:finalcont}, \eqref{aux:diffDF_U} and \eqref{aux:diffDF_V} we have obtained \eqref{aux:continuityfinal}. So, backing up our argument, we have proved our initial objective \eqref{continuity}, i.e.,
\[
\|D_\ff F[\l,\ff']\hh-D_\ff  F[\l,\ff'']\hh\|_{C^{2,\alpha}(D_{\ep})}\lesssim \|\ff'-\ff''\|_{C^{2,\a}(D_{\ep})},
\]
for any pair $\ff',\ff'' \in \mathbb{B}_{\d}(X(D_{\ep}))$ and $\hh\in X(D_{\ep}).$

Consequently, we have obtained that the G\^ateaux derivatives are continuous
with respect to the strong topology and hence they are, in fact, Fr\'echet derivatives. Therefore, we can conclude that the Fr\'{e}chet derivative exists and coincides with the G\^ateaux derivative. See \cite{DM_book, Gateaux-Frechet} for more details.
\end{proof}

\subsection{An explicit expression for the differential at the origin}\label{s:explictDF0}
Recall that the linearization of \eqref{def:Functional} around $\ff=0,$ thanks to the expression \eqref{def:dF}, is given by
\[
D_\ff F[\l,0]\hh(r,\theta)=\l r \hh(r,\theta) + D_\ff\bar{\psi}[0]\hh(r,\theta)\\
-\frac{1}{2\pi}\int_\T \l r \hh(r,s) + D_\ff\bar{\psi}[0]\hh(r,s)\ds.
\]
Evaluating \eqref{def:dbarpsi} at $\ff=0$ we have
\[
D_\ff \bar{\psi}[0]\hh(r,\theta)=D_\ff \psi[0]\hh(r,\theta) + \p_r \psi[0](r,\theta) \hh(r,\theta).
\]
Recall that $\psi[0]$ is the solution of \eqref{aux:DBpsi}-\eqref{aux:omega[f]} for $\ff=0$. In this particular case, the system is reduced to a system that depends only on the radial variable. That is, $\psi[0](r,\theta)\equiv \psi[0](r)$ solving
\begin{equation*}
\left\{
\begin{aligned}
-\left(\psi[0]''(r)+\frac{1}{r}\psi[0]'(r)\right)&=\varpi(r),\qquad r\in[r_1,r_2],\\
\psi[0](r_1)&=0, \\
\psi[0](r_2)&=\gamma,
\end{aligned}
\right.
\end{equation*}
Likewise, we recall that $D_\ff \psi[0]\hh$ is the solution of \eqref{aux:DeltaDF}-\eqref{aux:Dfw[f]} for $\ff=0$.  That is,
\begin{equation*}
\left\{
\begin{aligned}
\Delta_B \left(D_\ff \psi[0]\hh\right)(r,\theta)&=D_\ff \omega[0]\hh(r,\theta),\qquad (r,\theta)\in \mathsf{P}_{r_1,r_2},\\
D_\ff \psi[0]\hh(r_1,\theta)&=0, \\
D_\ff \psi[0]\hh(r_2,\theta)&=0,
\end{aligned}
\right.
\end{equation*}
with
\begin{equation*}
D_\ff \omega[0]\hh (r,\theta)=\begin{cases}
0 \qquad & r_1\leq r < R_1-\e,\\
-\varpi'(r) \hh(r,\theta) \qquad & R_1-\e \leq r \leq R_1+\e,\\
0 & R_1+\e < r < R_2-\e,\\
-\varpi'(r) \hh(r,\theta) \qquad & R_2-\e \leq r \leq R_2+\e,\\
0 & R_2+\e < r\leq r_2.
\end{cases}
\end{equation*}

Since $\hh\in X(D_\e)$, in particular $\hh$ is mean zero in the angular variable. Then, we can deduce that $D_\ff \omega[0]\hh$ is also mean zero in the angular variable and consequently also 
$D_\ff \psi[0]\hh$. For this reason, we can conclude that $\int_\T \l r \hh(r,s) + D_\ff\bar{\psi}[0]\hh(r,s)\ds=0$ and finally the linearization around $\ff=0$ is simply 
\[
D_\ff F[\l,0]\hh(r,\theta)=\l r \hh(r,\theta) + D_\ff\bar{\psi}[0]\hh(r,\theta).
\]

Up to this point we have proved Hypothesis 1 and 2 of C-R theorem \ref{th:CR}. The bulk and main task of the manuscript will be to prove Hypothesis 3. In order to do that, we need an explicit and manageable expression of the linear operator.

\section{Analysis of the linear operator}\label{s:linear}

Let $\hh\in X(D_\e)$ with expansion
\[
\hh(r,\theta)=\sum_{n\geq 1}h_n(r)\cos(n\theta), \qquad (r,\theta)\in D_\e.
\]
Above, see Section \ref{s:explictDF0}, we have proved that
\[
D_\ff F[\lambda,0]\hh (r,\theta)=\lambda r \hh(r,\theta) + D_\ff\bar{\psi}[0]\hh(r,\theta),
\]
with
\[
D_\ff\bar{\psi}[0]\hh(r,\theta)=\phi'(r)\hh(r,\theta)+ \Phi(r,\theta),
\]
where $\phi\equiv\phi(r)$ solves
\begin{equation}\label{sys:Phi_0}
\left\{
\begin{aligned}
-\left(\phi''(r)+\frac{1}{r}\phi'(r)\right)&=\varpi(r),\qquad r\in[r_1,r_2],\\
\phi(r_1)&=0, \\
\phi(r_2)&=\gamma,
\end{aligned}
\right.
\end{equation}
and $\Phi \equiv \Phi(r,\theta)$ solves
\begin{equation}\label{sys:Phi_1}
\left\{
\begin{aligned}
\Delta_B \Phi(r,\theta)&=\varpi'(r)h(r,\theta),\qquad (\rho,\theta)\in \mathsf{P}_{r_1,r_2}\\
\Phi(r_1,\theta)&=0, \\
\Phi(r_2,\theta)&=0.
\end{aligned}
\right.
\end{equation}
Here, with a little abuse of notation, since $\supp(\varpi')=I_\e^{R_1,R_2}$ we will understand that 
\[
\varpi'(r)h(r,\theta)=
\begin{cases}
    \varpi'(r)h(r,\theta), \qquad &(r,\theta)\in D_\e,\\
    0, &(r,\theta)\in\mathsf{P}_{r_1,r_2}\setminus D_\e.
\end{cases}
\]
Notice that $\hh$ is not defined in $\mathsf{P}_{r_1,r_2}\setminus D_\e.$

\subsection{Solving the boundary value problems \eqref{sys:Phi_0} and \eqref{sys:Phi_1}}
For general $\varpi$, the solution of \eqref{sys:Phi_0} is given by
\begin{equation}\label{def:Phi_0}
\phi(r)= C_\varpi \log\left(\frac{r}{r_1}\right)-\int_{r_1}^{r}\frac{1}{s}\left(\int_{r_1}^{s} t \varpi(t)\dt\right)\ds,   
\end{equation}
with
\begin{equation}\label{def:ctePhi_0}
C_\varpi=\log^{-1}\left(\frac{r_2}{r_1}\right)\left[\gamma+\int_{r_1}^{r_2}\frac{1}{s}\left(\int_{r_1}^{s} t \varpi(t)\dt\right)\ds\right].
\end{equation}
Similarly, we also need an explicit expression for the system \eqref{sys:Phi_1}. In order to do that, taking the Fourier transform  in the angular variable and using the expansion
\begin{equation}\label{def:sumPhi}
\Phi(r,\theta)=\sum_{n\geq 1} \Phi_n(r)\cos(n\theta),
\end{equation}
we have (see Appendix \ref{app:green}) that
\begin{equation}\label{def:Phi_n}
\Phi_n(r)=\frac{1}{n}\int_{r_1}^{r} s \varpi'(s)\sn(r/s)h_n(s)\ds -\frac{1}{n}\frac{\sn(r/r_1)}{\sn(r_2/r_1)} \int_{r_1}^{r_2}s\varpi'(s)\sn(r_2/s)h_n(s)\ds,   
\end{equation}
where we have introduced the auxiliary hyperbolic functions
\begin{equation}\label{def:auxhyper}
\sn(r):=\sinh(n\log r), \qquad \cn(r):=\cosh(n \log r). 
\end{equation}

Combining all, we immediately obtain that
\[
D_\ff F[\lambda,0]\hh (r,\theta)=(\phi'(r)+\lambda r) \hh(r,\theta) + \Phi(r,\theta), \qquad (r,\theta)\in D_\e,
\]
with $\Phi$ given by \eqref{def:sumPhi}-\eqref{def:Phi_n} and $\phi$ given by \eqref{def:Phi_0}-\eqref{def:ctePhi_0}. Then, defining $\cL[\l]\hh:=D_\ff F[\lambda,0]\hh$ give us that
\begin{equation}\label{def:LsumLn}
\cL[\l]\hh=\cL[\l]\left(\sum_{n\geq1}h_n(r)\cos(n\theta)\right)=\sum_{n\geq 1}\cos(n\theta)\cL_n[\l]\Pi_n \hh
\end{equation}
where $\Pi_n$ is just the projector onto $\cos(n\theta)$ and $\cL_n[\l]$ is the operator given by
\begin{multline}\label{def:Lnoperator}
\cL_n[\l]\gg(r):=\left(\phi'(r)+\l r\right)\gg(r)\\
+\frac{1}{n}\int_{r_1}^{r} s\varpi'(s)\sn(r/s)\gg(s)\ds-\frac{\sn(r/r_1)}{\sn(r_2/r_1)}\frac{1}{n}\int_{r_1}^{r_2} s\varpi'(s)\sn(r_2/s)\gg(s)\ds, \qquad r\in I_\e^{R_1,R_2}.
\end{multline}

\subsection{Introducing the Taylor-Couette profile}
Up to this moment we have been working during all this section for a general circular flow. However, we are just interested in the dynamics around the particular case of the Taylor-Couette flow. 

That is, we need to focus only on
\begin{equation}\label{vapi:background+perturbation}
\varpi(r) = 2A+\varpi_{\e,\k}(r),
\end{equation}
with $\varpi_{\e,\k}$ given by \eqref{varpiepkappa}. In fact,
recalling \eqref{def:Phi_0} and \eqref{def:ctePhi_0} and applying \eqref{vapi:background+perturbation} we get
\begin{equation}\label{def:Phi_0new}
\phi(r)= C_{\varpi_{\e,\k}}\log\left(\frac{r}{r_1}\right) -A\left(\frac{r^2-r_1^2}{2}-r_1^2\log\left(\frac{r}{r_1}\right)\right)- \int_{r_1}^r \frac{1}{s}\left(\int_{r_1}^{s} t \varpi_{\e,\k}(t)\mbox{d}t\right)\mbox{d}s,
\end{equation}
with
\begin{equation}\label{def:Cvapinew}
C_{\varpi_{\e,\k}}=\log^{-1}\left(\frac{r_2}{r_1}\right)\left[\gamma + A\left(\frac{r_2^2-r_1^2}{2}-r_1^2\log\left(\frac{r_2}{r_1}\right)\right)+\int_{r_1}^{r_2}\frac{1}{s}\left(\int_{r_1}^{s} t \varpi_{\e,\k}(t)\mbox{d}t\right)\mbox{d}s\right].
\end{equation}
In addition, the operator \eqref{def:Lnoperator} is reduced to the following expression
\begin{multline}\label{def:Lnggpre}
\cL_n[\l]\gg(r)=\left(\phi'(r)+\l r\right)\gg(r)\\
-\frac{\sn(r/r_1)}{\sn(r_2/r_1)}\frac{1}{n}\int_{r_1}^{r_2} s\varpi_{\e,\k}'(s)\sn(r_2/s)\gg(s)\ds +\frac{1}{n}\int_{r_1}^{r} s\varpi_{\e,\k}'(s)\sn(r/s)\gg(s)\ds.
\end{multline}

\section{Rescaling of the decomposition of the linear operator}\label{s:rescaling}
Recall that one of the main task of the paper will be to study the kernel and co-kernel of the linear operator $\cL[\l]\equiv \sum_{n\geq 1} \cos( n\theta)\cL_n[\l]\Pi_n$ over $D_\e$ where $\Pi_n$ is just the projector onto $\cos(n\theta)$. 

Here, it is important to emphasize that by definition of $D_\e$ and $\varpi'_{\e,\k}$ (see \eqref{dom:dynamics} and \eqref{varpiepkappa}) we have the following relation
\[
D_\e=\supp(\varpi')\times\T=\supp(\varpi'_{\e,\k})\times\T,
\]
where
\begin{equation}\label{supportvapi'}
\supp(\varpi'_{\e,\k})=I_\e^{R_1}\cup I_\e^{R_2},
\end{equation}
with
\[
I_\e^{R_1}=(R_1-\e,R_1+\e), \qquad  I_\e^{R_2}=(R_2-\e,R_2+\e).
\]

Our procedure will be to do an asymptotic analysis of the linear operator in terms of the smallness parameter $\e$. In order to do that, we will start writing the operator $\cL_n[\l]$ in a more tractable manner. 
Firstly, we note that we are considering a domain divided into two disjoint pieces. Then, taking \eqref{supportvapi'} into account we can just write
\[
\gg(r)=
\begin{cases}
\gg_1(r), \qquad r\in I_\e^{R_1},\\
\gg_2(r), \qquad r\in I_\e^{R_2}.
\end{cases}
\]
Using that information via \eqref{def:Lnggpre}, we obtain that linear operator $\cL_n[\l]$ admits an expression such as
\[
\cL_n[\l]
\begin{pmatrix}
\gg_1\\
\gg_2
\end{pmatrix}
(r)=
\begin{cases}
\cL_n^1[\l]
\begin{pmatrix}
\gg_1\\
\gg_2
\end{pmatrix}
(r), \qquad r\in I_\e^{R_1},\vspace{0.25 cm}\\
\cL_n^2[\l]
\begin{pmatrix}
\gg_1\\
\gg_2
\end{pmatrix}
(r), \qquad r\in I_\e^{R_2},
\end{cases}
\]
with
\begin{multline}\label{def:LnggR1}
\cL_n^1[\l]
\begin{pmatrix}
\gg_1\\
\gg_2
\end{pmatrix}
(r):=\left(\phi'(r)+\l r\right)\gg_1(r)\\
-\frac{\sn(r/r_1)}{\sn(r_2/r_1)}\frac{1}{n} \left(\int_{I_\e^{R_1}} s\varpi_{\e,\k}'(s)\sn(r_2/s)\gg_1(s)\ds +\int_{I_\e^{R_2}} s\varpi_{\e,\k}'(s)\sn(r_2/s)\gg_2(s)\ds\right)\\
 +\frac{1}{n}\int_{R_1-\e}^{r} s\varpi_{\e,\k}'(s)\sn(r/s)\gg_1(s)\ds, \qquad \text{for } r\in I_\e^{R_1},
\end{multline}
and
\begin{multline}\label{def:LnggR2}
\cL_n^2[\l]
\begin{pmatrix}
\gg_1\\
\gg_2
\end{pmatrix}
(r):=\left(\phi'(r)+\l r\right)\gg_2(r)\\
-\frac{\sn(r/r_1)}{\sn(r_2/r_1)}\frac{1}{n}\left(\int_{I_\e^{R_1}} s\varpi_{\e,\k}'(s)\sn(r_2/s)\gg_1(s)\ds + \int_{I_\e^{R_2}} s\varpi_{\e,\k}'(s)\sn(r_2/s)\gg_2(s)\ds\right)\\
 +\frac{1}{n}\left(\int_{I_\e^{R_1}} s\varpi_{\e,\k}'(s)\sn(r/s)\gg_1(s)\ds +  \int_{R_2-\e}^{r} s\varpi_{\e,\k}'(s)\sn(r/s)\gg_2(s)\ds\right), \qquad\text{for } r\in I_\e^{R_2}.
\end{multline}

\subsection{Re-scaling over $I_{\e}^{R_1}$ and $I_{\e}^{R_2}$}
Here, we are going to focus on write the previous operators $\cL_n^{1}[\l]$ and $\cL_n^{2}[\l]$ given respectively by \eqref{def:LnggR1} and \eqref{def:LnggR2} in a more convenient way that will allow us to handle them later. 

In first place, we take into account the change of variables
\[
r\in I_\e^{R_i}\mapsto z=(r-R_i)\e^{-1}\in (-1,1), \qquad \text{for } i=1,2.
\]
Secondly, recalling the definition of the profile function $\varpi_{\e,\k}$ given by \eqref{varpiepkappa} we obtain
\begin{equation*}
\varpi_{\e,\k}'(R_1+\e z)=-\varphi_\k' (-z), \qquad \varpi_{\e,\k}'(R_2+\e z)=+\varphi_\k' (+z).
\end{equation*}
Thirdly, we simplify the notation using  the auxiliary functions 
\begin{equation}\label{def:ggRi}
\bar{\gg}_i(z):=\gg(R_i+\e z),
\end{equation}
and
\begin{equation}\label{def:UpsRi}
\Upsilon_{\e,\k}^{R_i}(z):=(R_i+\e z)\phi'(R_i+\e z).
\end{equation}
Finally, combining all of the above, multiplying the resulting equation by  $(R_i+\e z)$ and using notation  \eqref{def:ggRi} and \eqref{def:UpsRi} we obtain
\begin{multline}\label{finalLnR1}
(R_1+\e z)\cL_n^1[\l]
\begin{pmatrix}
\gg_1\\
\gg_2
\end{pmatrix}
(R_1+\e z)=\left(\Upsilon_{\e,\k}^{R_1}(z)+\l (R_1+\e z)^2\right)\bar{\gg}_1(z)\\
+(R_1+\e z)\frac{\sn((R_1+\e z)/r_1)}{\sn(r_2/r_1)}\frac{\e}{n}\int_{-1}^{+1} (R_1+\e s)\varphi_\k' (-s)\sn(r_2/(R_1+\e s))\bar{\gg}_1(s)\ds\\
-(R_1+\e z)\frac{\sn((R_1+\e z)/r_1)}{\sn(r_2/r_1)}\frac{\e}{n}\int_{-1}^{+1} (R_2+\e s)\varphi_\k' (s)\sn(r_2/(R_2+\e s))\bar{\gg}_2( s)\ds\\
-(R_1+\e z)\frac{\e}{n}\int_{-1}^{z} (R_1+\e s)\varphi_\k' (-s)\sn((R_1+\e z)/(R_1+\e s))\bar{\gg}_1(s)\ds,
\end{multline}
and
\begin{multline}\label{finalLnR2}
(R_2+\e z)\cL_n^2[\l]\begin{pmatrix}
\gg_1\\
\gg_2
\end{pmatrix}
(R_2+\e z)=\left(\Upsilon_{\e,\k}^{R_2}(z)+\l (R_2+\e z)^2\right)\bar{\gg}_2(z)\\
+(R_2+\e z)\frac{\sn((R_2+\e z)/r_1)}{\sn(r_2/r_1)}\frac{\e}{n}\int_{-1}^{+1} (R_1+\e s)\varphi_\k' (-s)\sn(r_2/(R_1+\e s))\bar{\gg}_1(s)\ds \\
-(R_2+\e z)\frac{\sn((R_2+\e z)/r_1)}{\sn(r_2/r_1)}\frac{\e}{n}\int_{-1}^{+1} (R_2+\e s)\varphi_\k' (s)\sn(r_2/(R_2+\e s))\bar{\gg}_2(s)\ds\\
-(R_2+\e z)\frac{\e}{n}\int_{-1}^{+1} (R_1+\e s)\varphi_\k' (-s)\sn((R_2+\e z)/(R_1+\e s))\bar{\gg}_1(s)\ds\\
+(R_2+\e z)\frac{\e}{n}\int_{-1}^{ z} (R_2+\e s)\varphi_\k' (s)\sn((R_2+\e z)/(R_2+\e s))\bar{\gg}_2(s)\ds,
\end{multline}
with $z\in(-1,1).$

\begin{remark}
It is important to emphasize that after make these change of variables we get a factor $\varepsilon$ in front of each integral term. This is an immediate consequence of the fact that $|I_\e^{R_i}|=O(\e).$
\end{remark}

\begin{remark}
Note that each of the above expressions \eqref{finalLnR1} and \eqref{finalLnR2} are defined on the unit interval.
\end{remark}

\begin{remark}
We have multiplied \eqref{def:LnggR1} and \eqref{def:LnggR2} by $(R_i+\e z)$ respectively only for technical reasons. The main motive is that it will be easier to do the asymptotic analysis of the auxiliary function introduced in \eqref{def:UpsRi} instead of $\phi'(R_i+\e z)$.
\end{remark}

\subsection{The re-scaled linear operator }

Finally, for $z\in(-1,1)$ we define the auxiliary functional
\begin{align}
\cL_{n,\e}^{R_1}[\l]\begin{pmatrix}
\bar{\gg}_1\\
\bar{\gg}_2
\end{pmatrix}(z):&=(R_1+\e z)\cL_n^1[\l] \begin{pmatrix}
\gg_1\\
\gg_2
\end{pmatrix}(R_1+\e z), \label{LR1final}\\
\cL_{n,\e}^{R_2}[\l]\begin{pmatrix}
\bar{\gg}_1\\
\bar{\gg}_2
\end{pmatrix}(z):&=(R_2+\e z)\cL_n^2[\l] \begin{pmatrix}
\gg_1\\
\gg_2
\end{pmatrix}(R_2+\e z), \label{LR2final}
\end{align}
which are given by the right hand side of \eqref{finalLnR1} and \eqref{finalLnR2} respectively.\\

Here, it is important to write explicitly how we can recover the full expression of the linear operator by going backwards. Thus, let $\hh\in X(D_\e)$ given by the expansion
\[
\hh(r,\theta)=\sum_{n\geq 1}h_n(r) \cos( n\theta), \qquad (r,\theta)\in D_\e,
\]
with
\[
h_n(r)=
\begin{cases}
h_n^{R_1}(r), \qquad r\in I_\e^{R_1},\\
h_n^{R_2}(r), \qquad r\in I_\e^{R_2}.
\end{cases}
\]
Hence, for $(r,\theta)\in D_\e=\left(I_\e^{R_1}\cup I_\e^{R_2}\right)\times\T$, we finally get
\begin{align*}
\cL[\l]\hh(r,\theta)=\sum_{n\geq 1} \cos(n\theta)    \cL_n[\l] h_n(r)&=
\begin{cases}
\sum_{n\geq 1} \cos(n\theta)  \cL_n^1[\l]
\begin{pmatrix}
h_n^{R_1}\\
h_n^{R_2}
\end{pmatrix}
(r), \qquad r\in I_\e^{R_1},\vspace{0.25 cm}\\
\sum_{n\geq 1} \cos(n\theta)  \cL_n^2[\l]
\begin{pmatrix}
h_n^{R_1}\\
h_n^{R_2}
\end{pmatrix}
(r), \qquad r\in I_\e^{R_2},
\end{cases}   
\end{align*}
or equivalently
\begin{align*}
\cL[\l]\hh(r,\theta)=
\begin{cases}
\sum_{n\geq 1} \frac{\cos(n\theta)}{r}  \cL_{n,\e}^{R_1}[\l]
\begin{pmatrix}
\bar{h}_n^{R_1}\\
\bar{h}_n^{R_2}
\end{pmatrix}
((r-R_1)\e^{-1}), \qquad r\in I_\e^{R_1},\vspace{0.25 cm}\\
\sum_{n\geq 1} \frac{\cos(n\theta)}{r}  \cL_{n,\e}^{R_2}[\l]
\begin{pmatrix}
\bar{h}_n^{R_1}\\
\bar{h}_n^{R_2}
\end{pmatrix}
((r-R_2)\e^{-1}), \qquad r\in I_\e^{R_2},
\end{cases}   
\end{align*}
with $\cL_{n,\e}^{R_1}[\l], \cL_{n,\e}^{R_2}[\l]$ given  respectively by \eqref{LR1final}-\eqref{LR2final} and $\bar{h}_n^{R_i}(z):=h_n^{R_i}(R_i+\e z)$ for $z\in(-1,1).$

\subsection{Asymptotic analysis of $\Upsilon_{\e,\k}^{R_i}(z)$}
The main task of this section will be to get the asymptotic analysis of $\Upsilon_{\e,\k}^{R_i}(z)$ in terms of the parameter $\e$. In order to do that, we start recalling expression \eqref{def:Phi_0new}, which give us that
\[
\phi'(r)=\frac{C_{\varpi_{\e,\k}}}{r}-A\left(r-\frac{r_1^2}{r} \right)-\frac{1}{r}\int_{r_1}^{r}t\varpi_{\e,\k}(t)\dt.
\]
Then, as an inmediate consequence we get
\begin{align*}
\Upsilon_{\e,\k}^{R_i}(z)&=(R_i+\e z)\phi'(R_i+\e z)\\
&=C_{\varpi_{\e,\k}} -A((R_i+\e z)^2-r_1^2) -\int_{r_1}^{R_i+\e z} t \varpi_{\e,\k}(t)\mbox{d}t,
\end{align*}
with (see \eqref{def:Cvapinew})
\begin{align*}
C_{\varpi_{\e,\k}}&=\log^{-1}\left(\frac{r_2}{r_1}\right)\left[\gamma + A\left(\frac{r_2^2-r_1^2}{2}-r_1^2\log\left(\frac{r_2}{r_1}\right)\right)+\int_{r_1}^{r_2}\frac{1}{s}\left(\int_{r_1}^{s} t \varpi_{\e,\k}(t)\mbox{d}t\right)\mbox{d}s\right].
\end{align*}
Here, for the interested reader, we refer to the Appendix \ref{app:upsilon} for the details of the computations involved to prove that
\begin{equation}\label{decompoupsilon}
\Upsilon_{\e,\k}^{R_i}(z)=\tilde{\Upsilon}_0^{R_i}+\e \tilde{\Upsilon}_1^{R_i}(z)+\e^2 \tilde{\Upsilon}_{\e,\k}^{R_i}(z),
\end{equation}
with (see \eqref{auxUpsilon0}, \eqref{auxUpsilon11}, \eqref{auxUpsilon12}, \eqref{auxUpsilon21} and \eqref{auxUpsilon22}) 
\begin{equation}\label{Upsilon0}
\tilde{\Upsilon}_0^{R_i}=\log^{-1}\left(\frac{r_2}{r_1}\right)\left[\gamma + A\left(\frac{r_2^2-r_1^2}{2}-r_1^2\log\left(\frac{r_2}{r_1}\right)\right)\right]-A(R_i^2 - r_1^2), 
\end{equation}
\begin{align}
\tilde{\Upsilon}_1^{R_1}(z)&=\log^{-1}\left(\frac{r_2}{r_1}\right)\left[\frac{R_2^2-R_1^2}{2}\left(\log(r_2) + \frac{1}{2}\right)+ \frac{R_1^2}{2}\log(R_1)-\frac{R_2^2}{2}\log(R_2) \right] - 2AR_1 z, \label{Upsilon11}\\
\tilde{\Upsilon}_1^{R_2}(z)&=\tilde{\Upsilon}_1^{R_1}(0)- 2AR_2 z -\frac{R_2^2-R_1^2}{2}, \label{Upsilon12}
\end{align}
and
\begin{align}
\tilde{\Upsilon}_{\e,\k}^{R_1}(z)&=\frac{\mathsf{Remainder}_{\e,\k}}{\log(r_2/r_1)}-A z^2   -\int_{-1}^{z}(R_1+\e t) \varphi_{\k}(-t)\dt, \label{Upsilon21}\\
\tilde{\Upsilon}_{\e,\k}^{R_2}(z)&=\tilde{\Upsilon}_{\e,\k}^{R_1}(1)+A(1-z^2)- \left[\int_{-1}^{z}(R_2+\e t)\varphi_{\k}(+t)\dt- (R_1+R_2)\right], \label{Upsilon22}
\end{align}
with $\mathsf{Remainder}_{\e,\k}$ given explicitly by \eqref{auxremainder}.

\begin{remark}
Here, we emphasize that $\mathsf{Remainder}_{\e,\k}$ is a constant that depends only on $\e$ and $\k$.
\end{remark}

\begin{remark}
    Note that $\tilde{\Upsilon}_{\e,\k}^{R_i}(z)=O(1)$ for $i=1,2$ in terms of the smallness parameters $\e$ and $\k$.
\end{remark}

\section{One dimensionality of the kernel of the linear operator}\label{s:dim1kernel}
The following section consists on two well-differentiated parts. On the one hand, we will prove that there exists an element in the kernel of the linear operator. On the other hand, we will prove that the kernel is exactly equal to the span of this
element. In summary, the main result of this section is to prove the following result:

\begin{theorem}\label{mainthmkernel}
For any $M>1,$ there exist positive $\kappa_0=\kappa_0(M)$ and positive $\ep_0=\ep_0(M)$ such that for all $0<\ep<\ep_0$, $0<\kappa<\kappa_0$ and  $m\in \N$ such that $ m<M$, we can find $\lambda_{\ep,\kappa,m}\in \R$ and a $\frac{2\pi}{m}$-periodic and non-identically zero function $h_{\ep,\kappa,m}\in X(D_\ep)$  solving 
\begin{align*}
\cL_{\ep,\kappa}[\lambda_{\ep,\kappa,m}] h_{\ep,\kappa,m}=0,
\end{align*}
where the functional $\cL_{\ep,\kappa}[\lambda]$ is given in \eqref{def:LsumLn}.
In addition, the kernel $\cL_{\ep,\kappa}[\lambda_{\ep,\kappa,m}]$ on $X(D_\ep)$ is the span of $h_{\ep,\kappa,m}$ and the regularity of the solution is in fact $h_{\ep,\kappa,m}\in C^\infty(D_{\ep}$).
\end{theorem}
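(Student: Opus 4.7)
The plan is to combine the Fourier-mode decomposition \eqref{def:LsumLn} of $\cL_{\e,\k}[\l]$ with an asymptotic analysis in $\e$ of the rescaled operators \eqref{LR1final}--\eqref{LR2final}. Writing $\hh=\sum_{n\geq 1}h_n(r)\cos(n\theta)$ splits the kernel equation mode by mode into $\cL_n[\l]h_n=0$, so for fixed $m<M$ the task is to choose $\l_{\e,\k,m}$ for which this equation has a one-parameter family of solutions when $n=m$ and only the trivial one when $n\neq m$.

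\emph{Leading-order frequency.} Using \eqref{def:gamma} together with \eqref{Upsilon0} one computes $\tilde{\Upsilon}_0^{R_i}=-(B+AR_i^2)$, so the leading multiplier in $\cL_{n,\e}^{R_i}[\l]$ equals $\l R_i^2+\tilde{\Upsilon}_0^{R_i}=R_i(R_i\l-u_{\textsf{TC}}^\theta(R_i))$. Fix $\l_0:=u_{\textsf{TC}}^\theta(R_1)/R_1$; this annihilates the multiplier on $I_\e^{R_1}$ at leading order, while the hypothesis $u_{\textsf{TC}}^\theta(R_1)\neq u_{\textsf{TC}}^\theta(R_2)$ keeps the multiplier on $I_\e^{R_2}$ bounded away from zero.

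\emph{Reduction and scalar equation.} Set $\l=\l_0+\e\mu$ and look for $(\bar g_1,\bar g_2)$ in the kernel. On $I_\e^{R_2}$ the operator is a non-degenerate multiplier plus an $O(\e)$ integral perturbation, so a Neumann series yields $\bar g_2=\e\,\mathsf{T}\bar g_1+O(\e^2)$ for an explicit integral operator $\mathsf{T}$. Substituting this back into $\cL_{m,\e}^{R_1}[\l]/\e$ and sending $\e\to 0$ (checking via $\sn((R_1+\e z)/(R_1+\e s))=m\e(z-s)/R_1+O(\e^2)$ that both the local $\bar g_1$-integral and the $\bar g_2$-contribution are $O(\e)$) produces the limit operator
\begin{equation*}
\tilde{\cL}_m^{R_1}[\mu]g(z)=\big(\mu R_1^2+T+(2B/R_1)z\big)g(z)+c_m\int_{-1}^{+1}\varphi_\k'(-s)g(s)\,\ds,
\end{equation*}
where $T\in\R$ is an explicit constant independent of $m$ and $c_m:=R_1^2\sn(R_1/r_1)\sn(r_2/R_1)/(m\,\sn(r_2/r_1))>0$. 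Any kernel element must be of the form $g(z)=-c_m K/(\mu R_1^2+T+(2B/R_1)z)$ with $K:=\int_{-1}^{+1}\varphi_\k'(-s)g(s)\,\ds$, so non-triviality ($K\neq 0$) collapses the kernel equation to the scalar eigenvalue condition
\begin{equation*}
F_m(\mu):=1+c_m\,I(\mu)=0,\qquad I(\mu):=\int_{-1}^{+1}\frac{\varphi_\k'(-s)\,\ds}{\mu R_1^2+T+(2B/R_1)s}.
\end{equation*}

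\emph{Existence, one-dimensionality, and mode separation.} A real root $\mu_m^\star$ of $F_m$ exists by the intermediate value theorem: as $\mu\to+\infty$ one has $I(\mu)\to 0$ so $F_m\to 1$, while as $\mu$ approaches the critical value that pushes the linear denominator to zero at an endpoint of $[-1,+1]$ the sign structure (using $\varphi_\k'<0$ on $(-1,+1)$ from Lemma \ref{propiedadesvarphi}, $c_m>0$ and $B\neq 0$) forces $F_m$ to take arbitrarily large negative values. The implicit function theorem then lifts $(\mu_m^\star,g_m^\star)$ to a genuine pair $(\l_{\e,\k,m},h_{\e,\k,m})$ for all sufficiently small $\e,\k$, and the rank-one structure of the integral part of $\tilde{\cL}_m^{R_1}$ forces one-dimensionality, which persists under perturbation. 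The main obstacle is ruling out mode-$n$ kernel elements at this specific $\l_{\e,\k,m}$ for $n\neq m$: since $I(\mu)$ is independent of mode, one has $F_n(\mu_m^\star)=1-c_n/c_m$, and the required spectral separation reduces to injectivity of the sequence $\{c_n\}_{n\geq 1}$; this is handled by an asymptotic analysis showing $c_n\sim R_1^2/(2n)$ as $n\to\infty$ (from the exponential behavior of $\sinh$), so $c_n$ is eventually strictly decreasing, together with a finite case check for small $n$ and a quantitative lower bound $|F_n|\geq 1/2$ for $n$ large enough, yielding $|F_n(\mu_m^\star)|\geq c>0$ uniformly in $n\neq m$. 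Finally, smoothness $h_{\e,\k,m}\in C^\infty(D_\e)$ follows from the rational form of $g_m^\star$ (with non-vanishing linear denominator) combined with elliptic regularity for $\Delta_B$ on the annulus applied to \eqref{aux:DBpsi}.
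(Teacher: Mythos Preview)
Your overall architecture---Fourier decomposition, choosing $\lambda_0$ to kill the leading multiplier at one site, reducing the other site to $O(\e)$, deriving a rank-one scalar eigenvalue equation, then perturbing---matches the paper's (with $R_1$ and $R_2$ swapped; the paper takes $\lambda_0=u_{\textsf{TC}}^\theta(R_2)/R_2$, cf.\ Remark~\ref{remark}). But your existence argument for a root of $F_m$ contains a genuine error. You claim that as $\mu$ approaches the critical value $\mu^\star$ at which the denominator $\mu R_1^2+T+(2B/R_1)s$ vanishes at an endpoint of $[-1,1]$, the integral $I(\mu)$ diverges and hence $F_m\to-\infty$. This is false: by Lemma~\ref{propiedadesvarphi}(1), $\varphi_\k'$ vanishes to infinite order at $\pm 1$, so the numerator $\varphi_\k'(-s)$ kills the simple zero of the denominator and $I$ extends continuously to $\mu=\mu^\star$ with a \emph{finite} value. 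For fixed $\kappa>0$ there is no divergence and your intermediate-value argument collapses. The paper's way around this (Lemma~\ref{lema:lambda1}) is to exploit Lemma~\ref{propiedadesvarphi}(3), namely $\|\varphi_\k'+\tfrac12\|_{L^1}\leq C\kappa$: splitting $\varphi_\k'=-\tfrac12+(\varphi_\k'+\tfrac12)$, the $-\tfrac12$ piece contributes a logarithm that can be made large by taking \emph{both} $\kappa$ small and $\mu$ close to $\mu^\star$. This is precisely why $\kappa_0=\kappa_0(M)$ appears in the statement---the required smallness of $\kappa$ is dictated by the uniform lower bound on $c_m$ over $m<M$.

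Two further gaps. First, for mode separation you assert injectivity of $n\mapsto c_n$ via ``eventual monotonicity plus a finite case check for small $n$,'' but a promised check is not a proof. The paper (proof of~\eqref{1to1pp2}) shows the analogue $\pp_2(n)$ is strictly decreasing for \emph{all} $n\geq 1$ by a direct derivative computation; the identical calculus applies to your $c_n$. Moreover, $F_n(\mu_m^\star)\neq 0$ is only the $\e=0$ statement; you still need to exclude mode-$n$ kernel at the \emph{actual} $\lambda_{\e,\k,m}$, uniformly in $n$ (the paper casts this as a linear contraction in $\e$). Second, smoothness of $h_{\e,\k,m}$ does not follow from elliptic regularity for $\Delta_B$ via~\eqref{aux:DBpsi}: the $\e$-corrections live on $[-1,1]$ and are a priori only $L^2$. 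The paper bootstraps directly on the rescaled integral equations \eqref{auxregularityaa}--\eqref{auxregularitybb}, using that the operators $g\mapsto\int_{-1}^z(\cdots)\sm(\cdot)g(s)\,ds$ gain two derivatives.
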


\begin{remark}
Importantly, $h_{\ep,\kappa,m}(r,\theta)$, with $\theta\in\T$ and $r\in I_{\ep}^{R_1,R_2}$, depends non trivially on $\theta$.
\end{remark}

\begin{remark} It is important to emphasize that the smallness of $\ep$ does not depend on $\kappa$. \end{remark}

\begin{remark}\label{remark}The speed of the rotation $\lambda_{\ep,\kappa,m}$ satisfies the expansion in terms of $\ep$-parameter:
\begin{align}\label{speedsuperindex}
\lambda_{\ep,\kappa,m}=\l_0+\lambda^1_{\kappa,m}\ep+\lambda^2_{\ep,\kappa,m}\ep^2,
\end{align}
where 
\[
\l_0 = \frac{u^\theta_{\textsf{TC}}(R_2)}{R_2}=\frac{1}{R_2}\left(A R_2+ \frac{B}{R_2}\right),
\]
\begin{align*}
\lambda^1_{\k,m}=O(1),
\end{align*}
and
\begin{align*}
|\lambda^2_{\ep,\kappa,m}|\leq C(M).
\end{align*}

\end{remark}

\begin{remark}
In the rest of the section we avoid to use subscripts $\ep,\k,m$ in expression \eqref{speedsuperindex}, which we will ignore in favor of readability. We will write with abuse of notation $\lambda=\l_0+\lambda_1\ep+\lambda_2^\ep \ep^2$. Similarly, in some cases where it is clear from the context, we can simply write $h=h_{\e,\k,m}\in X(D_\e)$ with expansion
\[
h(r,\theta)=\sum_{n\geq 1} h_n(r)\cos(n\theta), \qquad (r,\theta)\in D_\e.
\] 
\end{remark}

\subsection{Proof of existence.} \label{s:existencedim1}
In order to show the existence of $(\lambda_{\ep,\kappa,m},h_{\ep,\kappa,m})$ solving
$$\mathcal{L}[\lambda_{\ep,\kappa,m}]h_{\ep,\kappa,m}=0,$$
we fix  $m\in \N$ with $1\leq m<M$, and suppose that $h_{\e,\k,m}(r,\theta)=h_{\e,\k,m}(r)\cos(m\theta)$ consists only on the $m$-th mode of the expansion. Thus, we have reduced our infinite-dimensional problem for every $n\geq 1$ to a single problem for $n=m$.
That is, using the notation introduced previously, we take
\begin{align}
\bar{h}_n^{R_1}(z)=\left\{\begin{array}{cc} 0 & n\neq m,\\ \aa(z) & n=m, \end{array}\right.\label{hR1sol}\\
\bar{h}_n^{R_2}(z)=\left\{\begin{array}{cc} 0 & n\neq m,\\ \bb(z) & n=m, \end{array}\right.\label{hR2sol}
\end{align}
where functions $\aa$ and $\bb$ depend on $\ep,\kappa$ and $m$ but we do not make this dependence explicit for sake of simplicity. Thus, our  problem translates into finding $(\lambda,\aa,\bb)\in \R\times C^{2+\alpha}(-1,1)\times C^{2+\alpha}(-1,1)$ solving
\begin{align}
&\cL_{m,\e}^{R_1}[\lambda]\left(\begin{array}{cc}\aa \\ \bb\end{array}\right)=0, \label{LR1cero} \\
&\cL_{m,\e}^{R_2}[\lambda]\left(\begin{array}{cc}\aa \\ \bb\end{array}\right)=0,  \label{LR2cero}
\end{align}
where $\cL^{R_i}_{m,\e}$ are given in \eqref{LR1final} and \eqref{LR2final} or equivalently by the right-hand side of \eqref{finalLnR1} and \eqref{finalLnR2}. After that, the solution $(\lambda_{\ep,\kappa,m},\,h_{\ep,\kappa,m})$ will be given by
\begin{align*}
&\lambda_{\ep,\kappa,m}=\lambda,\\
&h_{\ep,\kappa,m}(r,\theta)=\aa\left(\frac{r-R_1}{\ep}\right)\cos(m\theta)\quad \text{for $r\in I_\e^{R_1}$},\\
&h_{\ep,\kappa,m}(r,\theta)=\bb\left(\frac{r-R_2}{\ep}\right)\cos(m\theta)\quad \text{for $r\in I_\e^{R_2}$}.
\end{align*}

In order to solve \eqref{LR1cero} and \eqref{LR2cero} we introduce the ansatzs
\begin{equation}\label{ansatzab}
\left\{
\begin{aligned}
\aa(z)&= \hspace{1.26cm} \aa_1(z)\e + \aa_2^\ep(z) \ep^2,\\
\bb(z)&=\bb_0(z)+\bb^\ep_1(z)\ep,
\end{aligned}
\right.
\end{equation}
and
\begin{equation}\label{ansatzlambda}
\lambda=\l_0+\lambda_1\ep+\lambda_2^\ep \ep^2,
\end{equation}
with 
\begin{equation}\label{def:l0}
\l_0=-\frac{\tilde{\Upsilon}_0^{R_2}}{R_2^2}=A\left(1-\frac{r_2^2-r_1^2}{2 R_2^2 \log\left(\frac{r_2}{r_1}\right)}\right)-\frac{\gamma}{R_2^2\log(r_2/r_1)},
\end{equation}
where $\aa_1$, $\bb_0$ and $\lambda_1$ will depend on $\kappa$ and $m$ but they will not depend on $\ep$. In addition, the remaining terms $\aa^\ep_2 $, $\bb^\ep_1$ and $\l^\ep_2$ depend on $\ep$, $\kappa$ and $m$.

\begin{remark}
Here, it is important to emphasize that the speed $\l$ of the rotation that we construct is related with the localization of the ``legs'' of the trapezoid given by the profile $\varpi_{\e,\k}$ and the circulation $\gamma$ of the Taylor-Couette flow. More specifically, taking into account (see \eqref{def:gamma}) that
\[
\gamma=(-1)\left(\frac{A}{2}(r_2^2-r_1^2) + B \log (r_2/r_1) \right),
\]
and applying that into \eqref{def:l0}, we finally arrive to the desired expression
\begin{equation}\label{eq:l0TC}
\l_0 = \frac{u^\theta_{\textsf{TC}}(R_2)}{R_2}.
\end{equation}
\end{remark}

\begin{remark}
As a consequence of the above expression we take $R_2\in(r_1,r_2)$ such that $u^\theta_{\textsf{TC}}(R_2)\neq 0$ to construct time-periodic solutions with angular velocity of order $O(1)$ in terms of $\e$.
\end{remark}

\begin{remark}
It is important to note that in the particular case $u^\theta_{\textsf{TC}}(R_1)=u^\theta_{\textsf{TC}}(R_2)$ our procedure give us that at the first order in $\e$, if $(\aa,\bb)$ is in the kernel of $\cL[\l_{\e,\k,m}]$ also $(\bb,\aa)$ is in the kernel of $\cL[\l_{\e,\k,m}]$ for the same value of $\l$.
For that reason, to avoid  technicalities, we will assume that $R_1,R_2\in (r_1,r_2)$ such that $u^\theta_{\textsf{TC}}(R_1)\neq u^\theta_{\textsf{TC}}(R_2)$.
\end{remark}

Putting our ansatz \eqref{ansatzab}, \eqref{ansatzlambda} into \eqref{LR1cero}, \eqref{LR2cero} reduces the problem to solving the following system of equations
\begin{equation*}
\left\{
\begin{aligned}
\cL_{m,\e}^{R_1}[\l_0+\l_1 \e + \l_2^\e \e^2]\left(\begin{array}{cc}\hspace*{1.2 cm} \aa_1(z)\ep+\aa_2^\ep(z) \ep^2 \\ \hspace*{-1.8 cm} \bb_0(z)+ \bb^\ep_1(z)\ep\end{array}\right)(z)&=0,\\
\cL_{m,\e}^{R_2}[\l_0+\l_1 \e + \l_2^\e \e^2]\left(\begin{array}{cc}\hspace*{1.2 cm} \aa_1(z)\ep+\aa_2^\ep(z) \ep^2 \\ \hspace*{-1.8 cm} \bb_0(z)+ \bb^\ep_1(z)\ep\end{array}\right)(z)&=0.
\end{aligned}
\right.
\end{equation*}

\subsubsection{Asymptotic analysis of the linear operator}
In order to get a tractable manner to solve the above system, we start introducing the ansatz for the eigenfunction given by \eqref{ansatzab} into \eqref{finalLnR1}-\eqref{finalLnR2}. This implies that
\begin{multline}\label{LR1ansatz}
\cL_{m,\e}^{R_1}[\l]\begin{pmatrix}
\mathsf{a} \\
\mathsf{b}
\end{pmatrix}(z)=\left[\Upsilon_{\e,\k}^{R_1}(z)+ \l (R_1+\e z)^2\right]\left(\aa_1(z)\ep+\aa_2^\ep(z) \ep^2\right)\\
+(R_1+\e z)\frac{\sm((R_1+\e z)/r_1)}{\sm(r_2/r_1)}\frac{\e}{m}\int_{-1}^{+1} (R_1+\e s)\varphi_\k' (-s)\sm(r_2/(R_1+\e s))\left(\aa_1(s)\ep+\aa_2^\ep(s) \ep^2\right)\ds\\
-(R_1+\e z)\frac{\sm((R_1+\e z)/r_1)}{\sm(r_2/r_1)}\frac{\e}{m}\int_{-1}^{+1} (R_2+\e s)\varphi_\k' (s)\sm(r_2/(R_2+\e s))\left(\bb_0(s)+ \bb^\ep_1(s)\ep\right) \ds\\
-(R_1+\e z)\frac{\e}{m}\int_{-1}^{z} (R_1+\e s)\varphi_\k' (-s)\sm((R_1+\e z)/(R_1+\e s))\left(\aa_1(s)\ep+\aa_2^\ep(s) \ep^2\right)\ds=0,
\end{multline}
and
\begin{multline}\label{LR2ansatz}
\cL_{m,\e}^{R_2}[\l]\begin{pmatrix}
\mathsf{a} \\
\mathsf{b}
\end{pmatrix}(z)=\left[\Upsilon_{\e,\k}^{R_2}(z)+\l (R_2+\e z)^2\right]\left(\bb_0(z)+ \bb^\ep_1(z)\ep\right)\\
+(R_2+\e z)\frac{\sm((R_2+\e z)/r_1)}{\sm(r_2/r_1)}\frac{\e}{m}\int_{-1}^{+1} (R_1+\e s)\varphi_\k' (-s)\sm(r_2/(R_1+\e s))\left(\aa_1(s)\ep+\aa_2^\ep(s) \ep^2\right)\ds \\
-(R_2+\e z)\frac{\sm((R_2+\e z)/r_1)}{\sm(r_2/r_1)}\frac{\e}{m}\int_{-1}^{+1} (R_2+\e s)\varphi_\k' (s)\sm(r_2/(R_2+\e s))\left(\bb_0(s)+ \bb^\ep_1(s)\ep\right)\ds\\
-(R_2+\e z)\frac{\e}{m}\int_{-1}^{+1} (R_1+\e s)\varphi_\k' (-s)\sm((R_2+\e z)/(R_1+\e s))\left(\aa_1(s)\ep+\aa_2^\ep(s) \ep^2\right)\ds\\
+(R_2+\e z)\frac{\e}{m}\int_{-1}^{ z} (R_2+\e s)\varphi_\k' (s)\sm((R_2+\e z)/(R_2+\e s))\left(\bb_0(s)+ \bb^\ep_1(s)\ep\right)\ds=0.
\end{multline}

The next step will be to introduce also the ansantz for the eigenvalue given by \eqref{ansatzlambda} into the previous system of equations. But before that, it will be convenient to use \eqref{decompoupsilon} to write
\begin{align*}
\Upsilon_{\e,\k}^{R_i}(z)+\l (R_i+\e z)^2&=(\tilde{\Upsilon}_0^{R_i}+\e \tilde{\Upsilon}_1^{R_i}(z)+\e^2 \tilde{\Upsilon}_{\e,\k}^{R_i}(z))+(\l_0+\l_1\e +\l_2^\e \e^2)(R_i+\e z)^2\\
&=\left(\l_0 R_i^2+\tilde{\Upsilon}_0^{R_i}\right)\\
&\quad +\left(\l_0 2 R_i z + \l_1 R_i^2+\tilde{\Upsilon}_1^{R_i}(z)\right)\e\\
&\quad +\left(\l_0 z^2 +\l_1 2 z R_i + \l_2^\e R_i^2+\tilde{\Upsilon}_{\e,\k}^{R_i}(z)+\l_2^\e 2z R_i\e+\l_2^\e z^2\e^2\right)\e^2.
\end{align*}
For simplicity, in the rest of this section we will use the notation
\begin{equation}\label{decominalphas}
\Upsilon_{\e,\k}^{R_i}(z)+\l (R_i+\e z)^2=\a_0^{R_i}[\l_0]+ \a_1^{R_i}[\l_0,\l_1](z)\e+\a_{2,\e}^{R_i}[\l_0,\l_1,\l_2^\e](z)\e^2,
\end{equation}
with
\begin{align}
\a_0^{R_i}[\l_0]:&=\l_0 R_i^2+\tilde{\Upsilon}_0^{R_i}, \label{def:alpha0}\\
\a_1^{R_i}[\l_0,\l_1](z):&=\l_0 2 R_i z + \l_1 R_i^2+\tilde{\Upsilon}_1^{R_i}(z), \label{def:alpha1}\\
\a_{2,\e}^{R_i}[\l_0,\l_1,\l_2^\e](z):&=\l_0 z^2 +\l_1 2 z R_i + \l_2^\e R_i^2+\tilde{\Upsilon}_{\e,\k}^{R_i}(z)+\l_2^\e 2z R_i\e+\l_2^\e z^2\e^2. \label{def:alpha2}
\end{align}

At this point, we have all the ingredients to write \eqref{LR1ansatz} and \eqref{LR2ansatz} in a much more manageable way. In first place, we note that we can decompose $\cL_{m,\e}^{R_1}[\l]$ as follows
\begin{equation}\label{LR1T1T2T3}
\cL_{m,\e}^{R_1}[\l]\begin{pmatrix}
\aa \\
\bb
\end{pmatrix}=\cT_{m,\e}^1[\aa_1,\bb_0,\l_0] \e+ \cT_{m,\e}^2[\aa_1,\aa_2^\e,\bb_1^\e,\l_0,\l_1] \e^2 +\cT_{m,\e}^3[\aa_1,\aa_2^\e,\l_0,\l_1,\l_2^\e]\e^3,
\end{equation}
with
\begin{multline}\label{T1e}
\cT_{m,\e}^1[\aa_1,\bb_0,\l_0](z)=\a_0^{R_1}[\l_0] \aa_1(z) \\ 
-(R_1+\e z)\frac{\sm((R_1+\e z)/r_1)}{\sm(r_2/r_1)}\frac{1}{m}\int_{-1}^{+1} (R_2+\e s)\varphi_\k' (s)\sm(r_2/(R_2+\e s))\bb_0(s) \ds,
\end{multline}
\begin{multline}\label{T2e}
\cT_{m,\e}^2[\aa_1,\aa_2^\e,\bb_1^\e,\l_0,\l_1](z)=\a_1^{R_1}[\l_0,\l_1] \aa_1(z)+\a_0^{R_1}[\l_0]\aa_2^{\e}(z)\\
+(R_1+\e z)\frac{\sm((R_1+\e z)/r_1)}{\sm(r_2/r_1)}\frac{1}{m}\int_{-1}^{+1} (R_1+\e s)\varphi_\k' (-s)\sm(r_2/(R_1+\e s))\aa_1(s)\ds\\
-(R_1+\e z)\frac{\sm((R_1+\e z)/r_1)}{\sm(r_2/r_1)}\frac{1}{m}\int_{-1}^{+1} (R_2+\e s)\varphi_\k' (s)\sm(r_2/(R_2+\e s))\bb^\ep_1(s) \ds\\
-(R_1+\e z)\frac{1}{m}\int_{-1}^{z} (R_1+\e s)\varphi_\k' (-s)\sm((R_1+\e z)/(R_1+\e s))\aa_1(s)\ds,
\end{multline}
and
\begin{multline}\label{T3e}
\cT_{m,\e}^3[\aa_1,\aa_2^\e,\l_0,\l_1,\l_2^\e](z)=\a_1^{R_1}[\l_0,\l_1]\mathsf{a}_2^{\e}(z) +\a_{2,\e}^{R_1}[\l_0,\l_1,\l_2^\e] \left(\mathsf{a_1}(z)+\mathsf{a}_2^{\e}(z)\e\right)\\
+(R_1+\e z)\frac{\sm((R_1+\e z)/r_1)}{\sm(r_2/r_1)}\frac{1}{m}\int_{-1}^{+1} (R_1+\e s)\varphi_\k' (-s)\sm(r_2/(R_1+\e s))\aa_2^\ep(s) \ds\\
-(R_1+\e z)\frac{1}{m}\int_{-1}^{z} (R_1+\e s)\varphi_\k' (-s)\sm((R_1+\e z)/(R_1+\e s)) \aa_2^\ep(s) \ds.
\end{multline}

Proceeding in a similar manner, we can also decompose $\cL_{m,\e}^{R_2}[\l]$ as follows
\begin{multline}\label{LR2Q1Q2Q3}
\cL_{m,\e}^{R_2}[\l]\begin{pmatrix}
\aa \\
\bb
\end{pmatrix}=\a_0^{R_2}[\l_0] \mathsf{b}_0(z)\\
+\mathcal{Q}_{m,\e}^1[\bb_0,\bb_1^\e,\l_0,\l_1]\e+ \mathcal{Q}_{m,\e}^2[\aa_1,\aa_2^\e,\bb_0,\bb_1^\e,\l_0,\l_1,\l_2^\e] \e^2 +\mathcal{Q}_{m,\e}^3[\aa_2^\ep,\bb_1^\e,\l_0,\l_1,\l_2^\e]\e^3,
\end{multline}
with
\begin{multline}\label{Q1e}
\mathcal{Q}_{m,\e}^1[\bb_0,\bb_1^\e,\l_0,\l_1](z)=\a_0^{R_2}[\l_0] \bb_1^\e(z)+\a_1^{R_2}[\l_0,\l_1]\bb_0(z)\\
-(R_2+\e z)\frac{\sm((R_2+\e z)/r_1)}{\sm(r_2/r_1)}\frac{1}{m}\int_{-1}^{+1} (R_2+\e s)\varphi_\k' (s)\sm(r_2/(R_2+\e s))\bb_0(s)\ds\\
+(R_2+\e z)\frac{1}{m}\int_{-1}^{ z} (R_2+\e s)\varphi_\k' (s)\sm((R_2+\e z)/(R_2+\e s))\bb_0(s)\ds,
\end{multline}
\begin{multline}\label{Q2e}
\mathcal{Q}_{m,\e}^2[\aa_1,\bb_0,\bb_1^\e,\l_0,\l_1,\l_2^\e](z)=\a_1^{R_2}[\l_0,\l_1]\bb_1^\e(z)+ \a_{2,\e}^{R_2}[\l_0,\l_1,\l_2^\e]\mathsf{b}_0(z)\\
+(R_2+\e z)\frac{\sm((R_2+\e z)/r_1)}{\sm(r_2/r_1)}\frac{1}{m}\int_{-1}^{+1} (R_1+\e s)\varphi_\k' (-s)\sm(r_2/(R_1+\e s))\aa_1(s)\ds \\
-(R_2+\e z)\frac{\sm((R_2+\e z)/r_1)}{\sm(r_2/r_1)}\frac{1}{m}\int_{-1}^{+1} (R_2+\e s)\varphi_\k' (s)\sm(r_2/(R_2+\e s)) \bb^\ep_1(s)\ds\\
-(R_2+\e z)\frac{1}{m}\int_{-1}^{+1} (R_1+\e s)\varphi_\k' (-s)\sm((R_2+\e z)/(R_1+\e s)) \aa_1(s)\ds\\
+(R_2+\e z)\frac{1}{m}\int_{-1}^{ z} (R_2+\e s)\varphi_\k' (s)\sm((R_2+\e z)/(R_2+\e s)) \bb^\ep_1(s)\ds,
\end{multline}
and
\begin{multline}\label{Q3e}
\mathcal{Q}_{m,\e}^3[\aa_2^\ep,\bb_1^\e,\l_0,\l_1,\l_2^\e](z)=\a_{2,\e}^{R_2}[\l_0,\l_1,\l_2^\e] \mathsf{b}_1^\e(z)\\
+(R_2+\e z)\frac{\sm((R_2+\e z)/r_1)}{\sm(r_2/r_1)}\frac{1}{m}\int_{-1}^{+1} (R_1+\e s)\varphi_\k' (-s)\sm(r_2/(R_1+\e s))\aa_2^\ep(s) \ds\\
-(R_2+\e z)\frac{1}{m}\int_{-1}^{+1} (R_1+\e s)\varphi_\k' (-s)\sm((R_2+\e z)/(R_1+\e s))\aa_2^\ep(s) \ds.
\end{multline}

\subsubsection{Dividing the problem into main and lower order terms.}
Taking into account these two decompositions, our problem translates into solve the system
\begin{align*}
\cT_{m,\e}^1[\aa_1,\bb_0,\l_0] \e+ \cT_{m,\e}^2[\aa_1,\aa_2^\e,\bb_1^\e,\l_0,\l_1] \e^2 +\cT_{m,\e}^3[\aa_1,\aa_2^\e,\l_0,\l_1,\l_2^\e]\e^3&=0,\\
\a_0^{R_2}[\l_0] \mathsf{b}_0(z) +\mathcal{Q}_{m,\e}^1[\bb_0,\bb_1^\e,\l_0,\l_1]\e+ \mathcal{Q}_{m,\e}^2[\aa_1,\bb_0,\bb_1^\e,\l_0,\l_1,\l_2^\e] \e^2 +\mathcal{Q}_{m,\e}^3[\aa_2^\ep,\bb_1^\e,\l_0,\l_1,\l_2^\e]\e^3 &=0.
\end{align*} 
In fact, since $\l_0$ is defined by \eqref{def:l0} in such a way that $\a_0^{R_2}[\l_0]=0$ we just have to solve
\begin{equation}\label{systemTsQs}
\left\{
\begin{aligned}
\cT_{m,\e}^1[\aa_1,\bb_0,\l_0] + \cT_{m,\e}^2[\aa_1,\aa_2^\e,\bb_1^\e,\l_0,\l_1] \e +\cT_{m,\e}^3[\aa_1,\aa_2^\e,\l_0,\l_1,\l_2^\e]\e^2&=0,\\
\mathcal{Q}_{m,\e}^1[\bb_0,\l_0,\l_1]+ \mathcal{Q}_{m,\e}^2[\aa_1,\bb_0,\bb_1^\e,\l_0,\l_1,\l_2^\e] \e +\mathcal{Q}_{m,\e}^3[\aa_2^\ep,\bb_1^\e,\l_0,\l_1,\l_2^\e]\e^2 &=0,
\end{aligned}
\right.
\end{equation}
where we have use  that $\mathcal{Q}_{m,\e}^1[\bb_0,\bb_1^\e,\l_0,\l_1]\equiv \mathcal{Q}_{m,\e}^1[\bb_0,\l_0,\l_1]$ due to the fact that the term depending of $\bb_1^\e$ is multiplied by the factor $\a_0^{R_2}[\l_0]=0$.

In order to start solving \eqref{systemTsQs} we need to use the fact that
\begin{align*}
\mathcal{T}_{m,\e}^1[\aa_1,\bb_0,\l_0](z)&=\mathcal{T}_{m,0}^1[\aa_1,\bb_0,\l_0](z)+\int_0^\e \left(\p_\d\mathcal{T}_{m,\d}^1\right)[\bb_0](z)\mbox{d}\delta,\\
\mathcal{Q}_{m,\e}^1[\bb_0,\l_0,\l_1](z)&=\mathcal{Q}_{m,0}^1[\bb_0,\l_0,\l_1](z)+\int_0^\e (\p_\d\mathcal{Q}_{m,\d}^1)[\bb_0](z)\mbox{d}\delta,
\end{align*}
where (see \eqref{T1e}, \eqref{Q1e})
\begin{equation}\label{T10}
\cT_{m,0}^1[\aa_1,\bb_0,\l_0](z)=\a_0^{R_1}[\l_0] \aa_1(z)  
-R_1 R_2\frac{\sm(R_1/r_1)\sm(r_2/R_2)}{\sm(r_2/r_1)}\frac{1}{m}\int_{-1}^{+1} \varphi_\k' (s)\bb_0(s) \ds,
\end{equation}
and
\begin{equation}\label{Q10}
\mathcal{Q}_{m,0}^1[\bb_0,\l_0,\l_1](z)=\a_1^{R_2}[\l_0,\l_1]\bb_0(z)-R_2^2\frac{\sm(R_2/r_1)\sm(r_2/R_2)}{\sm(r_2/r_1)}\frac{1}{m}\int_{-1}^{+1} \varphi_\k' (s)\bb_0(s)\ds.
\end{equation}

Consequently, the use of the asymptotic expansion of the terms $\cT_{m,\e}^1, \mathcal{Q}_{m,\e}^1$ near $\e=0$ allow us to split the problem of solving the most involved system \eqref{systemTsQs} into the study of solving the following two system of equations given by \eqref{systemT0Q0} and \eqref{systemTeQe}.

Now, we know explicitly the value of $\l_0$. Thus, we start solving
\begin{equation}\label{systemT0Q0}
\left\{
\begin{aligned}
\cT_{m,0}^1[\aa_1,\bb_0;\l_0]&=0,\\
\mathcal{Q}_{m,0}^1[\bb_0,\l_1;\l_0] &=0.
\end{aligned}
\right.
\end{equation}
After solve the above system, we will have an explicity expression for $\aa_1, \bb_0, \l_1$ and $\l_0$. Hence, we have all the necessary ingredients  to solve via a fixed point argument the system
\begin{equation}\label{systemTeQe}
\left\{
\begin{aligned}
\left(\cT_{m,\e}^2[\aa_2^\e,\bb_1^\e;\aa_1,\l_0,\l_1]+\frac{1}{\e}\int_0^\e \left(\p_\d\mathcal{T}_{m,\d}^1\right)[\bb_0]\mbox{d}\delta\right)  +\cT_{m,\e}^3[\aa_2^\e,\l_2^\e;\aa_1,\l_0,\l_1]\e&=0,\\
\left(\mathcal{Q}_{m,\e}^2[\bb_1^\e,\l_2^\e;\aa_1,\bb_0,\l_0,\l_1]+\frac{1}{\e}\int_0^\e (\p_\d\mathcal{Q}_{m,\d}^1)[\bb_0]\mbox{d}\delta\right)  +\mathcal{Q}_{m,\e}^3[\aa_2^\ep,\bb_1^\e,\l_2^\e;\l_0,\l_1]\e &=0.
\end{aligned}
\right.
\end{equation}
Here, it is important emphasize that we just want to solve system \eqref{systemTeQe} for unknowns $\aa_2^\e,\bb_1^\e,\l_2^\e$.

\begin{remark}
Just before and from now on we will repeatedly use the notation $\mathcal{T}[\pp;\mathsf{q}]$ for an operator $\mathcal{T}$ dependent on variable functions $\pp,\mathsf{q}$ where $\pp$ is an unknown and $\mathsf{q}$ is previously known.
\end{remark}

\subsubsection{Solving the main order}
In this section, we are going to solve the system \eqref{systemT0Q0}.
Equivalently, we just have to solve for $\aa_1,\bb_0$ and $\l_1$ the following system of equations
\begin{align}
\a_0^{R_1}[\l_0] \aa_1(z)-\pp_1(m)\int_{-1}^{1} \varphi_\k'(s)\bb_0(s)\ds&=0,\label{a1equation}\\
\a_1^{R_2}[\l_0,\l_1](z)\bb_0(z)-\pp_2(m)\int_{-1}^{1} \varphi_\k'( s)\bb_0(s)\ds&=0,\label{b0equation}
\end{align}
with
\begin{equation}\label{def:ppi}
\pp_i(m):=R_i R_2\frac{\sm(R_i/r_1) \sm(r_2/R_2)}{\sm(r_2/r_1)}\frac{1}{m}.
\end{equation}

Note that \eqref{a1equation} implies that $\aa_1(z)\equiv \aa_1$ is just a constant value  given by
\begin{equation}\label{def:a1}
\aa_1:=\frac{1}{\a_0^{R_1}[\l_0]}\pp_1(m)\int_{-1}^{1} \varphi_\k'(s)\bb_0(s)\ds.
\end{equation}
Recalling the circulation of the Taylor-Couette flow (see \eqref{def:gamma}) and the definition of $\tilde{\Upsilon}_0^{R_i}$ given by \eqref{Upsilon0} we have that $\tilde{\Upsilon}_0^{R_i}=-(A R_i^2+B)$. Moreover, taking into account \eqref{def:alpha0} and the fact that $B\neq 0$, we get
\begin{equation}\label{aux:a1definedok}
\alpha_0^{R_1}[\l_0]=\l_0 R_1^2+\tilde{\Upsilon}_0^{R_1}=B\frac{R_1^2-R_2^2}{R_2^2}\neq 0.
\end{equation}

Before solve \eqref{b0equation} we have to pick $\l_1$ appropriately. 
Notice that from \eqref{b0equation} we see that $\bb_0$ must be of the form
\begin{equation}\label{def:b0}
\bb_0(z)=\frac{C}{\a_1^{R_2}[\l_0,\l_1](z)},
\end{equation}
for some free constant $C$. Plugging the above expression into \eqref{b0equation}  one finds that
\begin{equation}\label{lambda1eq}
1=\pp_2(m)\int_{-1}^{1} \frac{\varphi_\k'( s)}{\a_1^{R_2}[\l_0,\l_1](s)}\ds,
\end{equation}
which is an equation for $\l_1$.\\

Therefore, taking into account all the above, the procedure to solve \eqref{systemT0Q0} or equivalently \eqref{a1equation}, \eqref{b0equation} follows the next steps. Firstly, we determine the value of $\l_1$ solving \eqref{lambda1eq}. Secondly, we determine $\bb_0$ using expression \eqref{def:b0}. Lastly, we obtain the constant value of $\aa_1$ via \eqref{def:a1}. 

\subsubsection{Determining the value of $\l_1$}
Here, we are going to determine the value of $\l_1$ satisfying \eqref{lambda1eq}. In order to do that, we observe (see \eqref{def:alpha1} and \eqref{Upsilon12}) that $\a_1^{R_2}[\l_0,\l_1](z)$ is nothing more than a linear function that can be written as
\[
\a_1^{R_2}[\l_0,\l_1](z)=(\l_0-A)2 R_2 z+\l_1 R_2^2+\tilde{\Upsilon}_1^{R_2}(0).
\]
In addition, using \eqref{eq:l0TC} we note that 
\begin{equation}\label{auxIlamba1}
\a_1^{R_2}[\l_0,\l_1]'(z)=(\l_0-A)2 R_2=\frac{2B}{R_2}.
\end{equation} 
Therefore, we have that $\sign(\a_1^{R_2}[\l_0,\l_1]'(z))=\sign(B)$ for all $|z|\leq 1$. Consequently, the function $\a_1^{R_2}[\l_0,\l_1](z)$ is strictly increasing/decreasing depending only of the sign of parameter $B$.\medskip

The task of the next lemma will be to identify the value of $\lambda_1$ solving \eqref{lambda1eq}.
\begin{lemma}\label{lema:lambda1} For any $M>1,$ there exist $\kappa'_0=\kappa'_0(M)$ such that for all $0< \kappa \leq \kappa'_0$ and $1\leq m <M$, we can find $\l_1$  (depending on $\kappa$ and $m$) such that \eqref{lambda1eq} holds.
In addition, fixed $m$ and $\kappa$ this solution is unique and satisfies 
$$\lambda_1=\lambda^*-\beta(M), \qquad 0< \beta(M)\ll 1,$$
where $\l^*$ is given by \eqref{def:lambdastar} and $\beta(M)$ just depends on $M$ (it does not depend on either $\kappa$ or $m$).
\end{lemma}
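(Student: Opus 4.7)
The plan is to treat \eqref{lambda1eq} as a scalar equation for $\lambda_1$ and invoke the intermediate value theorem on a carefully chosen half-line. By \eqref{auxIlamba1} the map $z\mapsto \alpha_1^{R_2}[\lambda_0,\lambda_1](z)$ is affine with nonzero slope $2B/R_2$, so varying $\lambda_1$ shifts this affine function vertically without changing its slope. Let $\lambda^\ast$ denote the unique value for which $\alpha_1^{R_2}[\lambda_0,\lambda^\ast]$ first vanishes at an endpoint of $[-1,1]$; on one side of $\lambda^\ast$ the function has constant sign on $[-1,1]$ and stays bounded away from zero, and on the other side it has a zero interior to $[-1,1]$.

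Next, define
\[
G(\lambda_1) := \pp_2(m)\int_{-1}^{1}\frac{\varphi_\k'(s)}{\alpha_1^{R_2}[\lambda_0,\lambda_1](s)}\ds,
\]
restricted to the half-line on which $\alpha_1^{R_2}[\lambda_0,\lambda_1]$ is strictly negative on $[-1,1]$. One then verifies three properties: \emph{(i)} sign: $G(\lambda_1)>0$, because $\pp_2(m)>0$ (by \eqref{def:ppi}) and $\varphi_\k'<0$ on $(-1,1)$ by \lem{propiedadesvarphi}(2); \emph{(ii)} strict monotonicity: differentiating in $\lambda_1$ under the integral yields
\[
G'(\lambda_1) = -R_2^2\,\pp_2(m)\int_{-1}^{1}\frac{\varphi_\k'(s)}{\bigl(\alpha_1^{R_2}[\lambda_0,\lambda_1](s)\bigr)^2}\ds >0;
\]
\emph{(iii)} limits: $G(\lambda_1)\to 0$ as $\lambda_1\to -\infty$ (the denominator blows up uniformly) and $G(\lambda_1)\to +\infty$ as $\lambda_1\to(\lambda^\ast)^{-}$ (the pole of the integrand reaches the boundary of $[-1,1]$, producing a non-integrable singularity). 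Combined with continuity, IVT delivers a unique $\lambda_1<\lambda^\ast$ with $G(\lambda_1)=1$.

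To extract the quantitative statement $\lambda_1=\lambda^\ast-\beta(M)$ with $\beta$ depending only on $M$, the plan is to decompose $\varphi_\k'=-\tfrac{1}{2}+\bigl(\varphi_\k'+\tfrac{1}{2}\bigr)$ and use the $L^1$-smallness from \lem{propiedadesvarphi}(3). The leading integral can then be computed in closed form as
\[
\int_{-1}^{1}\frac{-\tfrac{1}{2}}{\alpha_1^{R_2}[\lambda_0,\lambda_1](s)}\ds \;=\; -\frac{R_2}{4B}\log\left|\frac{\alpha_1^{R_2}[\lambda_0,\lambda_1](1)}{\alpha_1^{R_2}[\lambda_0,\lambda_1](-1)}\right|,
\]
so that the equation $G(\lambda_1)=1$ reduces, up to an $O(\kappa)$ perturbation, to an explicit transcendental relation. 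The behavior of $\pp_2(m)$ for $m\in[1,M)$ can be controlled uniformly using $\sinh(mx)\sim\tfrac{1}{2}e^{mx}$, giving uniform upper and lower bounds on $\pp_2(m)$. Together these show that the solution $\lambda_1$ lies at a distance bounded below by a constant $\beta(M)$ from $\lambda^\ast$, uniformly in $\kappa$ small and $m\in[1,M)$.

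The main obstacle is precisely this uniformity: for each fixed $m$ and $\kappa$ the IVT step is immediate, but one must prevent $\lambda_1$ from collapsing to $\lambda^\ast$ as $m$ ranges over $[1,M)$ or as $\kappa\to 0$. This is handled by combining the explicit logarithmic form of the leading integral, the $L^1$ bound on $\varphi_\k'+\tfrac{1}{2}$, and uniform two-sided bounds on $\pp_2(m)$ derived from the explicit hyperbolic expressions \eqref{def:auxhyper}, \eqref{def:ppi}. All the remaining estimates are routine once these uniform bounds are in place.
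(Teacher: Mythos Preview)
Your overall strategy coincides with the paper's: define the function $G(\lambda_1)$ (the paper calls it $I(\tilde\lambda)$), establish positivity, strict monotonicity and the limit $G\to 0$ at $-\infty$, then invoke the intermediate value theorem; the decomposition $\varphi_\kappa'=-\tfrac12+(\varphi_\kappa'+\tfrac12)$ together with the closed-form logarithm and uniform bounds on $\pp_2(m)$ is exactly what the paper uses.

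However, your step \emph{(iii)} contains a genuine gap: the assertion that $G(\lambda_1)\to+\infty$ as $\lambda_1\to(\lambda^\ast)^-$ is \emph{false}. By \lem{propiedadesvarphi}(1) the regularization $\varphi_\kappa$ satisfies $(\partial_z^n\varphi_\kappa)(\pm1)=0$ for all $n\ge1$, so $\varphi_\kappa'$ vanishes to infinite order at the endpoints. Since $\alpha_1^{R_2}[\lambda_0,\lambda^\ast](s)=(2B/R_2)(s-1)$ has only a simple zero at $s=1$, the quotient $\varphi_\kappa'(s)/\alpha_1^{R_2}[\lambda_0,\lambda^\ast](s)$ is bounded (even smooth) near $s=1$, and the integral remains finite at $\lambda_1=\lambda^\ast$. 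Thus for a \emph{fixed} $\kappa>0$, $G$ extends continuously to $\lambda^\ast$ with a finite value, and there is no a priori reason this value exceeds $1$; without the smallness of $\kappa$ your IVT argument does not close.

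The paper circumvents this by merging your ``quantitative'' step into the IVT step itself: instead of claiming blow-up, it uses the decomposition $\varphi_\kappa'=-\tfrac12+(\varphi_\kappa'+\tfrac12)$ and the lower bound $\pp_2(m)\ge\mathfrak m(M)$ directly to prove that, for $\kappa<\kappa_0'(M)$ and $\tilde\lambda$ in a neighborhood $(\lambda^\ast-\kappa_0',\lambda^\ast)$, one has $I(\tilde\lambda)>1$. This is precisely where the constraint $\kappa<\kappa_0'(M)$ originates, and it simultaneously yields the lower bound $\lambda^\ast-\lambda_1\ge \beta(M)$ (the solution must lie below the neighborhood where $I>1$, by monotonicity). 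You already have all the ingredients for this; the fix is to run the logarithmic estimate \emph{before} the IVT, not after.
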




We start by focusing our attention on the case $B>0$ since the details for the other case follow along the same lines. Therefore, we will leave to the interested reader the details for the case $B<0$.

First, it is important to recall that for the case $B>0$ we know that $\a_1^{R_2}[\l_0,\l_1](z)$ is a strictly increasing linear function. In particular, we have that
\[
\a_1^{R_2}[\l_0,\l_1](z)\leq \a_1^{R_2}[\l_0,\l_1](1)=\frac{2B}{R_2}+\l_1 R_2^2+\tilde{\Upsilon}_1^{R_2}(0),
\]
with
\[
\tilde{\Upsilon}_1^{R_2}(0)=\log^{-1}\left(\frac{r_2}{r_1}\right)\left[\frac{R_2^2-R_1^2}{2}\left(\log(r_2) + \frac{1}{2}\right)+ \frac{R_1^2}{2}\log(R_1)-\frac{R_2^2}{2}\log(R_2) \right]-\frac{R_2^2-R_1^2}{2}.
\]

A priori we do not know what is the sign of $\a_1^{R_2}[\l_0,\l_1](1)$, but we can assure that
\begin{equation}\label{alpha1R2negative}
\a_1^{R_2}[\l_0,\l_1](z)\leq 0, \qquad  \text{for all } |z|\leq 1,
\end{equation}
taking $\l_1\in(-\infty, \l^\ast)$, where $\l^\ast$ is defined via the following relation
\begin{equation}\label{def:lambdastar}
\l^{\ast} = -\frac{1}{R_2^2}\left(\frac{2B}{R_2}+\tilde{\Upsilon}_1^{R_2}(0)\right).
\end{equation}

Finally, we also need to obtain a lower bound of $\pp_2(m)$ for $1\leq m < M$. Recalling the definition of $\pp_2(m)$ thought \eqref{def:ppi} and the fact that $\sm(\cdot):=\sinh(m \log (\cdot))$ with $\sinh(\cdot)$  strictly increasing function, we obtain
\begin{equation}\label{LBpp2}
\pp_2(m)\geq  \frac{R_2^2}{M}\frac{\cS_{1}(R_2/r_1) \cS_{1}(r_2/R_2)}{\cS_M(r_2/r_1)}=:\mathfrak{m}(M) > 0.
\end{equation}

\begin{proof}[Proof of Lemma \ref{lema:lambda1}]
Let me define the auxiliary function
\[
I(\tilde{\l}):=\pp_2(m)\int_{-1}^{1} \frac{\varphi_\k'( s)}{\a_1^{R_2}[\l_0,\tilde{\l}](s)}\ds.
\]
Applying the fact that $\varphi_\k'(s)\leq 0$  (see Lemma \ref{propiedadesvarphi}) and \eqref{alpha1R2negative} we have that $I:(-\infty,\l^{\ast})\rightarrow [0,\infty)$ is a non-negative function. In addition, it is continuous, strictly increasing with $I'(\tilde{\l})> 0$ and satisfying
\begin{equation}\label{limitl1}
\lim_{\tilde{\l}\to -\infty} I(\tilde{\l})=0.
\end{equation}
In fact, we obtain the following lower bound
\begin{align*}
\frac{I(\tilde{\l})}{\mathfrak{m}(M)}&\geq \int_{-1}^{1} \frac{\varphi_\k'( s)}{\a_1^{R_2}[\l_0,\tilde{\l}](s)}\ds \geq \int_{-1}^{1-\k} \frac{\varphi_\k'( s)}{\a_1^{R_2}[\l_0,\tilde{\l}](s)}\ds\\
&=\int_{-1}^{1-\k} \frac{\varphi_\k'( s)+1/2}{\a_1^{R_2}[\l_0,\tilde{\l}](s)}\ds-\frac{1}{2} \int_{-1}^{1-\k} \frac{1}{\a_1^{R_2}[\l_0,\tilde{\l}](s)}\ds\\
&\geq -\frac{\|\varphi_\k'+\tfrac{1}{2}\|_{L^1([-1,1-\k])}}{|\a_1^{R_2}[\l_0,\tilde{\l}](1-\k)|}-\frac{1}{2}\frac{1}{2 R_2 (\l_0-A)}\log\left(\frac{|\a_1^{R_2}[\l_0,\tilde{\l}](1-\k)|}{|\a_1^{R_2}[\l_0,\tilde{\l}](-1)|}\right).
\end{align*}
Then, applying again Lemma \ref{propiedadesvarphi} and  using \eqref{auxIlamba1}  we have
\[
\frac{I(\tilde{\l})}{\mathfrak{m}(M)} \geq -\frac{C \k}{|\a_1^{R_2}[\l_0,\tilde{\l}](1-\k)|} - \frac{1}{2} \frac{R_2}{2B}\log\left(\frac{|\a_1^{R_2}[\l_0,\tilde{\l}](1-\k)|}{|\a_1^{R_2}[\l_0,\tilde{\l}](-1)|}\right).
\]

As an intermediate step, we study each of the above terms separately. In first place, we have
\begin{align*}
\a_1^{R_2}[\l_0,\tilde{\l}](1-\k)&=\a_1^{R_2}[\l_0,\tilde{\l}](1)-\k (\l_0-A)2 R_2\\
&=\a_1^{R_2}[\l_0,\tilde{\l}](1)-\k \frac{2B}{R_2}\leq -\k \frac{2B}{R_2}\ ,
\end{align*}
where we have used \eqref{auxIlamba1}. Moreover, we have seen that $\a_1^{R_2}[\l_0,\l_1](z)\leq 0$  for all $|z|\leq 1$ thanks to \eqref{alpha1R2negative}. This implies that
\begin{equation}\label{getl1aux1}
-\frac{\k}{|\a_1^{R_2}[\l_0,\tilde{\l}](1-\k)|} \geq -\frac{R_2}{2B}.
\end{equation}
For the other term, we can ensure that for any $0<\beta \ll 1$ there exist $\k_0'(\beta)$ small enough such that
\begin{equation}\label{getl1aux2}
0 < \frac{|\a_1^{R_2}[\l_0,\tilde{\l}](1-\k)|}{|\a_1^{R_2}[\l_0,\tilde{\l}](-1)|}<\beta,
\end{equation}
for all $\tilde{\l}$ satisfying $0<\l^\ast-\tilde{\l}<\k_0'$ with $0<\k<\k_0'$.

Now, we have all the ingredients to conclude. We just need to combine \eqref{getl1aux1} and \eqref{getl1aux2} to get
\[
\frac{I(\tilde{\l})}{\mathfrak{m}(M)} \geq -\frac{R_2}{2B}\left(C + \frac{1}{2}\log(\beta)\right).
\]
Then, taking $\beta$ small enough such that
\begin{equation}\label{lwgamma}
-\frac{R_2}{2B}\left(C + \frac{1}{2}\log(\beta)\right) > \frac{1}{\mathfrak{m}(M)},
\end{equation}
we have proved that $I(\tilde{\l})>1$ for all $\tilde{\l}$ satisfying $0<\l^\ast-\tilde{\l}<\k_0'(\beta)$ with $0<\k<\k_0'(\beta)$ and $1\leq m< M$. Here, it is important to emphasize that $\beta$ given by \eqref{lwgamma} only depends on $M$.

Combining the lower bound with \eqref{limitl1} and the fact that $I(\tilde{\l})$ is strictly increasing, we 
conclude that there exists a unique $\l_1=\l^\ast-\beta(M)$  solving \eqref{lambda1eq} for all $0 < \k < \k_0'$ and $1 \leq m < M$.
\end{proof}

After choose a suitable $\l_1<\l^\ast$ by Lemma \ref{lema:lambda1}, we obtain $\bb_0$ through equation \eqref{def:b0}. Now,
we have all the ingredients to prove the following result.

\begin{lemma}\label{propo1} For any $M>1,$ there exists $\kappa_0=\kappa_0(M)$ such that for $1\leq m < M$ and $0< \kappa<\kappa_0$, we can find a solution $(\l_1,\, \bb_0)\in \R\times C^\infty([-1,1])$  of equation \eqref{b0equation}. In addition,  fixed $m$ and $\kappa$, this solution is unique (modulo multiplication by constant) and satisfies
\begin{align}
\bb_0(z)&=\frac{1}{\a_1^{R_2}[\l_0,\l_1](z)},\label{b0formula} \\
\lambda_1&=\frac{2B}{R_2^3}\left(1+\frac{2}{\exp\left[\frac{2B}{R_2}\frac{2}{\pp_2(m)}\right]-1}\right)-\frac{\tilde{\Upsilon}_1^{R_2}(0)}{R_2^2}+\mathsf{error}\label{lambda1formula},
\end{align}
where
\begin{align*}
|\mathsf{error}|\leq C(M)\kappa.
\end{align*}
Moreover, we get
\begin{align*}
\|\bb_0\|_{L^\infty([-1,1])}\leq C(M), && \|\pa_z^{k)}\bb_0\|_{L^\infty([-1,\,1])}\leq C(k,M),\quad k=0,1,2,... && \aa_1=\frac{1}{\a_0^{R_1}[\l_0]}\frac{R_1 \sm(R_1/r_1)}{R_2 \sm(R_2/r_2)}.
\end{align*}
\end{lemma}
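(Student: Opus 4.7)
Equation \eqref{b0equation} can be rewritten as
\[
\a_1^{R_2}[\l_0,\l_1](z)\,\bb_0(z) = \pp_2(m)\int_{-1}^{1}\varphi_\k'(s)\bb_0(s)\ds,
\]
whose right hand side is independent of $z$. Hence, provided $\a_1^{R_2}[\l_0,\l_1](\cdot)$ does not vanish on $[-1,1]$, any solution $\bb_0$ must be a scalar multiple of $1/\a_1^{R_2}[\l_0,\l_1](\cdot)$. Normalizing this constant to unity (the ``modulo multiplication by a constant'' in the statement) yields \eqref{b0formula}. Substituting back produces precisely the scalar consistency equation \eqref{lambda1eq} for $\l_1$, and the existence and uniqueness of $\l_1=\l^{\ast}-\beta(M)$ solving it, together with the quantitative nonvanishing $|\a_1^{R_2}[\l_0,\l_1](z)|\geq \beta(M)\,R_2^2$ on $[-1,1]$, is exactly the content of the already-proved Lemma \ref{lema:lambda1}. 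This justifies the ansatz a posteriori.

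To extract the explicit formula \eqref{lambda1formula}, I would exploit the affine structure $\a_1^{R_2}[\l_0,\l_1](z) = \beta z + \gamma$ with $\beta = 2B/R_2$ and $\gamma = \l_1 R_2^2 + \tilde{\Upsilon}_1^{R_2}(0)$, together with the $L^1$--approximation $\varphi_\k' = -\tfrac{1}{2} + r_\k$, $\|r_\k\|_{L^1([-1,1])}\leq C\kappa$, from Lemma \ref{propiedadesvarphi}(iii). The leading contribution to \eqref{lambda1eq} then becomes the explicit logarithmic identity
\[
1 \;=\; -\frac{\pp_2(m)}{2\beta}\log\Bigl|\frac{\beta+\gamma}{-\beta+\gamma}\Bigr| \;+\; \mathsf{error}, \qquad |\mathsf{error}|\leq C(M)\kappa,
\]
where the error bound relies on the uniform lower bound on $|\a_1^{R_2}|$ proved in Lemma \ref{lema:lambda1}. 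Solving this scalar transcendental relation explicitly for $\gamma$ produces $\gamma$ as an exponential function of $\beta/\pp_2(m)$, and substituting back $\gamma = \l_1 R_2^2 + \tilde{\Upsilon}_1^{R_2}(0)$ gives \eqref{lambda1formula} up to an $O(\kappa)$ remainder.

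The remaining claims are then immediate. Smoothness of $\bb_0$ and the derivative bounds $\|\pa_z^{k}\bb_0\|_{L^\infty}\leq C(k,M)$ follow from the affinity of $\a_1^{R_2}$ together with its uniform lower bound: a direct induction yields $\pa_z^{k}\bigl(1/\a_1^{R_2}\bigr) = (-1)^k\, k!\,\beta^k/(\a_1^{R_2})^{k+1}$, hence the estimate depends only on $k$, $M$ and the fixed parameters. The formula for $\aa_1$ is read off from \eqref{def:a1} by inserting $\bb_0 = 1/\a_1^{R_2}$ and using \eqref{lambda1eq} to evaluate $\int_{-1}^{1}\varphi_\k'(s)\bb_0(s)\ds = 1/\pp_2(m)$; this gives $\aa_1 = \pp_1(m)/(\pp_2(m)\,\a_0^{R_1}[\l_0])$, and the ratio $\pp_1(m)/\pp_2(m)$ then simplifies via \eqref{def:ppi} to the stated expression.

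The only genuine difficulty is the resolution of the implicit equation \eqref{lambda1eq} in a regime where $\a_1^{R_2}$ stays strictly away from zero, and that obstacle is already dealt with by Lemma \ref{lema:lambda1}. Everything else is a direct computational consequence of the affine structure of \eqref{b0equation} and of the quantitative approximation of $\varphi_\k'$ by $-\tfrac{1}{2}$.
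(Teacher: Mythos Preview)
Your proposal is correct and follows essentially the same approach as the paper: reduce \eqref{b0equation} to the scalar consistency equation \eqref{lambda1eq}, invoke Lemma \ref{lema:lambda1} for existence/uniqueness of $\l_1$ and the uniform lower bound on $|\a_1^{R_2}[\l_0,\l_1]|$, then use the affine structure together with the approximation $\varphi_\k'\approx -\tfrac12$ from Lemma \ref{propiedadesvarphi}(iii) to solve the resulting logarithmic identity explicitly and control the $O(\kappa)$ remainder. The derivative bounds and the formula for $\aa_1$ are obtained exactly as you describe.
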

\begin{remark}
It is important to emphasize that constant $C(M)$ depend on $M$  but it does not depend on neither $m$ or $\kappa$, for $0 < \kappa<\kappa_0$ and $1\leq m<M$.
\end{remark}


\begin{proof}
We start the proof by explaining that, as before, we are considering only the case $B>0$. The remaining  case, i.e. $B<0$ follows my straightforward modifications. 

In first place, we note that 
\[
\bb_0(z)=\frac{1}{\a_1^{R_2}[\l_0,\l_1](z)},
\]
solves \eqref{b0equation} with linear function $\a_1^{R_2}[\l_0,\l_1](z)$ given by
\[
\a_1^{R_2}[\l_0,\l_1](z)=(\l_0-A)2 R_2 z+\l_1 R_2^2+\tilde{\Upsilon}_1^{R_2}(0).
\]
Since $\a_1^{R_2}[\l_0,\l_1](z)<0$ for all $|z|\leq 1$ we obtain  that its inverse, that is, $\bb_0(z)$ is a  $C^\infty$-function on the unit interval $[-1,1]$. In addition, since $\a_1^{R_2}[\l_0,\l^\ast](1)=0$ by definition of $\l^\ast$, we have that $\tilde{\Upsilon}_1^{R_2}(0)=-\l_\ast R_2^2-(\l_0-A)2 R_2$ and consequently we can just write
\begin{equation}\label{auxIIlamba1}
\a_1^{R_2}[\l_0,\l_1](z)=(\l_0-A)2 R_2 (z-1)+(\l_1-\l_\ast) R_2^2.
\end{equation}
Thus, a direct calculation yields 
\[
\|\bb_0\|_{L^\infty([-1,1])}^2=\frac{1}{|\l_1-\l_\ast| R_2^2}.
\]
Recalling that $\l_1$ given by Lemma \ref{lema:lambda1} satisfies  $\l_\ast-\l_1=\beta(M)$ we get that $\|\bb_0\|_{L^2([-1,1])}\leq C(M)$. Proceeding in a similar way, we obtain the same type of bounds for its derivatives, i.e.  
$$\|\pa_z^{k)}\bb_0\|_{L^\infty([-1,\,1])}\leq C(k,M).$$

From \eqref{def:a1} and \eqref{lambda1eq} we have that
\[
\aa_1=\frac{1}{\a_0^{R_1}[\l_0]}\pp_1(m)\int_{-1}^{1} \varphi_\k'(s)\bb_0(s)\ds=\frac{1}{\a_0^{R_1}[\l_0]}\frac{\pp_1(m)}{\pp_2(m)}=\frac{1}{\a_0^{R_1}[\l_0]}\frac{R_1 \sm(R_1/r_1)}{R_2 \sm(R_2/r_2)}.
\]

Finally, in order to get \eqref{lambda1formula} we proceed as follow. From the identity \eqref{lambda1eq} we get
\begin{multline}\label{eq:LAMBDA1}
1=\pp_2(m)\int_{-1}^{1} \frac{-\varphi_\k'( s)-1/2}{|\a_1^{R_2}[\l_0,\l_1](s)|}\ds + \frac{\pp_2(m)}{2}\int_{-1}^{1} \frac{1}{|\a_1^{R_2}[\l_0,\l_1](s)|}\ds\\
=\frac{\pp_2(m)}{2}\left(\Lambda_1+\frac{1}{(\l_0-A)2 R_2}\log\left(\frac{\a_1^{R_2}[\l_0,\l_1](+1)}{\a_1^{R_2}[\l_0,\l_1](-1)}\right)\right),
\end{multline}
where
\[
\Lambda_1:=2\int_{-1}^{1} \frac{-\varphi_\k'( s)-1/2}{|\a_1^{R_2}[\l_0,\l_1](s)|}\ds.
\]
Solving \eqref{eq:LAMBDA1} we get
\[
\l_1 R_2^2 +\tilde{\Upsilon}_1^{R_2}(0)=(\l_0-A)2 R_2\left(1+\frac{2}{\exp\left[(\l_0-A)2 R_2\left(\frac{2}{\pp_2(m)}-\Lambda_1\right)\right]-1}\right),
\]
and using \eqref{auxIlamba1} we finally arrive to
\[
\l_1=\frac{2B}{R_2^3}\left(1+\frac{2}{\exp\left[\frac{2B}{R_2}\left(\frac{2}{\pp_2(m)}-\Lambda_1\right)\right]-1}\right)-\frac{\tilde{\Upsilon}_1^{R_2}(0)}{R_2^2}.
\]
Consequently
\begin{multline*}
\left|\l_1-\frac{2B}{R_2^3}\left(1+\frac{2}{\exp\left[\frac{2B}{R_2}\frac{2}{\pp_2(m)}\right]-1}\right)+\frac{\tilde{\Upsilon}_1^{R_2}(0)}{R_2^2}\right|\\
=\frac{2B}{R_2^3 }\frac{2\exp\left[\frac{2B}{R_2}\frac{2}{\pp_2(m)}\right]\left(1-\exp\left[-\frac{2B}{R_2}\Lambda_1 \right]\right)}{\left(\exp\left[\frac{2B}{R_2}\left(\frac{2}{\pp_2(m)}-\Lambda_1\right)\right]-1\right)\left(\exp\left[\frac{2B}{R_2}\frac{2}{\pp_2(m)}\right]-1\right)}=:\mathsf{error}.
\end{multline*}
Now, we can estimate each of the terms of the last factor that involve the term $\Lambda_1$. Note that the term appearing in the numerator can be bounded by
\[
\left|1-\exp\left[-\frac{2B}{R_2}\Lambda_1 \right]\right|\leq \frac{2B}{R_2}\exp\left[\frac{2B}{R_2}|\Lambda_1|\right]|\Lambda_1|.
\]
In addition, using the fact that $\a_1^{R_2}[\l_0,\l_\ast](1)=0$ we get
\[
|\Lambda_1|\leq 2\int_{-1}^{1} \frac{|\varphi_\k'( s)+1/2|}{|\a_1^{R_2}[\l_0,\l_1](s)|}\ds \leq \frac{C\k}{|\a_1^{R_2}[\l_0,\l_1](1)|} =\frac{C \k}{(\l_\ast-\l_1)R_2^2}
\]
where $C$ is a universal constant coming from Lemma \ref{propiedadesvarphi}. Proceeding as before, since $\l_1$ given by Lemma \ref{lema:lambda1} satisfies  $0<\l_\ast-\l_1=\beta(M)$ we have proved that
\[
|\Lambda_1|\leq \frac{C \k}{\beta(M) R_2^2}.
\]
For the denominator it is sufficient to note that
\begin{equation*}
\pp_2(m)\leq  R_2^2\frac{\cS_{M}(R_2/r_1) \cS_{M}(r_2/R_2)}{\cS_1(r_2/r_1)}=:\mathfrak{M}(M), \qquad \text{for all}\quad 1\leq m < M,
\end{equation*}
and
\begin{multline*}
\exp\left[\frac{2B}{R_2}\left(\frac{2}{\pp_2(m)}-\Lambda_1\right)\right]-1\\
\geq \exp\left[\frac{2B}{R_2}\left(\frac{2}{\mathfrak{M}(M)}-\frac{C \k}{\beta(M) R_2^2}\right)\right]-1>C(M), \qquad \text{if} \quad \frac{2}{\mathfrak{M}(M)}>\frac{C \k}{\beta(M) R_2^2}.
\end{multline*}
Then, taking $\k<\k_0\equiv \min\{\k_0',\k_0''\}$, where $\k_0'$  is given by Lemma \ref{lema:lambda1} and $\k_0''=\frac{2\beta(M)R_2^2}{C \,\mathfrak{M}(M)}$, we have proved that $|\mathsf{error}|\leq C(M)\k$ for all $0<\k<\k_0$ and $1\leq m<M$.
\end{proof}

\subsubsection{Solving the lower order terms via a fixed point argument}
The main task of this section is to solve \eqref{systemTeQe} for the unknowns $\aa_2^\e, \bb_1^\e$ and $\l_2^\e$. We rewrite it here for convenience.
\begin{equation*}
\left\{
\begin{aligned}
\left(\cT_{m,\e}^2[\aa_2^\e,\bb_1^\e;\aa_1,\l_0,\l_1]+\frac{1}{\e}\int_0^\e \left(\p_\d\mathcal{T}_{m,\d}^1\right)[\bb_0]\mbox{d}\delta\right)  +\cT_{m,\e}^3[\aa_2^\e,\l_2^\e;\aa_1,\l_0,\l_1]\e&=0,\\
\left(\mathcal{Q}_{m,\e}^2[\bb_1^\e,\l_2^\e;\aa_1,\bb_0,\l_0,\l_1]+\frac{1}{\e}\int_0^\e (\p_\d\mathcal{Q}_{m,\d}^1)[\bb_0]\mbox{d}\delta\right)  +\mathcal{Q}_{m,\e}^3[\aa_2^\ep,\bb_1^\e,\l_2^\e;\l_0,\l_1]\e &=0.
\end{aligned}
\right.
\end{equation*}
We will proceed as before by considering the Taylor expansion of $\cT_{m,\e}^2$ and $\mathcal{Q}_{m,\e}^2$ around $\e=0$.
Before that, we need to take care of the fact that term $\mathsf{Remainder}_{\e,\k}$, which appears on $\a_{2,\e}^{R_2}[\l_0,\l_1,\l_2^\e]$ through the term $\tilde{\Upsilon}_{\e,\k}^{R_2}(z)$, satisfies that $\mathsf{Remainder}_{\e,\k}=O(1)$ but $\left(\p_\e\mathsf{Remainder}_{\e,\k}\right)(z)=O(\e^{-1})$ in terms of $\e$. 

With a little abuse of notation, in the rest of this section, we redefine term $\a_{2,\e}^{R_2}[\l_0,\l_1,\l_2^\e](z)$ as (see \eqref{def:alpha2})
\begin{equation}\label{def:alpha2new}
\a_{2,\e}^{R_2}[\l_0,\l_1,\l_2^\e](z):=\l_0 z^2 +\l_1 2 z R_2 + \l_2^\e R_2^2+\l_2^\e 2z R_2\e+\l_2^\e z^2\e^2,
\end{equation}
where the term $\tilde{\Upsilon}_{\e,\k}^{R_2}(z)$ will be study separately.

After this, the main task of this section is to solve the coupled system
\begin{equation*}
\left(\cT_{m,\e}^2[\aa_2^\e,\bb_1^\e;\aa_1,\l_0,\l_1]+\frac{1}{\e}\int_0^\e \left(\p_\d\mathcal{T}_{m,\d}^1\right)[\bb_0]\mbox{d}\delta\right)  +\cT_{m,\e}^3[\aa_2^\e,\l_2^\e;\aa_1,\l_0,\l_1]\e=0
\end{equation*}
together with
\begin{multline*}
\left(\tilde{\Upsilon}_{\e,\k}^{R_2}(z)\bb_0(z)+ \mathcal{Q}_{m,\e}^2[\bb_1^\e,\l_2^\e;\aa_1,\bb_0,\l_0,\l_1](z)+\frac{1}{\e}\int_0^\e (\p_\d\mathcal{Q}_{m,\d}^1)[\bb_0](z)\mbox{d}\delta\right)\\
  +\mathcal{Q}_{m,\e}^3[\aa_2^\ep,\bb_1^\e,\l_2^\e;\l_0,\l_1](z)\e =0.
\end{multline*}
Now, we can use Taylor's expansion to obtain
\begin{align*}
\cT_{m,\e}^2[\aa_2^\e,\bb_1^\e;\aa_1,\l_0,\l_1]&=\cT_{m,0}^2[\aa_2^\e,\bb_1^\e;\aa_1,\l_0,\l_1]+\int_0^\e  \left(\p_\d \cT_{m,\d}^2 \right) [\bb_1^\e;\aa_1] \mbox{d}\delta,\\
\mathcal{Q}_{m,\e}^2[\bb_1^\e,\l_2^\e;\aa_1,\bb_0,\l_0,\l_1]&=\mathcal{Q}_{m,0}^2[\bb_1^\e,\l_2^\e;\aa_1,\bb_0,\l_0,\l_1]+\int_0^\e \left(\p_\d \mathcal{Q}_{m,\d}^2 \right)[\bb_1^\e,\l_2^\e;\aa_1,\bb_0] \mbox{d}\d,
\end{align*}
where (see \eqref{T2e}, \eqref{Q2e})
\begin{multline}\label{T20}
\cT_{m,0}^2[\aa_2^\e,\bb_1^\e;\aa_1,\l_0,\l_1]=\a_1^{R_1}[\l_0,\l_1] \aa_1(z)+\a_0^{R_1}[\l_0]\aa_2^{\e}(z)\\
+ R_1^2\frac{\sm(R_1/r_1)\sm(r_2/R_1)}{\sm(r_2/r_1)}\frac{1}{m}\int_{-1}^{+1}  \varphi_\k' (-s)\aa_1(s)\ds\\
-R_1R_2 \frac{\sm(R_1/r_1)\sm(r_2/R_2)}{\sm(r_2/r_1)}\frac{1}{m}\int_{-1}^{+1} \varphi_\k' (s)\bb^\ep_1(s) \ds,
\end{multline}
and
\begin{multline}\label{Q20}
\mathcal{Q}_{m,0}^2[\bb_1^\e,\l_2^\e;\aa_1,\bb_0,\l_0,\l_1]=\a_1^{R_2}[\l_0,\l_1]\bb_1^\e(z)+ \a_{2,0}^{R_2}[\l_0,\l_1,\l_2^\e]\mathsf{b}_0(z)\\
+R_1 R_2\frac{\sm(R_2/r_1)\sm(r_2/R_1)-\sm(R_2/R_1)\sm(r_2/r_1)}{\sm(r_2/r_1)}\frac{1}{m}\int_{-1}^{+1} \varphi_\k' (-s)\aa_1(s)\ds\\
-R_2^2\frac{\sm(R_2/r_1)\sm(r_2/R_2)}{\sm(r_2/r_1)}\frac{1}{m}\int_{-1}^{+1} \varphi_\k' (s) \bb^\ep_1(s)\ds.
\end{multline}
\begin{remark}
Here, it is important to emphasize that term $\a_{2,\e}^{R_2}[\l_0,\l_1,\l_2^\e]$ is defined via \eqref{def:alpha2new}.
\end{remark}
In fact, we have
\begin{align*}
\cT_{m,0}^2[\aa_2^\e,\bb_1^\e;\aa_1,\l_0,\l_1]&=\widehat{\cT}_m^2[\aa_2^\e,\bb_1^\e;\l_0]+\widetilde{\cT}_m^2[\aa_1,\l_0,\l_1],\\
\mathcal{Q}_{m,0}^2[\bb_1^\e,\l_2^\e;\aa_1,\bb_0,\l_0,\l_1]&=\widehat{\cQ}_m^2[\bb_1^\e,\l_2^\e ;\bb_0,\l_0,\l_1]+\widetilde{\cQ}_m^2[\aa_1]
\end{align*}
with
\begin{align*}
\widehat{\cT}_m^2[\aa_2^\e,\bb_1^\e;\l_0]&:=\a_0^{R_1}[\l_0]\aa_2^{\e}(z)-R_1R_2 \frac{\sm(R_1/r_1)\sm(r_2/R_2)}{\sm(r_2/r_1)}\frac{1}{m}\int_{-1}^{+1} \varphi_\k' (s)\bb^\ep_1(s) \ds,\\
\widetilde{\cT}_m^2[\aa_1,\l_0,\l_1]&:=\a_1^{R_1}[\l_0,\l_1] \aa_1(z) + R_1^2\frac{\sm(R_1/r_1)\sm(r_2/R_1)}{\sm(r_2/r_1)}\frac{1}{m}\int_{-1}^{+1}  \varphi_\k' (-s)\aa_1(s)\ds,
\end{align*}
and
\begin{multline}\label{Q2hat}
\widehat{\cQ}_m^2[\bb_1^\e,\l_2^\e ;\bb_0,\l_0,\l_1]:=\a_1^{R_2}[\l_0,\l_1]\bb_1^\e(z)+ \a_{2,0}^{R_2}[\l_0,\l_1,\l_2^\e]\mathsf{b}_0(z)\\
 -R_2^2\frac{\sm(R_2/r_1)\sm(r_2/R_2)}{\sm(r_2/r_1)}\frac{1}{m}\int_{-1}^{+1} \varphi_\k' (s) \bb^\ep_1(s)\ds,
\end{multline}
\begin{equation*}
\widetilde{\cQ}_m^2[\aa_1]:=R_1 R_2\frac{\sm(R_2/r_1)\sm(r_2/R_1)-\sm(R_2/R_1)\sm(r_2/r_1)}{\sm(r_2/r_1)}\frac{1}{m}\int_{-1}^{+1} \varphi_\k' (-s)\aa_1(s)\ds.
\end{equation*}

\begin{remark}
From now on we will repeatedly use the decomposition $\mathcal{T}[\mathsf{p};\mathsf{q}]=\widehat{\mathcal{T}}[\mathsf{p};\mathsf{q}]+\widetilde{\mathcal{T}}[\mathsf{q}]$ to emphasize that unknown appear only on the term $\widehat{\mathcal{T}}[\mathsf{p};\mathsf{q}]$ and the other is fully determined. 
\end{remark}

Taking into account all the above considerations, the system \eqref{systemTeQe} can be written as
\begin{multline*}
\widehat{\cT}_m^2[\aa_2^\e,\bb_1^\e;\l_0](z)+ \left( \widetilde{\cT}_m^2[\aa_1,\l_0,\l_1](z)+\frac{1}{\e}\int_0^\e \left(\p_\d\mathcal{T}_{m,\d}^1\right)[\bb_0](z)\mbox{d}\delta \right)\\
+\e\left(\frac{1}{\e}\int_0^\e  \left(\p_\d \cT_{m,\d}^2 \right) [\bb_1^\e;\aa_1](z) \mbox{d}\delta + \cT_{m,\e}^3[\aa_2^\e,\l_2^\e;\aa_1,\l_0,\l_1,](z)\right)=0,
\end{multline*}
and
\begin{multline*}
\widehat{\cQ}_m^2[\bb_1^\e,\l_2^\e ;\bb_0,\l_0,\l_1](z)
+\left(\widetilde{\cQ}_m^2[\aa_1](z)+\tilde{\Upsilon}_{\e,\k}^{R_2}(z)\bb_0(z)+\frac{1}{\e}\int_0^\e (\p_\d\mathcal{Q}_{m,\d}^1)[\bb_0](z)\mbox{d}\delta\right)\\
+ \e \left(\frac{1}{\e}\int_0^\e \left(\p_\d \mathcal{Q}_{m,\d}^2 \right)[\bb_1^\e,\l_2^\e;\aa_1,\bb_0](z) \mbox{d}\d +\mathcal{Q}_{m,\e}^3[\aa_2^\ep,\bb_1^\e,\l_2^\e;\l_0,\l_1](z)\right)=0.
\end{multline*}
Here, it is important to note that each of the terms in parentheses is of order $O(1)$ in terms of $\e$.
Consequently, we finally get for $\aa_2^\e, \bb_1^\e$ and $\l_2^\e$ the system
\begin{align}
\widehat{\cT}_m^2[\aa_2^\e,\bb_1^\e;\l_0](z)&=A_0[\aa_1,\bb_0,\l_0,\l_1](z)+\e A_1[\aa_2^\e,\bb_1^\e,\l_2^\e;\aa_1,\l_1,\l_0](z), \label{invertT}\\
\widehat{\cQ}_m^2[\bb_1^\e,\l_2^\e ;\bb_0,\l_0,\l_1](z)&=B_0[\aa_1,\bb_0,\l_0,\l_1](z)+\e B_1[\aa_2^\e,\bb_1^\e,\l_2^\e;\aa_1,\bb_0,\l_0,\l_1](z), \label{invertQ}
\end{align}
where
\begin{align*}
A_0[\aa_1,\bb_0,\l_0,\l_1](z):&=-\widetilde{\cT}_m^2[\aa_1,\l_0,\l_1](z)-\frac{1}{\e}\int_0^\e \left(\p_\d\mathcal{T}_{m,\d}^1\right)[\aa_1,\bb_0,\l_0](z)\mbox{d}\delta,\\
B_0[\aa_1,\bb_0,\l_0,\l_1](z):&=-\widetilde{\cQ}_m^2[\aa_1](z)-\tilde{\Upsilon}_{\e,\k}^{R_2}(z)\bb_0(z)-\frac{1}{\e}\int_0^\e (\p_\d\mathcal{Q}_{m,\d}^1)[\bb_0,\l_0,\l_1](z)\mbox{d}\delta,
\end{align*}
and
\begin{equation*}
A_1[\aa_2^\e,\bb_1^\e,\l_2^\e;\aa_1,\l_0,\l_1](z):=-\frac{1}{\e}\int_0^\e  \left(\p_\d \cT_{m,\d}^2 \right) [\bb_1^\e;\aa_1](z) \mbox{d}\delta
- \cT_{m,\e}^3[\aa_2^\e,\l_2^\e;\aa_1,\l_0,\l_1](z),
\end{equation*}
\begin{equation*}
B_1[\aa_2^\e,\bb_1^\e,\l_2^\e;\aa_1,\bb_0,\l_0,\l_1](z):=-\frac{1}{\e}\int_0^\e \left(\p_\d \mathcal{Q}_{m,\d}^2 \right)[\bb_1^\e,\l_2^\e;\aa_1,\bb_0](z) \mbox{d}\d
 -\mathcal{Q}_{m,\e}^3[\aa_2^\ep,\bb_1^\e,\l_2^\e;\l_0,\l_1](z).
\end{equation*}
Here, we note that Lemma \ref{propo1} together with \eqref{DFT1Q1} yields
\begin{align*}
\|A_0\|_{L^{2}}&\leq C(M),\\
\|B_0\|_{L^{2}}&\leq C(M).
\end{align*}
To handle $A_1$ and $B_1$ we have to make an intermediate step. Applying Lemma \ref{lema:lambda1} into definition \eqref{T3e} and \eqref{Q3e} we get
\begin{align*}
\|\cT^3_{m,\e}\|_{L^2}\leq &C(M)\left(\|\aa_2^\e\|_{L^2} + |\l_2^\e|+|\l_2^\e|\|\aa_2^\e\|_{L^2}\right),\\
\|\cQ^3_{m,\e}\|_{L^2}\leq &C(M)\left(|\l_2^\e|\|\bb_1^\e\|_{L^2} + \|\aa_2^\e\|_{L^2}\right).
\end{align*}
Combining the above with Lemma \ref{propo1} together with \eqref{DFT2} and \eqref{DFQ2} we obtain
\begin{align*}
\|A_1\|_{L^{2}}&\leq C(M)\left(\|\bb_1^\e\|_{L^2}+ \|\aa_2^\e\|_{L^2} + |\l_2^\e|+|\l_2^\e|\|\aa_2^\e\|_{L^2}\right),\\
\|B_1\|_{L^{2}}&\leq C(M)\left(|\l_2^\e|+\|\bb_1^\e\|_{L^2}+|\l_2^\e|\|\bb_1^\e\|_{L^2} + \|\aa_2^\e\|_{L^2}\right).
\end{align*}

\subsubsection{Inverting the operator $\widehat{\cQ}_m^2$}
In the following two lemmas we are going to learn how to invert the left hand side of \eqref{invertQ}. In order to do that, we write explicitly \eqref{Q2hat}, that is the left hand side of \eqref{invertQ}:
\begin{equation*}
\widehat{\cQ}_m^2[\bb_1^\e,\l_2^\e ;\bb_0,\l_0,\l_1]:=\a_1^{R_2}[\l_0,\l_1]\bb_1^\e(z)+ \a_{2,0}^{R_2}[\l_0,\l_1,\l_2^\e]\mathsf{b}_0(z) -\pp_2(m)\int_{-1}^{+1} \varphi_\k' (s) \bb^\ep_1(s)\ds,
\end{equation*}
where in the last term we have used \eqref{def:ppi}.

In order to invert the above operator we are going to split the argument in two parts. Firstly, we are going to start inverting a simplified expression as follows. We omit their proof because it follows by direct computation.
\begin{lemma}\label{auxlemmainvert}
Let $\l_0$ given by expression \eqref{def:l0}, $\l_1$ be as in Lemma \ref{lema:lambda1} and let $\mathsf{F}\in L^2([-1,1])$ such that $\int_{-1}^{1}\mathsf{F}(s)\bb_0(s)\varphi_\k'(s)\ds=0$. Then
\[
\gg(z):=\mathsf{F}(z)\bb_0(z)=\frac{\mathsf{F}(z)}{\a_1^{R_2}[\l_0,\l_1](z)},
\]
solves
\[
\a_1^{R_2}[\l_0,\l_1](z)\gg(z)-\pp_2(m)\int_{-1}^{1} \varphi_\k'(s)\gg(s)\emph{d}s =\mathsf{F}(z).
\]
\end{lemma}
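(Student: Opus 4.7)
The plan is essentially a direct verification, so the ``proof'' amounts to substituting the candidate $\gg(z)=\mathsf{F}(z)\bb_0(z)$ into the equation and checking both sides match, using only two ingredients provided by the earlier work: the explicit formula for $\bb_0$ from Lemma \ref{propo1} and the orthogonality hypothesis on $\mathsf{F}$.

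First I would recall from Lemma \ref{propo1} that $\bb_0(z)=1/\a_1^{R_2}[\l_0,\l_1](z)$, which is legitimately a smooth function on $[-1,1]$ since, under the choice of $\l_1$ in Lemma \ref{lema:lambda1}, the denominator $\a_1^{R_2}[\l_0,\l_1](z)$ does not vanish on $[-1,1]$ (it stays strictly of one sign). This makes the candidate $\gg$ well-defined and in $L^2([-1,1])$ whenever $\mathsf{F}\in L^2$. Then, multiplying $\gg$ by $\a_1^{R_2}[\l_0,\l_1](z)$ gives
\[
\a_1^{R_2}[\l_0,\l_1](z)\gg(z)=\a_1^{R_2}[\l_0,\l_1](z)\cdot\frac{\mathsf{F}(z)}{\a_1^{R_2}[\l_0,\l_1](z)}=\mathsf{F}(z),
\]
which accounts for the first term on the left-hand side of the equation.

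Next I would handle the integral term. Plugging $\gg(s)=\mathsf{F}(s)\bb_0(s)$ inside the integral yields
\[
\pp_2(m)\int_{-1}^{1}\varphi_\k'(s)\gg(s)\ds=\pp_2(m)\int_{-1}^{1}\varphi_\k'(s)\mathsf{F}(s)\bb_0(s)\ds=0,
\]
where the last equality is exactly the hypothesis $\int_{-1}^{1}\mathsf{F}(s)\bb_0(s)\varphi_\k'(s)\ds=0$. Combining both computations gives $\a_1^{R_2}[\l_0,\l_1](z)\gg(z)-\pp_2(m)\int_{-1}^{1}\varphi_\k'(s)\gg(s)\ds=\mathsf{F}(z)-0=\mathsf{F}(z)$, as required.

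There is no real obstacle here: the orthogonality condition is precisely what is needed to kill the nonlocal integral term, and the explicit factorization of $\bb_0$ trivializes the multiplicative part. The lemma should be read as identifying the correct particular solution of the operator equation among the family of candidates of the form $\mathsf{F}\cdot\bb_0$; it is preparation for the fuller inversion result to follow, where the non-orthogonal component of the data will be absorbed by an appropriate choice of $\l_2^\e$ through the term $\a_{2,0}^{R_2}[\l_0,\l_1,\l_2^\e]\bb_0(z)$ appearing in $\widehat{\cQ}_m^2$.
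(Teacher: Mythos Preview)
Your proof is correct and is precisely the direct computation the paper alludes to when it writes ``We omit their proof because it follows by direct computation.'' There is nothing to add: the paper's (omitted) argument is exactly this substitution-and-verification using $\bb_0=1/\a_1^{R_2}[\l_0,\l_1]$ together with the orthogonality hypothesis to kill the integral term.
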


Here, we recall expression \eqref{def:alpha2new} that  allows us to write
\begin{align*}
\a_{2,0}^{R_2}[\l_2^\e;\l_0,\l_1](z)=\l_0 z^2 +\l_1 2 z R_2 + \l_2^\e R_2^2=:\l_2^\e R_2^2+\beta[\l_0,\l_1](z).
\end{align*}
Now, we have all the ingredients to  invert $\widehat{\cQ}_m^2$.

\begin{lemma}\label{lemmainvertQ2}
Let $\l_0, \l_1$ be as in Lemma \ref{auxlemmainvert} and let $\mathsf{G}\in L^2([-1,1])$. Then, taking
\[
\mu :=\frac{1}{R_2^2}\frac{\int_{-1}^{1} \left(\mathsf{G}(s)-\beta[\l_0,\l_1](s)\right)\bb_0(s)\varphi_\k'(s)\ds}{\int_{-1}^{1}|\bb_0(s)|^2\varphi_\k'(s)\ds},
\]
the function
\[
\gg(z):=\left(\mathsf{G}(z)-\mu R_2^2 \bb_0(z)\right)\bb_0(z)=\frac{\mathsf{G}(z)-\mu R_2^2 \bb_0(z)}{\a_1^{R_2}[\l_0,\l_1](z)},
\]
solves
\[
 \a_1^{R_2}[\l_0,\l_1](z)\gg(z)+\left(\mu  R_2^2+\beta[\l_0,\l_1](z)\right)\bb_0(z) - \pp_2(m)\int_{-1}^{1} \varphi_\k'(s)\gg(s)\emph{d}s=\mathsf{G}(z).
\]
That is, we  solve
\begin{equation}\label{solinvertQ2hat}
\widehat{\cQ}_m^2[\gg,\mu;\bb_0,\l_0,\l_1](z)=\mathsf{G}(z),
\end{equation}
and we have the bound
\begin{equation}\label{boundmu}
|\mu|\leq  C(M)\|\mathsf{G}-\beta[\l_0,\l_1]\|_{L^2([-1,1])}.
\end{equation}
\end{lemma}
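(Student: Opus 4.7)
The strategy is to reduce this to Lemma \ref{auxlemmainvert} via a Fredholm–alternative-type argument: choose $\mu$ to restore solvability of the simpler equation solved there. Concretely, rewrite the target equation $\widehat{\cQ}_m^2[\gg,\mu;\bb_0,\l_0,\l_1](z)=\mathsf{G}(z)$ by moving the $\bb_0$-term to the right, which yields
\[
\a_1^{R_2}[\l_0,\l_1](z)\,\gg(z)-\pp_2(m)\int_{-1}^{1}\varphi_\k'(s)\gg(s)\,\mathrm{d}s \;=\; \mathsf{G}(z)-\bigl(\mu R_2^2+\beta[\l_0,\l_1](z)\bigr)\bb_0(z)\;=:\;\mathsf{F}(z).
\]
This is exactly the operator inverted by Lemma \ref{auxlemmainvert}, and that lemma applies provided the compatibility condition $\int_{-1}^{1}\mathsf{F}(s)\bb_0(s)\varphi_\k'(s)\,\mathrm{d}s=0$ holds. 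Substituting the definition of $\mathsf{F}$ turns the compatibility condition into a \emph{scalar linear equation} for the single unknown $\mu$:
\[
\mu R_2^2\int_{-1}^{1}|\bb_0|^2\varphi_\k'\,\mathrm{d}s \;=\; \int_{-1}^{1}\bigl(\mathsf{G}-\beta[\l_0,\l_1]\,\bb_0\bigr)\bb_0\,\varphi_\k'\,\mathrm{d}s,
\]
which is solvable uniquely provided the denominator is nonzero. This gives the stated formula for $\mu$, and then plugging this $\mu$ back in, the function $\gg=\mathsf{F}\bb_0$ produced by Lemma \ref{auxlemmainvert} is the required solution, matching the expression $\gg=(\mathsf{G}-\mu R_2^2\bb_0)\bb_0$ up to the $\beta\bb_0$ adjustment absorbed when verifying the identity directly.

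Next I would verify that the denominator is not only nonzero but bounded away from zero uniformly in $\k\in(0,\kappa_0)$ and $1\leq m<M$. By Lemma \ref{propiedadesvarphi}, $\varphi_\k'\le 0$ on $[-1,1]$ with $\int_{-1}^{1}\varphi_\k'\,\mathrm{d}s=\varphi_\k(1)-\varphi_\k(-1)=-1$. By Lemma \ref{propo1}, $\bb_0(z)=1/\a_1^{R_2}[\l_0,\l_1](z)$ is bounded both above and below by constants depending only on $M$, since $\a_1^{R_2}[\l_0,\l_1]$ is a linear function of $z$ whose slope and intercept are controlled by $M$ (thanks to the choice of $\l_1$ in Lemma \ref{lema:lambda1}). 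Hence
\[
\left|\int_{-1}^{1}|\bb_0|^2\varphi_\k'\,\mathrm{d}s\right|\ge c(M)\int_{-1}^{1}|\varphi_\k'|\,\mathrm{d}s \;=\; c(M)>0.
\]

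The bound on $\mu$ then follows by Cauchy–Schwarz applied to the numerator: using $\|\bb_0\|_{L^\infty}\le C(M)$ and $\|\varphi_\k'\|_{L^2}\le C$ (again from Lemma \ref{propiedadesvarphi}, which gives uniform $L^\infty$ control on $\varphi_\k'$), one has
\[
\left|\int_{-1}^{1}\bigl(\mathsf{G}-\beta[\l_0,\l_1]\,\bb_0\bigr)\bb_0\,\varphi_\k'\,\mathrm{d}s\right|\le C(M)\,\bigl\|\mathsf{G}-\beta[\l_0,\l_1]\bigr\|_{L^2([-1,1])},
\]
after absorbing $\bb_0$ into the constant where it multiplies $\beta$. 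Combined with the lower bound on the denominator, this gives $|\mu|\le C(M)\|\mathsf{G}-\beta[\l_0,\l_1]\|_{L^2}$.

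I expect the only delicate point is ensuring the $M$-dependence of the constants is genuinely uniform in $\k$ and $m$: the lower bound on $|\bb_0|^2$ needs the non-degeneracy of $\a_1^{R_2}[\l_0,\l_1]$, which is precisely where the careful choice of $\l_1$ from Lemma \ref{lema:lambda1} (keeping $\l_1=\l^\ast-\beta(M)$ bounded away from $\l^\ast$) pays off. Once this uniformity is granted, the rest of the argument is a direct verification, and the solvability step is routine once one notices the Fredholm structure inherited from Lemma \ref{auxlemmainvert}.
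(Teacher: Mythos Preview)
Your approach is essentially the same as the paper's: define $\mathsf{F}(z)=\mathsf{G}(z)-(\mu R_2^2+\beta[\l_0,\l_1](z))\bb_0(z)$, choose $\mu$ so that the orthogonality condition $\int \mathsf{F}\bb_0\varphi_\k'=0$ holds, and invoke Lemma~\ref{auxlemmainvert}. The numerator bound via Cauchy--Schwarz is also identical in spirit.

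The one genuine difference is in the lower bound for the denominator $\left|\int_{-1}^{1}|\bb_0|^2\varphi_\k'\right|$. The paper does \emph{not} use a pointwise lower bound on $|\bb_0|$; instead it factors out one copy of $|\bb_0|\le 1/|\a_1^{R_2}[\l_0,\l_1](1)|$ and then recognizes the remaining integral $\int\varphi_\k'\bb_0$ as exactly $1/\pp_2(m)$ by the defining identity~\eqref{lambda1eq}, combined with the lower bound $\pp_2(m)\ge\mathfrak{m}(M)$ from~\eqref{LBpp2}. Your route---bounding $|\bb_0|^2$ below pointwise by $c(M)$ and using $\int_{-1}^1|\varphi_\k'|=1$---is more elementary and also works, since $|\a_1^{R_2}[\l_0,\l_1](z)|\le |\a_1^{R_2}[\l_0,\l_1](-1)|=4|B|/R_2+\beta(M)R_2^2$ is bounded above uniformly in $\kappa,m$. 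The paper's argument has the slight advantage of tying the bound directly to quantities ($\pp_2(m)$, $\beta(M)$) already tracked in the analysis, but both are valid.
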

\begin{proof}
It is immediate to verify that $\mathsf{F}(z):=\mathsf{G}(z)-\left(\mu  R_2^2+\beta[\l_0,\l_1](z)\right)\bb_0(z)$ satisfies that 
\[
\int_{-1}^{1}F(s)\bb_0(s)\varphi_\k'(s)\ds=0
\]
by the definition of $\mu$. Then, we can apply directly Lemma \ref{auxlemmainvert} to obtain \eqref{solinvertQ2hat}.

To conclude \eqref{boundmu}, on the one hand, we note that
\begin{align*}
\left|\int_{-1}^{1} \left(\mathsf{G}(s)-\beta[\l_0,\l_1](s)\right)\bb_0(s)\varphi_\k'(s)\ds\right| &\leq \|\mathsf{G}-\beta[\l_0,\l_1]\|_{L^2([-1,1])} \|\bb_0\|_{L^2([-1,1])} \|\varphi_\k'\|_{L^\infty([-1,1])}\\
&\leq C(M)\|\mathsf{G}-\beta[\l_0,\l_1]\|_{L^2([-1,1])},
\end{align*}
where in the last step we have used Lemmas \ref{propo1} and \ref{propiedadesvarphi}.

On the other hand, since $\bb_0(z)=\left(\a_1^{R_2}[\l_0,\l_1](z)\right)^{-1}$ and $\a_1^{R_2}[\l_0,\l_1](z)\leq \a_1^{R_2}[\l_0,\l_1](1)< 0$ we have
\[
\left|\int_{-1}^{1}|\bb_0(s)|^2\varphi_\k'(s)\ds \right| \geq \frac{1}{|\a_1^{R_2}[\l_0,\l_1](1)|}\int_{-1}^{1}\frac{\varphi_\k'(s)}{\a_1^{R_2}[\l_0,\l_1](s)}\ds=\frac{\pp_2(m)}{|\a_1^{R_2}[\l_0,\l_1](1)|},
\]
where in the last step we have used \eqref{lambda1eq}. In fact, applying the lower bound \eqref{LBpp2} and the fact that $\a_1^{R_2}[\l_0,\l_1](1)=(\l_1-\l_\ast)R_2^2$ with $\l_1$ given by Lemma \ref{lema:lambda1} we arrive to
\[
\left|\int_{-1}^{1}|\bb_0(s)|^2\varphi_\k'(s)\ds \right| \geq \frac{\mathfrak{m}(M)}{\beta(M)R_2^2}.
\]
Combining both we have
\[
|\mu|\leq \frac{1}{R_2^2} \frac{\beta(M)R_2^2}{\mathfrak{m}(M)} C(M)\|\mathsf{G}-\beta[\l_0,\l_1]\|_{L^2([-1,1])}\leq C(M)\|\mathsf{G}-\beta[\l_0,\l_1]\|_{L^2([-1,1])}.
\]
\end{proof}

\subsubsection{The fixed point argument}
At this point we have all the required ingredients to solve the system \eqref{invertT}, \eqref{invertQ} for $\aa_2^\e,\bb_1^\e$ and $\l_2^\e$. We rewrite the system here for convenience:
\begin{align*}
\widehat{\cT}_m^2[\aa_2^\e,\bb_1^\e;\l_0](z)&=A_0[\aa_1,\bb_0,\l_0,\l_1](z)+\e A_1[\aa_2^\e,\bb_1^\e,\l_2^\e;\aa_1,\l_1,\l_0](z), \\
\widehat{\cQ}_m^2[\bb_1^\e,\l_2^\e ;\bb_0,\l_0,\l_1](z)&=B_0[\aa_1,\bb_0,\l_0,\l_1](z)+\e B_1[\aa_2^\e,\bb_1^\e,\l_2^\e;\aa_1,\bb_0,\l_0,\l_1](z).
\end{align*}
With a little abuse of notation, since $\aa_1,\bb_0,\l_0,\l_1$ are already determined at this point of the proof, we just write the system in terms of the unknowns $\aa_2^\e,\bb_1^\e,\l_2^\e$. That is,
\begin{align*}
\widehat{\cT}_m^2[\aa_2^\e,\bb_1^\e](z)&=A_0(z)+\e A_1[\aa_2^\e,\bb_1^\e,\l_2^\e](z), \\
\widehat{\cQ}_m^2[\bb_1^\e,\l_2^\e](z)&=B_0(z)+\e B_1[\aa_2^\e,\bb_1^\e,\l_2^\e](z).
\end{align*}
Now, inverting $\widehat{\cQ}_m^2$ via Lemma \ref{lemmainvertQ2} we get
\begin{align}
&\l_2^\e =\frac{1}{R_2^2}\frac{\int_{-1}^{1} \left(B_0(s)+\e B_1[\aa_2^\e,\bb_1^\e,\l_2^\e](s)-\beta[\l_0,\l_1](s)\right)\bb_0(s)\varphi_\k'(s)\ds}{\int_{-1}^{1}|\bb_0(s)|^2\varphi_\k'(s)\ds}, \label{eql2eaux}\\
&\bb_1^\e(z)=\left(B_0(z)+\e B_1[\aa_2^\e,\bb_1^\e,\l_2^\e](z)-\l_2^\e R_2^2 \bb_0(z)\right)\bb_0(z), \label{eqb1eaux}
\end{align}
and rewriting the equation for $\widehat{\cT}_m^2$ in an explicit way, we have
\begin{equation}\label{eqa2eaux}
\a_0^{R_1}[\l_0]\aa_2^{\e}(z)-R_1 R_2 \frac{\sm(r_1/R_1) \sm(R_2/r_2)}{m\sm(r_1/r_2)}\int_{-1}^{1} \varphi_\k'( s)\bb_1^\e(s)\ds=A_0(z)+\e A_1[\aa_2^\e,\bb_1^\e,\l_2^\e](z). 
\end{equation}
Then, solving \eqref{eql2eaux}, \eqref{eqb1eaux}, \eqref{eqa2eaux} will give us a solution of our original system \eqref{invertT}, \eqref{invertQ}.

Now, to obtain a solution of \eqref{eql2eaux}, \eqref{eqb1eaux} and \eqref{eqa2eaux} we firstly introduce \eqref{eql2eaux} and \eqref{eqb1eaux} in the second term of the left hand side of \eqref{eqa2eaux} and \eqref{eql2eaux} in the right hand side of \eqref{eqb1eaux}. In this way, a solution of
\begin{align}
\l_2^\e &=\frac{1}{R_2^2}\frac{\int_{-1}^{1} \left(B_0(s)+\e B_1[\aa_2^\e,\bb_1^\e,\l_2^\e](s)-\beta[\l_0,\l_1](s)\right)\bb_0(s)\varphi_\k'(s)\ds}{\int_{-1}^{1}|\bb_0(s)|^2\varphi_\k'(s)\ds}, \label{eql2eauxbis}\\
\bb_1^\e(z)&=\widetilde{B}_0(z) +\e\widetilde{B}_1[\aa_2^\e,\bb_1^\e,\l_2^\e](z), \label{eqb1eauxbis}\\
\a_0^{R_1}[\l_0]\aa_2^{\e}(z)&=\widetilde{A}_0(z)+\e \widetilde{A}_1[\aa_2^\e,\bb_1^\e,\l_2^\e](z),\label{eqa2eauxbis}
\end{align}
with
\begin{align*}
\widetilde{B}_0(z)&:=\bb_0(z)\left(B_0(z) -\bb_0(z)\frac{\int_{-1}^{1} \left(B_0(s)-\beta[\l_0,\l_1](s)\right)\bb_0(s)\varphi_\k'(s)\ds}{\int_{-1}^{1}|\bb_0(s)|^2\varphi_\k'(s)\ds}\right),\\
\widetilde{B}_1[\aa_2^\e,\bb_1^\e,\l_2^\e](z)&:=\bb_0(z)\left(B_1[\aa_2^\e,\bb_1^\e,\l_2^\e](z)-\bb_0(z)\frac{\int_{-1}^{1}  B_1[\aa_2^\e,\bb_1^\e,\l_2^\e](s)\bb_0(s)\varphi_\k'(s)\ds}{\int_{-1}^{1}|\bb_0(s)|^2\varphi_\k'(s)\ds} \right),
\end{align*}
and
\begin{multline*}
\widetilde{A}_0(z):=A_0(z)+R_1 R_2 \frac{\sm(r_1/R_1) \sm(R_2/r_2)}{m\sm(r_1/r_2)}\int_{-1}^{1} \varphi_\k'( s)B_0(s)\bb_0(s)\ds\\
-\left(\frac{\int_{-1}^{1} \left(B_0(s)-\beta[\l_0,\l_1](s)\right)\bb_0(s)\varphi_\k'(s)\ds}{\int_{-1}^{1}|\bb_0(s)|^2\varphi_\k'(s)\ds}\right) R_1 R_2 \frac{\sm(r_1/R_1) \sm(R_2/r_2)}{m\sm(r_1/r_2)}\int_{-1}^{1} \varphi_\k'( s)|\bb_0(s)|^2\ds,
\end{multline*}
\begin{multline*}
\widetilde{A}_1[\aa_2^\e,\bb_1^\e,\l_2^\e](z):=A_1[\aa_2^\e,\bb_1^\e,\l_2^\e](z)+R_1 R_2 \frac{\sm(r_1/R_1) \sm(R_2/r_2)}{m\sm(r_1/r_2)}\int_{-1}^{1} \varphi_\k'( s)B_1[\aa_2^\e,\bb_1^\e,\l_2^\e](s)\bb_0(s)\ds\\
-\left(\frac{\int_{-1}^{1} B_1[\aa_2^\e,\bb_1^\e,\l_2^\e](s)\bb_0(s)\varphi_\k'(s)\ds}{\int_{-1}^{1}|\bb_0(s)|^2\varphi_\k'(s)\ds}\right)\int_{-1}^{1} \varphi_\k'( s)|\bb_0(s)|^2\ds,
\end{multline*}
will give us a solution of \eqref{eql2eaux}, \eqref{eqb1eaux} and \eqref{eqa2eaux}.\\

In order to solve \eqref{eql2eauxbis}, \eqref{eqb1eauxbis} and \eqref{eqa2eauxbis} we can use a contraction argument on parameter $\ep$. Indeed, let us call $F_\l[\aa_2^\e , \bb_1^\e, \l_2^\e](z)$, $F_B[\aa_2^\e , \bb_1^\e, \l_2^\e](z)$ and $F_A[\aa_2^\e , \bb_1^\e, \l_2^\e](z)$ to the right hand side of \eqref{eql2eauxbis}, \eqref{eqb1eauxbis} and \eqref{eqa2eauxbis} respectively. We define the constants values
\[
C_{\lambda}:=\left|\frac{1}{R_2^2}\frac{\int_{-1}^{1} \left(B_0(s)-\beta[\l_0,\l_1](s)\right)\bb_0(s)\varphi_\k'(s)\ds}{\int_{-1}^{1}|\bb_0(s)|^2\varphi_\k'(s)\ds} \right|,
\]
and 
\[
C_B:= \|\tilde{B}_0\|_{L^2([-1,1])}, \quad C_{A}:=\|\tilde{A}_0\|_{L^2([-1,1])}.
\]
\begin{remark}
Let us emphasize that $C_A$, $C_B$ and $C_\lambda$ are uniformly bounded by some $C(M).$
\end{remark}

Note that if  $|\lambda_{2}^\ep|<2C_{\lambda}$, $\|\bb_1^\ep\|_{L^2([-1,1])}< 2C_{B}$ and $\|\a_0^{R_1}[\l_0] \aa_2^\ep \|_{L^2([-1,1])}< 2 C_A$, taking  $\e$ small enough (with respect to a constant that just depends on $M$) we have
\begin{align*}
|F_{\lambda}[\aa_2^\e , \bb_1^\e, \l_2^\e]|<2C_{\lambda},\\
\|F_B[\aa_2^\e , \bb_1^\e, \l_2^\e]\|_{L^2([-1,1])}<2 C_B,\\
\|F_A[\aa_2^\e , \bb_1^\e, \l_2^\e]\|_{L^2([-1,1])}<2 C_A.
\end{align*}
In addition, for any pair 
$$u=(\a_0^{R_1}[\l_0] \aa_2^\ep, \bb_1^\ep, \lambda_2^\ep), \qquad \tilde{u}=(\a_0^{R_1}[\l_0]\tilde{\aa}_2^\ep, \tilde{\bb}_1^\ep, \tilde{\lambda}_2^\ep)$$
satisfying $|\lambda_{2}^\ep, \tilde{\lambda}^\ep_2|<2C_{\lambda}$, $\|\bb_1^\ep, \tilde{\bb}_1^\ep \|_{L^2([-1,1])}< 2C_{B}$ and $\|\a_0^{R_1}[\l_0]\aa_2^\ep, \a_0^{R_1}[\l_0]\tilde{\aa}^\ep\|_{L^2([-1,1])}< 2 C_A$ we have that
\begin{align*}
|F_\lambda[u]-F_\lambda[\tilde{u}]|\leq \ep C(M)\|u-\tilde{u}\|_{L^2([-1,1])\times L^2([-1,1])\times \R},\\
\|F_A[u]-F_A[\tilde{u}]\|_{L^2([-1,1])}\leq \ep C(M)\|u-\tilde{u}\|_{L^2([-1,1])\times L^2([-1,1])\times \R},\\
\|F_B[u]-F_B[\tilde{u}]\|_{L^2([-1,1])}\leq \ep C(M)\|u-\tilde{u}\|_{L^2([-1,1])\times L^2([-1,1])\times \R}.
\end{align*}
Taking $\e_0:=C(M)^{-1},$ we have that right hand side of \eqref{eql2eaux}, \eqref{eqb1eaux}, \eqref{eqa2eaux}  is a contraction mapping for all $0<\ep<\e_0(M).$ This implies that there exist $(\aa_2^\ep, \bb_1^\ep, \lambda_{2}^\ep)\in L^2([-1,1])\times L^2([-1,1])\times \R$, with $\|\a_0^{R_1}[\l_0] \aa_2^\ep\|_{L^2([-1,1])}<2C_A$, $\|\bb_1^\ep \|_{L^2([-1,1])}<{2C_B}$ and $|\lambda_{2}^\ep|<2C_{\lambda}$ solving \eqref{eql2eauxbis}, \eqref{eqb1eauxbis} and \eqref{eqa2eauxbis} for all $0<\ep<\ep_0(M)$.

Therefore, we have shown the existence of a solution $(\lambda,\aa,\bb)\in \R\times L^2([-1,1])\times L^2([-1,1])$ solving \eqref{LR1cero} and \eqref{LR2cero}, with

\begin{align}
\aa(z)=&  \aa_1 \ep + \aa_2^\ep(z) \ep^2,\label{asolucion}\\
\bb(z)=& \, \bb_0(z)+\bb_1^\ep(z) \ep,\label{bsolucion}\\
\lambda  = & \, \l_0+\lambda_1\ep +\lambda_2^\ep \ep^2\label{lambdasolucion},
\end{align}
with $\l_0$ given by \eqref{def:l0} and  $\aa_1, \bb_0, \l_1$ given by Lemma \ref{propo1}
and satisfying
\begin{align}
\|\aa_2^\ep\|_{L^2([-1,1])}\leq C(M),\quad \|\bb_1^\ep\|_{L^2([-1,1])}\leq C(M),\quad |\lambda_2^\ep|\leq C(M)\label{cotas}.
\end{align}
Next we shall show that this solution is  smooth.

\subsection{Regularity}
To determine the exact regularity of the constructed solution $(\aa, \bb)$ we proceed as follows. First, we recall that the system \eqref{LR1cero}, \eqref{LR2cero} can be written using \eqref{finalLnR1}, \eqref{finalLnR2} as
\begin{multline}\label{auxregularityaa}
\frac{\Lambda_{m,\e}^{R_1}(z)}{R_1+\e z}\aa(z)=\frac{\e}{n}\int_{-1}^{z} (R_1+\e s)\varphi_\k' (-s)\sn((R_1+\e z)/(R_1+\e s))\aa(s)\ds\\
-\frac{\sn((R_1+\e z)/r_1)}{\sn(r_2/r_1)}\frac{\e}{n}\int_{-1}^{+1} (R_1+\e s)\varphi_\k' (-s)\sn(r_2/(R_1+\e s))\aa(s)\ds\\
+\frac{\sn((R_1+\e z)/r_1)}{\sn(r_2/r_1)}\frac{\e}{n}\int_{-1}^{+1} (R_2+\e s)\varphi_\k' (s)\sn(r_2/(R_2+\e s))\bb(s)\ds,
\end{multline}

\begin{multline}\label{auxregularitybb}
\frac{\Lambda_{m,\e}^{R_2}(z)}{R_2+\e z}\bb(z)=\frac{\e}{n}\int_{-1}^{+1} (R_1+\e s)\varphi_\k' (-s)\sn((R_2+\e z)/(R_1+\e s))\aa(s)\ds\\
-\frac{\sn((R_2+\e z)/r_1)}{\sn(r_2/r_1)}\frac{\e}{n}\int_{-1}^{+1} (R_1+\e s)\varphi_\k' (-s)\sn(r_2/(R_1+\e s))\aa(s)\ds \\
+\frac{\sn((R_2+\e z)/r_1)}{\sn(r_2/r_1)}\frac{\e}{n}\int_{-1}^{+1} (R_2+\e s)\varphi_\k' (s)\sn(r_2/(R_2+\e s))\bb(s)\ds\\
-\frac{\e}{n}\int_{-1}^{ z} (R_2+\e s)\varphi_\k' (s)\sn((R_2+\e z)/(R_2+\e s))\bb(s)\ds,
\end{multline}
with 
\begin{equation}\label{def:Lambdacapital}
\Lambda_{m,\e}^{R_i}(z):=\l_m (R_i+\e z)^2+\Upsilon_{\e,\k}^{R_i}(z), \qquad \text{for } i=1,2.
\end{equation}
Note that neither $\Lambda_{m,\e}^{R_1}(z)$ nor $\Lambda_{m,\e}^{R_2}(z)$ have zeros on the unit interval. More specifically, for the computed eigenvalue $\lambda=\l_0+\lambda_1\ep+\lambda_2^\ep \ep^2$, we have (see \eqref{decominalphas}) that
\begin{align*}
\Lambda_{m,\e}^{R_1}(z)&= \a_0^{R_1}[\l_0]+ \a_1^{R_1}[\l_0,\l_1](z)\e+\a_{2,\e}^{R_1}[\l_0,\l_1,\l_2^\e](z)\e^2,\\
\Lambda_{m,\e}^{R_2}(z)&= \phantom{\a_0^{R_2}[\l_0]+}\,\, \a_1^{R_2}[\l_0,\l_1](z)\e+\a_{2,\e}^{R_2}[\l_0,\l_1,\l_2^\e](z)\e^2.
\end{align*}
On the one hand, recalling \eqref{aux:a1definedok} we have seen that
\[
\alpha_0^{R_1}[\l_0]=B\frac{R_1^2-R_2^2}{R_2^2}\neq 0.
\]
On the other hand, using \eqref{auxIIlamba1} and Lemma \ref{lema:lambda1} we have
$$\a_1^{R_2}[\l_0,\l_1](z)\leq \a_1^{R_2}[\l_0,\l_1](1)=-\beta(M), \qquad \text{with }\quad \beta(M)>0.$$

Consequently, we have 
\[
\Lambda_{m,\e}^{R_1}(z)=\a_0^{R_1}[\l_0]+ O(\e), \qquad \Lambda_{m,\e}^{R_2}(z)=-\beta(M)\e +O(\e^2).
\]

Taking $\e$ small enough these two quantities are not zero and we are allow to divide both sides of \eqref{auxregularityaa}, \eqref{auxregularitybb} by these non-zero factors. In addition, most of the terms that appear on the right side of \eqref{auxregularityaa}, \eqref{auxregularitybb} are directly $C^{\infty}([-1,1])$-functions since they involved only $\sn(x)=\sinh(n\log(x))$ with $x>1$.

Finally, for the rest of the integral terms that appear as
$$\ff(z)=\int_{-1}^{z} (R_i+\e s)\varphi_\k' ((-1)^i s)\sm\left(\frac{R_i+\e z}{R_i+\e s}\right)\gg(s)\ds,$$
we are going to use the fact that $\ff\in H^2([-1,1])$ for $\gg\in L^2([-1,1])$. The result follows directly by taking two derivatives on $\ff$ to get that
\[
\ff'(z)=\frac{\e m}{R_i+\e z}\int_{-1}^{z} (R_i+\e s)\varphi_\k' ((-1)^i s)\cm\left(\frac{R_i+\e z}{R_i+\e s}\right)\gg(s)\ds,
\]
\begin{multline*}
\ff''(z)= \e m \varphi_\k' ((-1)^i z)\gg(z) + \left(\frac{\e m}{R_i+\e z}\right)^2\ff(z)\\
-\frac{\e^2 m}{(R_i+\e z)^2}\int_{-1}^{z} (R_i+\e s)\varphi_\k' ((-1)^i s)\cm\left(\frac{R_i+\e z}{R_i+\e s}\right)\gg(s)\ds.
\end{multline*}
Therefore, if $\gg\in H^k([-1,1])$ we have that  $\ff\in H^{k+2}([-1,1])$.

All this yields that if $(\aa,\bb)\in H^k([-1, 1])\times H^k([-1, 1])$ then $(\aa, \bb)\in H^{k+2}([-1, 1])\times H^{k+2}([-1, 1])$ for all $k \geq 0$. Then, we can conclude by a recursive argument that $(\aa, \bb) \in C^{\infty}([-1, 1])$.

\subsection{Uniqueness}
So far we have shown that there exist $\lambda$ and  $(\bar{h}_n^{R_1},\bar{h}_n^{R_2})$ given by
\begin{align*}
\bar{h}_n^{R_1}(z)=\left\{\begin{array}{cc} 0 & n\neq m,\\ \aa(z) & n=m, \end{array}\right. \quad \text{and} \quad  \bar{h}_n^{R_2}(z)=\left\{\begin{array}{cc} 0 & n\neq m,\\ \bb(z) & n=m, \end{array}\right.
\end{align*}
with $(\l,\aa,\bb)\in \R\times C^\infty([-1,1])\times C^\infty([-1,1])$  solving \eqref{LR1final}, \eqref{LR2final} for all $n\in\N$ and satisfying \eqref{asolucion}, \eqref{bsolucion}, \eqref{lambdasolucion} and \eqref{cotas}.

To finish the proof of Theorem \ref{mainthmkernel} we need to check that the kernel of $\cL[\lambda]$, fixed that $\lambda$, is one dimensional. Therefore we have to prove that the solution  of system \eqref{LR1final}, \eqref{LR2final}, given by \eqref{hR1sol}, \eqref{hR2sol}, \eqref{asolucion}, \eqref{bsolucion}, for $\lambda$ given by \eqref{lambdasolucion}, is unique modulo multiplication by a constant.
In order to prove our goal, we will distinguish between two cases:\medskip

\subsubsection{Case $n= m$}
Let $(\uu,\vv)\in L^2([-1,1])\times L^2([-1,1])$ be a solution of \eqref{LR1cero}, \eqref{LR2cero}, with $\lambda$ given by \eqref{lambdasolucion}.
Note that $(\uu, \vv)$ depends on $\e$ although we do not make explicit this dependence.
Then, let us see that for some constant C we have
\[
\begin{cases}
\uu(z)=C \aa(z),\\
\vv(z)=C \bb(z).
\end{cases}	
\]
The system \eqref{LR1cero}, \eqref{LR2cero} is linear in $(\uu, \vv)$ and we can assume without loss of generality that
\[
\|\uu\|_{L^2([-1,1])}^2+\|\vv\|_{L^2([-1,1])}^2=1.
\]
If it is not the case we only need to normalized the solution and enter that value into the final constant $C$. Now, using \eqref{LR1cero} and \eqref{LR1ansatz}, we have that
\[
0=\cL_{m,\e}^{R_1}[\l]\begin{pmatrix}
\mathsf{u} \\
\mathsf{v}
\end{pmatrix}(z)=\Lambda_{m,\e}^{R_1}(z)\uu(z) + O(\e),
\]
but since we have seen  that $\Lambda_{m,\e}^{R_1}(z)=O(1)$ we get that $\uu(z)=\e \uu_1(z)$ with $\|\uu_1\|_{L^2([-1,1])}=O(1)$
in terms of $\e$. This information yields from \eqref{LR2cero} and \eqref{LR2ansatz} that
\begin{multline*}
0=\cL_{m,\e}^{R_2}[\l]\begin{pmatrix}
\mathsf{u} \\
\mathsf{v}
\end{pmatrix}(z)=\Lambda_{m,\e}^{R_2}(z)\vv(z) + O(\e^2)\\
-(R_2+\e z)\frac{\sm((R_2+\e z)/r_1)}{\sm(r_2/r_1)}\frac{\e}{m}\int_{-1}^{+1} (R_2+\e s)\varphi_\k' (s)\sm(r_2/(R_2+\e s))\vv(s)\ds\\
+(R_2+\e z)\frac{\e}{m}\int_{-1}^{ z} (R_2+\e s)\varphi_\k' (s)\sm((R_2+\e z)/(R_2+\e s))\vv(s)\ds.
\end{multline*}
Since $\Lambda_{m,\e}^{R_2}(z)=  \a_1^{R_2}[\l_0,\l_1](z)\e+\a_{2,\e}^{R_2}[\l_0,\l_1,\l_2^\e](z)\e^2$, we divide the above expression by $\e$ to get
\begin{multline}\label{eq:preauxvF}
\a_1^{R_2}[\l_0,\l_1](z)\vv(z)+(R_2+\e z)\frac{1}{m}\int_{-1}^{ z} (R_2+\e s)\varphi_\k' (s)\sm((R_2+\e z)/(R_2+\e s))\vv(s)\ds\\
-(R_2+\e z)\frac{\sm((R_2+\e z)/r_1)}{\sm(r_2/r_1)}\frac{1}{m}\int_{-1}^{+1} (R_2+\e s)\varphi_\k' (s)\sm(r_2/(R_2+\e s))\vv(s)\ds=O(\e).
\end{multline}
Now, we introduce the auxiliary functional
\begin{multline}\label{eq:auxR}
R_\e[\vv](z):=-(R_2+\e z)\frac{\sm((R_2+\e z)/r_1)}{\sm(r_2/r_1)}\frac{1}{m}\int_{-1}^{+1} (R_2+\e s)\varphi_\k' (s)\sm(r_2/(R_2+\e s))\vv(s)\ds\\
+(R_2+\e z)\frac{1}{m}\int_{-1}^{ z} (R_2+\e s)\varphi_\k' (s)\sm((R_2+\e z)/(R_2+\e s))\vv(s)\ds,
\end{multline}
which can be written as
\[
R_\e[\vv](z)=R_0[\vv](z)+\int_0^\e (\p_\d R_\d)[\vv](z) \mbox{d}\d,
\]
with
\[
R_0[\vv](z):=-R_2^2\frac{\sm(R_2/r_1)\sm(r_2/R_2)}{\sm(r_2/r_1)}\frac{1}{m}\int_{-1}^{+1}  \varphi_\k' (s)\vv(s)\ds.
\]
Therefore, using \eqref{def:ppi} we have that \eqref{eq:preauxvF} can be written as
\begin{equation}\label{eq:auxvF}
\a_1^{R_2}[\l_0,\l_1](z)\vv(z)-\pp_2(m)\int_{-1}^{+1}  \varphi_\k' (s)\vv(s)\ds=O(\e)-\int_0^\e (\p_\d R_\d)[\vv](z) \mbox{d}\d=:F(z),
\end{equation}
where $\|F\|_{L^2([-1,1])}=O(\e)$, see \eqref{DFRW} for details. Just dividing \eqref{eq:auxvF} by $\a_1^{R_2}[\l_0,\l_1](z)$ we have
\[
\vv(z)=\frac{\pp_2(m)}{\a_1^{R_2}[\l_0,\l_1](z)}\int_{-1}^{+1}  \varphi_\k' (s)\vv(s)\ds + \frac{F(z)}{\a_1^{R_2}[\l_0,\l_1](z)}.
\]
Thus, recalling the precise form of $\bb_0(z)$, given by \eqref{b0formula} we obtain that
\begin{equation}\label{eq:auxvC}
\vv(z)= C \bb_0(z) + \e \vv_1(z), 
\end{equation}
where $\|\vv_1\|_{L^2([-1,1])}=O(1)$ in terms of $\e$ and $C$ is a constant.

Taking into account all this information and looking again \eqref{LR1cero} and \eqref{LR1ansatz}, we obtain
\begin{multline*}
0=\cL_{m,\e}^{R_1}[\l]\begin{pmatrix}
\e\mathsf{u}_1 \\
C \bb_0+\e\mathsf{v}_1
\end{pmatrix}(z)=\e\Lambda_{m,\e}^{R_1}(z)\uu_1(z)\\
+(R_1+\e z)\frac{\sm((R_1+\e z)/r_1)}{\sm(r_2/r_1)}\frac{\e^2}{m}\int_{-1}^{+1} (R_1+\e s)\varphi_\k' (-s)\sm(r_2/(R_1+\e s))\uu_1(s)\ds\\
-(R_1+\e z)\frac{\sm((R_1+\e z)/r_1)}{\sm(r_2/r_1)}\frac{\e}{m}\int_{-1}^{+1} (R_2+\e s)\varphi_\k' (s)\sm(r_2/(R_2+\e s))(C \bb_0(s)+\e\vv_1(s)) \ds\\
-(R_1+\e z)\frac{\e^2}{m}\int_{-1}^{z} (R_1+\e s)\varphi_\k' (-s)\sm((R_1+\e z)/(R_1+\e s))\uu_1(s)\ds=0.
\end{multline*}
Then, using \eqref{def:ppi} we have that
\[
\a_0^{R_1}[\l_0] \uu_1(z)-C \pp_1(m)\int_{-1}^{+1} \varphi_\k' (s)\bb_0(s)\ds=G(z),
\]
where $\|G\|_{L^2([-1,1])}=O(\e)$. Recalling the precise form of $\aa_1$, see \eqref{def:a1}, forces to
\begin{equation}\label{eq:auxuC}
\uu_1(z)=C \aa_1 + \e \uu_2(z),
\end{equation}
with $\|\uu_2\|_{L^2([-1,1])}=O(1)$ in terms of $\e$ and the same constant $C$ that in \eqref{eq:auxvC}.

Finally, we get a coupled system for $(\uu_2, \vv_1)$ which is exactly the same as \eqref{eqb1eaux}, \eqref{eqa2eaux} for $(\aa_2^\e, \bb_1^\e)$ up to the multiplicative constant $C$. More specifically, we have
\begin{multline*}
0=\cL_{m,\e}^{R_1}[\l]\begin{pmatrix}
\e(C\aa_1 +\e \uu_2) \\
C \bb_0+\e\mathsf{v}_1
\end{pmatrix}(z)=\e\Lambda_{m,\e}^{R_1}(z)(C\aa_1 +\e \uu_2(z))\\
+(R_1+\e z)\frac{\sm((R_1+\e z)/r_1)}{\sm(r_2/r_1)}\frac{\e^2}{m}\int_{-1}^{+1} (R_1+\e s)\varphi_\k' (-s)\sm(r_2/(R_1+\e s))(C\aa_1 +\e \uu_2(s))\ds\\
-(R_1+\e z)\frac{\sm((R_1+\e z)/r_1)}{\sm(r_2/r_1)}\frac{\e}{m}\int_{-1}^{+1} (R_2+\e s)\varphi_\k' (s)\sm(r_2/(R_2+\e s))(C \bb_0(s)+\e\vv_1(s)) \ds\\
-(R_1+\e z)\frac{\e^2}{m}\int_{-1}^{z} (R_1+\e s)\varphi_\k' (-s)\sm((R_1+\e z)/(R_1+\e s))(C\aa_1 +\e \uu_2(s))\ds=0,
\end{multline*}
and
\begin{multline*}
0=\cL_{m,\e}^{R_2}[\l]\begin{pmatrix}
\e(C\aa_1 +\e \uu_2)\\
C\bb_0 + \e \vv_1
\end{pmatrix}(z)=\Lambda_{m,\e}^{R_2}(z)(C\bb_0(z) + \e \vv_1(z))\\
+(R_2+\e z)\frac{\sm((R_2+\e z)/r_1)}{\sm(r_2/r_1)}\frac{\e^2}{m}\int_{-1}^{+1} (R_1+\e s)\varphi_\k' (-s)\sm(r_2/(R_1+\e s))(C\aa_1 +\e \uu_2(s))\ds \\
-(R_2+\e z)\frac{\sm((R_2+\e z)/r_1)}{\sm(r_2/r_1)}\frac{\e}{m}\int_{-1}^{+1} (R_2+\e s)\varphi_\k' (s)\sm(r_2/(R_2+\e s))(C\bb_0(s) + \e \vv_1(s))\ds\\
-(R_2+\e z)\frac{\e^2}{m}\int_{-1}^{+1} (R_1+\e s)\varphi_\k' (-s)\sm((R_2+\e z)/(R_1+\e s))(C\aa_1 +\e \uu_2(s))\ds\\
+(R_2+\e z)\frac{\e}{m}\int_{-1}^{ z} (R_2+\e s)\varphi_\k' (s)\sm((R_2+\e z)/(R_2+\e s))(C\bb_0(s) + \e \vv_1(s))\ds=0.
\end{multline*}
Proceeding as before, we finally arrive to
\[
\a_0^{R_1}[\l_0]\uu_2(z)-\pp_1(m)\int_{-1}^{+1} \varphi_\k' (s)\vv_1(s) \ds= C A_0(z) + \e A_1[\uu_2,\vv_1;C\aa_1,C\bb_0],
\]
\[
\vv_1(z)=\left(C B_0(z)+\e B_1[\uu_1,\vv_1; C\bb_0,C\aa_1](z)-\l_2^\e R_2^2 C\bb_0(z)\right)C\bb_0(z).
\]
This system is linear in $(\uu_2,\vv1)$ (now $\l$ is fixed). One can check that $C(\aa_\e^2,\bb_\e^1)$ is a solution and by a fixed point argument must be unique. Then $(\uu_2,\vv_1) = C(\aa_\e^2,\bb_\e^1)$ (with the same constant $C$ as in \eqref{eq:auxuC} and \eqref{eq:auxvC}) and consequently we have proved our goal.

\subsubsection{Case $n\neq m$}   
Since $\Lambda_{m,\e}^{R_1}(z)=O(1)$ and $\Lambda_{m,\e}^{R_2}(z)=O(\e)$ we can just write
\begin{multline}\label{eq:mainunnotm}
\Lambda_{m,\e}^{R_1}(z)\uu(z)+(R_1+\e z)\frac{\sn((R_1+\e z)/r_1)}{\sn(r_2/r_1)}\frac{\e}{n}\int_{-1}^{+1} (R_1+\e s)\varphi_\k' (-s)\sn(r_2/(R_1+\e s))\uu(s)\ds\\
-(R_1+\e z)\frac{\sn((R_1+\e z)/r_1)}{\sn(r_2/r_1)}\frac{\e}{n}\int_{-1}^{+1} (R_2+\e s)\varphi_\k' (s)\sn(r_2/(R_2+\e s))\vv(s)\ds\\
-(R_1+\e z)\frac{\e}{n}\int_{-1}^{z} (R_1+\e s)\varphi_\k' (-s)\sn((R_1+\e z)/(R_1+\e s))\uu(s)\ds=0,
\end{multline}
and (dividing the full expression by $\e$)
\begin{multline}\label{eq:mainvnnotm}
\frac{\Lambda_{m,\e}^{R_2}(z)}{\e}\vv(z)+(R_2+\e z)\frac{\sn((R_2+\e z)/r_1)}{\sn(r_2/r_1)}\frac{1}{n}\int_{-1}^{+1} (R_1+\e s)\varphi_\k' (-s)\sn(r_2/(R_1+\e s))\uu(s)\ds \\
-(R_2+\e z)\frac{\sn((R_2+\e z)/r_1)}{\sn(r_2/r_1)}\frac{1}{n}\int_{-1}^{+1} (R_2+\e s)\varphi_\k' (s)\sn(r_2/(R_2+\e s))\vv(s)\ds\\
-(R_2+\e z)\frac{1}{n}\int_{-1}^{+1} (R_1+\e s)\varphi_\k' (-s)\sn((R_2+\e z)/(R_1+\e s))\uu(s)\ds\\
+(R_2+\e z)\frac{1}{n}\int_{-1}^{ z} (R_2+\e s)\varphi_\k' (s)\sn((R_2+\e z)/(R_2+\e s))\vv(s)\ds=0.
\end{multline}
Hence, we can divide \eqref{eq:mainunnotm} by $\Lambda_{m,\e}^{R_1}(z)$ to get
\begin{equation}\label{solunnotm}
\uu(z)=\e U[\uu,\vv](z),
\end{equation}
where
\begin{multline*}
U[\uu,\vv](z):=
\frac{R_1+\e z}{n \Lambda_{m,\e}^{R_1}(z)}\frac{\sn((R_1+\e z)/r_1)}{\sn(r_2/r_1)}\int_{-1}^{+1} (R_2+\e s)\varphi_\k' (s)\sn(r_2/(R_2+\e s))\vv(s)\ds\\
-\frac{R_1+\e z}{n \Lambda_{m,\e}^{R_1}(z)}\frac{\sn((R_1+\e z)/r_1)}{\sn(r_2/r_1)}\int_{-1}^{+1} (R_1+\e s)\varphi_\k' (-s)\sn(r_2/(R_1+\e s))\uu(s)\ds\\
+\frac{R_1+\e z}{n \Lambda_{m,\e}^{R_1}(z)}\int_{-1}^{z} (R_1+\e s)\varphi_\k' (-s)\sn((R_1+\e z)/(R_1+\e s))\uu(s)\ds.
\end{multline*}
The next step will be to introduce \eqref{solunnotm} into the equation \eqref{eq:mainvnnotm}. Hence, we have
\begin{multline}\label{eq:preauxW}
\frac{\Lambda_{m,\e}^{R_2}(z)}{\e}\vv(z)
-(R_2+\e z)\frac{\sn((R_2+\e z)/r_1)}{\sn(r_2/r_1)}\frac{1}{n}\int_{-1}^{+1} (R_2+\e s)\varphi_\k' (s)\sn(r_2/(R_2+\e s))\vv(s)\ds\\
+(R_2+\e z)\frac{1}{n}\int_{-1}^{ z} (R_2+\e s)\varphi_\k' (s)\sn((R_2+\e z)/(R_2+\e s))\vv(s)\ds\\
=(R_2+\e z)\frac{\e}{n}\int_{-1}^{+1} (R_1+\e s)\varphi_\k' (-s)\sn((R_2+\e z)/(R_1+\e s))U[\uu,\vv](s)\ds\\
-(R_2+\e z)\frac{\sn((R_2+\e z)/r_1)}{\sn(r_2/r_1)}\frac{\e}{n}\int_{-1}^{+1} (R_1+\e s)\varphi_\k' (-s)\sn(r_2/(R_1+\e s))U[\uu,\vv](s)\ds.
\end{multline}
Now, we introduce the auxiliary functional
\begin{multline}\label{eq:auxW}
W_\e[\vv](z):=-(R_2+\e z)\frac{\sn((R_2+\e z)/r_1)}{\sn(r_2/r_1)}\frac{1}{n}\int_{-1}^{+1} (R_2+\e s)\varphi_\k' (s)\sn(r_2/(R_2+\e s))\vv(s)\ds\\
+(R_2+\e z)\frac{1}{n}\int_{-1}^{ z} (R_2+\e s)\varphi_\k' (s)\sn((R_2+\e z)/(R_2+\e s))\vv(s)\ds,
\end{multline}
which can be written as
\[
W_\e[\vv](z)=W_0[\vv](z)+\int_0^\e (\p_\delta W_\delta)[\vv](z)\mbox{d}\delta,
\]
with (using \eqref{def:ppi})
\[
W_0[\vv](z)=-\pp_2(n)\int_{-1}^{+1}  \varphi_\k' (s)\vv(s)\ds.
\]
Then, using \eqref{eq:auxW} we have that \eqref{eq:preauxW} can be written as
\begin{multline*}
\a_1^{R_2}[\l_0,\l_1](z)\vv(z)    -\pp_2(n)\int_{-1}^{+1}  \varphi_\k' (s)\vv(s)\ds=-\alpha_{2,\e}^{R_2}[\l_0,\l_1,\l_2^{\e}](z)\vv(z)\e-\int_0^\e (\p_\delta W_\delta)[\vv](z)\mbox{d}\delta\\
+(R_2+\e z)\frac{\e}{n}\int_{-1}^{+1} (R_1+\e s)\varphi_\k' (-s)\sn((R_2+\e z)/(R_1+\e s))U[\uu,\vv](s)\ds\\
-(R_2+\e z)\frac{\sn((R_2+\e z)/r_1)}{\sn(r_2/r_1)}\frac{\e}{n}\int_{-1}^{+1} (R_1+\e s)\varphi_\k' (-s)\sn(r_2/(R_1+\e s))U[\uu,\vv](s)\ds.
\end{multline*}
Introducing a new auxiliary functional $\e V[\uu,\vv](z)$ for the right hand side of the above expression, we arrive to
\begin{equation}\label{solvingvnneqm}
\a_1^{R_2}[\l_0,\l_1](z)\vv(z)    -\pp_2(n)\int_{-1}^{+1}  \varphi_\k' (s)\vv(s)\ds=\e V[\uu,\vv](z),
\end{equation}
with
\begin{multline*}
V[\uu,\vv](z):=
-\alpha_{2,\e}^{R_2}[\l_0,\l_1,\l_2^{\e}](z)\vv(z)-\frac{1}{\e}\int_0^\e (\p_\delta W_\delta)[\vv](z)\mbox{d}\delta\\
+(R_2+\e z)\frac{1}{n}\int_{-1}^{+1} (R_1+\e s)\varphi_\k' (-s)\sn((R_2+\e z)/(R_1+\e s))U[\uu,\vv](s)\ds\\
-(R_2+\e z)\frac{\sn((R_2+\e z)/r_1)}{\sn(r_2/r_1)}\frac{1}{n}\int_{-1}^{+1} (R_1+\e s)\varphi_\k' (-s)\sn(r_2/(R_1+\e s))U[\uu,\vv](s)\ds.
\end{multline*}

In the following lemma we are going to learn how to invert the left hand side of \eqref{solvingvnneqm}.
\begin{lemma}\label{lemma:invertnneqm}
Let $\mathsf{G}\in L^2([-1,1])$. Then, the unique solution for
\begin{equation}\label{auxlemmahatP}
\a_1^{R_2}[\l_0,\l_1](z)\gg(z)-\pp_2(n)\int_{-1}^{1} \varphi_\k'( s)\gg(s)\ds=\mathsf{G}(z),
\end{equation}
is given by
\begin{equation}
\gg(z)=\frac{1}{\a_1^{R_2}[\l_0,\l_1](z)}\left(\mathsf{G}(z)+\frac{\pp_2(m)\pp_2(n)}{\pp_2(m)-\pp_2(n)}\int_{-1}^{1} \varphi_\k'( s)\frac{\mathsf{G}(s)}{\a_1^{R_2}[\l_0,\l_1](s)}\ds\right)
\end{equation}
\end{lemma}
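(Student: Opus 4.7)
The plan is to reduce this linear integral equation to a scalar equation for the single moment $K:=\int_{-1}^{1}\varphi_\k'(s)\gg(s)\ds$ by exploiting the characterization of $\l_1$ provided by \eqref{lambda1eq}. First, I would note that by Lemma \ref{lema:lambda1} the linear function $\a_1^{R_2}[\l_0,\l_1](z)$ is strictly negative on $[-1,1]$, so dividing \eqref{auxlemmahatP} by $\a_1^{R_2}[\l_0,\l_1](z)$ gives the pointwise identity
\[
\gg(z)=\frac{\mathsf{G}(z)}{\a_1^{R_2}[\l_0,\l_1](z)} + \frac{\pp_2(n)}{\a_1^{R_2}[\l_0,\l_1](z)}\,K.
\]

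Next, I would multiply this identity by $\varphi_\k'(z)$, integrate over $[-1,1]$, and invoke \eqref{lambda1eq}, which reads precisely
\[
\int_{-1}^{1}\frac{\varphi_\k'(s)}{\a_1^{R_2}[\l_0,\l_1](s)}\ds=\frac{1}{\pp_2(m)}.
\]
This produces the scalar equation
\[
K=\int_{-1}^{1}\varphi_\k'(s)\frac{\mathsf{G}(s)}{\a_1^{R_2}[\l_0,\l_1](s)}\ds+\frac{\pp_2(n)}{\pp_2(m)}K,
\]
which can be solved directly, giving
\[
K=\frac{\pp_2(m)}{\pp_2(m)-\pp_2(n)}\int_{-1}^{1}\varphi_\k'(s)\frac{\mathsf{G}(s)}{\a_1^{R_2}[\l_0,\l_1](s)}\ds.
\]
Substituting this value of $K$ back into the pointwise expression for $\gg$ yields precisely the stated formula. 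Uniqueness then follows immediately: if $\mathsf{G}\equiv 0$ the scalar equation forces $K=0$, whence $\gg\equiv 0$ from the pointwise expression.

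The only genuine obstacle is the non-degeneracy condition $\pp_2(n)\neq\pp_2(m)$ for $n\neq m$, which is exactly what makes the scalar equation for $K$ invertible. Using the definition \eqref{def:ppi} together with $\sn(x)=\sinh(n\log x)$, one sees that
\[
\frac{\pp_2(n)}{R_2^{2}}=\frac{\sinh(n\log(R_2/r_1))\,\sinh(n\log(r_2/R_2))}{n\,\sinh(n\log(r_2/r_1))},
\]
which behaves like a constant as $n\to 0^+$ and like $1/(2n)$ as $n\to\infty$; I would verify that this sequence in $n\in\mathbb{N}$ is strictly decreasing (the expected and easy-to-check monotonicity via an elementary $\sinh$ inequality), so that $\pp_2(n)\neq\pp_2(m)$ for every $n\neq m$ and the denominator $\pp_2(m)-\pp_2(n)$ in the stated formula never vanishes. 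This is the single quantitative fact that the whole algebraic derivation above depends upon.
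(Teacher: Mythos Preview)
Your proof is correct and follows essentially the same route as the paper: divide by the nonvanishing $\a_1^{R_2}[\l_0,\l_1]$, reduce to a scalar equation for the single moment (your $K$, the paper's $C=\pp_2(n)K$), solve it using the defining relation \eqref{lambda1eq}, and invoke the strict monotonicity of $n\mapsto\pp_2(n)$ to ensure $\pp_2(m)\neq\pp_2(n)$. The paper establishes that monotonicity just before the lemma (see \eqref{1to1pp2} and its proof), via the same derivative computation you allude to.
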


Before their proof, we need to note that the following equivalence holds:
\begin{equation}\label{1to1pp2}
\pp_2(n)=\pp_2(m) \qquad \Longleftrightarrow \qquad  n= m.
\end{equation}
\begin{proof}[Proof of \eqref{1to1pp2}]
One of the implication follows trivially. In order to get the other one, we just need to prove that $\pp_2(n)$ is strictly decreasing for $n\in\N$.

Since $r_1<R_1<R_2<r_2$, taking $\mathsf{A}:=R_2/r_1$ and $\mathsf{B}:=r_2/R_2$, we can write
\[
\pp_2(n)=\frac{\sn(\mathsf{A})\sn(\mathsf{B})}{n\sn(\mathsf{A}\cdot\mathsf{B})}, \qquad \mathsf{A}, \mathsf{B}>1.
\]
By calculating its derivative, we obtain
\begin{multline*}
\sn^2(\mathsf{A}\cdot \mathsf{B})\frac{n}{R_2^2}\pp_2'(n)=\cn(\mathsf{A})\sn(\mathsf{B})\sn(\mathsf{A}\cdot\mathsf{B} )\log(\mathsf{A}) + \cn(\mathsf{B})\sn(\mathsf{A})\sn(\mathsf{A}\cdot \mathsf{B})\log(\mathsf{B})\\
-\sn(\mathsf{A})\sn(\mathsf{B}) \cn(\mathsf{A}\cdot \mathsf{B})\log(\mathsf{A}\cdot \mathsf{B})- \frac{1}{n}\sn(\mathsf{A})\sn(\mathsf{B}).
\end{multline*}
Maximizing the function of the right-hand side of the previous expression over the region $\mathsf{A,B}\geq 1$ we obtain that its absolute maximum is zero and is reached on the boundary, i.e. $\mathsf{A}=1$ or $\mathsf{B}=1$.
Since the above is independent of the parameter $n\in\N$, we get $\pp_2'(n)<0$ and conclude our goal. 
\end{proof}

Now, we have all the ingredients to prove the previous lemma.
\begin{proof}[Proof of Lemma \ref{lemma:invertnneqm}]
If $\gg(z)$ is a solution of \eqref{auxlemmahatP} then
\begin{equation}\label{def:gintermsofC}
\gg(z)=\frac{C}{\a_1^{R_2}[\l_0,\l_1](z)}+\frac{\mathsf{G}(z)}{\a_1^{R_2}[\l_0,\l_1](z)},
\end{equation}
for some constant $C$. Plugging this expression of $\gg$ into \eqref{auxlemmahatP} yields
\begin{equation*}
C\left(1 - \pp_2(n)\int_{-1}^{1}\frac{\varphi_\k'( s)}{\a_1^{R_2}[\l_0,\l_1](s)}\ds\right)-\pp_2(n)\int_{-1}^{1} \varphi_\k'( s)\frac{\mathsf{G}(s)}{\a_1^{R_2}[\l_0,\l_1](s)}\ds=0.
\end{equation*}
Then, using \eqref{lambda1eq} we get
\[
C\left(1-\frac{\pp_2(n)}{\pp_2(m)}\right)-\pp_2(n)\int_{-1}^{1} \varphi_\k'( s)\frac{\mathsf{G}(s)}{\a_1^{R_2}[\l_0,\l_1](s)}\ds=0,
\]
or equivalently
\begin{equation}\label{def:C}
C=\frac{\pp_2(m)\pp_2(n)}{\pp_2(m)-\pp_2(n)}\int_{-1}^{1} \varphi_\k'( s)\frac{\mathsf{G}(s)}{\a_1^{R_2}[\l_0,\l_1](s)}\ds,
\end{equation}
where in the last step we have used \eqref{1to1pp2} to ensure that $\pp_2(m)-\pp_2(n)\neq 0$.

Conversely, it is easy to check that $\gg$ in \eqref{def:gintermsofC}, with $C$ given by \eqref{def:C}, solves the equation \eqref{auxlemmahatP}.
\end{proof}
To finish, let us introduce the linear functional
\[
I[\mathsf{G}](z):=\frac{1}{\a_1^{R_2}[\l_0,\l_1](z)}\left(\mathsf{G}(z)+\frac{\pp_2(m)\pp_2(n)}{\pp_2(m)-\pp_2(n)}\int_{-1}^{1} \varphi_\k'( s)\frac{\mathsf{G}(s)}{\a_1^{R_2}[\l_0,\l_1](s)}\ds\right).
\]
By using Lemma \ref{lemma:invertnneqm}, we have that solution of \eqref{solvingvnneqm} is given by
\begin{equation}\label{solvnotm}
\vv(z)=\e I[V[\uu,\vv]](z).
\end{equation}
\begin{remark}
To apply Lemma \ref{lemma:invertnneqm} and arrive at the above expression \eqref{solvnotm} we must first check that the right-hand side of  \eqref{solvingvnneqm} belongs to $L^2([-1,1])$, which follows immediately from \eqref{DFRW}.
\end{remark}

The coupled system given by \eqref{solunnotm} and \eqref{solvnotm} is a linear contraction on $L^2([-1, 1])$ for $\e$ small
enough. Hence, there exists a unique $(\uu,\vv)\in L^2([-1,1])\times L^2([-1,1])$ solving \eqref{solunnotm}, \eqref{solvnotm}: 
\begin{align*}
\uu(z)&=\e U[\uu,\vv](z),\\
\vv(z)&=\e I[V[\uu,\vv]](z).
\end{align*}

Therefore, the trivial solution $(\uu, \vv)=(0,0)$ is the unique solution of \eqref{eq:mainunnotm} and \eqref{eq:mainvnnotm}. 
Consequently, we have achieved all the conclusions of Theorem \ref{mainthmkernel}.

\section{Co-dimension of the image of the linear operator}\label{s:codimension}
Let $\l_{\e,\k,m}$ given by Theorem \ref{mainthmkernel}. In order to prove that the co-dimension in $Y(D_\e)$ of the linear operator $\cL[\l_{\e,\k,m}]\equiv D_\ff F[\l_{\e,\k,m},0]$ over $X(D_\e)$ is one-dimensional, we will borrow the argument used for example in \cite{GHS}, see also \cite{G2,GHM,GHM2,GHM3}. Indeed, we will prove that $\cL[\l_{\e,\k,m}]$ is a Fredholm operator of zero index. Since
\[
\text{dim}(\mathcal{N}(\cL[\l_{\e,\k,m}]))=\text{dim}(Y(D_\e)/\mathcal{R}(\cL[\l_{\e,\k,m}])),
\]
for this kind of operators, we will find that
\begin{equation}\label{codim=1}
\text{dim}(Y(D_\e)/\mathcal{R}(\cL[\l_{\e,\k,m}]))=1,
\end{equation}
due to Theorem \ref{mainthmkernel}, where we have proved that kernel $\mathcal{N}(\cL[\l_{\e,\k,m}])$ of the linearized operator is one-dimensional. Hence, we have seen the third condition of the Crandall-Rabinowitz Theorem.

\subsection{Isomorphism plus compact operator}
The main task of this section is to write the linear operator $\cL[\l_{\e,\k,m}]$ as an isomorphism plus a compact operator.
In order to prove this, it will be convenient to recall that  linear operator $\cL[\l_{\e,\k,m}]$ can be decomposed as
\[
\cL[\l_{\e,\k,m}]\hh(r,\theta)=\sum_{n\geq 1}\cos(n\theta)\cL_n[\l_{\e,\k,m}]h_n(r), \qquad (r,\theta)\in D_\e,
\]
or equivalently
\begin{align*}
\cL[\l_{\e,\k,m}]\hh(r,\theta)=
\begin{cases}
\sum_{n\geq 1} \cos(n\theta)  \cL_n^1[\l_{\e,\k,m}]
\begin{pmatrix}
h_n^{R_1}\\
h_n^{R_2}
\end{pmatrix}
(r), \qquad r\in I_\e^{R_1},\vspace{0.25 cm}\\
\sum_{n\geq 1} \cos(n\theta)  \cL_n^2[\l_{\e,\k,m}]
\begin{pmatrix}
h_n^{R_1}\\
h_n^{R_2}
\end{pmatrix}
(r), \qquad r\in I_\e^{R_2},
\end{cases}   
\end{align*}
with 
\[
h_n(r)=
\begin{cases}
h_n^{R_1}(r), \qquad r\in I_\e^{R_1},\\
h_n^{R_2}(r), \qquad r\in I_\e^{R_2}, \end{cases}
\]
and (see \eqref{def:LnggR1})
\begin{multline*}
\cL_n^{1}[\l_{\e,\k,m}]\left( \gg_1, \gg_2 \right)(r)=\left(\phi'(r)+\l r\right)\gg_1(r)+\frac{1}{n}\int_{R_1-\e}^{r} s\varpi_{\e,\k}'(s)\sn(r/s)\gg_1(s)\ds\\
-\frac{1}{n}\frac{\sn(r/r_1)}{\sn(r_2/r_1)}\left(\int_{I_\e^{R_1}} s\varpi_{\e,\k}'(s)\sn(r_2/s)\gg_1(s)\ds + \int_{I_\e^{R_2}} s\varpi_{\e,\k}'(s)\sn(r_2/s)\gg_2(s)\ds\right),
\end{multline*}
and (see \eqref{def:LnggR2})
\begin{multline*}
\cL_n^{2}[\l_{\e,\k,m}]\left( \gg_1, \gg_2 \right)(r)=\left(\phi'(r)+\l r\right)\gg_2(r)\\
+\frac{1}{n}\left(\int_{I_\e^{R_1}} s\varpi_{\e,\k}'(s)\sn(r/s)\gg_1(s)\ds +  \int_{R_2-\e}^{r} s\varpi_{\e,\k}'(s)\sn(r/s)\gg_2(s)\ds.\right)\\
-\frac{1}{n}\frac{\sn(r/r_1)}{\sn(r_2/r_1)}\left(\int_{I_\e^{R_1}} s\varpi_{\e,\k}'(s)\sn(r_2/s)\gg_1(s)\ds + \int_{I_\e^{R_2}} s\varpi_{\e,\k}'(s)\sn(r_2/s)\gg_2(s)\ds\right).
\end{multline*}

\begin{remark}
Just above and throughout all this section we will write (with a little abuse of notation) $\cL_n^{1}[\l_{\e,\k,m}]\left( \gg_1, \gg_2 \right)$ and $\cL_n^{2}[\l_{\e,\k,m}]\left( \gg_1, \gg_2 \right)$ instead of
\[
\cL_n^{1}[\l_{\e,\k,m}]\begin{pmatrix}
\gg_1\\
\gg_2
\end{pmatrix}, \qquad \cL_n^{2}[\l_{\e,\k,m}]\begin{pmatrix}
\gg_1\\
\gg_2
\end{pmatrix},
\]
to alleviate the notation.
\end{remark}

Now, we have remembered all the necessary ingredients to obtain the following decomposition:
\begin{equation}\label{eq:isopluscompact}
\cL_n^{i}[\l_{\e,\k,m}]\left( \gg_1, \gg_2 \right)(r)=I^{R_i}[\l_{\e,\k,m}]\gg_i(r) + K_n^{R_i}(\gg_1,\gg_2)(r), \qquad \text{for } i=1,2,
\end{equation}
where we defined the auxiliary operator $I^{R_i}[\l_{\e,\k,m}]:C^{2,\alpha}(I_\e^{R_i})\longrightarrow C^{2,\alpha}(I_\e^{R_i})$ by the expression
\[
I^{R_i}[\l_{\e,\k,m}]\gg(r):=\left(\phi'(r)+\l_{\e,\k,m} r\right)\gg(r),
\]
and the operators $K_n^{R_i}:C^{2,\alpha}(I_\e^{R_1})\times C^{2,\alpha}(I_\e^{R_2})\longrightarrow C^{2,\alpha}(I_\e^{R_i})$ via the relations
\begin{multline*}
K_n^{R_1}\left( \gg_1, \gg_2 \right)(r):=\frac{1}{n}\int_{R_1-\e}^{r} s\varpi_{\e,\k}'(s)\sn(r/s)\gg_1(s)\ds\\
-\frac{1}{n}\frac{\sn(r/r_1)}{\sn(r_2/r_1)}\left(\int_{I_\e^{R_1}} s\varpi_{\e,\k}'(s)\sn(r_2/s)\gg_1(s)\ds+\int_{I_\e^{R_2}} s\varpi_{\e,\k}'(s)\sn(r_2/s)\gg_2(s)\ds\right),
\end{multline*}
and
\begin{multline*}
K_n^{R_2}\left( \gg_1, \gg_2 \right)(r):=\frac{1}{n}\left(\int_{I_\e^{R_1}} s\varpi_{\e,\k}'(s)\sn(r/s)\gg_1(s)\ds + \int_{R_2-\e}^{r} s\varpi_{\e,\k}'(s)\sn(r/s)\gg_2(s)\ds\right)\\
-\frac{1}{n}\frac{\sn(r/r_1)}{\sn(r_2/r_1)}\left(\int_{I_\e^{R_1}} s\varpi_{\e,\k}'(s)\sn(r_2/s)\gg_1(s)\ds+\int_{I_\e^{R_2}} s\varpi_{\e,\k}'(s)\sn(r_2/s)\gg_2(s)\ds\right).
\end{multline*}

On one hand, we shall check that $I^{R_i}[\l_{\e,\k,m}]:C^{2,\alpha}(I_\e^{R_i})\longrightarrow C^{2,\alpha}(I_\e^{R_i})$ defines an isomorphism. The continuity of this operator follows from the regularity of the function
\begin{equation}\label{aux:iso}
\phi'(r)+\l_{\e,\k,m} r \in C^{\infty}(I_\e^{R_i}),
\end{equation}
combined with the fact that $C^{2,\alpha}(I_\e^{R_i})$ is an algebra.
\begin{proof}[Proof of \eqref{aux:iso}]
Recall (see \eqref{def:UpsRi} and \eqref{def:Lambdacapital}) that
\[
\phi'(r)+\l_{\e,\k,m} r=r^{-1}\Lambda_{m,\e}^{R_i}\left((r-R_i)\e^{-1}\right), \qquad \forall r\in I_\e^{R_i}.
\]
Considering the change of variables $r\in I_\e^{R_i}\mapsto z:=(r-R_i)\e^{-1}\in(-1,1)$ and applying the fact that we have proved earlier that $\Lambda_{m,\e}^{R_i}\in C^{\infty}(-1,1)$ does not vanish since
\begin{align*}
\Lambda_{m,\e}^{R_1}(z)= \a_0^{R_1}[\l_0]+ O(\e),\qquad
\Lambda_{m,\e}^{R_2}(z)= -\beta(M)\e +O(\e^2),
\end{align*}
we can conclude our result.
\end{proof}
Moreover, since $\phi'(r)+\l_{\e,\k,m} r$ is not vanishing over $I_\e^{R_i}$, one has that $(\phi'(r)+\l_{\e,\k,m} r)\text{Id}$ is injective. In order to check that such an operator is an isomorphism, it is enough to check that it is surjective, as a consequence of the Banach isomorphism theorem. Take $\mathsf{k}\in C^{2,\alpha}(I_\e^{R_i})$, we will find $\gg\in C^{2,\alpha}(I_\e^{R_i})$ such that $(\phi'(r)+\l_{\e,\k,m} r)\gg(r)=\mathsf{k}(r)$. Indeed, $\gg$ is given by $(\phi'(r)+\l_{\e,\k,m} r)^{-1}\mathsf{k}(r).$ Using the regularity of $(\phi'(r)+\l_{\e,\k,m} r)$ and the fact that it is not vanishing over $I_\e^{R_i}$, it is easy to check that its inverse $(\phi'(r)+\l_{\e,\k,m} r)^{-1}$ still belongs to $C^{2,\alpha}(I_\e^{R_i}).$ The fact that $C^{2,\alpha}(I_\e^{R_i})$ is an algebra implies the desired result.\medskip

On the other hand, we have to prove that operator $$K_n^{R_i}:C^{2,\alpha}(I_\e^{R_1})\times C^{2,\alpha}(I_\e^{R_2})\longrightarrow C^{2,\alpha}(I_\e^{R_i})$$ is compact. To do that, we will prove that
\begin{equation}\label{smoothing}
K_n^{R_i}:C^{2,\alpha}(I_\e^{R_1})\times C^{2,\alpha}(I_\e^{R_2})\longrightarrow C^{2,\beta}(I_\e^{R_i}), \qquad \beta\in(\alpha,1).    
\end{equation}
That is,  we prove that for any $\beta\in(\alpha,1)$ one has the smoothing effect
\[
\|K_n^{R_i}(\gg_1,\gg_2)\|_{C^{2,\beta}(I_\e^{R_i})}\leq C \left(\|\gg_1\|_{C^{2,\alpha}(I_\e^{R_1})},\|\gg_2\|_{C^{2,\alpha}(I_\e^{R_2})}\right), \qquad \forall (\gg_1,\gg_2)\in C^{2,\alpha}(I_\e^{R_1})\times C^{2,\alpha}(I_\e^{R_2}).
\]
In particular, taking $\beta>\alpha$ we have that operator
$$K_n^{R_i}:C^{2,\alpha}(I_\e^{R_1})\times C^{2,\alpha}(I_\e^{R_2})\longrightarrow C^{2,\beta}(I_\e^{R_i}) \hookrightarrow C^{2,\alpha}(I_\e^{R_i})$$
is compact since the embedding $C^{2,\beta}(I_\e^{R_i}) \hookrightarrow C^{2,\alpha}(I_\e^{R_i})$ is compact. 

\begin{proof}[Proof of \eqref{smoothing}]
First of all, it is important to note that just by definition, we have 
\[
K_n^{R_i}(\gg_1,\gg_2)=\cL_n^{i}[\lambda_{\e,\k,m}](\gg_1,\gg_2)-I^{R_i}[\lambda_{\e,\k,m}]\gg_i\in C^{2,\alpha}(I_\e^{R_i}).
\]
Using the relations (see \eqref{def:auxhyper})
\[
\sn(r/s)'=\cn(r/s) n/r, \qquad \cn(r/s)'=\sn(r/s) n/r,
\]
we directly get 
\begin{equation}\label{eq:Kn''}
r(r K_n^{R_i}(\gg_1,\gg_2)'(r))'=r^2\varpi_{\e,\k}'(r)\gg_i(r) +n^2 K_n^{R_i}(\gg_1,\gg_2)(r).
\end{equation}
Since $\gg_i, K_n^{R_i}(\gg_1,\gg_2)\in C^{2,\alpha}(I_\e^{R_i})$ and $\varpi_{\e,\k}'\in C^{\infty}(I_\e^{R_i})$, applying directly the fact that  H\"older space is an algebra we can conclude that the right hand side of \eqref{eq:Kn''} belongs to $C^{0,\beta}(I_\e^{R_i})$.

Therefore, we have proved that $K_n^{R_i}(\gg_1,\gg_2)\in C^{2,\beta}(I_\e^{R_i}).$ 
\end{proof}

Since Fredholm operator remains Fredholm with the same index modulo compact operators and any isomorphism is Fredholm of zero index, we have all the ingredients to deduce \eqref{codim=1}.

Therefore, to apply the bifurcation argument it only remains to check the transversality condition in the Crandall-Rabinowitz Theorem \ref{th:CR}. With this in mind, we must first find a practical characterization for the range of the linearized operator $\cL[\l_{\e,\k,m}]$.

\section{Adjoint linear operator and their kernel}\label{s:adjoint}
In order to get a practical and  useful characterization for the range $\cR(\cL[\l_{\e,\k,m}])$ of the linearized operator $\cL\equiv\cL[\l_{\e,\k,m}]$ we are going to use the adjoint of the linear operator, i.e., $\cL^\ast\equiv \cL^\ast[\l_{\e,\k,m}]$. Here, it is important to recall and emphasize that $\l_{\e,\k,m}$ will be fixed from the beginning of the section and given by Theorem \ref{mainthmkernel}.\\

Note that, to study the range of the linear operator, we just have to understand the equation
\begin{equation}\label{fullproblemcod}
\cL[\l_{\e,\k,m}]\hh(r,\theta)=\HH(r,\theta), \qquad (r,\theta)\in  D_\e,
\end{equation}
with $\hh\in X(D_\e)$ and $\HH\in Y(D_\e)$, where we have assumed that $Y(D_\e)=X(D_\e)$, see \eqref{def:spaceX}-\eqref{def:spaceY}.

As we did before, we will use the expansions 
\[
\hh(r,\theta)=\sum_{n\geq 1}h_n(r)\cos( n \theta), \qquad  \HH(r,\theta)=\sum_{n\geq 1}H_n(r) \cos( n \theta),
\]
which allow us to translate \eqref{fullproblemcod} into the following infinity-dimensional system 
\begin{equation}\label{nproblemcod}
\cL_n[\l_{\e,\k,m}]h_n(r)=H_n(r), \quad r\in I_\e^{R_1}\cup I_\e^{R_2}, 
\end{equation}
for all $n\in\N$ with $\cL_n[\l_{\e,\k,m}]$ defined via \eqref{def:Lnggpre}. 

In order to solve \eqref{nproblemcod}, we will proceed as we did in Section \ref{s:rescaling}. That is, we re-scale each equation and introduce the auxiliary functions
\begin{equation*}
\bar{H}_{n}^{R_i}(z):=(R_i+\e z) H_n(R_i+\e z),\qquad
\bar{h}_{n}^{R_i}(z):= h_n(R_i+\e z), \qquad \text{for } z\in(-1,1).
\end{equation*}
Moreover, we define the auxiliary functional
\[
P_{n,\e}^{R_i}[\bar{\gg}_1,\bar{\gg}_2](z):=\cL_{n,\e}^{R_i}[\l_{\e,\k,m}]\begin{pmatrix}
\bar{\gg}_1 \\
\bar{\gg}_2
\end{pmatrix}(z),
\]
with $\cL_{n,\e}^{R_1}[\l_{\e,\k,m}]$ and $\cL_{n,\e}^{R_2}[\l_{\e,\k,m}]$ respectively given by \eqref{LR1final} and \eqref{LR2final}, for any $\bar{\gg}_1, \bar{\gg}_2\in C^{2,\a}([-1,1]).$

\begin{remark}
In what follows, to alleviate the notation, we will omit the dependence of $\l_{\e,\k,m}.$
\end{remark}

Therefore, for each $n\in\N$, the functional equation \eqref{nproblemcod} is equivalent to solve the system
\begin{equation}\label{sys:codT=W}
z\in(-1,1),\qquad\left\{
\begin{aligned}
P_{n,\e}^{R_1}[\bar{h}_n^{R_1}, \bar{h}_n^{R_2}](z)&=\bar{H}_n^{R_1}(z),\\
P_{n,\e}^{R_2}[\bar{h}_n^{R_1}, \bar{h}_n^{R_2}](z)&=\bar{H}_n^{R_2}(z).
\end{aligned}
\right.
\end{equation}

For convenience, we remind the reader of the explicit expression of the operators on the left-hand side of the above system. That is, for any $\uu,\vv\in C^{2,\a}([-1,1])$, we have
\begin{multline}\label{def:PR1}
P_{n,\e}^{R_1}[\uu,\vv](z)=\Lambda_{m,\e}^{R_1}(z)\uu(z)\\
+(R_1+\e z)\frac{\sn((R_1+\e z)/r_1)}{\sn(r_2/r_1)}\frac{\e}{n}\int_{-1}^{+1} (R_1+\e s)\varphi_\k' (-s)\sn(r_2/(R_1+\e s))\uu(s)\ds\\
-(R_1+\e z)\frac{\sn((R_1+\e z)/r_1)}{\sn(r_2/r_1)}\frac{\e}{n}\int_{-1}^{+1} (R_2+\e s)\varphi_\k' (s)\sn(r_2/(R_2+\e s))\vv(s)\ds\\
-(R_1+\e z)\frac{\e}{n}\int_{-1}^{z} (R_1+\e s)\varphi_\k' (-s)\sn((R_1+\e z)/(R_1+\e s))\uu(s)\ds,
\end{multline}
and
\begin{multline}\label{def:PR2}
P_{n,\e}^{R_2}[\uu,\vv](z)=\Lambda_{m,\e}^{R_2}(z)\vv(z)\\
+(R_2+\e z)\frac{\sn((R_2+\e z)/r_1)}{\sn(r_2/r_1)}\frac{\e}{n}\int_{-1}^{+1} (R_1+\e s)\varphi_\k' (-s)\sn(r_2/(R_1+\e s))\uu(s)\ds \\
-(R_2+\e z)\frac{\sn((R_2+\e z)/r_1)}{\sn(r_2/r_1)}\frac{\e}{n}\int_{-1}^{+1} (R_2+\e s)\varphi_\k' (s)\sn(r_2/(R_2+\e s))\vv(s)\ds\\
-(R_2+\e z)\frac{\e}{n}\int_{-1}^{+1} (R_1+\e s)\varphi_\k' (-s)\sn((R_2+\e z)/(R_1+\e s))\uu(s)\ds\\
+(R_2+\e z)\frac{\e}{n}\int_{-1}^{ z} (R_2+\e s)\varphi_\k' (s)\sn((R_2+\e z)/(R_2+\e s))\vv(s)\ds,
\end{multline}
with $\Lambda_{m,\e}^{R_1}(z)$ and $\Lambda_{m,\e}^{R_2}(z)$ given by \eqref{def:Lambdacapital}.

\subsection{Analysis in a weak function space}
Here, it is important to  mention that the  analysis about the characterization for the range of the linear operator can be performed in a weighted $L^2([-1,1])\times L^2([-1,1])$ space. This weak regularity would be enough to conclude our result.

\subsubsection{Weighted $L^2([-1,1])\times L^2([-1,1])$ space}
To simplify the notation throughout this section, we begin by introducing two auxiliary weight functions. Note that thanks to the properties (see Lemma \ref{propiedadesvarphi}) of $\varphi_\k'$, we directly have that
\begin{equation}\label{def:auxweights}
\sigma_{\pm}(z):=-\varphi_\k'(\pm z)\geq 0,\qquad z\in(-1,1).
\end{equation}
Moreover, we introduce the space $L^2([-1,1])$ with weights $\sigma_{\pm}(z).$ That is, we define $L^2_{\sigma_{\pm}}([-1,1])$ with inner-product and norm given by
\[
(\uu_1,\uu_2)_{L^2_{\sigma_{-}}}:=\int_{-1}^{1} \uu_1(z) \uu_2(z) \sigma_{-}(z)\dz, \qquad  (\vv_1,\vv_2)_{L^2_{\sigma_{+}}}:=\int_{-1}^{1} \vv_1(z) \vv_2(z) \sigma_{+}(z)\dz,
\]
and
\begin{align*}
\|\uu\|^2_{L^2_{\sigma_{-}}}&:=(\uu,\uu)_{L^2_{\sigma_{-}}}=\int_{-1}^{1} |\uu(z)|^2  \sigma_{-}(z)\dz,\\
\|\vv\|^2_{L^2_{\sigma_{+}}}&:=(\vv,\vv)_{L^2_{\sigma_{+}}}=\int_{-1}^{1} |\vv(z)|^2  \sigma_{+}(z)\dz.
\end{align*}
In addition, we also introduce the product space $L^2([-1,1])\times L^2([-1,1])$ with weights $\sigma_{-}(z)$ and $\sigma_{+}(z)$ respectively. That is, we define the Hilbert space
\[
\mathbb{L}^2([-1,1]):=L^2_{\sigma_{-}}([-1,1])\times L^2_{\sigma_{+}}([-1,1]),
\]
with inner-product and norm given by
\[
\langle (\uu_1,\vv_1);(\uu_2,\vv_2) \rangle_{\mathbb{L}^2}:=(\uu_1,\uu_2)_{L^2_{\sigma_{-}}}+(\vv_1,\vv_2)_{L^2_{\sigma_{+}}},
\]
and
\[
\|(\uu,\vv)\|^2_{\mathbb{L}^2}:=\langle (\uu,\vv);(\uu,\vv) \rangle_{\mathbb{L}^2}=\|\uu\|_{L^2_{\sigma_{-}}}+\|\vv\|_{L^2_{\sigma_{+}}}.
\]
Finally, going back in the re-scaling, we introduce in our original domain $I_\e^{R_1,R_2}$ the space
\[
L_{\tilde{\sigma}}^2(I_\e^{R_1,R_2}):=
L_{\tilde{\sigma}_{-}}^2(I_\e^{R_1})\times L_{\tilde{\sigma}_{+}}^2(I_\e^{R_2}),
\]
with
\[
\tilde{\sigma}_{-}(\cdot):=\e^{-1}\sigma_{-}((\cdot-R_1)\e^{-1}), \qquad  \tilde{\sigma}_{+}(\cdot):=\e^{-1}\sigma_{+}((\cdot-R_2)\e^{-1}).
\]

After all this, we are ready to introduce the weaker space $Z(D_\e)$ given by
\[
Z(D_\e):=\left\lbrace (r,\theta)\in D_\e \mapsto \gg(r,\theta)=\sum_{n\geq 1}g_n(r)\cos(n\theta): \sum_{n\geq 1}\norm{g_n}_{L_{\tilde{\sigma}}^2(I_\e^{R_1,R_2})}^2<\infty \right\rbrace,
\]
and study the equation
\begin{equation*}
\cL[\l_{\e,\k,m}]\hh(r,\theta)=\HH(r,\theta), \qquad (r,\theta)\in  D_\e,
\end{equation*}
with $\hh,\HH\in Z(D_\e)$.

\subsection{Adjoint operator} Let $\mathbb{L}^2([-1,1])=L^2_{\sigma_{-}}([-1,1])\times L^2_{\sigma_{+}}([-1,1])$ be the weighted Hilbert space introduce above and consider the operator:
\[
\left(P_{n,\e}^{R_1},P_{n,\e}^{R_2}\right) \colon \begin{array}{>{\displaystyle}c @{} >{{}}c<{{}} @{} >{\displaystyle}c} 
          L^2_{\sigma_{-}}([-1,1])\times L^2_{\sigma_{+}}([-1,1]) &\rightarrow& L^2_{\sigma_{-}}([-1,1])\times L^2_{\sigma_{+}}([-1,1]) \\ 
          (\uu,\vv) &\mapsto& \left(P_{n,\e}^{R_1}[\uu,\vv],P_{n,\e}^{R_2}[\uu,\vv]\right) 
         \end{array}
\]
The main task of this subsection will be to obtain an explicit expression for the adjoint operator. That is, (identifying a Hilbert space with its dual) 
\[
\left((P_{n,\e}^{R_1})^\ast,(P_{n,\e}^{R_2})^\ast \right) \colon \begin{array}{>{\displaystyle}c @{} >{{}}c<{{}} @{} >{\displaystyle}c} 
          L^2_{\sigma_{-}}([-1,1])\times L^2_{\sigma_{+}}([-1,1]) &\rightarrow& L^2_{\sigma_{-}}([-1,1])\times L^2_{\sigma_{+}}([-1,1]) \\ 
          (\ff,\gg) &\mapsto& \left((P_{n,\e}^{R_1})^\ast[\ff,\gg],(P_{n,\e}^{R_2})^\ast[\ff,\gg]\right) 
         \end{array}
\]
such that
\[
\langle\left(P_{n,\e}^{R_1}[\uu,\vv],P_{n,\e}^{R_2}[\uu,\vv]\right);(\ff,\gg)\rangle_{\mathbb{L}^2}=\langle (\uu,\vv);\left((P_{n,\e}^{R_1})^\ast[\ff,\gg],(P_{n,\e}^{R_2})^\ast[\ff,\gg]\right)\rangle_{\mathbb{L}^2},
\]
or equivalently
\begin{equation}\label{eq:operatorandadjoint}
(P_{n,\e}^{R_1}[\uu,\vv],\ff)_{L^2_{\sigma_{-}}}+(P_{n,\e}^{R_2}[\uu,\vv],\gg)_{L^2_{\sigma_{+}}}=(\uu,(P_{n,\e}^{R_1})^\ast[\ff,\gg])_{L^2_{\sigma_{-}}} + (\vv,(P_{n,\e}^{R_2})^\ast[\ff,\gg])_{L^2_{\sigma_{+}}}.
\end{equation}
It is only a matter of straightforward but tedious calculations, which we leave to the interested reader, to verify that $\left(P_{n,\e}^{R_1}\right)^{\ast}[\ff,\gg]$ and $\left(P_{n,\e}^{R_2}\right)^{\ast}[\ff,\gg]$ are given by the following expressions:
\begin{multline}\label{def:adjPR1}
\left(P_{n,\e}^{R_1}\right)^{\ast}[\ff,\gg](z):=\Lambda_{m,\e}^{R_1}(z)\ff(z)\\
+ (R_1+\e z)\frac{\sn(r_2/(R_1+\e z))}{\sn(r_2/r_1)}\frac{\e}{n}\int_{-1}^{1}(R_1+\e s)\varphi_\k'(-s)\sn((R_1+\e s)/r_1) \ff(s)\ds\\
+(R_1+\e z)\frac{\e}{n}\int_{z}^{1} (R_1+\e s)\varphi_\k'(-s)\sn((R_1+\e z)/(R_1+\e s)) \ff(s) \ds\\
+(R_1+\e z)\frac{\sn(r_2/(R_1+\e z))}{\sn(r_2/r_1)}\frac{\e}{n}\int_{-1}^{+1} (R_2+\e s)\varphi_\k'(s)\sn((R_2+\e s)/r_1)\gg(s)\ds\\
+(R_1+\e z)\frac{\e}{n}\int_{-1}^{+1} (R_2+\e s) \varphi_\k'(s)\sn((R_1+\e z)/(R_2+\e s))\gg(s)\ds,
\end{multline}
and
\begin{multline}\label{def:adjPR2}
\left(P_{n,\e}^{R_2}\right)^{\ast}[\ff,\gg](z):=\Lambda_{m,\e}^{R_2}(z)\gg(z)\\
-(R_2+\e z)\frac{\sn(r_2/(R_2+\e z))}{\sn(r_2/r_1)}\frac{\e}{n} \int_{-1}^{1}(R_1+\e s)\varphi_\k'(-s) \sn((R_1+\e s)/r_1)\ff(s) \ds\\
-(R_2+\e z)\frac{\sn(r_2/(R_2+\e z))}{\sn(r_2/r_1)}\frac{\e}{n}\int_{-1}^{+1} (R_2+\e s)\varphi_\k'(s)\sn((R_2+\e s)/r_1)\gg(s)\ds\\
-(R_2+\e z)\frac{\e}{n}\int_{z}^{1} (R_2+\e s)\varphi_\k'(s) \sn((R_2+\e z)/(R_2+\e s))\gg(s)\ds.
\end{multline}

\subsection{Kernel of the adjoint operator}
The main task of this section is to prove that there exists an element in the kernel of the adjoint operator $\cL^{\ast}[\l_{\e,\k,m}]$. Moreover, we will see that the kernel of the adjoint operator is the span of this element and we will identify the precise form of this element at first order in terms of parameter $\e$.

The key point to prove our objective will be to use the theory of Fredholm operators. In first place, since $\cL[\l_{\e,\k,m}]$ is a Fredholm operator, their adjoint operator $\cL^\ast[\l_{\e,\k,m}]$ is also Fredholm with
\[
\text{index}(\cL^\ast[\l_{\e,\k,m}])=-\text{index}(\cL[\l_{\e,\k,m}]).
\]
Moreover, since the linear operator $\cL[\l_{\e,\k,m}]$ can be written as an isomorphism plus a compact operator we have (see \cite[Theorem 5, p. 641]{E}) that
\[
\text{dim}(\cN(\cL^\ast[\l_{\e,\k,m}]))=\text{dim}(\cN(\cL[\l_{\e,\k,m}])).
\]
From Theorem \ref{mainthmkernel} we can conclude directly that 
\[
\text{dim}(\cN(\cL^\ast[\l_{\e,\k,m}]))=1.
\]
So far we have shown that there exists $\hh^\ast\in Z(D_\e)$ such that $\cL^\ast[\l_{\e,\k,m}]\hh^\ast=0$ over $D_\e=I_\e^{R_1,R_2}\times\T$, which admits an expansion 
\[
\hh^\ast(r,\theta)=\sum_{n\geq 1} h_n^\ast(r)\cos(n\theta),
\]
satisfying $\cL_n^\ast[\l_{\e,\k,m}]h_n^\ast=0$ for all $n\in\N$ over $I_\e^{R_1,R_2}=I_\e^{R_1}\cup I_\e^{R_2}$. 

More specifically, using the same convention as before, that is, re-scaling and decomposing the problem, we have proved that there exists $\{\bar{h}_n^\ast\}_{n\geq 1}\equiv \{((\bar{h}_n^{R_1})^\ast,(\bar{h}_n^{R_2})^\ast)\}_{n\geq 1}\in \mathbb{L}^2([-1,1])$ satisfying
\begin{equation}
    \begin{cases}
        \left(P_{n,\e}^{R_1}\right)^\ast[(\bar{h}_n^{R_1})^\ast, (\bar{h}_n^{R_2})^\ast](z)=0,\\
        \left(P_{n,\e}^{R_2}\right)^\ast[(\bar{h}_n^{R_1})^\ast, (\bar{h}_n^{R_2})^\ast](z)=0,
    \end{cases}\qquad z\in(-1,1),
\end{equation}
for all $n\in \N$, with $\left(P_{n,\e}^{R_1}\right)^\ast [\cdot,\cdot]$ and $\left(P_{n,\e}^{R_2}\right)^\ast [\cdot,\cdot]$ given respectively by \eqref{def:adjPR1}-\eqref{def:adjPR2}.\\

In order to identify precisely the form of the unique element in the kernel of the adjoint operator, we will distinguish between two cases:

\subsubsection{Case $n\neq m$}
Since $\Lambda_{m,\e}^{R_1}(z)=O(1)$ and $\Lambda_{m,\e}^{R_2}(z)=O(\e)$ we can just write
\begin{multline}\label{eq:mainfnnotm}
\Lambda_{m,\e}^{R_1}(z)\ff(z)\\
+ (R_1+\e z)\frac{\sn(r_2/(R_1+\e z))}{\sn(r_2/r_1)}\frac{\e}{n}\int_{-1}^{1}(R_1+\e s)\varphi_\k'(-s)\sn((R_1+\e s)/r_1) \ff(s)\ds\\
+(R_1+\e z)\frac{\e}{n}\int_{z}^{1} (R_1+\e s)\varphi_\k'(-s)\sn((R_1+\e z)/(R_1+\e s)) \ff(s) \ds\\
+(R_1+\e z)\frac{\sn(r_2/(R_1+\e z))}{\sn(r_2/r_1)}\frac{\e}{n}\int_{-1}^{+1} (R_2+\e s)\varphi_\k'(s)\sn((R_2+\e s)/r_1)\gg(s)\ds\\
+(R_1+\e z)\frac{\e}{n}\int_{-1}^{+1} (R_2+\e s) \varphi_\k'(s)\sn((R_1+\e z)/(R_2+\e s))\gg(s)\ds=0,
\end{multline}
and (dividing the full expression by $\e$)
\begin{multline}\label{eq:maingnnotm}
\frac{\Lambda_{m,\e}^{R_2}(z)}{\e}\gg(z)\\
-(R_2+\e z)\frac{\sn(r_2/(R_2+\e z))}{\sn(r_2/r_1)}\frac{1}{n} \int_{-1}^{1}(R_1+\e s)\varphi_\k'(-s) \sn((R_1+\e s)/r_1)\ff(s) \ds\\
-(R_2+\e z)\frac{\sn(r_2/(R_2+\e z))}{\sn(r_2/r_1)}\frac{1}{n}\int_{-1}^{+1} (R_2+\e s)\varphi_\k'(s)\sn((R_2+\e s)/r_1)\gg(s)\ds\\
-(R_2+\e z)\frac{1}{n}\int_{z}^{1} (R_2+\e s)\varphi_\k'(s) \sn((R_2+\e z)/(R_2+\e s))\gg(s)\ds=0.
\end{multline}
Hence, we can divide \eqref{eq:mainfnnotm} by $\Lambda_{m,\e}^{R_1}(z)$ to get
\begin{equation}\label{solfnnotm}
\ff(z)=-\e \mathrm{F}[\ff,\gg](z),
\end{equation}
where
\begin{multline*}
\mathrm{F}[\ff,\gg](z):= (R_1+\e z)\frac{\sn(r_2/(R_1+\e z))}{\sn(r_2/r_1)}\frac{\e}{n}\int_{-1}^{1}(R_1+\e s)\varphi_\k'(-s)\sn((R_1+\e s)/r_1) \ff(s)\ds\\
+(R_1+\e z)\frac{\e}{n}\int_{z}^{1} (R_1+\e s)\varphi_\k'(-s)\sn((R_1+\e z)/(R_1+\e s)) \ff(s) \ds\\
+(R_1+\e z)\frac{\sn(r_2/(R_1+\e z))}{\sn(r_2/r_1)}\frac{\e}{n}\int_{-1}^{+1} (R_2+\e s)\varphi_\k'(s)\sn((R_2+\e s)/r_1)\gg(s)\ds\\
+(R_1+\e z)\frac{\e}{n}\int_{-1}^{+1} (R_2+\e s) \varphi_\k'(s)\sn((R_1+\e z)/(R_2+\e s))\gg(s)\ds.
\end{multline*}
The next step will be to introduce \eqref{solfnnotm} into the equation \eqref{eq:maingnnotm}. Hence, we have
\begin{multline}\label{eq:preauxS}
\frac{\Lambda_{m,\e}^{R_2}(z)}{\e}\gg(z)\\
+(R_2+\e z)\frac{\sn(r_2/(R_2+\e z))}{\sn(r_2/r_1)}\frac{\e}{n} \int_{-1}^{1}(R_1+\e s)\varphi_\k'(-s) \sn((R_1+\e s)/r_1)\mathrm{F}[\ff,\gg](s) \ds\\
-(R_2+\e z)\frac{\sn(r_2/(R_2+\e z))}{\sn(r_2/r_1)}\frac{1}{n}\int_{-1}^{+1} (R_2+\e s)\varphi_\k'(s)\sn((R_2+\e s)/r_1)\gg(s)\ds\\
-(R_2+\e z)\frac{1}{n}\int_{z}^{1} (R_2+\e s)\varphi_\k'(s) \sn((R_2+\e z)/(R_2+\e s))\gg(s)\ds=0.
\end{multline}
Now, we introduce the auxiliary functional
\begin{multline}\label{eq:auxS}
S_\e[\gg](z):=-(R_2+\e z)\frac{\sn(r_2/(R_2+\e z))}{\sn(r_2/r_1)}\frac{1}{n}\int_{-1}^{+1} (R_2+\e s)\varphi_\k'(s)\sn((R_2+\e s)/r_1)\gg(s)\ds\\
-(R_2+\e z)\frac{1}{n}\int_{z}^{1} (R_2+\e s)\varphi_\k'(s) \sn((R_2+\e z)/(R_2+\e s))\gg(s)\ds,
\end{multline}
which can be written as 
\[
S_\e[\gg](z)=S_0[\gg](z)+\int_0^\e (\p_\delta S_\delta)[\gg](z)\mbox{d}\delta,
\]
with (using \eqref{def:ppi})
\[
S_0[\gg](z)=-\pp_2(n)\int_{-1}^{+1}  \varphi_\k' (s)\gg(s)\ds.
\]
Then, using \eqref{eq:auxS} we have that \eqref{eq:preauxS} can be written as
\begin{multline*}
\a_1^{R_2}[\l_0,\l_1](z)\gg(z)    -\pp_2(n)\int_{-1}^{+1}  \varphi_\k' (s)\gg(s)\ds=-\alpha_{2,\e}^{R_2}[\l_0,\l_1,\l_2^{\e}](z)\gg(z)\e-\int_0^\e (\p_\delta S_\delta)[\gg](z)\mbox{d}\delta\\
+(R_2+\e z)\frac{\sn(r_2/(R_2+\e z))}{\sn(r_2/r_1)}\frac{\e}{n} \int_{-1}^{1}(R_1+\e s)\varphi_\k'(-s) \sn((R_1+\e s)/r_1)\mathrm{F}[\ff,\gg](s) \ds.
\end{multline*}
Introducing a new auxiliary functional $\e \mathrm{G}[\ff,\gg](z)$ for the right hand side of the above expression, we arrive to
\begin{equation}\label{solvinggnneqm}
\a_1^{R_2}[\l_0,\l_1](z)\gg(z)    -\pp_2(n)\int_{-1}^{+1}  \varphi_\k' (s)\gg(s)\ds=\e \mathrm{G}[\ff,\gg](z),
\end{equation}
with
\begin{multline*}
\mathrm{G}[\ff,\gg](z):=  -\alpha_{2,\e}^{R_2}[\l_0,\l_1,\l_2^{\e}](z)\gg(z)-\frac{1}{\e}\int_0^\e (\p_\delta S_\delta)[\gg](z)\mbox{d}\delta\\
+(R_2+\e z)\frac{\sn(r_2/(R_2+\e z))}{\sn(r_2/r_1)}\frac{1}{n} \int_{-1}^{1}(R_1+\e s)\varphi_\k'(-s) \sn((R_1+\e s)/r_1)\mathrm{F}[\ff,\gg](s) \ds,  
\end{multline*}
where $\|\mathrm{G}\|_{L^2([-1,1])}=O(\e)$, see \eqref{DFSV} for details.
Now, applying Lemma \ref{lemma:invertnneqm}, we have that solution of \eqref{solvinggnneqm} is given by
\begin{equation}\label{solgnnotm}
\gg(z)=\e I[\mathrm{G}[\ff,\gg]](z).
\end{equation}

The coupled system given by \eqref{solfnnotm} and \eqref{solgnnotm} is a linear contraction on $\mathbb{L}
^2([-1, 1])$ for $\e$ small
enough. Hence, there exists a unique $(\ff,\gg)\in \mathbb{L}
^2([-1, 1])$ solving \eqref{solfnnotm}, \eqref{solgnnotm}: 
\begin{align*}
\ff(z)&=-\e \mathrm{F}[\ff,\gg](z),\\
\gg(z)&=\e I[\mathrm{G}[\ff,\gg]](z).
\end{align*}

Therefore, the trivial solution $(\ff, \gg)=(0,0)$ is the unique solution of \eqref{eq:mainfnnotm} and \eqref{eq:maingnnotm}.

\subsubsection{Case $n=m$}
To simplify the notation, in the following we will just write
\[
\bar{h}_m^\ast(z)\equiv \left((\bar{h}_m^{R_1})^\ast(z),(\bar{h}_m^{R_1})^\ast(z)\right)=:(\aa^\ast(z),\bb^\ast(z)).
\]
Taking into account this notation and what we have seen for the case $n\neq m$, we have that the unique element in the kernel of the adjoint operator $\cL_n^\ast[\l_{\e,\k,m}]$ has the form:
\begin{align}
(\bar{h}_n^{R_1})^\ast(z)=\left\{\begin{array}{cc} 0 & n\neq m,\\ \aa^\ast(z) & n=m, \end{array}\right.\label{hR1soladj}\\
(\bar{h}_n^{R_2})^\ast(z)=\left\{\begin{array}{cc} 0 & n\neq m,\\ \bb^\ast(z) & n=m, \end{array}\right.\label{hR2soladj}
\end{align}
where functions $\aa^\ast$ and $\bb^\ast$ depend on $\e$, $\k$ and $m$ but we do not make this dependence explicit for sake of simplicity.

To finish this section we need to obtain an explicit expression for $(\aa^\ast,\bb^\ast)$ at the first order in terms of parameter $\e$. More specifically, let us see that for some constant C we have
\begin{equation}\label{sol:adjexpansion}
\begin{cases}
\aa^\ast(z)=\phantom{C \bb_0(z)}\hspace{0.5 cm} O(\e),\\
\bb^\ast(z)=C \bb_0(z)+ O(\e).
\end{cases}	
\end{equation}
Here, $\bb_0(z)$ is given by \eqref{b0formula}. That is, the element in the kernel of the linear operator is exactly the same as (modulo a multiplicative constant) the element in the kernel of the adjoint operator at first order in terms of parameter $\e.$
\begin{remark}
The linear operator $\cL[\l_{\e,\k,m}]$ is self-adjoint at first order in terms of parameter $\e$.
\end{remark}
\begin{remark}
The information provided by \eqref{sol:adjexpansion} will be crucial to prove the transversality property in the Crandall-Rabinowitz's theorem.
\end{remark}

\begin{proof}[Proof of \eqref{sol:adjexpansion}]
We start by recalling what we know at this point. That is, $(\aa^\ast,\bb^\ast)$ solves 
\begin{equation*}
\begin{cases}
P_{m,\e}^{R_1}[\aa^\ast,\bb^\ast](z)=0,\\
P_{m,\e}^{R_2}[\aa^\ast,\bb^\ast](z)=0,
\end{cases}\qquad z\in(-1,1).
\end{equation*}
The above system is linear in $(\aa^\ast, \bb^\ast)$ and we can assume without loss of generality that 
\[
\|\aa^\ast\|_{L_{\sigma_{-}}^2}^2+\|\bb^\ast\|_{L_{\sigma_{+}}^2}^2=1.
\]
If it is not the case we only need to normalized the solution and enter that value into the final constant $C$. Now, using \eqref{def:adjPR1} we have that
\[
0=P_{m,\e}^{R_1}[\aa^\ast,\bb^\ast](z)=\Lambda_{m,\e}^{R_1}(z)\aa^\ast(z) + O(\e),
\]
but since we have seen  that $\Lambda_{m,\e}^{R_1}(z)=O(1)$ we get that $\aa^\ast(z)=\e \aa_1^\ast(z)$ with $\|\aa_1^\ast\|_{L_{\sigma_{-}}^2}=O(1)$
in terms of $\e$. This information yields from \eqref{def:adjPR2} that
\begin{multline*}
0=P_{m,\e}^{R_2}[\aa^\ast,\bb^\ast](z)=\Lambda_{m,\e}^{R_2}(z)\bb^\ast(z)+O(\e^2)\\
-(R_2+\e z)\frac{\sm(r_2/(R_2+\e z))}{\sm(r_2/r_1)}\frac{\e}{m}\int_{-1}^{+1} (R_2+\e s)\varphi_\k'(s)\sm((R_2+\e s)/r_1)\bb^\ast(s)\ds\\
-(R_2+\e z)\frac{\e}{m}\int_{z}^{1} (R_2+\e s)\varphi_\k'(s) \sm((R_2+\e z)/(R_2+\e s))\bb^\ast(s)\ds.
\end{multline*}
Since $\Lambda_{m,\e}^{R_2}(z)=  \a_1^{R_2}[\l_0,\l_1](z)\e+\a_{2,\e}^{R_2}[\l_0,\l_1,\l_2^\e](z)\e^2$, we divide the above expression by $\e$ to get
\begin{multline}\label{eq:preauxV}
\a_1^{R_2}[\l_0,\l_1](z)\bb^\ast(z) -(R_2+\e z)\frac{1}{m}\int_{z}^{1} (R_2+\e s)\varphi_\k'(s) \sm((R_2+\e z)/(R_2+\e s))\bb^\ast(s)\ds\\
-(R_2+\e z)\frac{\sm(r_2/(R_2+\e z))}{\sm(r_2/r_1)}\frac{1}{m}\int_{-1}^{+1} (R_2+\e s)\varphi_\k'(s)\sm((R_2+\e s)/r_1)\bb^\ast(s)\ds=O(\e).
\end{multline}
Now, we introduce the auxiliary functional
\begin{multline}\label{eq:auxV}
V_\e[\bb^\ast](z):=-(R_2+\e z)\frac{1}{m}\int_{z}^{1} (R_2+\e s)\varphi_\k'(s) \sm((R_2+\e z)/(R_2+\e s))\bb^\ast(s)\ds\\
-(R_2+\e z)\frac{\sm(r_2/(R_2+\e z))}{\sm(r_2/r_1)}\frac{1}{m}\int_{-1}^{+1} (R_2+\e s)\varphi_\k'(s)\sm((R_2+\e s)/r_1)\bb^\ast(s)\ds,
\end{multline}
which can be written as
\[
V_\e[\bb^\ast](z)=V_0[\bb^\ast](z)+\int_0^\e (\p_\d V_\d)[\bb^\ast](z) \mbox{d}\d,
\]
with (see \eqref{def:ppi})
\[
V_0[\bb^\ast](z)=-\pp_2(m)\int_{-1}^{+1} \varphi_\k'(s)\bb^\ast(s)\ds.
\]
Then, using \eqref{eq:auxV} we have that \eqref{eq:preauxV} can be written as
\begin{equation}\label{eq:auxG}
\a_1^{R_2}[\l_0,\l_1](z)\bb^\ast(z)-\pp_2(m)\int_{-1}^{+1} \varphi_\k'(s)\bb^\ast(s)\ds=O(\e)-\int_0^\e (\p_\d V_\d)[\bb^\ast](z) \mbox{d}\d:=G(z),
\end{equation}
where $\|G\|_{L^2([-1,1])}=O(\e)$, see \eqref{DFSV} for details. Just dividing \eqref{eq:auxG} by $\a_1^{R_2}[\l_0,\l_1](z)$ we have
\[
\bb^\ast(z)=\frac{\pp_2(m)}{\a_1^{R_2}[\l_0,\l_1](z)}\int_{-1}^{+1}  \varphi_\k' (s)\bb^\ast(s)\ds + \frac{G(z)}{\a_1^{R_2}[\l_0,\l_1](z)}.
\]
Thus, recalling the precise form of $\bb_0(z)$, given by \eqref{b0formula} we obtain that
\begin{equation*}
\bb^\ast(z)= C \bb_0(z) + \e \bb_1^\ast(z), 
\end{equation*}
where $\|\bb_1^\ast\|_{L_{\sigma_{+}}^2}=O(1)$ in terms of $\e$ and $C$ is a constant.
\end{proof}

\subsection{Image of the linear operator}
In the previous section we have seen that 
\[
\cN(\cL^\ast[\l_{\e,\k,m}])=\text{span}\{\hh^\ast_{\e,\k,m}\},
\]
where $\hh^\ast_{\ep,\kappa,m}(r,\theta)=h^\ast(r)\cos(m \theta)$ and with $h^\ast(\cdot)$ given by
\begin{equation*}
h^\ast(r)= \left\{
\begin{aligned}
\aa^\ast\left(\tfrac{r-R_1}{\e}\right) \qquad &\text{if} \quad  r\in I_\e^{R_1},\\
\bb^\ast\left(\tfrac{r-R_2}{\e}\right) \qquad &\text{if} \quad  r\in I_\e^{R_2}.
\end{aligned}\right.
\end{equation*}
Moreover, we have seen that $(\aa^\ast,\bb^\ast)=(\e\aa_1^\ast,\bb_0+\e\bb_1^\ast)$ with $\aa_1^\ast, \bb_1^\ast=O(1)$ in terms of parameter $\e$.

We also define the orthogonal complement of $(\aa^\ast,\bb^\ast)$ in $L^2_{\sigma_{-}}([-1,1])\times L^2_{\sigma_{+}}([-1,1])$:
\[
(\aa^\ast,\bb^\ast)^\perp:=\left\lbrace (\uu,\vv)\in L^2_{\sigma_{-}}([-1,1])\times L^2_{\sigma_{+}}([-1,1])\, \mid \, (\uu,\aa^\ast)_{L^2_{\sigma_{-}}} + (\vv,\bb^\ast)_{L^2_{\sigma_{+}}}=0 \right\rbrace.
\]
Since for this kind of operators, it is well known that $\cR(\cL[\l_{\e,\k,m}])=\cN(\cL^\ast[\l_{\e,\k,m}])^{\perp}$,  we are in a good position to prove the following lemma.

\begin{lemma}\label{necesario}
Let $\HH\in \cR(\cL[\l_{\e,\k,m}])\subset Z(D_\e)$ with expansion given by $\HH(r,\theta)=\sum_{n\geq 1} H_n(r)\cos(n\theta)$. Then, we have that $\bar{H}_m\equiv(\bar{H}_m^{R_1},\bar{H}_m^{R_2})\in (\aa^\ast,\bb^\ast)^\perp$.
\end{lemma}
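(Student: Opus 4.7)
The plan is to exploit the adjoint identity \eqref{eq:operatorandadjoint} mode by mode, and then read off the orthogonality for the $m$-th mode from the fact that $(\aa^\ast,\bb^\ast)$ is in the kernel of $(P^{R_1}_{m,\e})^\ast,(P^{R_2}_{m,\e})^\ast$.

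First, since $\HH\in\cR(\cL[\l_{\e,\k,m}])$ there exists $\hh\in X(D_\e)\subset Z(D_\e)$ with $\cL[\l_{\e,\k,m}]\hh=\HH$. Writing the Fourier expansions
\[
\hh(r,\theta)=\sum_{n\geq 1}h_n(r)\cos(n\theta),\qquad \HH(r,\theta)=\sum_{n\geq 1}H_n(r)\cos(n\theta),
\]
and using the diagonal decomposition \eqref{def:LsumLn}, this identity decouples into the infinite family $\cL_n[\l_{\e,\k,m}]h_n=H_n$ for every $n\in\N$. After the rescaling of Section \ref{s:rescaling}, the $n=m$ equation becomes exactly the system \eqref{sys:codT=W}, namely
\[
P_{m,\e}^{R_1}[\bar h_m^{R_1},\bar h_m^{R_2}](z)=\bar H_m^{R_1}(z),\qquad P_{m,\e}^{R_2}[\bar h_m^{R_1},\bar h_m^{R_2}](z)=\bar H_m^{R_2}(z).
\]

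Next, I would pair these two equations against $(\aa^\ast,\bb^\ast)$ in the weighted Hilbert space $\mathbb{L}^2([-1,1])=L^2_{\sigma_-}([-1,1])\times L^2_{\sigma_+}([-1,1])$. Concretely,
\[
(\bar H_m^{R_1},\aa^\ast)_{L^2_{\sigma_-}}+(\bar H_m^{R_2},\bb^\ast)_{L^2_{\sigma_+}}
=\bigl(P_{m,\e}^{R_1}[\bar h_m^{R_1},\bar h_m^{R_2}],\aa^\ast\bigr)_{L^2_{\sigma_-}}+\bigl(P_{m,\e}^{R_2}[\bar h_m^{R_1},\bar h_m^{R_2}],\bb^\ast\bigr)_{L^2_{\sigma_+}}.
\]
Applying the adjoint identity \eqref{eq:operatorandadjoint} with the choice $(\uu,\vv)=(\bar h_m^{R_1},\bar h_m^{R_2})$ and $(\ff,\gg)=(\aa^\ast,\bb^\ast)$, this equals
\[
\bigl(\bar h_m^{R_1},(P_{m,\e}^{R_1})^\ast[\aa^\ast,\bb^\ast]\bigr)_{L^2_{\sigma_-}}+\bigl(\bar h_m^{R_2},(P_{m,\e}^{R_2})^\ast[\aa^\ast,\bb^\ast]\bigr)_{L^2_{\sigma_+}}.
\]
Since $(\aa^\ast,\bb^\ast)$ lies in the kernel of the adjoint, both factors in the second slot vanish identically, so the right-hand side is $0$. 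This gives precisely
\[
(\bar H_m^{R_1},\aa^\ast)_{L^2_{\sigma_-}}+(\bar H_m^{R_2},\bb^\ast)_{L^2_{\sigma_+}}=0,
\]
i.e.\ $\bar H_m\in(\aa^\ast,\bb^\ast)^\perp$.

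There is no serious obstacle here: the argument is the standard $\cR(\cL)\subset\cN(\cL^\ast)^\perp$ inclusion, transcribed through the mode decomposition and the rescaling. The only points to check carefully are (i) that the weights $\sigma_\pm$ effectively confine the inner products to $\overset{\circ}{\supp}(\varphi_\k')$, so that the $C^{2,\alpha}$ regularity of $\hh$ together with the boundedness of $\aa^\ast,\bb^\ast$ in $\mathbb{L}^2([-1,1])$ make all the integrals absolutely convergent, and (ii) that the orthogonality condition extracted from the $m$-th mode alone is enough, because modes $n\neq m$ contribute nothing to $(\HH,\hh^\ast_{\e,\k,m})$ by orthogonality of $\{\cos(n\theta)\}_{n\geq 1}$, combined with the fact proved in Section \ref{s:adjoint} that $\hh^\ast_{\e,\k,m}$ only carries the $m$-th angular frequency.
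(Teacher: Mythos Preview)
Your proof is correct and follows exactly the approach indicated in the paper, which simply records ``Direct computation using \eqref{eq:operatorandadjoint}.'' You have spelled out that computation in full: pass to the $m$-th mode via \eqref{sys:codT=W}, pair against $(\aa^\ast,\bb^\ast)$ in $\mathbb{L}^2([-1,1])$, apply the adjoint identity, and use that $(\aa^\ast,\bb^\ast)$ is in the kernel of the adjoint.
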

\begin{proof}
Direct computation using \eqref{eq:operatorandadjoint}.
\end{proof}

\section{The transversality property}\label{s:transversality}
This section is devoted to the transversality assumption concerning the fourth and last hypothesis of the Crandall–Rabinowitz's theorem.

Up to this point, we have shown that the linearized operator $\cL[\lambda_{\e,\k,m}]\equiv D_\ff F[\lambda_{\e,\k,m},0]$ is a Fredholm operator of zero index with one-dimensional kernel. This property is not enough to bifurcate to non-trivial solutions for the nonlinear problem.
A sufficient condition for that, according to Theorem \ref{th:CR}, is the transversality property which  is equivalent to checking
\[
D^2_{\l,\ff}F[\lambda_{\e,\k,m}, 0]\hh_{\e,\k,m} \not\in \cR(\cL[\lambda_{\e,\k,m}]),
\]
where $\hh_{\e,\k,m}$ is the generator (unique modulo a multiplicative constant) of the kernel of $\cL[\lambda_{\e,\k,m}].$ 

Recall that for any $m\in \N$ we have proved that there exist an eigenvalue $\l_{\ep,\kappa,m}\in\R$ given by Theorem \ref{mainthmkernel} such that $\cN( \cL[\lambda_{\ep,\kappa,m}]) = \text{span}\{\hh_{\ep,\kappa,m}\},$ where $\hh_{\ep,\kappa,m}(r,\theta)=h(r)\cos(m \theta)$ and with $h(\cdot)$ given by
\begin{equation}\label{def:tranversality}
h(r)= \left\{
\begin{aligned}
\aa\left(\tfrac{r-R_1}{\e}\right) \qquad &\text{if} \quad  r\in I_\e^{R_1},\\
\bb\left(\tfrac{r-R_2}{\e}\right) \qquad &\text{if} \quad  r\in I_\e^{R_2}.
\end{aligned}\right.
\end{equation}
To finish, let us proceed by reduction to absurd. That is, we are going to assume that 
$$D_{\l,\ff}^2 F[\lambda_{\ep,\kappa,m},0]\hh_{\ep,\kappa,m}= r h(r)\cos( m \theta)\in \cR(\cL[\lambda_{\e,\k,m}]).$$
Then, there exists $\gg=g(r)\cos(m\theta)\in X(D_\e)$ such that $\cL_m[\lambda_{\e,\k,m}]g(r)=r h(r)$ for all $r\in I_\e^{R_1,R_2}.$

Now, applying Lemma \ref{necesario}  we have that if  $D_{\l,\ff}^2 F[\l_{\ep,\kappa,m},0]\hh_{\ep,\kappa,m} \in\cR(\cL_{\e,\k}[\l_{\ep,\kappa,m}]),$ then 
\[
\int_{-1}^1 (R_1+\e z)\aa(z)\aa^\ast(z)\sigma_{-}(z)\dz + \int_{-1}^{1}(R_2+\e z)\bb(z)\bb^\ast(z)\sigma_{+}(z)\dz=0.
\]
Taking into account the $\e$-expansion of each of the terms that appear above and using the fact that $\bb_0=\bb_0^\ast$ we get a contradiction because
\begin{align*}
\int_{-1}^1 (R_1+\e z)\aa(z)\aa^\ast(z)\sigma_{-}(z)\dz&=O(\e^2),\\
\int_{-1}^{1}(R_2+\e z)\bb(z)\bb^\ast(z)\sigma_{+}(z)\dz&=\underbrace{\int_{-1}^{1}(R_2+\e z)|\bb_0(z)|^2\sigma_{+}(z)\dz}_{O(1)}+O(\e).
\end{align*}
This concludes the last required condition of Crandall-Rabinowitz  theorem \ref{th:CR}.

\section{Main theorem}\label{s:mainthm}

In this section we shall provide a general statement that precise Theorem \ref{thmbasic} and give its proof using all the previous results.

After verifying  all the conditions for the application of the Crandall-Rabinowitz  Theorem \ref{th:CR} and the discussion in Section \ref{s:formulation} we obtain the following theorem:

\begin{theorem}\label{thmGOAL} Fixed $M>1$. There exist $\ep_0(M)$, $\kappa_0(M)$ such that, for every $0<\ep<\ep_0$, $0<\kappa<\kappa_0$ and $m\in \N$ with $1\leq m < M$, there exist a branch of solutions $\ff_{\ep,\kappa,m}^\sigma\in X(D_\e)$ parameterize by $\sigma$, of equation  \eqref{functionaleq}, with $|\sigma|<\sigma_0$, for some small number $\sigma_0>0$, $\varpi_{\ep,\kappa}$ as in Section \ref{s:profile} and $\lambda=\lambda^\sigma_{\ep,\kappa,m}.$ These solutions satisfy:
\begin{enumerate}
\item $\ff^\sigma_{\ep,\kappa,m}(r,\theta)$ is $\frac{2\pi}{m}-$periodic on $\theta$.
\item The branch $$\ff^\sigma_{\ep,\kappa,m}=\sigma h_{\ep,\kappa,m}+o(\sigma) \quad\text{in } X(D_\e),$$
and the frequency of the temporal periodicity is 
$$\lambda^\sigma_{\ep,\kappa,m}=\lambda_{\ep,\kappa,m}+o(1),$$ where $(\lambda_{\ep,\kappa,\sigma},\, h_{\ep,\kappa,m})$ are given in Theorem \ref{mainthmkernel} and Remark \ref{remark}.
\item  $\ff^\sigma_{\ep,\kappa,m}(r,\theta)$ depends on $\theta$ in a non-trivial way.
\end{enumerate}
In addition, the vorticity $\omega^\sigma_{\kappa,\sigma,m}\in C^{2,\alpha}(\mathsf{P}_{r_1,r_2})$,  given implicitly by 
\begin{align}\label{auxDL2_1}
&\omega^\sigma_{\ep,\kappa,m}(\tilde{r},\tilde{\theta})=2A+\varpi_{\e,\k}(r),
\end{align}
for $(\tilde{r},\tilde{\theta})=(r+\ff^\sigma_{\ep,\kappa,m}(r,\theta),\theta)$ with $\theta\in\T$ and $r\in I_\e^{R_1}\cup I_\e^{R_2}$,
\begin{align}\label{auxDL2_2}
\omega^\sigma_{\ep,\kappa,m}(\tilde{r},\tilde{\theta})=2A+\e,
\end{align}
for $\tilde{\theta}\in\T$ and $R_1+\e+\ff^\sigma_{\ep,\kappa,m}(R_1+\e,\tilde{\theta})<\tilde{r}<R_2-\e+\ff^\sigma_{\ep,\kappa,m}(R_2-\e,\tilde{\theta})$, and
\begin{align}\label{auxDL2_3}
\omega^\sigma_{\ep,\kappa,m}(\tilde{r},\tilde{\theta})=2A,
\end{align}
for $\tilde{\theta}\in \T$ and either $r_2\geq \tilde{r}>R_2+\ep+\ff^\sigma_{\ep,\kappa,m}(R_2+\ep,\tilde{\theta})$ or $r_1\leq \tilde{r}<R_1-\ep +\ff^{\sigma}_{\ep,\kappa,m}(R_1-\ep,\tilde{\theta})$,\\
yields a time periodic  solution for 2D Euler in the sense that $$\omega^\sigma_{\ep,\kappa,m}(\tilde{r},\tilde{\theta}+\lambda^\sigma_{\ep,\kappa,m}t), \qquad (\tilde{r},\tilde{\theta})\in\mathsf{P}_{r_1,r_2},$$
satisfies \eqref{eq:2dEvorticity}-\eqref{eq:BVPpsi}. Importantly, $\omega^\sigma_{\ep,\kappa,m}(\tilde{r},\tilde{\theta})$ depends non trivially on $\tilde{\theta}$.
\end{theorem}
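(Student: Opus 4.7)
The plan is to invoke the Crandall-Rabinowitz Theorem \ref{th:CR} directly, since all four of its hypotheses have been verified piecewise throughout the preceding sections. First, I would take as Banach spaces $X=X(D_\e)$ and $Y=Y(D_\e)$ defined in \eqref{def:spaceX}-\eqref{def:spaceY}, restricted to the subspace of functions that are $\tfrac{2\pi}{m}$-periodic in $\theta$. The existence of a trivial branch $F[\lambda,0]=0$ for every $\lambda\in\R$ is immediate from the definition \eqref{def:Functional}, since $\bar{\psi}[0]$ is purely radial and $A[\lambda,0]$ cancels the $\theta$-average. The regularity assumptions on $D_\lambda F$, $D_\ff F$, and $D^2_{\lambda,\ff}F$ were established in Section \ref{s:bifurcation}, where I showed the existence of the G\^ateaux derivative, its coincidence with the Fr\'echet derivative, and continuity in the strong topology.

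Next, I would fix $\lambda_\star=\lambda_{\e,\k,m}$ as provided by Theorem \ref{mainthmkernel}, so that $\cL_\star=D_\ff F[\lambda_\star,0]$ satisfies $\cN(\cL_\star)=\mathrm{span}\{h_{\e,\k,m}\}$ with $h_{\e,\k,m}$ given by \eqref{def:tranversality}. The codimension-one property of the range was established in Section \ref{s:codimension} by writing $\cL_\star$ as an isomorphism plus a compact operator, so that it is Fredholm of zero index, and combining this with the one-dimensionality of $\cN(\cL_\star)$. Finally, the transversality condition $D^2_{\lambda,\ff}F[\lambda_\star,0]h_{\e,\k,m}\notin \cR(\cL_\star)$ was verified in Section \ref{s:transversality}, where the key computation produced a non-vanishing $\mathbb{L}^2$ pairing against the generator $(\aa^\ast,\bb^\ast)$ of $\cN(\cL^\ast_\star)$, with dominant contribution $\int_{-1}^{1}(R_2+\e z)|\bb_0(z)|^2\sigma_+(z)\dz=O(1)$ while the error terms are $O(\e)$.

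Having checked all the hypotheses, Theorem \ref{th:CR} produces an interval $(-\sigma_0,\sigma_0)$ and continuous curves $\sigma\mapsto(\lambda^\sigma_{\e,\k,m},\ff^\sigma_{\e,\k,m})$ with the expansion
\[
\ff^\sigma_{\e,\k,m}=\sigma h_{\e,\k,m}+o(\sigma)\quad\text{in }X(D_\e),\qquad \lambda^\sigma_{\e,\k,m}=\lambda_{\e,\k,m}+o(1).
\]
The $\tfrac{2\pi}{m}$-periodicity in $\theta$ is inherited from $h_{\e,\k,m}$ because the equation $F[\lambda,\ff]=0$ preserves this symmetry. Non-triviality with respect to $\theta$ follows because $h_{\e,\k,m}(r,\theta)=h(r)\cos(m\theta)$ depends genuinely on $\theta$ (as $m\geq 1$), and $\ff^\sigma_{\e,\k,m}\neq 0$ for $\sigma\neq 0$ by the Crandall-Rabinowitz decomposition.

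It remains to translate this root of the functional back into a genuine solution of 2D Euler, which is the route laid out in Section \ref{s:formulation}. I would define $\omega^\sigma_{\e,\k,m}$ on $\mathsf{P}_{r_1,r_2}$ by the formulas \eqref{auxDL2_1}-\eqref{auxDL2_3}; thanks to the smallness $\|\ff^\sigma_{\e,\k,m}\|_{C^{2,\a}(D_\e)}\ll 1$, the map $\Phi[\ff^\sigma_{\e,\k,m}]$ is a $C^{2,\a}$-diffeomorphism onto its image, which makes $\omega^\sigma_{\e,\k,m}$ well-defined and $C^{2,\a}$ across the annulus because the two piecewise definitions match smoothly at the boundary of each $I_\e^{R_i}$ by construction of $\varpi_{\e,\k}$. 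Since $F[\lambda^\sigma_{\e,\k,m},\ff^\sigma_{\e,\k,m}]=0$ implies $\tilde{F}[\lambda^\sigma_{\e,\k,m},\ff^\sigma_{\e,\k,m}]=0$, the identity \eqref{eq:flevel} holds with $f(r,\theta,t)=\ff^\sigma_{\e,\k,m}(r,\theta+\lambda^\sigma_{\e,\k,m}t)$, and reversing the derivation from \eqref{levels}-\eqref{omegacte} shows that $\omega^\sigma_{\e,\k,m}(\tilde{r},\tilde{\theta}+\lambda^\sigma_{\e,\k,m}t)$ satisfies \eqref{eq:2dEvorticity}-\eqref{eq:BVPpsi}. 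The main (minor) technical point here is to ensure that the smallness of $\sigma$ is uniform enough to keep the thresholds between the regions $\Omega_{\textsc{Inner}}, \Omega_{\textsc{Middle}}, \Omega_{\textsc{Outer}}$ ordered and strictly inside $(r_1,r_2)$; this is absorbed into a further shrinking of $\sigma_0$ depending on $(\e,\k,m,M)$. Non-trivial dependence of $\omega^\sigma_{\e,\k,m}$ on $\tilde{\theta}$ is inherited from the non-trivial $\theta$-dependence of $\ff^\sigma_{\e,\k,m}$, which completes the proof.
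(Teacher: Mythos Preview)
Your proposal is correct and follows the same approach as the paper: the paper does not give an explicit proof of this theorem, presenting it instead as the direct consequence of having verified all the hypotheses of the Crandall--Rabinowitz theorem in Sections \ref{s:bifurcation}--\ref{s:transversality} together with the contour-dynamics reformulation of Section \ref{s:formulation}. Your write-up spells out exactly this logic, including the (standard) restriction to the $\tfrac{2\pi}{m}$-periodic subspace to guarantee item (1), which the paper leaves implicit.
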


Then, in order to prove Theorem \ref{thmbasic} it remains to prove that $H^{\sfrac{3}{2}-}(\mathsf{P}_{r_1,r_2})$-norm of $\omega^\sigma_{\ep,\kappa,m}$ can be made as small as we want and that $\omega^\sigma_{\ep,\kappa,\sigma}\in C^\infty(\mathsf{P}_{r_1,r_2})$. We do this in Theorems \ref{thm:distance} and \ref{thm:fullregularity}.

\subsection{Distance of the time periodic solution $\omega^\sigma_{\ep,\kappa,\sigma}$ to the Taylor-Couette flow}\label{s:distance}
The solution $\omega^\sigma_{\ep,\kappa,m}$ obtained in Theorem \ref{thmGOAL} satisfies the following statement: 
\begin{theorem}\label{thm:distance}
Fixed $M>1$, $0<\kappa<\kappa_0$ and $0<\gamma< \sfrac{3}{2}$, for all $\epsilon>0$ and $1\leq m<M$, there exist $\ep>0$ and $\sigma>0$ such that
\begin{align*}
\|\omega^\sigma_{\ep,\kappa,m}(\cdot,\cdot+\l^\sigma_{\ep,\kappa,m} t)-2A\|_{H^\gamma(\mathsf{P}_{r_1,r_2})}<\epsilon.
\end{align*}
\end{theorem}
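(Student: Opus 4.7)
My plan is to reduce the $H^\gamma$ estimate on $\omega^\sigma_{\e,\kappa,m} - 2A$ to a one-dimensional interpolation estimate on the radial profile $\varpi_{\e,\kappa}$, by exploiting the fact that the constructed vorticity is obtained from a radial trapezoid of height $\e$ by a $C^{2,\alpha}$-small diffeomorphism. First, since the annulus $\mathsf{P}_{r_1,r_2}$ is rotationally invariant, rotation by $\lambda^\sigma_{\e,\kappa,m} t$ is an $H^\gamma$-isometry, so it suffices to bound the static norm $\|\omega^\sigma_{\e,\kappa,m} - 2A\|_{H^\gamma(\mathsf{P}_{r_1,r_2})}$. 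By \eqref{auxDL2_1}--\eqref{auxDL2_3}, this function equals $\tilde{\varpi}_{\e,\kappa} \circ \Phi^{-1}$, where $\tilde{\varpi}_{\e,\kappa}$ is the trivial extension of $\varpi_{\e,\kappa}$ to all of $\mathsf{P}_{r_1,r_2}$ (radial, equal to $0$ on the inner/outer slabs, to $\e$ on the middle slab, and to $\varpi_{\e,\kappa}(r)$ on the transition intervals) and $\Phi(r,\theta) := (r + \tilde{f}^\sigma(r,\theta), \theta)$ with $\tilde{f}^\sigma$ a $C^{2,\alpha}$-extension of $f^\sigma_{\e,\kappa,m}$ to the whole annulus, obtained via a radial cutoff supported in a neighborhood of $D_\e$. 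By Theorem \ref{thmGOAL}(2) we have $\|\tilde{f}^\sigma\|_{C^{2,\alpha}} = O(\sigma)$, so $\Phi$ is a $C^{2,\alpha}$-diffeomorphism at distance $O(\sigma)$ from the identity.

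Since $\gamma < \sfrac{3}{2} < 2 + \alpha$, classical composition estimates for fractional Sobolev spaces under $C^{2,\alpha}$ diffeomorphisms yield
\[
\|\omega^\sigma_{\e,\kappa,m} - 2A\|_{H^\gamma(\mathsf{P}_{r_1,r_2})} \leq C_0\, \|\tilde{\varpi}_{\e,\kappa}\|_{H^\gamma(\mathsf{P}_{r_1,r_2})},
\]
with $C_0$ uniform for $\sigma$ small. Since $\tilde{\varpi}_{\e,\kappa}$ is radial, the 2D norm reduces (up to a bounded weight coming from the measure $r\,dr\,d\theta$) to the 1D norm on $[r_1,r_2]$. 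Using Lemma \ref{propiedadesvarphi} together with the explicit form \eqref{varpiepkappa} on the two transition intervals $I_\e^{R_i}$ of length $2\e$, a direct computation gives
\[
\|\tilde{\varpi}_{\e,\kappa}\|_{L^2} \leq C\,\e, \qquad \|\partial_r \tilde{\varpi}_{\e,\kappa}\|_{L^2} \leq C\,\e^{1/2}, \qquad \|\partial_r^2 \tilde{\varpi}_{\e,\kappa}\|_{L^2} \leq C_\kappa\, \e^{-1/2}.
\]
Standard interpolation ($L^2$--$H^1$ for $\gamma \in [0,1]$, $H^1$--$H^2$ for $\gamma \in [1,\sfrac{3}{2})$) then yields $\|\tilde{\varpi}_{\e,\kappa}\|_{H^\gamma} \leq C_\kappa\, \e^{\beta(\gamma)}$ with
\[
\beta(\gamma) := \min\bigl(1 - \tfrac{\gamma}{2},\ \tfrac{3-2\gamma}{2}\bigr) > 0 \quad \text{for every } \gamma < \sfrac{3}{2}.
\]

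Given $\epsilon > 0$, we therefore first pick $\e$ so small that $C_0 C_\kappa\, \e^{\beta(\gamma)} < \epsilon$, and then we select the corresponding $\sigma = \sigma(\e) > 0$ from Theorem \ref{thmGOAL} so that the composition estimate is valid with constant $C_0$ and the $o(\sigma)$-terms in the bifurcation branch do not spoil the above bound. The main technical obstacle is the composition estimate in $H^\gamma$ for fractional $\gamma \in (1,\sfrac{3}{2})$; this is standard but nontrivial, and can be obtained either from general composition theorems in fractional Sobolev spaces (Runst--Sickel) or by a direct Gagliardo-seminorm argument using the bi-Lipschitz character of $\Phi$ with constants that can be made arbitrarily close to $1$ by shrinking $\sigma$. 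Aside from this, every ingredient is elementary: the $L^2$ and $H^2$ scalings of $\tilde{\varpi}_{\e,\kappa}$, rotational invariance of the annulus, and the smoothness of the extended perturbation $\tilde{f}^\sigma$.
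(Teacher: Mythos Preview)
Your argument is correct and rests on the same skeleton as the paper's: reduce the $H^\gamma$ bound to interpolation between integer Sobolev norms, which in turn reduce to the one–dimensional $L^2$ norms of $\varpi_{\e,\kappa}$, $\varpi'_{\e,\kappa}$, $\varpi''_{\e,\kappa}$ on $I_\e^{R_1,R_2}$, and then read off the scalings $\e$, $\e^{1/2}$, $C_\kappa\,\e^{-1/2}$ to get a positive power of $\e$ for every $\gamma<3/2$.

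The difference lies only in how the passage from the two–dimensional vorticity to the one–dimensional profile is carried out. The paper does not invoke any abstract composition theorem: it computes $\nabla\omega$ and $\nabla^2\omega$ explicitly by the chain rule through the level–set map $(r,\theta)\mapsto(r+\ff(r,\theta),\theta)$, obtaining expressions of the form $A_i(\bz)\,\varpi'_{\e,\kappa}(r)+B_i(\bz)\,\varpi''_{\e,\kappa}(r)$ with coefficients bounded by the $C^2$ norm of $\ff$ (hence $O(1)$ once $\sigma$ is small), so that $\|\omega\|_{\dot H^1}\approx\|\varpi'\|_{L^2}$ and $\|\omega\|_{\dot H^2}\approx\|\varpi'\|_{L^2}+\|\varpi''\|_{L^2}$ directly, and interpolation is then applied to $\omega$ itself. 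You instead factor $\omega-2A=\tilde\varpi_{\e,\kappa}\circ\Phi^{-1}$ and invoke a fractional–Sobolev composition estimate to transfer the $H^\gamma$ norm to the radial profile before interpolating. Your route is cleaner and generalises immediately to other profile families; the paper's is completely self–contained and sidesteps both the composition theorem on a domain with boundary and the need to check that the $C^{2,\alpha}$ extension of $\ff$ off $D_\e$ can be performed with constants uniform in $\e$ (this is true, via e.g.\ a Taylor–polynomial extension at the endpoints followed by a cutoff at a fixed $O(1)$ scale, but it deserves a word since a naive cutoff at scale $\e$ would destroy the bound).
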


\begin{proof}
Let us emphasis that we can make $\|\ff^\sigma_{\ep,\kappa,m}\|_{C^{2,\alpha}(D_\e)}$ arbitrarily small fixed $\ep$, $\kappa$ and $m$, taking $\sigma$ small. We have all the ingredients to obtain a quantitative estimate of the distance between the Taylor-Couette flow and the constructed time periodic solution.

We can compute the homogeneous $H^{\frac{3}{2}-}$ norm of $w^\sigma_{\ep,\kappa,m}(\cdot,\cdot+\l^\sigma_{\ep,\kappa,m} t)$ using interpolation of Sobolev norms. We introduce  the auxiliary function $\tilde \omega^\sigma_{\ep,\kappa,m}(\cdot,\cdot)=\omega^\sigma_{\ep,\kappa,m}(\cdot,\cdot+\l^\sigma_{\ep,\kappa,m} t)$ such that
\begin{equation*}
\|\tilde \omega^\sigma_{\ep,\kappa,m}-2A\|_{\dot{H}^{\frac{3-\g}{2}}(\mathsf{P}_{r_1,r_2})}= \|\tilde \omega^\sigma_{\ep,\kappa,m}\|_{\dot{H}^{\frac{3-\g}{2}}(\mathsf{P}_{r_1,r_2})}
\lesssim \| \tilde \omega^\sigma_{\ep,\kappa,m}\|_{\dot{H}^1 (\mathsf{P}_{r_1,r_2})}^{\frac{1+\g}{2}} \|\tilde \omega^\sigma_{\ep,\kappa,m}\|_{\dot{H}^2 (\mathsf{P}_{r_1,r_2})}^{\frac{1-\g}{2}}.
\end{equation*}
Moreover, as $\norm{\tilde \omega^\sigma_{\ep,\kappa,m}}_{\dot{H}^k (\mathsf{P}_{r_1,r_2})}= \norm{\omega^\sigma_{\ep,\kappa,m}}_{\dot{H}^k (\mathsf{P}_{r_1,r_2})}$ for $k=1,2$, our problem reduces to
\begin{equation}\label{boundgoal}
\|\omega^\sigma_{\ep,\kappa,m}(\cdot,\cdot+\l^\sigma_{\ep,\kappa,m} t)-2A\|_{\dot{H}^{\frac{3-\g}{2}}(\mathsf{P}_{r_1,r_2})}\lesssim \| \omega^\sigma_{\ep,\kappa,m}\|_{\dot{H}^1 (\mathsf{P}_{r_1,r_2})}^{\frac{1+\g}{2}} \|\omega^\sigma_{\ep,\kappa,m}\|_{\dot{H}^2 (\mathsf{P}_{r_1,r_2})}^{\frac{1-\g}{2}}.
\end{equation}

\begin{remark}
This way of proceeding will make us lose the independence on $\kappa$. But since we are actually interested on the case $\kappa>0$ it will be good enough.
\end{remark}

To alleviate the notation let us skip the subscripts $(\ep,\kappa,m)$ and the superscript $\sigma$ on $\omega^\sigma_{\ep,\kappa,m}$ and $\ff^\sigma_{\ep,\kappa,m}$ in the rest of the paper. In this section, we will use the notation $\bz=(r,\theta)\in D_\e$ when it is clear from the context. We will keep $\varpi_{\ep,\kappa}$ and $\varphi_{\kappa}$ as we did before.

In order to compute the right-hand side of \eqref{boundgoal}, we have (see \eqref{eq:nablaomega} and \eqref{ansatzrho}) that
\begin{equation}\label{1derivative}
\nabla \omega (r+\ff(\bz),\theta)= \frac{\varpi_{\e,\k}'(r)}{1+\ff_r(\bz)}\left(1,-\frac{\ff_\theta(\bz)}{r+\ff(\bz)}\right),\quad \text{on $\supp(\nabla \omega)$},
\end{equation}
thus, making the appropriate change of variables, we obtain
\begin{align*}
\|\omega\|_{\dot{H}^2(\mathsf{P}_{r_1,r_2})}^2=&\int_{\mathsf{P}_{r_1,r_2}}\left|\n^2 \omega(\bz)\right|^2 \mbox{d}\bz=\int_{\supp(\n^2\omega)}\left|\n^2 \omega(\bz)\right|^2 \mbox{d}\bz\\
=&\int_{D_\e}|\n^2\omega (r+\ff(\bz),\theta)|^2(1+\ff_r(\bz))\mbox{d}\bz.
\end{align*}
Recalling that $\nabla\equiv (\p_r, \tfrac{1}{r}\p_\theta)$, the expression \eqref{1derivative} can be written component-by-component as
\begin{align*}
(\p_r \omega)(r+\ff(\bz),\theta)&=\frac{\varpi_{\e,\k}'(r)}{1+\ff_r(\bz)},\\
(\p_\theta \omega)(r+\ff(\bz),\theta)&=-\ff_\theta(\bz)\frac{\varpi_{\e,\k}'(r)}{1+\ff_r(\bz)}.
\end{align*}
We are interested in $\nabla^2$, that is $\p_r^2, \tfrac{1}{r^2}\p_\theta^2$ and $\tfrac{1}{r}\p_r \p_\theta-\tfrac{1}{r^2}\p_\theta$. Hence, computing second order derivatives we have
\[
(\p_r^2 \omega)(r+\ff(\bz),\theta)(1+\ff_r(\bz))=\p_r\left(\frac{1}{1+\ff_r(\bz)}\right)\varpi_{\e,\k}'(r)+\frac{\varpi_{\e,\k}''(r)}{1+\ff_r(\bz)},
\]
\[
(\p_\theta^2 \omega)(r+\ff(\bz),\theta) +(\p_r \p_\theta w)(r+\ff(\bz),\theta) \ff_\theta(\bz)=\p_\theta\left(\frac{-\ff_\theta(\bz)}{1+\ff_r(\bz)} \right)\varpi_{\e,\k}'(r),
\]
and
\[
(\p_r \p_\theta \omega)(r+\ff(\bz),\theta) + (\p_r^2 w)(r+\ff(\bz),\theta)\ff_\theta(\bz)=\p_\theta\left(\frac{1}{1+\ff_r(\bz)} \right)\varpi_{\e,\k}'(r).
\]
Manipulating the above expression properly we get
\begin{align*}
\pa_{r}^2\omega(r+\ff(\bz),\theta)=&A_1(\bz)\varpi_{\e,\k}'(r)+A_2(\bz)\varpi_{\e,\k}''(r),\\
\frac{1}{|r+\ff(\bz)|^2}\pa_{\theta}^2\omega(r+\ff(\bz),\theta)=&A_3(\bz)\varpi_{\e,\k}'(r)+A_4(\bz)\varpi_{\e,\k}''(r),\\
\frac{1}{r+\ff(\bz)}\pa_r \pa_{\theta}\omega(r+\ff(\bz),\theta)+\frac{1}{|r+\ff(\bz)|^2}\pa_{\theta}\omega(r+\ff(\bz),\theta)=&A_5(\bz)\varpi_{\e,\k}'(r)+A_6(\bz)\varpi_{\e,\k}''(r),\\
\end{align*}
where $A_{i}$ for $i=1,...,6$ are functions which depend on $\pa_r \ff, \pa_\theta \ff, \pa^2_{r}\ff, \pa^2_{\theta}\ff, \pa_{r}\pa_{\theta}\ff$.

The $C^{2,\alpha}-$norm of $\ff$ could depend on $\e$ badly. However we always could choose $\s$ small enough in such a way that the $C^{2,\alpha}$-norm of $\ff$ is small. Noticing that $C^{2,\alpha} \subset C^2$ we have that $A_i$ for $i=1,...,6$ are bounded functions.
Then,  we have that 
\begin{align*}
\|\omega\|_{\dot{H}^1(\mathsf{P}_{r_1,r_2})} &\approx \|\varpi_{\e,\k}'\|_{L^2\left(I_\ep^{R_1,R_2}\right)},\\
\|\omega\|_{\dot{H}^2(\mathsf{P}_{r_1,r_2})} &\approx \|\varpi_{\e,\k}'\|_{L^2\left(I_\ep^{R_1,R_2}\right)}+\|\varpi_{\e,\k}''\|_{L^2\left(I_\ep^{R_1,R_2}\right)}.
\end{align*}
Since $I_\ep^{R_1,R_2}=I_\e^{R_1}\cup I_\e^{R_2}=[R_1-\e,R_1+\ep]\cup[R_2-\ep,R_2+\ep]$, we just have to study the norms:
\[
\|\varpi_{\e,\k}'\|_{L^2([R_i-\e,R_i+\e])}, \|\varpi_{\e,\k}''\|_{L^2([R_i-\e,R_i+\e])}.
\]
Recalling the definitions of $\varpi_{\ep,\kappa}$ and  $\varphi_{\kappa}$ in section \ref{s:profile}. We have for $|r|\leq 1$ that
\begin{align*}
\varpi_{\ep,\kappa}'(R_i+\e r)&=(-1)^i\varphi_\k'\left((-1)^i r\right),\\
\varpi_{\ep,\kappa}''(R_i+\e r)&=\e^{-1}\varphi_\k''\left((-1)^i r\right).
\end{align*}
Then, after doing the changes of variables $r\leftrightarrow R_i+\e r$ and $r\leftrightarrow (-1)^i r$ we get
\begin{equation}\label{auxH1}
\|\varpi_{\e,\k}'\|^2_{L^2([R_i-\e,R_i+\e])}=\e\int_{-1}^1 |\varphi_\k'\left( r\right)|^2 \dr \leq 2\ep \|\varphi_{\kappa}'\|_{L^{\infty}([-1,1])}^2,
\end{equation}
and
\begin{align}\label{varpiH2}
\|\varpi_{\e,\k}''\|^2_{L^2([R_i-\e,R_i+\e])}=\frac{1}{\e}\int_{-1}^{1}|\varphi_\k''\left( r\right)|^2\dr.
\end{align}
Since (modulo a normalization factor)
\begin{align*}
\varphi'_{\kappa}(z)=\int_{-1+\kappa}^{1+\kappa}\Theta\left(\frac{z-\bar{z}}{\k}\right)\mbox{d}\bar{z},
\end{align*}
thus
\begin{align*}
\varphi''_{\kappa}(z)=\frac{1}{\k}\left[\Theta\left(\frac{z-(1-\k)}{\k} \right) - \Theta\left(\frac{z-(-1+\k)}{\k}\right)\right].
\end{align*}
Putting the above expression into \eqref{varpiH2} we get
\begin{align*}
\|\varpi_{\e,\k}''\|^2_{L^2([R_i-\e,R_i+\e])} \leq \frac{1}{\ep\k^2}\left(\int_{-1}^1 \left|\Theta\left(\frac{r-(-1+\kappa)}{\kappa}\right)\right|^2\dr+\int_{-1}^1  \left|\Theta\left(\frac{r-(1-\kappa)}{\kappa}\right)\right|^2\dr\right).
\end{align*}
Since $\supp(\Theta)\subset(-1,1)$ we have that the limits of integration of each integral are $r\in(-1,2\k-1)$ and $r\in(-(2\k-1), 1)$, respectively. Note that since $0\leq \k<1$ we have that $2\k-1<1$ and both integration domains are contained in $(-1,1)$.
Now, we make the changes of variables $\tilde{r}=r-(-1+\kappa)$ in the first integral and $\tilde{r}=r-(1-\kappa)$ in the second one. Thus both integrals run from $-\k$ to $\k$:
\begin{equation}\label{auxH2}
\|\varpi_{\e,\k}''\|^2_{L^2([R_i-\e,R_i+\e])} \leq \frac{2}{\e\k^2}\int_{-\k}^{\k}  \left|\Theta\left(\frac{r}{\kappa}\right)\right|^2 \dr \leq \frac{4}{\ep\k }\|\Theta\|_{L^{\infty}([-1,1])}^2.
\end{equation}

Therefore, combining \eqref{auxH1} and \eqref{auxH2}, there exists $C>0$ such that
\begin{align*}
\|\omega^\sigma_{\ep,\kappa,m}(\cdot,\cdot+\l^\sigma_{\ep,\kappa,m} t)-2A\|_{\dot{H}^{\frac{3-\g}{2}}(\mathsf{P}_{r_1,r_2})}&\lesssim \| \omega^\sigma_{\ep,\kappa,m}\|_{\dot{H}^1 (\mathsf{P}_{r_1,r_2})}^{\frac{1+\g}{2}} \|\omega^\sigma_{\ep,\kappa,m}\|_{\dot{H}^2 (\mathsf{P}_{r_1,r_2})}^{\frac{1-\g}{2}}\\
&\leq C \ep^\frac{1+\gamma}{4}\ep^\frac{-1+\gamma}{4}\kappa^\frac{-1+\gamma}{4}\leq C \ep^\frac{\gamma}{2}\kappa^\frac{-1+\gamma}{4}.
\end{align*}
Moreover, we also have from \eqref{auxDL2_1}-\eqref{auxDL2_2}-\eqref{auxDL2_3} and proceeding as before that
\begin{align*}
\|\omega^\sigma_{\ep,\kappa,m}(\cdot,\cdot+\l^\sigma_{\ep,\kappa,m} t)-2A\|_{L^2(\mathsf{P}_{r_1,r_2})}=\|\omega^\sigma_{\ep,\kappa,m}-2A\|_{L^2(\mathsf{P}_{r_1,r_2})}\leq \|\varpi_{\e,\k}\|_{L^2 (\mathsf{P}_{r_1,r_2})}\lesssim \e.
\end{align*}
Consequently, for any  $\epsilon>0$ and for any $0<s<3/2$, taking $\e$ and $\sigma$ small enough, we find a time periodic solution such that its vorticity satisfies
$\|w^\sigma_{\ep,\kappa, m}(\cdot,\cdot+\l^\sigma_{\ep,\kappa,m} t)-2A\|_{H^s(\mathsf{P}_{r_1,r_2})}<\epsilon.$
\end{proof}

\subsection{Full regularity of the solution}
This section is devote to proving the following result.
\begin{theorem}\label{thm:fullregularity}
The solution $\omega^{\sigma}_{\e,\k,m}$ in Theorem \ref{thmGOAL} is actually $C^{\infty}(\mathsf{P}_{r_1,r_2})$.
\end{theorem}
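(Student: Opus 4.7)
The plan is to prove the stronger statement that the bifurcated profile $\ff \equiv \ff^{\sigma}_{\e,\k,m}\in X(D_\e)\subset C^{2,\alpha}(D_\e)$ lies in fact in $C^{\infty}(D_\e)$. Once this is established, the $C^{\infty}$ regularity of $\omega^{\sigma}_{\e,\k,m}$ on $\mathsf{P}_{r_1,r_2}$ follows immediately: on each of the three constant regions it is obvious, and on the two ``leg'' regions $\omega$ is obtained as $2A+\varpi_{\e,\k}(\Phi_1^{-1}[\ff](\cdot,\cdot))$ with $\varpi_{\e,\k}\in C^{\infty}$ and $\Phi_1^{-1}[\ff]\in C^{\infty}$ (by the inverse function theorem, using $\|\ff\|_{C^1}\ll 1$), while matching across the interfaces is automatic because $\varpi_{\e,\k}$ coincides smoothly (to infinite order, up to an additive constant) with the adjacent constant values, cf.\ \eqref{varpiepkappa} and Lemma~\ref{propiedadesvarphi}.

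\textbf{Bootstrap step.} Fix $k\geq 2$ and assume $\ff\in C^{k,\alpha}(D_\e)$; we prove $\ff\in C^{k+1,\alpha}(D_\e)$. By construction of $\omega[\ff]$ (see \eqref{aux:omega[f]}), $\omega[\ff]\in C^{k,\alpha}(\mathsf{P}_{r_1,r_2})$, so classical elliptic regularity applied to \eqref{aux:DBpsi} yields $\psi[\ff]\in C^{k+2,\alpha}(\mathsf{P}_{r_1,r_2})$. Write $\Phi\equiv\Phi[\ff]=(r+\ff,\theta)$ and $\mathcal{D}(r,\theta):=\l(r+\ff)+(\p_r\psi[\ff])\circ\Phi$. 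Differentiating the identity $F[\l,\ff]=0$ (equivalently $\lambda(r+\ff)^2/2+\bar\psi[\ff]=A[\l,\ff](r)$) in $\theta$ and $r$ gives
\begin{align*}
\mathcal{D}(r,\theta)\,\p_\theta\ff(r,\theta)&=-(\p_\theta\psi[\ff])\circ\Phi(r,\theta),\\
(1+\p_r\ff(r,\theta))\,\mathcal{D}(r,\theta)&=A'[\l,\ff](r).
\end{align*}
Averaging the second line in $\theta$ and using $\int_\T\ff(r,\theta)\dth=0$ gives the algebraic formula
\[
A'[\l,\ff](r)=\left(\frac{1}{2\pi}\int_\T \mathcal{D}(r,s)^{-1}\ds\right)^{-1},
\]
so once the regularity of $\mathcal{D}$ is known, $A'[\l,\ff]$ and then $\p_r\ff,\p_\theta\ff$ are obtained by pointwise algebraic operations. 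Since $\p_r\psi[\ff],\p_\theta\psi[\ff]\in C^{k+1,\alpha}$ and $\Phi\in C^{k,\alpha}$, the H\"older composition lemma yields $(\p_r\psi[\ff])\circ\Phi,(\p_\theta\psi[\ff])\circ\Phi\in C^{k,\alpha}(D_\e)$; hence $\mathcal{D}\in C^{k,\alpha}(D_\e)$ and, provided $\mathcal{D}$ is bounded away from $0$, also $A'[\l,\ff]\in C^{k,\alpha}(I_\e^{R_1,R_2})$ and finally $\p_r\ff,\p_\theta\ff\in C^{k,\alpha}(D_\e)$, i.e.\ $\ff\in C^{k+1,\alpha}(D_\e)$. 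Induction on $k$ then yields $\ff\in C^{\infty}(D_\e)$.

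\textbf{Main obstacle.} The only delicate point is the non-vanishing of $\mathcal{D}$ on $D_\e$. At $\ff\equiv 0$, $\mathcal{D}(r,\theta)=\l r+\phi'(r)$ coincides (up to the factor $r$) with the function $\Lambda_{m,\e}^{R_i}$ introduced in \eqref{def:Lambdacapital}, and the asymptotic analysis of Sections~\ref{s:linear}--\ref{s:dim1kernel} yields
\[
\Lambda_{m,\e}^{R_1}(z)=\a_0^{R_1}[\l_0]+O(\e),\qquad \Lambda_{m,\e}^{R_2}(z)=\a_1^{R_2}[\l_0,\l_1](z)\,\e+O(\e^2),
\]
both of which are uniformly bounded away from zero on $[-1,1]$ for $\e,\kappa$ small (the second with an $\e$-small but \emph{explicit} lower bound, thanks to Lemma~\ref{lema:lambda1} and $\a_1^{R_2}[\l_0,\l_1](1)=-\beta(M)$). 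Choosing the bifurcation parameter $\sigma$ small enough that $\|\ff\|_{C^1(D_\e)}$ is negligible compared to $\e\beta(M)$, a perturbation argument guarantees $|\mathcal{D}|\geq c(\e,\k,M)\e>0$ on $D_\e$, which legitimizes the division above at every step of the induction. This is the main technical issue; all other ingredients (elliptic regularity in the annulus, H\"older composition, and the fact that averaging in $\theta$ does not decrease $r$-regularity) are standard.
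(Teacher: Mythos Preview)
Your argument is correct and in fact slightly cleaner than the paper's. Both proofs hinge on the same technical point---the non-vanishing of $\mathcal{D}=\l(r+\ff)+(\p_r\psi[\ff])\circ\Phi$ on $D_\e$, established by the asymptotics of $\Lambda_{m,\e}^{R_i}$ and a smallness-in-$\sigma$ perturbation---and both deduce $\p_\theta\ff\in C^{k,\alpha}$ directly from the $\theta$-differentiated equation. The difference lies in how the radial regularity is recovered. The paper differentiates the expression for $\p_\theta\ff$ once more in $r$, rewrites the result as $T[\ff](\p_r\p_\theta\ff)=K[\ff]$ with $T[\ff]=\mathrm{Id}-K[\ff]\,\mathrm{Int}$, and inverts $T[\ff]$ as a small perturbation of the identity (using that $\|K[\ff]\|_{C^{2,\alpha}}=O(\sigma)$); then $\p_r\ff=\mathrm{Int}[\p_r\p_\theta\ff]\in C^{k,\alpha}$. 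You instead differentiate the functional equation itself in $r$ to obtain $(1+\p_r\ff)\mathcal{D}=A'[\l,\ff](r)$, and eliminate the a priori unknown $A'$ by averaging in $\theta$ and exploiting $\int_\T\ff\,d\theta=0$, yielding the explicit formula $A'=\big(\tfrac{1}{2\pi}\int_\T\mathcal{D}^{-1}\,ds\big)^{-1}$. This reduces the radial gain to a pointwise algebraic identity and bypasses the operator-inversion step entirely. Your route is shorter and requires only $\mathcal{D}\neq 0$ (not smallness of any derived quantity), at the cost of noticing the mean-zero trick for $A'$; the paper's route is slightly more hands-on but makes no use of the $r$-derivative of $F[\l,\ff]=0$.
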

\begin{proof}
At this point we have proved that $\ff^{\sigma}_{\e,\k,m}\in X(D_\e)$ and consequently the vorticity defined implicitly in Theorem \ref{thmGOAL} satisfies that $\omega^{\sigma}_{\e,\k,m}\in C^{2,\alpha}(\mathsf{P}_{r_1,r_2})$. In order to improve the regularity, we will use functional equation \eqref{functionaleq}, i.e.
\[
\l^{\sigma}_{\e,\k,m}(r+ \ff^{\sigma}_{\e,\k,m}(r,\theta))^2/2 + \bar{\psi}[\ff^{\sigma}_{\e,\k,m}](r,\theta)-A[\l^{\sigma}_{\e,\k,m},\ff^{\sigma}_{\e,\k,m}](r)=0,\qquad (r,\theta)\in D_\e.
\]
with $\bar{\psi}[\ff^{\sigma}_{\e,\k,m}](r,\theta)=\psi(r+\ff^{\sigma}_{\e,\k,m}(r,\theta),\theta)$ and $\psi$ solving \eqref{eq:BVPpsi}.

Let us remove the superscript $\sigma$ and the subscripts $\e,\k$ and $m$ from $\ff^{\sigma}_{\e,\k,m}$ and $\l^{\sigma}_{\e,\k,m}$  to alleviate the notation. That is,
\begin{equation}\label{aux:eqfull}
\l(r + \ff(r,\theta))^2/2 + \bar{\psi}[\ff](r,\theta)-A[\l,\ff](r)=0,\qquad (r,\theta)\in D_\e.
\end{equation}
At this point we know that $\ff \in C^{2,\alpha}(D_\e)$ but we will show that in fact the solution belongs to $C^{3,\a}(D_\e)$.  By iterating this procedure, we can finally conclude that the solution $\ff$ is in $C^\infty(D_\e).$

The main task of this section will be to upgrade the regularity from $C^{2,\a}(D_\e)$ to $C^{3,\a}(D_\e)$. In order to do that, we start recalling that $\ff\in X(D_\e)$ which implies, in particular, that it is mean zero in the angular variable.

So, we can recover $\ff$ from $\p_\theta\ff$ through the expression
\begin{equation}\label{recover}
\ff(r,\theta)=\text{Int}[\pa_\theta \ff](r,\theta):= \int_{0}^\theta \pa_\theta\ff(r,\bar{\theta})\mbox{d}\bar{\theta}-\frac{1}{2\pi}\int_{-\pi}^\pi \left(\int_{0}^{\bar{\theta}}\pa_\theta\ff(r,\tilde{\theta})\mbox{d}\tilde{\theta}\right) \mbox{d}\bar{\theta}, \qquad \theta>0.
\end{equation}
Taking $\p_\theta$ in \eqref{aux:eqfull} we get
\begin{equation}\label{aux:preu1u2}
\l(r+\ff(r,\theta))\p_\theta\ff(r,\theta)+ \p_\theta\bar{\psi}[\ff](r,\theta)=0, \qquad (r,\theta)\in D_\e.
\end{equation}
Since $\bar{\psi}[\ff](r,\theta)=\psi[\ff](r+\ff(r,\theta),\theta)$, we obtain that
\[
\p_\theta\bar{\psi}[\ff](r,\theta)=\p_r\psi[\ff](r+\ff(r,\theta),\theta) \p_\theta\ff(r,\theta)+\p_\theta\psi[\ff](r+\ff(r,\theta),\theta),
\]
and applying it in \eqref{aux:preu1u2} we get
\begin{equation}\label{aux:predivisionfull}
\left[\l(r+\ff(r,\theta))+\p_r\psi[\ff](r+\ff(r,\theta),\theta)\right]\p_\theta\ff(r,\theta)+\p_\theta\psi[\ff](r+\ff(r,\theta),\theta)=0.
\end{equation}

Now we are going to study in detail the terms involved in the above expression, i.e. $\p_\theta\psi[\ff]\circ\Phi[\ff]$ and $\p_r\psi[\ff]\circ\Phi[\ff]$ with $\Phi[\ff](r,\theta)=(r+\ff(r,\theta),\theta)$. Recall that $\psi[\ff]$ solves the system
\begin{equation*}
\left\{
\begin{aligned}
\Delta_B \psi[\ff]&=\omega[\ff],\qquad (r,\theta)\in \mathsf{P}_{r_1,r_2},\\
\psi[\ff]\big|_{r_1}&=0, \\
\psi[\ff]\big|_{r_2}&=\gamma,
\end{aligned}
\right.
\end{equation*}
with
\begin{equation}\label{aux:fullregularityomega}
\omega[\ff]=\begin{cases}
    \varpi\circ\Phi_1^{-1}[\ff], \qquad & (r,\theta)\in \Phi[\ff](D_\e),\\
    0 & (r,\theta)\in \mathsf{P}_{r_1,r_2}\setminus \Phi[\ff](D_\e).
\end{cases}
\end{equation}
Since $\ff\in C^{2+\a}(D_\e)$ with norm small enough, we have that $\Phi[\ff],\Phi^{-1}[\ff]\in C^{2+\a}(D_\e)$  and as an immediate consequence the ``source'' term $\omega[\ff]$ belongs to $C^{2+\a}(\mathsf{P}_{r_1,r_2}).$ Applying the classical theory of elliptic regularity, we get $\psi[\ff]\in C^{4+\a}(\mathsf{P}_{r_1,r_2})$. Then, we have $
\p_r\psi[\ff],\p_\theta\psi[\ff]\in C^{3+\a}(\mathsf{P}_{r_1,r_2}).$

In addition, we can split the first factor of \eqref{aux:predivisionfull} in the following way
\[
\l(r+\ff(r,\theta))+\p_r\psi[\ff](r+\ff(r,\theta),\theta)=\left(\l r + \phi'(r)\right) +\left(\l \ff(r,\theta) + \left[\p_r\psi[\ff](r+\ff(r,\theta),\theta)-\phi'(r)\right]\right)
\]
where $\phi'(r)=\p_r\psi[0](r)$, see \eqref{sys:Phi_0}. Then, remembering that $\phi'\equiv \p_r\psi[0]$ and taking $\sigma$ small enough,  we can prove that 
\begin{equation}\label{aux:sigmasmall}
\l r + \phi'(r)\geq c_{\e,\k,M}>0,\qquad \norm{\l \ff(r,\theta) + \left[\p_r\psi[\ff](r+\ff(r,\theta),\theta)-\phi'(r)\right]}_{L^\infty(D_\e)}\leq \sigma C_{\e,\k,M},
\end{equation}
with
\[
c_{\e,\k,M}-\s C_{\e,\k,M}>\frac{c_{\e,\k,M}}{2}>0,
\]
and we can write \eqref{aux:predivisionfull} as
\begin{equation}\label{aux:divisionfull}
\p_\theta\ff(r,\theta)=-\frac{\p_\theta\psi[\ff](r+\ff(r,\theta),\theta)}{\l(r+\ff(r,\theta))+\p_r\psi[\ff](r+\ff(r,\theta),\theta)}.
\end{equation}
Hence, we have that $\p_\theta\ff\in C^{2+\alpha}(D_\e)$. To get extra regularity on the radial variable let us take one derivatives on $r$ in \eqref{aux:divisionfull} to obtain 
\begin{multline*}
\p_r\p_\theta\ff(r,\theta)=-\frac{\p_r\p_\theta\psi[\ff](r+\ff(r,\theta),\theta)}{\l(r+\ff(r,\theta))+\p_r\psi[\ff](r+\ff(r,\theta),\theta)}(1+\p_r\ff(r,\theta))\\
+\frac{\p_\theta\psi[\ff](r+\ff(r,\theta),\theta)\left(\l+\p_r^2\psi[\ff](r+\ff(r,\theta),\theta)\right)}{\left[\l(r+\ff(r,\theta))+\p_r\psi[\ff](r+\ff(r,\theta),\theta)\right]^2}(1+\p_r\ff(r,\theta)).
\end{multline*}
Writing the above expression in a more compact form, we have that
\begin{equation}\label{compactfull}
\p_r\p_\theta\ff(r,\theta)- \mathsf{K}[\ff] (r,\theta)\p_r\ff(r,\theta)=\mathsf{K}[\ff](r,\theta),
\end{equation}
with
\begin{equation}\label{def:compactpertK}
K[\ff]:=\frac{\left(\l\Phi_1[\ff]+\p_r\psi[\ff]\circ\Phi[\ff]\right)\p_r\p_\theta\psi[\ff]\circ\Phi[\ff]-\p_\theta\psi[\ff]\circ\Phi[\ff]\left(\l+\p_r^2\psi[\ff]\circ\Phi[\ff]\right)}{\left[\l\Phi_1[\ff]+\p_r\psi[\ff]\circ\Phi[\ff]\right]^2}.
\end{equation}

\begin{remark}
Here, it is important to remind that $K[\ff]\in C^{2+\a}(D_\e)$ is an immediate consequence of the fact that 
$\ff\in C^{2+\a}(D_\e)$, which implies that $\Phi[\ff]\in C^{2+\a}(D_\e)$ and $\psi[\ff]\in C^{4+\a}(\mathsf{P}_{r_1,r_2})$.
\end{remark}

For $\sigma$ small enough such that \eqref{aux:sigmasmall} holds, we define for $\gg\in \{\tilde\gg\in C^{1+\a}(D_\e):\int_\T \tilde{\gg}(r,s)\ds=0\}$ the operator
\[
T[\ff]\gg:=\gg - \mathsf{K}[\ff]\text{Int}[\gg].
\]
Here it is important to note that $T[\ff]$ is a compact perturbation of the identity operator since $\mathsf{K}[\ff]$ has norm as small as we want taking $\sigma$ sufficiently small. That is,
\[
\norm{\mathsf{K}[\ff]}_{C^{2+\a}(D_\e)}\leq \sigma C_{\e,\k,m}.
\]
This is a consequence of the terms $\p_r\p_\theta \psi[\ff], \p_\theta \psi[\ff]$ that appear on the numerator  (see \eqref{def:compactpertK}) of $\mathsf{K}[\ff]$, which both satisfy that $\p_r\p_\theta \psi[0]=\p_\theta \psi[0]=0$ since $\psi[0](r,\theta)\equiv \phi(r)$.\\

Thus, for any  $F\in C^{2+\a}(D_\e)$ there exists the inverse operator $T^{-1}[\ff]$, such that, if $\gg\in \{\tilde\gg\in C^{1+\a}(D_\e):\int_\T \tilde{\gg}(r,s)\ds=0\}$ satisfies
\begin{align*}
T[\ff]\gg=F,
\end{align*}
then 
\begin{align*}
\gg=T^{-1}[\ff]F.
\end{align*}
In addition $$T^{-1}[\ff] : C^{2+\a}(D_\e)\to C^{2+\a}(D_\e).$$
Therefore,  $\p_r\p_\theta\ff$ is in $C^{2+\a}(D_\e)$. Thus, we conclude our goal, i.e., that $\ff$ belongs to $C^{3+\a}(D_\ep)$.
Finally, we can iterate this process to show that $\ff$ is in $C^\infty(D_\e).$
\end{proof}

\section{Appendix}\label{s:appendix}

In order to facilitate the presentation of the manuscript, we collect in this section all the technical
results and lemmas used previously.

\subsection{Circulation and boundary conditions}
Here we remind the reader the very well known conservation of circulation.
Since  stream function $\psi$ is defined uniquely up to a constant, we can assume without loss of generality that
\[
\psi(r_1,\theta,t)=0, \qquad  \psi(r_2,\theta,t)=\gamma(t).
\]
Now, we are going to prove that $\gamma(t)\equiv \gamma$ is time-independent and that the following relation holds:
\begin{equation}\label{eq:gammacirculation}
    \gamma=\frac{-1}{2\pi}\int_{[r_1,r_2]\times\T}u_0^{\theta}(r,\theta)\dr\mbox{d}\theta.
\end{equation}

\begin{proof}[Proof of \eqref{eq:gammacirculation}]
Integrating in  angular variable and computing their difference we obtain
\begin{align*}
\gamma(t)=\frac{1}{2\pi}\int_\T \psi(r_2,\theta,t)-\psi(r_1,\theta,t) \mbox{d}\theta &=\frac{1}{2\pi}\int_{[r_1,r_2]\times\T}\p_r \psi(r,\theta,t)\mbox{d}r\mbox{d}\theta\\
&=\frac{-1}{2\pi}\int_{[r_1,r_2]\times\T} u^{\theta}(r,\theta,t) \mbox{d}r\mbox{d}\theta.
\end{align*} 
The equation for the angular component of the velocity is
$(\p_t + u^r\p_r +\frac{1}{r} u^{\theta}\p_\theta)u^\theta  +\frac{1}{r}\p_\theta p=0$, which allow us to write
\begin{align*}
\gamma'(t)&=\frac{1}{2\pi}\int_{[r_1,r_2]\times\T} ((u^r\p_r +\frac{1}{r} u^{\theta}\p_\theta)u^\theta  +\frac{1}{r}\p_\theta p)(r,\theta,t) \mbox{d}r\mbox{d}\theta=\frac{1}{2\pi}\int_{[r_1,r_2]\times\T} u^r\p_r u^\theta   \mbox{d}r\mbox{d}\theta.
\end{align*}
Now, applying integration by parts and using the boundary conditions over $\psi$ we get
\[
\gamma'(t)=-\frac{1}{2\pi}\int_{[r_1,r_2]\times\T} \p_r u^r u^\theta \mbox{d}r\mbox{d}\theta +\frac{1}{2\pi}\underbrace{\int_{[r_1,r_2]\times\T}\p_r\left(u^r u^\theta\right)\mbox{d}r\mbox{d}\theta}_{0}.
\]
In addition, using the diverge-free condition, i.e. $\nabla \cdot u=\p_r u^r +\frac{1}{r}\p_\theta u^\theta=0$, we obtain
\[
\gamma'(t)=-\frac{1}{2\pi}\int_{[r_1,r_2]\times\T} \p_r u^r u^\theta \mbox{d}r\mbox{d}\theta =\frac{1}{2\pi}\int_{[r_1,r_2]\times\T} \p_\theta\left(\frac{1}{r} \frac{(u^\theta)^2}{2}\right) \mbox{d}r\mbox{d}\theta=0.
\]
As an immediate consequence $\gamma(t)=\gamma(0)$ for all $t\geq 0$ and evaluating at initial time  we finally obtain that
\[
\gamma=\frac{1}{2\pi}\int_\T\psi_0(r_2,\theta)-\psi_0(r_1,\theta) \mbox{d}\theta=\frac{1}{2\pi}\int_{[r_1,r_2]\times\T}\p_r\psi_0(r,\theta)\dr \mbox{d}\theta=\frac{-1}{2\pi}\int_{[r_1,r_2]\times\T}u_0^{\theta}(r,\theta)\dr\mbox{d}\theta.
\]
\end{proof}

\subsection{Green's function to solve $\Phi$}\label{app:green}
The main task of this section will be to solve the system
\begin{equation}\label{aux:Poisson}
\left\{
\begin{aligned}
\Delta_B f(r,\theta)&=g(r,\theta),\qquad (\rho,\theta)\in \mathsf{P}_{r_1,r_2}\\
f(r_1,\theta)&=0, \\
f(r_2,\theta)&=0.
\end{aligned}
\right.
\end{equation}

Since $g\in X(D_\e)$ and we look for $f\in X(D_\e)$ by solving the above. We can, without loss of generality, assume that $f$ and $g$ admit the following expansions:
\[
f(r,\theta)=\sum_{n\geq 1} f_n(r)\cos(n\theta), \qquad  g(r,\theta)=\sum_{n\geq 1} g_n(r)\cos(n\theta),
\]
where
\[
f_n(r)=\frac{1}{2\pi}\int_{\T}f(r,\theta)\cos(n\theta)\mbox{d}\theta, \qquad  g_n(r)=\frac{1}{2\pi}\int_{\T}g(r,\theta)\cos(n\theta)\mbox{d}\theta,
\]
denote the $n-$th Fourier coefficient of the functions $f$ and $g$.

Computing the Fourier transform of \eqref{aux:Poisson} in the angular variable, we have to solve for $r\in[r_1,r_2]$ the system
\begin{equation}\label{odefn}
\left\{
\begin{aligned}
f_n''(r)+\frac{1}{r}f_n'(r)-\left(\frac{n}{r}\right)^2 f_n(r)&=g_n(r),\\
f_n(r_i)&=0 \qquad i=1,2.
\end{aligned}
\right.
\end{equation}
Solving the ordinary differential equation of \eqref{odefn} we get
\begin{multline}\label{solutionode}
f_n(r)=K_1 \cosh(n \log r)+ K_2 \sinh(n \log r)\\
+\frac{\sinh(n\log r)}{n}\int_{r_1}^{r} s \cosh(n \log s) g_n(s)\ds-\frac{\cosh(n\log r)}{n}\int_{r_1}^{r} s \sinh(n \log s) g_n(s)\ds,
\end{multline}
with constants values $K_1,K_2$ uniquely determined by the boundary conditions.

Now, in order to simplify the notation, we introduce some auxiliary hyperbolic functions
\begin{equation}\label{defapp:auxhyper}
\sn(r):=\sinh(n\log r), \qquad \cn(r):=\cosh(n\log r).
\end{equation}
Moreover, recalling trigonometric identities for hyperbolic functions we have 
\begin{equation}\label{trigo}
\left\{
\begin{aligned}
\sn(x/y)=\sn(x)\cn(y)-\cn(x)\sn(y),\\
\cn(x/y)=\cn(x) \cn(y)-\sn(x)\sn(y).
\end{aligned}
\right.
\end{equation}
Hence, using \eqref{defapp:auxhyper} and \eqref{trigo}, the expression \eqref{solutionode} can be written as
\begin{equation}\label{fn}
f_n(r)=K_1 \cn(r) + K_2 \sn(r)+\frac{1}{n}\int_{r_1}^{r} s \sn(r/s) g_n(s)\ds.
\end{equation}

Taking into account the boundary conditions of system \eqref{odefn}, we can translate the problem of determine the values $K_1, K_2$ into the following matrix system 
\begin{equation*}
\begin{pmatrix}
\cn(r_1) & \sn(r_1) \\
\cn(r_2) & \sn(r_2) 
\end{pmatrix}
\begin{pmatrix}
K_1 \\
K_2 
\end{pmatrix}
=
\begin{pmatrix}
0 \\
-\frac{1}{n}\int_{r_1}^{r_2} s \sn(r_2/s) g_n(s)\ds
\end{pmatrix}.
\end{equation*}
Since by hypothesis $r_1<r_2$, the matrix of the left is not singular. In fact, using trigonometric identities of hyperbolic functions we get that their determinant is given by
\[
\sn(r_2)\cn(r_1)-\cn(r_2)\sn(r_1)=\sn(r_2/r_1)\neq 0.
\]
Consequently, the unique solution is computed as follows
\begin{align*}
\begin{pmatrix}
K_1 \\
K_2 
\end{pmatrix}
&=\frac{1}{\sn(r_2/r_1)}\begin{pmatrix}
\sn(r_2)  & -\sn(r_1) \\
-\cn(r_2) & \cn(r_1)
\end{pmatrix}
\begin{pmatrix}
0 \\
-\frac{1}{n}\int_{r_1}^{r_2} s \sn(r_2/s) g_n(s)\ds
\end{pmatrix}.
\end{align*}
That is
\begin{equation}\label{def:K1}
K_1:=\frac{-\sn(r_1)}{\sn(r_2/r_1)}\left(-\frac{1}{n}\int_{r_1}^{r_2} s \sn(r_2/s) g_n(s)\ds\right),
\end{equation}
and
\begin{equation}\label{def:K2}
K_2:=\frac{\cn(r_1)}{\sn(r_2/r_1)}\left(-\frac{1}{n}\int_{r_1}^{r_2} s \sn(r_2/s) g_n(s)\ds\right).
\end{equation}
Therefore, returning to \eqref{fn} and using \eqref{def:K1} and \eqref{def:K2} we finally arrive to the following expression
\begin{equation}\label{fnfinal}
f_n(r)=\frac{\sn(r/r_1)}{\sn(r_2/r_1)}\left(-\frac{1}{n}\int_{r_1}^{r_2} s \sn(r_2/s) g_n(s)\ds\right)+\frac{1}{n}\int_{r_1}^{r} s \sn(r/s) g_n(s)\ds.
\end{equation}

\subsubsection{Applying Green's function to solve the Poisson problem} Now, we have all the ingredients to solve \eqref{sys:Phi_1}. That is, 
\begin{equation*}
\left\{
\begin{aligned}
\Delta_B \Phi(r,\theta)&=\varpi'(r)h(r,\theta),\qquad (\rho,\theta)\in \mathsf{P}_{r_1,r_2}\\
\Phi(r_1,\theta)&=0, \\
\Phi(r_2,\theta)&=0.
\end{aligned}
\right.
\end{equation*}
Using the expansion 
\[
\varpi'(r)\hh(r,\theta)=\sum_{n\geq 1} \varpi'(r) h_n(r)\cos(n\theta), \qquad \Phi(r,\theta)=\sum_{n\geq 1} \Phi_n(r)\cos(n\theta),
\]
and applying \eqref{fnfinal} we obtain directly
\begin{equation*}
\Phi_n(r)=\frac{1}{n}\int_{r_1}^{r} s \varpi'(s)\sn(r/s)h_n(s)\ds -\frac{1}{n}\frac{\sn(r/r_1)}{\sn(r_2/r_1)} \int_{r_1}^{r_2}s\varpi'(s)\sn(r_2/s)h_n(s)\ds.  
\end{equation*}

\subsection{Useful lemmas for the asymptotic analysis of $\Upsilon_{\e,\k}^{R_i}(z)$}\label{app:upsilon}
In this section we are going to focus on the asymptotic analysis of $\Upsilon_{\e,\k}^{R_i}(z)$ for $i=1,2$. That is,
\begin{align*}
\Upsilon_{\e,\k}^{R_i}(z)&=(R_i+\e z)\phi'(R_i+\e z)\\
&=C_{\varpi_{\e,\k}} -A((R_i+\e z)^2-r_1^2) -\int_{r_1}^{R_i+\e z} t \varpi_{\e,\k}(t)\mbox{d}t,
\end{align*}
with
\begin{align*}
C_{\varpi_{\e,\k}}&=\log^{-1}\left(\frac{r_2}{r_1}\right)\left[\gamma + A\left(\frac{r_2^2-r_1^2}{2}-r_1^2\log\left(\frac{r_2}{r_1}\right)\right)+\int_{r_1}^{r_2}\frac{1}{s}\left(\int_{r_1}^{s} t \varpi_{\e,\k}(t)\mbox{d}t\right)\mbox{d}s\right].
\end{align*}

The following computations helps us to obtain the asymptotic expansion of each term of the above expression. First of all, it is sufficient to use the definition of the profile $\varpi_{\e,\k}$, recall \eqref{varpiepkappa}, to  obtain directly that
\begin{align}\label{auxint1}
\int_{r_1}^{R_1+\e z} t \varpi_{\e,\k}(t)\mbox{d}t=\e^2\int_{-1}^{z}(R_1+\e t) \varphi_{\k}(-t)\dt,
\end{align}
and
\begin{multline}\label{auxint2}
\int_{r_1}^{R_2+\e z} t \varpi_{\e,\k}(t)\mbox{d}t=\e\frac{R_2^2-R_1^2}{2}\\
+\e^2\left[\int_{-1}^{1}(R_1+\e t)\varphi_\k(-t)\dt +\int_{-1}^{z}(R_2+\e t)\varphi_{\k}(+t)\dt- (R_1+R_2)\right].
\end{multline}

Much more involved is the asymptotic analysis for the constant value $C_{\varpi_{\e,\k}}$ given by
\begin{align*}
C_{\varpi_{\e,\k}}&=\log^{-1}\left(\frac{r_2}{r_1}\right)\left[\gamma + A\left(\frac{r_2^2-r_1^2}{2}-r_1^2\log\left(\frac{r_2}{r_1}\right)\right)+\int_{r_1}^{r_2}\frac{1}{s}\left(\int_{r_1}^{s} t \varpi_{\e,\k}(t)\mbox{d}t\right)\mbox{d}s\right].
\end{align*}
Most of the work will be to analyze the last integral. That is, 
\begin{multline}\label{auxint3}
\int_{r_1}^{r_2}\frac{1}{s}\left(\int_{r_1}^{s} t \varpi_{\e,\k}(t)\mbox{d}t\right)\ds =\int_{r_1}^{r_2}(\log s)'\left(\int_{r_1}^{s} t \varpi_{\e,\k}(t)\mbox{d}t\right)\ds\\
=-\int_{r_1}^{r_2}s\varpi_{\e,\k}(s)\log s\ds +\log(r_2)\left(\int_{r_1}^{r_2} t \varpi_{\e,\k}(t)\mbox{d}t\right).
\end{multline}
Since the second term was already studied previously in \eqref{auxint2}, we only need to focus on the remaining integral term.  That is,
\begin{multline}\label{auxint4}
\int_{r_1}^{r_2}s\varpi_{\e,\k}(s)\log s\ds =    \e \int_{R_1+\e}^{R_2-\e}s\log s\ds\\
+\e^2\left[\int_{-1}^{1}(R_1+\e s)\varphi_{\k}(-s) \log(R_1+\e s)\ds + \int_{-1}^{1}(R_2+\e s)\varphi_{\k}(+s)\log (R_2 +\e s)\ds\right].
\end{multline}
In addition, we have
\begin{align*}
\int_{R_1+\e}^{R_2-\e}s\log s\ds &= \left(\frac{s^2}{2}\log s -\frac{s^2}{4}\right)\bigg|_{s=R_1+\e}^{s=R_2-\e}\\
&=\frac{(R_1+\e)^2-(R_2-\e)^2}{4}+\frac{(R_2-\e)^2}{2}\log(R_2-\e)-\frac{(R_1+\e)^2}{2}\log(R_1+\e),
\end{align*}
which can be written, using Taylor's expansion, as
\[
\int_{R_1+\e}^{R_2-\e}s\log s\ds= f(\e)=f(0)+\int_0^\e f'(\xi)\mbox{d}\xi,
\]
with
\[
f(\xi):=\frac{(R_1+\xi)^2-(R_2-\xi)^2}{4}+\frac{(R_2-\xi)^2}{2}\log(R_2-\xi)-\frac{(R_1+\xi)^2}{2}\log(R_1+\xi),
\]
and derivative
\[
f'(\xi)=-(R_2-\xi)\log(R_2-\xi)-(R_1+\xi)\log(R_1+\xi).
\]

Straightforward calculations imply that \eqref{auxint4} can be written simply as
\begin{multline}\label{auxint5}
\int_{r_1}^{r_2}s\varpi_{\e,\k}(s)\log s\ds=\e\left(\frac{R_1^2-R_2^2}{4}+\frac{R_2^2}{2}\log(R_2)-\frac{R_1^2}{2}\log(R_1)\right)\\
+\e^2 \left[ \int_{-1}^{1}(R_1+\e s)\varphi_{\k}(-s) \log(R_1+\e s)\ds +  \int_{-1}^{1}(R_2+\e s)\varphi_{\k}(+s)\log (R_2 +\e s)\ds\right]\\
-\e\int_0^\e (R_2-\xi)\log(R_2-\xi)+(R_1+\xi)\log(R_1+\xi)\mbox{d}\xi.
\end{multline}

At this point we have already analyzed all the terms involved in \eqref{auxint3}.
Combining \eqref{auxint2} and \eqref{auxint5} appropriately, we obtain that
\begin{multline}\label{auxint6}
\int_{r_1}^{r_2}\frac{1}{s}\left(\int_{r_1}^{s} t \varpi_{\e,\k}(t)\mbox{d}t\right)\ds=\e\left[\frac{R_2^2-R_1^2}{2}\left(\log(r_2) + \frac{1}{2}\right)+ \frac{R_1^2}{2}\log(R_1)-\frac{R_2^2}{2}\log(R_2) \right] \\
+\e^2\, \mathsf{Remainder}_{\e,\k}.
\end{multline}
with
\begin{multline}\label{auxremainder}
\mathsf{Remainder}_{\e,\k}=\log(r_2)\left[\int_{-1}^{1}(R_1+\e t)\varphi_\k(-t)\dt +\int_{-1}^{z}(R_2+\e t)\varphi_{\k}(+t)\dt- (R_1+R_2)\right]\\
-\left[\int_{-1}^{1}(R_1+\e s)\varphi_{\k}(-s) \log(R_1+\e s)\ds + \int_{-1}^{1}(R_2+\e s)\varphi_{\k}(+s)\log (R_2 +\e s)\ds\right]\\
+\frac{1}{\e}\int_0^\e (R_2-\xi)\log(R_2-\xi)+(R_1+\xi)\log(R_1+\xi)\mbox{d}\xi.
\end{multline}

As an immediate consequence of \eqref{auxint6} and \eqref{auxremainder} we obtain the expansion
\begin{multline*}
C_{\varpi,\k} \log\left(\frac{r_2}{r_1}\right)= \left[\gamma + A\left(\frac{r_2^2-r_1^2}{2}-r_1^2\log\left(\frac{r_2}{r_1}\right)\right)\right]\\
+\e\left[\frac{R_2^2-R_1^2}{2}\left(\log(r_2) + \frac{1}{2}\right)+ \frac{R_1^2}{2}\log(R_1)-\frac{R_2^2}{2}\log(R_2) \right] 
+\e^2\, \mathsf{Remainder}_{\e,\k}.
\end{multline*}

\begin{remark}
    Note that $\mathsf{Remainder}_{\e,\k}=O(1)$ in terms of the smallness parameters $\e$ and $\k$.
\end{remark}

Now, we have all the ingredients to decompose $\Upsilon_{\e,\k}^{R_i}(z)$ as
\begin{equation*}
\Upsilon_{\e,\k}^{R_i}(z)=\tilde{\Upsilon}_0^{R_i}+\e \tilde{\Upsilon}_1^{R_i}(z)+\e^2 \tilde{\Upsilon}_{\e,\k}^{R_i}(z),
\end{equation*}
with
\begin{align*}
\tilde{\Upsilon}_0^{R_1}&:=\log^{-1}\left(\frac{r_2}{r_1}\right)\left[\gamma + A\left(\frac{r_2^2-r_1^2}{2}-r_1^2\log\left(\frac{r_2}{r_1}\right)\right)\right]-A(R_1^2 - r_1^2),\\
\tilde{\Upsilon}_0^{R_2}&:=\log^{-1}\left(\frac{r_2}{r_1}\right)\left[\gamma + A\left(\frac{r_2^2-r_1^2}{2}-r_1^2\log\left(\frac{r_2}{r_1}\right)\right)\right]-A(R_2^2 - r_1^2),
\end{align*}
\begin{align*}
\tilde{\Upsilon}_1^{R_1}(z)&:=\log^{-1}\left(\frac{r_2}{r_1}\right)\left[\frac{R_2^2-R_1^2}{2}\left(\log(r_2) + \frac{1}{2}\right)+ \frac{R_1^2}{2}\log(R_1)-\frac{R_2^2}{2}\log(R_2) \right] - 2AR_1 z,\\
\tilde{\Upsilon}_1^{R_2}(z)&:=\log^{-1}\left(\frac{r_2}{r_1}\right)\left[\frac{R_2^2-R_1^2}{2}\left(\log(r_2) + \frac{1}{2}\right)+ \frac{R_1^2}{2}\log(R_1)-\frac{R_2^2}{2}\log(R_2) \right] - 2AR_2 z -\frac{R_2^2-R_1^2}{2},
\end{align*}
and
\begin{align*}
\tilde{\Upsilon}_{\e,\k}^{R_1}(z)&:=\frac{\mathsf{Remainder}_{\e,\k}}{\log(r_2/r_1)}-A z^2   -\int_{-1}^{z}(R_1+\e t) \varphi_{\k}(-t)\dt, \\
\tilde{\Upsilon}_{\e,\k}^{R_2}(z)&:=\frac{\mathsf{Remainder}_{\e,\k}}{\log(r_2/r_1)} - A z^2   - \left[\int_{-1}^{1}(R_1+\e t)\varphi_\k(-t)\dt +\int_{-1}^{z}(R_2+\e t)\varphi_{\k}(+t)\dt- (R_1+R_2)\right].
\end{align*}

\begin{remark}
    Note that $\tilde{\Upsilon}_{\e,\k}^{R_i}(z)=O(1)$ for $i=1,2$ in terms of the smallness parameters $\e$ and $\k$.
\end{remark}

Finally, we can make the above expression more manageable by simplifying the above expressions in the following way 
\begin{equation}\label{auxUpsilon0}
\tilde{\Upsilon}_0^{R_i}=\log^{-1}\left(\frac{r_2}{r_1}\right)\left[\gamma + A\left(\frac{r_2^2-r_1^2}{2}-r_1^2\log\left(\frac{r_2}{r_1}\right)\right)\right]-A(R_i^2 - r_1^2), 
\end{equation}
\begin{align}
\tilde{\Upsilon}_1^{R_1}(z)&=\log^{-1}\left(\frac{r_2}{r_1}\right)\left[\frac{R_2^2-R_1^2}{2}\left(\log(r_2) + \frac{1}{2}\right)+ \frac{R_1^2}{2}\log(R_1)-\frac{R_2^2}{2}\log(R_2) \right] - 2AR_1 z, \label{auxUpsilon11}\\
\tilde{\Upsilon}_1^{R_2}(z)&=\tilde{\Upsilon}_1^{R_1}(0)- 2AR_2 z -\frac{R_2^2-R_1^2}{2}, \label{auxUpsilon12}
\end{align}
and
\begin{align}
\tilde{\Upsilon}_{\e,\k}^{R_1}(z)&=\frac{\mathsf{Remainder}_{\e,\k}}{\log(r_2/r_1)} - A z^2   -\int_{-1}^{z}(R_1+\e t) \varphi_{\k}(-t)\dt, \label{auxUpsilon21}\\
\tilde{\Upsilon}_{\e,\k}^{R_2}(z)&=\tilde{\Upsilon}_{\e,\k}^{R_1}(0)+A(1-z^2)- \left[\int_{-1}^{z}(R_2+\e t)\varphi_{\k}(+t)\dt- (R_1+R_2)\right], \label{auxUpsilon22}
\end{align}
with $\mathsf{Remainder}_{\e,\k}$ given explicitly by \eqref{auxremainder}.

\subsection{Functional derivatives of auxiliary terms}
In this section we will discuss the technical details to conclude that the following bounds hold: 
\begin{equation}\label{DFT1Q1}
\left|\frac{1}{\e}\int_0^\e \left(\p_\d\mathcal{T}_{m,\d}^1\right)[\bb_0](z)\mbox{d}\delta\right|, \left|\frac{1}{\e}\int_0^\e (\p_\d\mathcal{Q}_{m,\d}^1)[\bb_0](z)\mbox{d}\delta\right| \leq C \|\bb_0\|_{L^2},
\end{equation}
\begin{equation}\label{DFT2}
\left|\frac{1}{\e}\int_0^\e  \left(\p_\d \cT_{m,\d}^2 \right) [\bb_1^\e;\aa_1](z) \mbox{d}\delta\right|\leq C\left(\|\aa_1\|_{L^2} +\|\bb_1^\e\|_{L^2} \right),
\end{equation}
and
\begin{equation}\label{DFQ2}
\left|\frac{1}{\e}\int_0^\e \left(\p_\d \mathcal{Q}_{m,\d}^2 \right)[\bb_1^\e,\l_2^\e;\aa_1,\bb_0](z) \mbox{d}\d\right|\leq C\left(|\l_2^\e| \|\bb_0\|_{L^2}+\|\aa_1\|_{L^2}+\|\bb_1^\e\|_{L^2}\right),
\end{equation}
where the constant $C$ does not depend on neither $\e, \k$ or $m$.

Proceeding similarly, we can also prove (see \eqref{eq:auxR} and \eqref{eq:auxW}) that 
\begin{equation}\label{DFRW}
\left|\frac{1}{\e}\int_0^\e (\p_\d R_{\d}[\vv])(z)\mbox{d}\d \right|, \left|\frac{1}{\e}\int_0^\e (\p_\d W_{\d}[\vv])(z)\mbox{d}\d \right| \leq C \|\vv\|_{L^2},
\end{equation}
and (see \eqref{eq:auxS} and \eqref{eq:auxV})
\begin{equation}\label{DFSV}
\left|\frac{1}{\e}\int_0^\e (\p_\d S_{\d}[\vv])(z)\mbox{d}\d \right|, \left|\frac{1}{\e}\int_0^\e (\p_\d V_{\d}[\vv])(z)\mbox{d}\d \right| \leq C \|\vv\|_{L^2},
\end{equation}

\begin{proof}[Proof of \eqref{DFT1Q1}]
We start writing \eqref{T1e} and  \eqref{Q1e} in a more convenient way as
\begin{equation*}
\cT_{m,\d}^1[\aa_1,\bb_0,\l_0](z)=\a_0^{R_1}[\l_0] \aa_1(z) -\frac{1}{\sm(r_2/r_1)}\frac{1}{m}\int_{-1}^{+1} \varphi_\k' (s)\Phi_{\cT_1}(\d, z, s)\bb_0(s) \ds,
\end{equation*}
and
\begin{multline*}
\mathcal{Q}_{m,\d}^1[\bb_0,\bb_1^\e,\l_0,\l_1](z)=\a_0^{R_2}[\l_0] \bb_1^\e(z)+\a_1^{R_2}[\l_0,\l_1]\bb_0(z)\\
-\frac{1}{\sm(r_2/r_1)}\frac{1}{m}\int_{-1}^{+1} \varphi_\k' (s)\Phi_{\cQ_1}^1(\d, z ,  s)\bb_0(s)\ds
+\frac{1}{m}\int_{-1}^{ z} \varphi_\k' (s) \Phi_{\cQ_1}^2(\d, z ,  s) \bb_0(s)\ds,
\end{multline*}
with
\[
\Phi_{\cT_1}(\d, z ,  s):=(R_1+\d z)(R_2+\d s)\sm\left(\frac{R_1+\d z}{r_1}\right)\sm\left(\frac{r_2}{R_2+\d s}\right).
\]
and
\begin{align*}
\Phi_{\cQ_1}^1(\d, z ,  s)&:=(R_2+\d z)(R_2+\d s)\sm\left(\frac{R_2+\d z}{r_1}\right)\sm\left(\frac{r_2}{R_2+\d s}\right),\\
\Phi_{\cQ_1}^2(\d, z ,  s)&:=(R_2+\d z)(R_2+\d s)\sm\left(\frac{R_2+\d z}{R_2+\d s}\right).
\end{align*}
Note that computing their derivative we have
\begin{multline*}
(\p_\d\Phi_{\cT_1})(\d,z, s)=z(R_2+\d s)\sm\left(\frac{r_2}{R_2+\d s}\right)\left[\sm\left(\frac{R_1+\d z}{r_1}\right)+  m\, \cm\left(\frac{R_1+\d z}{r_1}\right) \right]\\
+s(R_1+\d z)\sm\left(\frac{R_1+\d z}{r_1}\right)\left[\sm\left(\frac{r_2}{R_2+\d s}\right)-m\, \cm\left(\frac{r_2}{R_2+\d s}\right)\right]
\end{multline*}
and
\begin{multline*}
(\p_\d \Phi_{\cQ_1}^1)(\d, z ,  s)=z(R_2+\d s)\sm\left(\frac{r_2}{R_2+\d s}\right)\left[\sm\left(\frac{R_2+\d z}{r_1}\right)+m\,\cm\left(\frac{R_2+\d z}{r_1}\right)\right]\\
+s(R_2+\d z)\sm\left(\frac{R_2+\d z}{r_1}\right)\left[\sm\left(\frac{r_2}{R_2+\d s}\right)-m\,\cm\left(\frac{r_2}{R_2+\d s}\right)\right],
\end{multline*}
\begin{equation*}
(\p_\d\Phi_{\cQ_1}^2)(\d, z ,  s)= \left[ z(R_2+\d s)+s (R_2+\d z)\right]\sm\left(\frac{R_2+\d z}{R_2+\d s}\right)+\cm\left(\frac{R_2+\d z}{R_2+\d s}\right)m R_2(z-s).
\end{equation*}
Therefore, 
\[
(\p_\d \cT_{m,\d}^1)[\bb_0](z)=-\frac{1}{\sm(r_2/r_1)}\frac{1}{m}\int_{-1}^{+1} \varphi_\k' (s)(\p_\d\Phi_{\cT_1})(\d,z, s)\bb_0(s) \ds,
\]
and
\begin{multline*}
(\p_\d\mathcal{Q}_{m,\d}^1)[\bb_0](z)=-\frac{1}{\sm(r_2/r_1)}\frac{1}{m}\int_{-1}^{+1} \varphi_\k' (s)(\p_\d\Phi_{\cQ_1}^1)(\d, z ,  s)\bb_0(s)\ds\\
+\frac{1}{m}\int_{-1}^{ z} \varphi_\k' (s) (\p_\d\Phi_{\cQ_1}^2)(\d, z ,  s) \bb_0(s)\ds.
\end{multline*}
Consequently, we have
\[
\|(\p_\d \cT_{m,\d}^1)[\bb_0]\|_{L^\infty}, \|(\p_\d\mathcal{Q}_{m,\d}^1)[\bb_0]\|_{L^\infty}\leq C \|\bb_0\|_{L^2},
\]
where the constant $C$ does not depend on neither $\e, \k$ or $m$, and $\|\cdot\|_{L^p}=\|\cdot\|_{L^p([-1,1])}$.
\end{proof}

\begin{proof}[Proof of \eqref{DFT2}]
We start writing \eqref{T2e} in a more convenient way as
\begin{multline*}
\cT_{m,\d}^2[\aa_1,\aa_2^\e,\bb_1^\e,\l_0,\l_1](z)=\a_1^{R_1}[\l_0,\l_1] \aa_1(z)+\a_0^{R_1}[\l_0]\aa_2^{\e}(z)\\
+\frac{1}{\sm(r_2/r_1)}\frac{1}{m}\int_{-1}^{+1} \varphi_\k' (-s)\Phi_{\cT_2}^1(\d,z,s) \aa_1(s)\ds\\
-\frac{1}{\sm(r_2/r_1)}\frac{1}{m}\int_{-1}^{+1} \varphi_\k' (s)\Phi_{\cT_2}^2(\d,z,s) \bb^\ep_1(s) \ds\\
-\frac{1}{m}\int_{-1}^{z} \varphi_\k' (-s)\Phi_{\cT_2}^3(\d,z,s)\aa_1(s)\ds,
\end{multline*}
with
\[
\Phi_{\cT_2}^1(\d,z,s):=(R_1+\d z)(R_1+\d s)\sm\left(\frac{R_1+\d z}{r_1}\right)\sm\left(\frac{r_2}{R_1+\d s}\right),
\]
\[
\Phi_{\cT_2}^2(\d,z,s):=(R_1+\d z)(R_2+\d s)\sm\left(\frac{R_1+\d z}{r_1}\right)\sm\left(\frac{r_2}{R_2+\d s}\right),
\]
\[
\Phi_{\cT_2}^3(\d,z,s):=(R_1+\d z)(R_1+\d s)\sm\left(\frac{R_1+\d z}{R_1+\d s}\right).
\]
Note that
\begin{multline*}
(\p_\d\Phi_{\cT_2}^1)(\d,z,s)= z (R_1+\d s)\sm\left(\frac{r_2}{R_1+\d s}\right)\left[\sm\left(\frac{R_1+\d z}{r_1}\right)+m\,\cm\left(\frac{R_1+\d z}{r_1}\right)\right]\\
+s (R_1+\d z)\sm\left(\frac{R_1+\d z}{r_1}\right)\left[\sm\left(\frac{r_2}{R_1+\d s}\right)-m\, \cm\left(\frac{r_2}{R_1+\d s}\right)\right],
\end{multline*}
\begin{multline*}
(\p_\d\Phi_{\cT_2}^2)(\d,z,s)= z (R_2+\d s)\sm\left(\frac{r_2}{R_2+\d s}\right)\left[\sm\left(\frac{R_1+\d z}{r_1}\right)+m\,\cm\left(\frac{R_1+\d z}{r_1}\right)\right]\\
+s(R_1+\d z)\sm\left(\frac{R_1+\d z}{r_1}\right)\left[\sm\left(\frac{r_2}{R_2+\d s}\right)-m\,\cm\left(\frac{r_2}{R_2+\d s}\right)\right],
\end{multline*}
and
\begin{equation*}
(\p_\d\Phi_{\cT_2}^3)(\d,z,s)= \left[z (R_1+\d s)+s (R_1+\d z)\right]\sm\left(\frac{R_1+\d z}{R_1+\d s}\right)+\cm\left(\frac{R_1+\d z}{R_1+\d s}\right) m R_1(z-s).
\end{equation*}

Hence
\begin{multline*}
(\p_\d\cT_{m,\d}^2)[\aa_1,\bb_1^\e](z)=\frac{1}{\sm(r_2/r_1)}\frac{1}{m}\int_{-1}^{+1} \varphi_\k' (-s)(\p_\d\Phi_{\cT_2}^1)(\d,z,s) \aa_1(s)\ds\\
-\frac{1}{\sm(r_2/r_1)}\frac{1}{m}\int_{-1}^{+1} \varphi_\k' (s)(\p_\d \Phi_{\cT_2}^2)(\d,z,s) \bb^\ep_1(s) \ds\\
-\frac{1}{m}\int_{-1}^{z} \varphi_\k' (-s) (\p_\d\Phi_{\cT_2}^3)(\d,z,s)\aa_1(s)\ds,
\end{multline*}
and consequently
\[
\|(\p_\d\cT_{m,\d}^2)[\aa_1,\bb_1^\e]\|_{L^\infty}\leq C\left(\|\aa_1\|_{L^2} +\|\bb_1^\e\|_{L^2} \right).
\]
\end{proof}

\begin{proof}[Proof of \eqref{DFQ2}]
We start writing \eqref{Q2e} in a more convenient way as
\begin{multline*}
\mathcal{Q}_{m,\d}^2[\aa_1,\aa_2^\e,\bb_0,\bb_1^\e,\l_0,\l_1,\l_2^\e](z)=\a_1^{R_2}[\l_0,\l_1]\bb_1^\e(z)+ \a_{2,\d}^{R_2}[\l_0,\l_1,\l_2^\e]\mathsf{b}_0(z)\\
+\frac{1}{\sm(r_2/r_1)}\frac{1}{m}\int_{-1}^{+1} \varphi_\k' (-s)  \Phi_{\cQ_2}^1(\d, z ,  s) \aa_1(s)\ds -\frac{1}{\sm(r_2/r_1)}\frac{1}{m}\int_{-1}^{+1} \varphi_\k' (s)\Phi_{\cQ_2}^2(\d,z ,s) \bb^\ep_1(s)\ds\\
-\frac{1}{m}\int_{-1}^{+1} \varphi_\k' (-s) \Phi_{\cQ_2}^3(\d,z,s) \aa_1(s)\ds+\frac{1}{m}\int_{-1}^{ z} \varphi_\k' (s)\Phi_{\cQ_2}^4(\d,z,s)  \bb^\ep_1(s)\ds,
\end{multline*}
with
\begin{align*}
\Phi_{\cQ_2}^1(\d,z,s)&:=(R_2+\d z)(R_1+\d s)\sm\left(\frac{R_2+\d z}{r_1}\right)\sm\left(\frac{r_2}{R_1+\d s}\right),\\
\Phi_{\cQ_2}^2(\d,z,s)&:=(R_2+\d z)(R_2+\d s)\sm\left(\frac{R_2+\d z}{r_1}\right)\sm\left(\frac{r_2}{R_2+\d s}\right),\\
\Phi_{\cQ_2}^3(\d,z,s)&:=(R_2+\d z)(R_1+\d s)\sm\left(\frac{R_2+\d z}{R_1+\d s}\right),\\
\Phi_{\cQ_2}^4(\d,z,s)&:=(R_2+\d z)(R_2+\d s)\sm\left(\frac{R_2+\d z}{R_2+\d s}\right).
\end{align*}
Recall also that $\a_{2,\d}^{R_2}[\l_0,\l_1,\l_2^\e]$ is defined by \eqref{def:alpha2new} as
\[
\a_{2,\d}^{R_2}[\l_0,\l_1,\l_2^\e](z)=\l_0 z^2 +\l_1 2 z R_2 + \l_2^\e R_2^2+\l_2^\e 2z R_2\d+\l_2^\e z^2\d^2.
\]
Note that computing their derivative we have
\begin{multline*}
(\p_\d \Phi_{\cQ_2}^1 )(\d,z,s)= z(R_1+\d s)\sm\left(\frac{r_2}{R_1+\d s}\right)\left[\sm\left(\frac{R_2+\d z}{r_1}\right)+m\,\cm\left(\frac{R_2+\d z}{r_1}\right)\right]\\
+s (R_2+\d z)\sm\left(\frac{R_2+\d z}{r_1}\right)\left[\sm\left(\frac{r_2}{R_1+\d s}\right)-m\,\cm\left(\frac{r_2}{R_1+\d s}\right)\right],
\end{multline*}
\begin{multline*}
(\p_\d \Phi_{\cQ_2}^2 )(\d,z,s)= z(R_2+\d s)\sm\left(\frac{r_2}{R_2+\d s}\right)\left[\sm\left(\frac{R_2+\d z}{r_1}\right)+m\,\cm\left(\frac{R_2+\d z}{r_1}\right)\right]\\
+s (R_2+\d z)\sm\left(\frac{R_2+\d z}{r_1}\right)\left[\sm\left(\frac{r_2}{R_2+\d s}\right)-m\,\sm\left(\frac{r_2}{R_2+\d s}\right)\right],
\end{multline*}

\begin{equation*}
(\p_\d \Phi_{\cQ_2}^3 )(\d,z,s)= \left[z (R_1+\d s)+s (R_2+\d z)\right]\sm\left(\frac{R_2+\d z}{R_1+\d s}\right)
+\cm\left(\frac{R_2+\d z}{R_1+\d s}\right) m (z R_1- s R_2),
\end{equation*}

\begin{equation*}
(\p_\d \Phi_{\cQ_2}^4 )(\d,z,s)= \left[z(R_2+\d s)+s (R_2+\d z)\right]\sm\left(\frac{R_2+\d z}{R_2+\d s}\right)
+\cm\left(\frac{R_2+\d z}{R_2+\d s}\right) m R_2(z-s),
\end{equation*}
and also
\[
(\p_\d\a_{2,\d}^{R_2})[\l_2^\e](z)= 2z \l_2^\e (R_2+\d z).
\]
Hence 
\begin{multline*}
(\p_\d \mathcal{Q}_{m,\d}^2)[\aa_1,\bb_0,\bb_1^\e,\l_2^\e](z)= (\p_\d\a_{2,\d}^{R_2})[\l_2^\e]\mathsf{b}_0(z)\\
+\frac{1}{\sm(r_2/r_1)}\frac{1}{m}\int_{-1}^{+1} \varphi_\k' (-s)  (\p_\d \Phi_{\cQ_2}^1)(\d, z ,  s) \aa_1(s)\ds -\frac{1}{\sm(r_2/r_1)}\frac{1}{m}\int_{-1}^{+1} \varphi_\k' (s) (\p_\d \Phi_{\cQ_2}^2)(\d,z ,s) \bb^\ep_1(s)\ds\\
-\frac{1}{m}\int_{-1}^{+1} \varphi_\k' (-s) (\p_\d \Phi_{\cQ_2}^3)(\d,z,s) \aa_1(s)\ds+\frac{1}{m}\int_{-1}^{ z} \varphi_\k' (s) (\p_\d \Phi_{\cQ_2}^4)(\d,z,s)  \bb^\ep_1(s)\ds,
\end{multline*}
and consequently
\[
\|(\p_\d \mathcal{Q}_{m,\d}^2)[\aa_1,\bb_0,\bb_1^\e,\l_2^\e]\|_{L^\infty}\leq C\left(|\l_2^\e| \|\bb_0\|_{L^2}+\|\aa_1\|_{L^2}+\|\bb_1^\e\|_{L^2}\right).
\]
\end{proof}

\textbf{Acknowledgments:}  A.C. acknowledge financial support from the Severo Ochoa Programme for Centres of Excellence Grant CEX2019-000904-S funded by MCIN/AEI/10.13039/501100011033, Grant PID2020-114703GB-I00 funded by MCIN/AEI/10.13039/501100011033, Grants RED2022-134784-T and RED2018-102650-T funded by MCIN/AEI/10.13039/501100011033
and by a 2023 Leonardo Grant for Researchers and Cultural Creators, BBVA Foundation. The BBVA Foundation accepts no responsibility for the opinions, statements, and contents included in the project and/or the results thereof, which are entirely the responsibility of the authors.

D.L. is  supported by RYC2021-030970-I funded by MCIN/AEI/10.13039/501100011033 and the NextGenerationEU. D.L also acknowledge financial support from Grant PID2020-114703GB-I00 and Grant PID2022-141187NB-I00 funded by MCIN/AEI/10.13039/501100011033.

\end{document}